\newcommand{\dd}{\text{\it \dj\hspace{1pt}}}
\newcommand{\ol}[1]{\overline{#1}}
\numberwithin{equation}{section}
\newcommand{\C}{\ensuremath{\mathbb{C}}}
\newcommand{\R}{\ensuremath{\mathbb{R}}}
\newcommand{\Rn}{\ensuremath{\mathbb{R}^n}}
\newcommand{\N}{\ensuremath{\mathbb{N}}}
\newcommand{\Z}{\ensuremath{\mathbb{Z}}}
\newcommand{\F}{\ensuremath{\mathcal{F}}}
\newcommand{\SD}{\ensuremath{\mathcal{S}}}
\newcommand{\sd}{\, d}
\newcommand{\supp}{\operatorname{supp}}
\newcommand{\eps}{\ensuremath{\varepsilon}}
\newcommand{\weight}[1]{\langle #1\rangle}
\newcommand{\Div}{\operatorname{div}}
\newcommand{\OP}{\operatorname{OP}}
\newcommand{\T}{\ensuremath{\mathbb{T}}}
\newcommand{\habil}[1]{}
\newtheorem{thm}{Theorem}[section]
\newtheorem{corollary}[thm]{Corollary}
\newtheorem{lem}[thm]{Lemma}
\newtheorem{lemma}[thm]{Lemma}
\newtheorem{defn}[thm]{Definition}
\newtheorem{definition}[thm]{Definition}
\newtheorem{theorem}[thm]{Theorem}
\newtheorem{prop}[thm]{Proposition}
\newtheorem{proposition}[thm]{Proposition}
\newtheorem{claim*}{Claim}
\newtheorem{assumption}[thm]{Assumption}
\newtheorem{rem}[thm]{Remark}
\newtheorem{remark}[thm]{Remark}
\newenvironment{proof*}[1]{{\bf Proof
#1:}}{\hspace*{\fill}\rule{1.2ex}{1.2ex}\\ }
\newenvironment{proof}{\noindent{\bf
Proof:\,}}{\hspace*{\fill}\rule{1.2ex}{1.2ex}\\ }
\newcommand{\wA}{\widetilde A}
\newcommand{\pr}{\operatorname{pr}}
\newcommand{\inj}{\operatorname{i}}
\newcommand{\vn}{{\vec n}}
\newcommand{\crp}{\overline{\mathbb R}_+}
\newcommand{\rn}{{\mathbb R}^n}
\newcommand{\rnp}{{\mathbb R}^n_+}
\newcommand{\crnp}{\overline{\mathbb R}^n_+}
\newcommand{\comega}{\overline\Omega }
\newcommand{\srplus}{\cal S(\overline{\mathbb R}_+)}
\newcommand{\ang}[1]{\langle {#1} \rangle}
\newcommand{\simto}{\overset\sim\rightarrow}
\newcommand{\rp}{ \mathbb R_+}
\newcommand{\tr}{\operatorname{tr}}
\newcommand{\Ami}{A_{\min}}
\newcommand{\Ama}{A_{\max}}
\begin{document}
\begin{titlepage}
\title{Extension Theory and  Kre\u\i{}n-type Resolvent
  Formulas for Nonsmooth Boundary Value
  Problems} 
\author{Helmut~Abels\footnote{Fakult\"at f\"ur Mathematik,  
Universit\"at Regensburg,  
93040 Regensburg, Germany, E-mail {\tt helmut.abels@mathematik.uni-regensburg.de}}, Gerd~Grubb\footnote{Department of Mathematical Sciences,
Copenhagen University,
Universitetsparken 5,
 DK-2100 Copenhagen, Denmark.
E-mail {\tt grubb@math.ku.dk} 
}, Ian~Geoffrey~Wood\footnote{School of Mathematics, Statistics \& Actuarial Science,
Cornwallis Building,
University of Kent,
Canterbury,
Kent CT2 7NF, UK.
E-mail {\tt i.wood@kent.ac.uk}  
}}
%\author{ xxx \footnote{Copenhagen U., {\sf e-mail: grubb@math.ku.dk}}}
\end{titlepage}
\maketitle

\begin{abstract}{
The theory of selfadjoint extensions of symmetric operators, and more
generally the theory of extensions of dual pairs, was
implemented some years ago
for boundary value problems for elliptic operators on 
smooth bounded domains. Recently, the questions have been taken up
again for nonsmooth
domains, with results first on $C^{1,1}$-domains 
for symmetric or smooth second-order operators, 
and next on quasi-convex Lipschitz domains 
for the selfadjoint realizations of the Laplacian. In
the present work we show that pseudodifferential methods can be used
to obtain a full characterization, including Kre\u\i{}n resolvent formulas,
of the realizations of nonselfadjoint second-order operators on
$C^{\frac32+\varepsilon }$ domains; more precisely, we treat domains 
with $B^\frac32_{p,2}$-smoothness and operators with
$H^1_q$-coefficients, for suitable $p>2(n-1)$ and $q>n$. 
The advantage of the
pseudodifferential boundary operator calculus is that the
operators are represented by a principal part and a lower-order
remainder, leading to regularity results; in particular we analyze
resolvents, Poisson solution operators and Dirichlet-to-Neumann operators in
this way, also in Sobolev spaces of negative order. Some unbounded domains are allowed.} 
\end{abstract}

\noindent
{\bf Key words:} Extension theory; Krein resolvent formula; elliptic
boundary value problems; pseudodifferential boundary operators; symbol
smoothing; M-functions; nonsmooth domains; non\-smooth coefficients  \\
{\bf MSC (2000):} 35J25, 35P05, 35S15, 46E35, 47A10, 47A20, 47G30

\section{Introduction}\label{intro}

The systematic theory of selfadjoint extensions of a symmetric operator in a
Hilbert space $H$, or more generally, adjoint pairs of extensions of a
given dual pair of operators in $H$, has its origin in fundamental
works of Kre\u\i{}n \cite{Krein47}, Vishik \cite{Vishik52} and Birman
\cite{Birman56}. There have been several lines of development since
then. For one thing, there are the early works of Grubb
\cite{Grubb68}--\cite{Grubb74} completing and extending the theories
and giving an implementation for results for  boundary value problems
for elliptic 
PDEs. Another line has been the development by, among others,
Kochubei \cite{K75}, 
Gorbachuk--Gorbachuk \cite{GG84}, Derkach--Malamud \cite{DM91},
Malamud--Mogilevskii \cite{MM02}, where the tendency
has been to
incorporate the problems into studies of relations (generalizing
operators), with applications to (operator valued) ODEs; keywords in
this development are boundary triples, Weyl-Titchmarsh
$m$-functions. More recently this has been applied to PDEs 
(e.g., Amrein-Pearson
\cite{AP04}, Behrndt and coauthors \cite{BL07, BL12, BLL13a, BLL13b, BR12}, Brown-Marletta-Naboko-Wood \cite{BMNW08}, Kopachevski{\u{\i}}-Kre\u{\i}n \cite{KK04}, Malamud \cite{Mal10}, Ryzhov \cite{R07}). Further references are given in Brown-Grubb-Wood
\cite{BGW09}, where a connection between the two lines of development
is worked out.

One of the interesting aims  is to establish
Kre\u\i{}n resolvent formulas, linking the resolvent of a general
operator with the resolvent of a fixed reference operator by
expressing the difference in terms of operators connected to boundary
conditions, encoding spectral information.

In the applications to elliptic PDEs, Kre\u\i{}n-type resolvent formulas
are by now well-established in the case of operators with smooth
coefficients on smooth domains, but there remain challenging questions
about the validity in nonsmooth cases, and their applications.
%%GG
 
One difficulty in implementing the extension theory in nonsmooth cases
lies in the fact that one needs mapping properties of direct and
inverse operators  not only in the most usual Sobolev spaces, but
also in spaces of low order, even of negative order over the boundary.
Another difficulty is to arrive at a theory where ellipticity
considerations are still applicable, in the way that the
operators are defined from
principal symbols plus lower-order error terms. This is important for
regularity questions, as well as for questions of spectral estimates. 
%%GG

%Another nontrivial point is to make it possible to profit from ellipticity
%considerations for the general boundary conditions that appear,
%leading to regularity results.

Gesztesy and Mitrea have addressed the %questions 
extension problem %%GG
for the Laplacian on
Lipschitz domains, showing Kre\u\i{}n-type  resolvent formulas in
\cite{GM08,GM09,GM09b}  involving  Robin problems under the
hypothesis that the boundary is of H\"o{}lder class $C^{\frac32+\varepsilon}$. More recently,
they have described the selfadjoint realizations of the Laplacian in  \cite{GM10} (based on the
abstract theory of \cite{Grubb68}), under a
more general hypothesis of quasi-convexity, which includes convex
domains and necessitates nonstandard boundary value spaces.
Posilicano and Raimondi gave in \cite{PR09} an analysis of selfadjoint
realizations of
second-order problems on $C^{1,1}$-domains. Grubb treated
nonselfadjoint realizations on $C^{1,1}$-domains in \cite{G08}, including
Neumann-type boundary conditions \begin{equation}
\chi u=C\gamma_0u, \label{eq:Neucond}
\end{equation}
with $C$ a differential operator of order 1, where the other mentioned
works mainly treat cases \eqref{eq:Neucond} with $C$ of order $<1$ or
nonlocal. (\cite{G08} can
be considered as a pilot project for the present paper.)
%%GG
It should also be mentioned that Behrndt and Micheler \cite{BM13} recently
%(after the submission of the present paper) 
have shown how a
parametrization of the selfadjoint realizations of the Laplace
operator on a Lipschitz domain can be obtained by use of the theory of
quasi-boundary triples due to Behrndt et al.\  (cf.\ e.g.\ \cite{BL12}).
Compared with our results less regularity of the boundary is needed in the analysis. But the results are restricted to the Laplacian, while in the following we work with general second order elliptic operators. Moreover, in order to deal with Lipschitz boundary, where the usual results on elliptic regularity might fail, Behrndt and Micheler work with suitable more abstractly defined function spaces. In the case that the boundary is of class $C^{\frac32+\eps}$ for some $\eps>0$, their function spaces coincide with the classical ones, which we use in the following, cf. \cite[Theorem 4.10]{BM13} and the discussion below. 

Our aim in this paper is
to set up a construction of general extensions and resolvents
that works in $L_2$ Sobolev spaces 
%%GG
when the regularity of $\Omega$ is in a scale of
function spaces larger than $\bigcup_{\varepsilon >
  0}C^{\frac32+\varepsilon }$, the coefficients of the elliptic
operator $A$ in another larger scale, 
yet 
allowing the use
of  pseudodifferential calculi that can take ellipticity
of boundary conditions into account and give precise information on
the principal parts of the operators.
We here choose to
work with operators having coefficients in scales of Sobolev spaces
and their generalizations to Besov and Bessel-potential spaces, since
this allows rather precise multiplication properties, and convenient
trace mapping results; then H\"older
space  properties can be read off using the well-known embedding
theorems.  
The resulting hypothesis on $\partial\Omega $ is that it can be parametrized by functions in the  Besov
space $B^{\frac32}_{p,2}$ for some $p>2(n-1)$. We note that this assumption is equivalent to $\tfrac32 - \tfrac{n-1}p>1$, where $\tfrac32 - \tfrac{n-1}p$ is the regularity number of the Besov space $B^{\frac32}_{p,2}$, which is the scaling exponent of the highest order parts of the norms under dilations of functions. It is the most relevant number for Sobolev embeddings, estimates of nonlinearities and applications to nonlinear partial differential equations. We note that (locally) $B^{\frac32}_{p,2}$ is inbetween $C^{1+\tau}$ and $C^{\frac32+\eps}$ for $\tau=\frac12 -\tfrac{n-1}p>0$ and any $\eps>0$. But the regularity number of  $B^{\frac32}_{p,2}$ is the same as the one of $C^{1+\tau}$ and can be much smaller than $\frac32$.

The theory of pseudodifferential boundary value problems (originating
in Boutet de Monvel \cite{BoutetDeMonvel} and further developed e.g.\
in the book of Grubb
\cite{FunctionalCalculus}; introductory material is given in \cite{G09}) is well-established for operators with $C^\infty $-coefficients
on $C^\infty $ domains. 
It has been extended to nonsmooth
cases by Abels \cite{NonsmoothGreen}, along the lines  of the extension of
pseudodifferential operators on open sets in 
Kumano-Go and Nagase \cite{KumanoGoNagase}, Marshall \cite{MarschallSobolevCoeff}, Taylor
\cite{TaylorNonlinearPDE}, \cite{ToolsForPDE}. These results have been applied to studies of the Stokes operator in Abels \cite{HInftyInLayer} and Abels and Terasawa~\cite{BIPVariableViscosity}, which in particular imply optimal regularity results for the instationary Stokes system, cf.  Abels~\cite{SolonnikovProceedings}. For applications to quasi-linear differential equations and free boundary value problems non-smooth coefficients are essential, cf. e.g. Abels~\cite{FreeSurface} and Abels and Terasawa~\cite{AbelsTerasawa2}.  The present
paper builds 
on \cite{NonsmoothGreen} and ideas of \cite{BIPVariableViscosity} and develops additional material.

Our final results will be formulated for operators acting between $L_2$ 
Sobolev spaces, but along the
way we also need $L_p$-based variants with $p\ne 2$ for the operator-
and domain-coefficients.  Here the
integral exponent will be called $p$ when we describe the domain $\Omega $
and its boundary $\Sigma =\partial\Omega $, and $q$ when we describe
the given partial differential operators $A$ and boundary operators
and their rules of calculus. There is then an optimal choice of how to
link $p$ and $q$, together with the dimension and the smoothness
parameters of the spaces where the operators act; this is expressed in Assumption
\ref{tau-assumption2}. 

%%GG
The results in the paper have been applied in Grubb \cite{G12} to show
spectral asymptotic estimates for the boundary term in Kre\u\i{}n
formulas established here.

We originally intended to include $2m$-order operators $A$ with $m>1$,
but the coefficients in  Green's formula needed an extra, lengthy
development of symbol classes that made us
postpone this to a future publication.
\medskip

\noindent{\it Plan of the paper.} In Section 2, we recall the facts on
Besov and Bessel-potential function spaces that we shall need, define
the domains with boundary in these smoothness classes, and establish
a useful diffeomorphism property. Nonsmooth pseudodifferential
operators are recalled, with mapping- and composition-properties, and
Green's formula for second-order nonsmooth elliptic operators $A$ on
appropriate nonsmooth domains is established.  The Appendix gives
further information on pseudodifferential boundary operators ($\psi$dbo's) with
nonsmooth coefficients, extending some results of \cite{NonsmoothGreen} to
$S^m_{1,\delta}$ classes. Section 3 recalls the
abstract extension theory of \cite{Grubb68}, \cite{Grubb74},
\cite{BGW09}. In Section 4 we use the $\psi$dbo calculus to construct 
the resolvent $(A_\gamma-\lambda)^{-1}$ and Poisson solution
operator $K^\lambda_\gamma$ for the Dirichlet problem in the nonsmooth
situation, by
localization and parameter-dependent estimates. The construction shows that the principal part of the resolvent belongs  to the class of non-smooth pseudodifferential boundary operators, which is essential for the subsequent analysis. Section 5 gives an
extension of Green's formula to low-order spaces, and provides an
analysis of $K^\lambda_\gamma$ and the associated Dirichlet-to-Neumann
operator $P^\lambda_{\gamma,\chi }=\chi K^\lambda_\gamma$, needed for the
interpretation of the abstract theory. In particular it is shown that the operators coincide with operators of the pseudodifferential calculi up to lower order operators, which is one of the central results of the paper. Finally, the interpretation  is worked out in
Section 6, leading to a full validity of the characterization of the
closed realizations of $A$ in terms of boundary conditions, and including
Kre\u\i{}n-type resolvent formulas for all closed realizations $\wA$.   
 Section 7 gives a special analysis of the Neumann-type boundary
 conditions \eqref{eq:Neucond} entering in the theory, showing in
 particular that regularity of solutions holds 
 when $C-P^0_{\gamma,\chi }$ is elliptic.

\newcommand{\xx}{\tilde{x}}

\section[Basics on function spaces]{Basics on function spaces and operators on non\-smooth
    domains}\label{prelim}
\subsection{Function spaces on non\-smooth domains}
\label{sec:Funct.sp}

For convenience we here recall the definitions and properties of
function spaces that will be used throughout this paper. Proofs
can be found e.g.\ in Triebel~\cite{Triebel1} and Bergh and L\"ofstr\"om~\cite{Interpolation}.  All spaces are
Banach spaces, some $L_2$-based spaces are also Hilbert spaces.

The usual multi-index notation for differential operators with
$\partial=\partial_x=(\partial_1,\dots,\partial_n)$,
$\partial_j=\partial_{x_j}=\partial/\partial x_j$, and 
$D=D_x=(D_1,\dots,D_n)$, $D_j=D_{x_j}=-i\partial/\partial x_j$, will
be employed. 

For the spaces defined over ${\R}^n$, the Fourier transform ${\F}$ is
used to define operators such as $p(D_x)u={\F}^{-1}(p(\xi ){\F}u)$ (also
called $\operatorname{Op}(p)u$), for suitable functions $p(\xi )$. In
particular, with $\weight{\xi}=(1+|\xi|^2)^{1/2}$, $\weight{D_x}^s$
stands for %%\linebreak 
$(1-\Delta )^{ s/2}$. $\SD(\Rn)$ denotes the Schwartz space of smooth,
rapidly decreasing functions and  $ \SD'(\Rn)$ its dual space, the
space of tempered distributions.
\medskip

\noindent {\it Function spaces}. 
The Bessel potential space in $\Rn$ of order $s\in \R$ is defined for $1<p<\infty$ by
\begin{equation*}
  H_p^s(\Rn) = \left\{f\in \SD'(\Rn): \weight{D_x}^s  f \in L_p(\Rn)\right\},
\end{equation*}
normed by $\|f\|_{H_p^s(\Rn)}=\|\weight{D_x}^s {f}\|_{L_p(\Rn)}$.
For $s=m$, a non-negative integer, $H^m_p({\R}^n)$ equals the space
of $L_p({\R}^n)$-functions with derivatives up to order $m$ in
$L_p({\R}^n)$, also denoted $W^m_p({\R}^n)$. 
In the case $p=2$, we omit the lower index and simply write $H^s(\Rn)$
instead of $H_2^s(\Rn)$. We denote the sesquilinear duality pairing of
$u\in H^s(\Rn)$ with $v\in H^{-s}(\Rn)$ by $(u,v)_{s,-s}$ (linear in
$u$, conjugate linear in $v$).

To describe the regularity, both of domains and of operator-coefficients, we shall also need Besov spaces $B^s_{p,q}(\Rn)$, where $s\in \R, 1\leq p,q\leq \infty$. These are defined by
$ B^s_{p,q}(\Rn)= \left\{f\in \SD'(\Rn): \|f\|_{B^s_{p,q}(\Rn)}<\infty\right\}$,
where
\begin{eqnarray*}
  \|f\|_{B^s_{p,q}(\Rn)} &=& \left(\sum_{j=0}^\infty 2^{sjq} \|\varphi_j(D_x) f\|_{L_p(\Rn)}^q\right)^\frac1q\quad \text{if}\ q<\infty,\\
  \|f\|_{B^s_{p,\infty}(\Rn)} &=& \sup_{j\in\N_0}2^{sj} \|\varphi_j(D_x) f\|_{L_p(\Rn)}.
\end{eqnarray*}
Here,  
$\varphi_j$, $j\in \N_0$, is a partition of unity on $\Rn$ such that $\supp \varphi_0 \subseteq \{\xi\in \Rn: |\xi|\leq 2\}$ and $\supp \varphi_j\subseteq \{\xi\in \Rn: 2^{j-1}\leq |\xi|\leq 2^{j+1}\}$ if $j\in \N$, chosen such that $\varphi_j(\xi)= \varphi_1(2^{1-j}\xi)$ for all $j\in \N$, $\xi\in\Rn$.

The parameter $s$ indicates the smoothness of the functions. 
The second parameter $p$ is called the integration exponent. The third
parameter $q$ 
is called the summation exponent; it measures smoothness on a \emph{finer scale} than $s$, which can be seen by the following simple relations:
\begin{alignat}{2}\label{eq:BesovEmb1}
  B^s_{p,1}(\Rn) \hookrightarrow B^s_{p,q_1}(\Rn)&\hookrightarrow B^s_{p,q_2}(\Rn)\hookrightarrow B^s_{p,\infty}(\Rn)&\quad & \text{if}\ 1\leq q_1\leq q_2\leq \infty,\\\label{eq:BesovEmb2}
  B^{s+\eps}_{p,\infty}(\Rn) &\hookrightarrow B^s_{p,1}(\Rn),
\end{alignat}
where $s\in \R$, $\eps>0$, and $1\leq p\leq \infty$ are arbitrary.
(The sign $\hookrightarrow$ indicates continuous embedding.)  
The embeddings follow directly from the definition and the fact that 
$\ell_{q_1}(\N_0)\hookrightarrow \ell_{q_2}(\N_0)$ if 
$1\leq q_1\leq q_2\leq \infty$. Here, $\ell_{q}(\N_0)$ is the
space of sequences $(a_k)_{k\in \N_0}$
such that $\left(\sum_{k=0}^\infty |a_k|^q\right)^\frac1q <\infty$ in
the case $q<\infty$ and $\sup_{k\in \N_0} |a_k|<\infty$ if $q=\infty$,
provided with the hereby defined norm. 

We recall that for $p=q$ and $s\in \R_+\setminus \N$, $B^s_{p,p}(\Rn)$ equals
the Sobolev-Slobodetski\u\i {} space $W^s_p(\Rn)$, whereas for $s\in \N_0$,
it is $H^s_p(\Rn)$ that equals $W^s_p(\Rn)$. (In the following, the $H$- and
$B$-notation will be used for clarity; these scales of spaces have
the best interpolation properties.)
In the case $p=2$, all three
spaces coincide, for general $s$:
\begin{equation}\label{eq:HsBesovID}
  H^s(\Rn)=H^s_2(\Rn) = B^s_{2,2}(\Rn)=W^s_2(\Rn).
\end{equation}

The spaces $B^s_{\infty, \infty }(\Rn)$, also denoted $\mathcal C^s(\Rn)$
when $s>0$ (H\"older-Zygmund spaces), play a special role. For $s\in
\R_+\setminus \N$, $B^s_{\infty, \infty }(\Rn)=\mathcal C^s(\Rn)$ can be identified with the
H\"older space $C^{k,\sigma }(\Rn)$, defined for $k+\sigma =s$, $k\in\N_0$ and $\sigma \in
(0,1]$, and also denoted $C^{s}(\Rn)$ when $\sigma \in (0,1)$.  For $s=k\in\N$, there are sharp inclusions
\[ C^k_b(\Rn)\hookrightarrow C^{k-1,1}(\Rn)\hookrightarrow \mathcal C^k(\Rn);
\]
here, $C^k_b(\Rn)$ is the usual space of bounded continuous
functions with bounded continuous derivatives up to order $k$.

At this point, let us recall some interpolation results:
Denoting the real and complex interpolation functors by  $(.,.)_{\theta,q}$ and $(.,.)_{[\theta]}$, respectively, we have that
 if $s_0,s_1\in \R$ with $s_0\neq s_1$, $1\leq p,q_0,q_1,r\leq \infty$, and $s= (1-\theta) s_0+ \theta s_1$, $\theta \in (0,1)$, then
\begin{equation}\label{eq:BesovInterpol1}
  (B^{s_0}_{p,q_0}(\Rn), B^{s_1}_{p,q_1}(\Rn))_{\theta, r} = B^s_{p,r}(\Rn).
\end{equation}
If additionally $\frac1p= \frac{1-\theta}{p_0} + \frac{\theta}{p_1}$ for some $1\leq p_0,p_1\leq \infty$ and $\frac1q= \frac{1-\theta}{q_0} + \frac{\theta}{q_1}$, then
\begin{equation}\label{eq:BesovInterpol2}
  (B^{s_0}_{p_0,q_0}(\Rn), B^{s_1}_{p_1,q_1}(\Rn))_{[\theta]} = B^s_{p,q}(\Rn),
\end{equation}
cf. \cite[Theorem 6.4.5]{Interpolation} or \cite[Section~2.4.1 Theorem]{Triebel1}. 
Using the same notation, we have in particular for the Bessel potential spaces \begin{align}\label{eq:BesselInterpolReal}  
  (H^{s_0}_p(\R^n), H^{s_1}_p(\R^n))_{\theta,r} &= B^s_{p,r}(\R^n),
 \\ (H^{s_0}_{p_0}(\R^n), H^{s_1}_{p_1}(\R^n))_{[\theta]}& =
 H^s_p(\R^n)
\nonumber
\end{align}
(cf.~\cite[Theorem 6.4.5]{Interpolation}).

\medskip

\noindent {\it General embedding properties.} 
For any $1<p<\infty$,  we have the following embeddings between Besov
spaces and Bessel potential spaces:
\begin{align}
  B^{s+\varepsilon }_{p,q_1}(\Rn) &\hookrightarrow H_p^s(\Rn) \hookrightarrow
  B^{s-\varepsilon }_{p,q_2}(\Rn) \qquad \text{for all } \eps>0, 1\leq q_1,q_2\leq
  \infty, s\in\R,\nonumber\\
B^{s}_{p,\min(2,p)}(\Rn) &\hookrightarrow H_p^s(\Rn) \hookrightarrow
B^{s}_{p,\max(2,p)}(\Rn) \qquad \text{for all } s\in\R,\label{eq:BesovEmb4}
\end{align}
cf. e.g. \cite[Theorem 6.4.4]{Interpolation}.

There are the following Sobolev embeddings for Bessel potential spaces:
  \begin{alignat}{2}\label{eq:BesselEmb}
    H^{s_1}_{p_1}(\Rn)&\hookrightarrow
    H^{s_0}_{p_0}(\Rn)
\qquad \text{if}\ s_1\geq s_0, s_1-\tfrac{n}{p_1}\geq s_0-\tfrac{n}{p_0},
\\
     H^{s}_p(\Rn)&\hookrightarrow \mathcal{C}^{\alpha}(\Rn)\qquad \text{if}\ \alpha=s-\tfrac np>0, 
  \end{alignat}
provided that $1<p_1\leq p_0 <\infty$, $1<p<\infty$.
In particular, $H^{s}_{p}(\Rn)\hookrightarrow L_{p}(\Rn)$ for $s\ge 0$.

 For the Besov spaces a Sobolev-type embedding is given by 
  \begin{alignat}{2}\label{eq:BesovEmb3}
    B^{s_1}_{p_1,q}(\Rn) &\hookrightarrow B^{s_0}_{p_0,q}(\Rn) &\qquad& \text{if}\ s_1\geq s_0 \ \text{and}\ s_1-\tfrac{n}{p_1}\geq s_0-\tfrac{n}{p_0},
  \end{alignat}
  for any $1\leq q\leq\infty$.
In particular, combining this with \eqref{eq:BesovEmb1}, we get
\begin{alignat}{2}
B^{s_1}_{p_1,q}(\Rn) \hookrightarrow   B^{s_0}_{\infty, q}(\Rn) \hookrightarrow B^{s_0}_{\infty, \infty}(\Rn) = \mathcal{C}^{s_0}(\Rn)\label{Hoelderreg}
 \end{alignat}
whenever $s_0=s_1-\frac{n}{p_1}>0$. In the opposite direction, we have from \eqref{eq:BesovEmb1} and \eqref{eq:BesovEmb2}
\begin{equation}\label{eq:Zygmundem} \mathcal
  C^{\alpha 
  }(\Rn)
=  B^{\alpha 
}_{\infty ,\infty }(\Rn)
\hookrightarrow  B^{\alpha -\varepsilon
}_{\infty ,2}(\Rn)
\end{equation}
when $\alpha >0$, $0<\varepsilon <\alpha $. We also note that
\begin{equation}\label{eq:SobolevEmb}
  H^{s_1}_{p_1}(\Rn)\cup B^{s_1}_{p_1,p_1}(\Rn)\hookrightarrow  H^{s_0}_{p_0}(\Rn)\cap B^{s_0}_{p_0,p_0}(\Rn)
\end{equation}
if $1<p_1<p_0 <\infty$ and $s_1-\frac{n}{p_1}\geq s_0-\frac{n}{p_0}$; this can be found in \cite[Section~2.8.1, equation (17)]{Triebel1}.

\medskip

\noindent {\it Function spaces over subsets of $\Rn$.}
The Bessel potential and  Besov spaces are defined on a domain
$\Omega\subset\Rn$ with $C^{0,1}$-boundary (see Definition \ref{bdryreg} below) simply by restriction:
\begin{eqnarray}\label{restr.spaces}
  H^s_p(\Omega) &=& \{f\in \mathcal{D}'(\Omega): f= f'|_{\Omega}, f'\in H^s_p(\Rn) \},\\
  B^s_{p,q}(\Omega) &=& \{f\in \mathcal{D}'(\Omega): f= f'|_{\Omega}, f'\in B^s_{p,q}(\Rn) \},\nonumber
\end{eqnarray}
for $s\in\R$  
and $1\leq p,q\leq \infty$.
Here $f'|_{\Omega}\in \mathcal{D}'(\Omega)$ is defined by
$\weight{f'|_{\Omega},\varphi}_{\mathcal{D}'(\Omega),
  \mathcal{D}(\Omega)}=
\weight{f,\varphi}_{\mathcal{D}'(\Rn),\mathcal{D}(\Rn)}$ for all
$\varphi \in C_0^\infty(\Omega)$, embedded in $  C_0^\infty(\Rn)$ by
extension by zero. 
The spaces are equipped with the quotient norms, e.g.,
\begin{eqnarray}\label{eq:QuotientNorm}
  \|f\|_{B^s_{p,q}(\Omega)} &=& \inf_{f'\in B^s_{p,q}(\Rn): f'|_{\Omega}= f}\|f'\|_{B^s_{p,q}(\Rn)}.
\end{eqnarray}

In particular, $H^m_p(\Omega )$ is for  $m\in {\N}_0$ and $1<p<\infty$ equal to the usual Sobolev space $W^m_p(\Omega)$ of  
$L_p(\Omega )$-functions with derivatives up to order $m$ in
$L_p(\Omega )$. We recall that
there is an extension operator $E_\Omega$ which is a bounded linear operator
$E_\Omega\colon W^m_p(\Omega)\to W^m_p(\R^n)$,
for all $m\in \N_0$,  $1\leq p\leq \infty$, and satisfies $E_\Omega f|_{\Omega} = f$ for all $f\in
W^m_p(\Omega)$.
This holds when $\Omega$ is merely a Lipschitz domain,
cf.\ e.g.\ Stein~\cite[Chapter VI, Section 3.2]{Stein:SingInt} and trivially carries over to $H^m_p(\Omega)$ for $1< p< \infty$. Moreover, in view of the fact that $H^s_p(\Omega)$ is a retract of $H^s_p(\Rn)$, one has that all interpolation and Sobolev embedding results for $H^s_p(\Rn)$ are inherited by the spaces on $\Omega$. 

We shall also need the spaces
\begin{equation*}
  H^s_0(\comega)=\{u\in H^s(\Rn): \operatorname{supp}u\subset \comega\}.
\end{equation*}
Here, $H^s_0(\comega)$ identifies in a natural
way with the dual space of $H^{-s}(\Omega )$, for all
$s\in\R$, cf.\ \cite[Theorem~3.30]{McLean}. 
For $s$ integer $\ge 0$, $H^s_0(\comega)$ equals the closure of
$C_0^\infty(\Omega)$ in $H^s(\Omega)$ and is usually denoted
$H^s_0(\Omega )$ (see also
\cite[Theorem~3.33]{McLean}). 

\medskip

\noindent {\it Traces.}
Next, let us recall the well-known trace theorems:
The trace map $\gamma_0 $  from $\Rn_+$ to $\R^{n-1}$, defined on
smooth functions with bounded support,  extends by
continuity to continuous maps
 for $s>\frac1p$, $1<p<\infty$, $1\le q\le \infty $, 
  \begin{eqnarray*}
    \gamma_0 \colon  H^s_p(\Rn_+)& \to & B_{p,p}^{s-\frac1p}(\R^{n-1}),\\
    \gamma_0 \colon  B^s_{p,q}(\Rn_+)& \to & B_{p,q}^{s-\frac1p}(\R^{n-1}).
  \end{eqnarray*}  
All of these maps are surjective and have continuous right inverses.

\medskip

\noindent{\it Vector-valued Besov and Bessel potential spaces.}
In the following let $X$ be a Banach space. Then
$L_p(\R^n;X)$, $1\leq p< \infty$, is defined as the space of strongly measurable functions
$f\colon \R^n\to X$ with 
\begin{equation*}
  \|f\|_{L_p(\R^n;X)}:= \left(\int_{\Rn} \|f(x)\|_X^p
  dx\right)^{\frac1p}<\infty%\quad \text{if}\ 1\leq p<\infty
\end{equation*}
and $L_\infty(\R^n;X)$ is the space of all strongly measurable and
essentially bounded functions. 
Similarly, $\ell_p(\N_0;X)$, $1\leq p\leq \infty$, denotes the $X$-valued variant of $\ell_p(\N_0)$.
 
Furthermore, let $\SD(\Rn;X)$ be the space  of smooth rapidly decreasing
functions $f\colon\Rn\to X$ 
and let $\SD'(\Rn;X):= \mathcal{L}(\SD(\Rn),X)$ denote the space of
tempered $X$-valued distributions. 
Then the $X$-valued
variants of the Bessel potential and Besov spaces of order $s\in\R$ are defined as
\begin{alignat*}{1}
  H^s_p(\Rn;X)&:=\{f\in\SD'(\Rn;X): \weight{D_x}^s f \in L_p(\Rn;X)\} \quad
  \text{if} \ 1<p<\infty,\\ 
  B^s_{p,q}(\Rn;X)&:=\{f\in\SD'(\Rn;X): (2^{sj}\varphi_j(D_x) f)_{j\in\N_0} \in \ell_q(\N_0;L_p(\Rn;X))\},
\end{alignat*} 
where $1\leq p,q\leq \infty$. 
Here, $p(D_x)f\in \mathcal{S}'(\Rn;X)$ is defined by
\begin{equation*}
  \weight{p(D_x)f,\overline\varphi}= \weight{f,\overline{\overline{p}(D_x)\varphi}}\qquad \text{for all}\ \varphi \in \SD(\Rn).
\end{equation*}
We will also make use of the Banach space
\begin{alignat*}{1}
\operatorname{\it BUC}(\R^n;X)=\{ f\in L_\infty(\Rn;X): f \hbox{ is uniformly continuous}\}
\end{alignat*} 
with the supremum norm.

The properties of the function spaces discussed above for the scalar case, carry over to the vector-valued case. For details, we refer to e.g.~\cite{ABHN01,HP57} (for the Bochner integral and its properties) and to \cite{Amann} (for vector-valued function spaces).

In the following we will use some special anisotropic Sobolev spaces.

\begin{definition}\label{def:W2p} Let $I=(0,\infty)$ or $\R$,  and let
  $\Omega=\R^{n-1}\times I$, with coordinates $(x',x_n)$. For $k\in \N_0$, $1\leq p<\infty$, set 
$$
W^{k}_{(2,p)}(\Omega)=\left\{f\in L_2(I;L_p(\R^{n-1})):\partial_x^\alpha f \in L_2(I;L_p(\R^{n-1})), |\alpha|\leq k\right\}.
$$ 
\end{definition}

\begin{lem}\label{lem:W2p}
One has for $k\ge 1$ that
\begin{eqnarray}
 W^{k}_{(2,p)}(\Omega)
\label{eq:BUCEmbedd}
 &\hookrightarrow & \operatorname{\it BUC}(I;B^{k-\frac12}_{p,2}(\R^{n-1})).
\end{eqnarray}
Here, the trace mapping $u\mapsto u|_{x_n=0}$ is surjective from $
W^{k}_{(2,p)}(\Omega)$ to $B^{k-\frac12}_{p,2}(\R^{n-1})$. Namely, when
$g(x')\in B^{k-\frac12}_{p,2}(\R^{n-1})$, then
$G(x',x_n)=(e^{-Ax_n}g)(x')$ is in $
W^{k}_{(2,p)}(\Rn_+)$ with $G(x',0)=g(x')$ (where $e^{-Ax_n}$ is the
semigroup generated by $-A=-\ang{D_{x'}}$). $G(x',x_n)$ extends to a function 
$G\in W^{k}_{(2,p)}(\Rn)$.
\end{lem}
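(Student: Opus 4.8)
The plan is to establish the three assertions — the embedding \eqref{eq:BUCEmbedd}, surjectivity of the trace, and the explicit right inverse via the semigroup $e^{-Ax_n}$ — by reducing everything to a one-dimensional statement after a partial Fourier transform in $x'$. First I would observe that $W^k_{(2,p)}(\Omega)$ is, for $k\ge1$, continuously embedded in the intersection $L_2(I;H^k_p(\R^{n-1}))\cap H^k(I;L_p(\R^{n-1}))$: the definition controls all mixed derivatives $\partial_{x'}^{\alpha'}\partial_{x_n}^j f$ with $|\alpha'|+j\le k$, so in particular taking $j=0$ gives $L_2(I;H^k_p)$ and taking $\alpha'=0$ gives $H^k(I;L_p)$. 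Now interpolating these two endpoints at parameter $\theta=\frac12$ and using the vector-valued version of \eqref{eq:BesselInterpolReal} (resp.\ \eqref{eq:BesovInterpol1}) — valid since $H^k_p(\R^{n-1})$, $L_p(\R^{n-1})$ form a sensible couple — gives, with the trace theorem in the $x_n$-variable (a $1$-dimensional trace at $x_n=0$ costs half a derivative and produces a $B_{\cdot,2}$ space because $p=2$ in the $I$-direction), the embedding into $BUC(I;B^{k-\frac12}_{p,2}(\R^{n-1}))$. The $BUC$-regularity (rather than merely $L_\infty$ in $x_n$) follows because $H^1(I;Y)\hookrightarrow BUC(\overline I;Y)$ for any Banach space $Y$, applied with $Y$ the interpolation space.

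For surjectivity of the trace and the explicit right inverse, I would take $g\in B^{k-\frac12}_{p,2}(\R^{n-1})$ and set $G(x',x_n)=(e^{-Ax_n}g)(x')$ where $A=\ang{D_{x'}}=\Op(\ang{\xi'})$. Passing to the Fourier transform in $x'$, $\widehat G(\xi',x_n)=e^{-\ang{\xi'}x_n}\widehat g(\xi')$, and one computes $\partial_{x_n}^j\widehat G(\xi',x_n)=(-\ang{\xi'})^j e^{-\ang{\xi'}x_n}\widehat g(\xi')$. Thus a mixed derivative $\partial_{x'}^{\alpha'}\partial_{x_n}^j G$ has symbol $(i\xi')^{\alpha'}(-\ang{\xi'})^j e^{-\ang{\xi'}x_n}\widehat g$, whose $L_p(\R^{n-1})$-norm in $x'$ is (using the Besov/Bessel characterization and the fact that $|\xi'|^{|\alpha'|}\ang{\xi'}^j\le \ang{\xi'}^{|\alpha'|+j}\le\ang{\xi'}^k$) dominated by $\ang{\xi'}^{k-j}$ times a $B^{j-\frac12}_{p,2}$-type quantity of $g$. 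Then the $L_2(I)$-norm in $x_n$ is handled by the elementary estimate $\int_0^\infty \ang{\xi'}^{2(k-j)}e^{-2\ang{\xi'}x_n}\,dx_n=\tfrac12\ang{\xi'}^{2(k-j)-1}$, which converts $k-\tfrac12$ derivatives on the boundary into the full anisotropic norm — this is exactly the computation behind the trace theorem for the half-line with $L_2$-integrability. Collecting the dyadic pieces via Littlewood–Paley gives $\|G\|_{W^k_{(2,p)}(\Rn_+)}\le C\|g\|_{B^{k-\frac12}_{p,2}(\R^{n-1})}$, and $G(x',0)=g$ is immediate since $e^{-A\cdot 0}=I$.

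Finally, to extend $G$ from $\Rn_+$ to all of $\Rn$ one applies the standard higher-order reflection (Seeley/Babich) extension operator in the $x_n$-variable: a finite linear combination $\widetilde G(x',x_n)=\sum_{\ell} c_\ell G(x',-\lambda_\ell x_n)$ for $x_n<0$, with the $\lambda_\ell>0$ and $c_\ell$ chosen so that all $x_n$-derivatives up to order $k-1$ match at $x_n=0$; this is a bounded operator on $W^k_{(2,p)}$ because each substitution $x_n\mapsto-\lambda_\ell x_n$ is a bi-Lipschitz change of variables in the $I$-direction preserving both $L_2(I)$ and all $\partial_{x_n}^j$, $j\le k$, while acting trivially on the $L_p(\R^{n-1})$-factor. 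I expect the main obstacle to be making the interpolation step in the first paragraph fully rigorous in the vector-valued, mixed-integrability setting — one must check that the couple $(L_2(I;H^k_p(\R^{n-1})),\,H^k(I;L_p(\R^{n-1})))$ genuinely interpolates to the anisotropic space with the correct trace target, which is where the choice $q=2$ (summation exponent) on the boundary and the restriction $k\ge1$ both come from; the explicit semigroup computation in the second paragraph is the clean way to sidestep this, so in practice I would lead with the direct Fourier-analytic argument and use interpolation only as a cross-check.
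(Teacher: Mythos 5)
Your second and third steps are fine: the Littlewood--Paley computation for $G=e^{-Ax_n}g$ is a legitimate, more self-contained substitute for the paper's appeal to the semigroup characterization of real interpolation spaces from Butzer--Berens (one does need the almost-orthogonality of the profiles $e^{-c2^lx_n}$ in $L_2(\R_+)$ to land in $q=2$ rather than $q=1$, but that is routine), and the higher-order reflection is exactly the extension the paper takes from \cite{LM68}. The genuine gap is in your proof of the first assertion, the embedding $W^k_{(2,p)}(\Omega)\hookrightarrow BUC(I;B^{k-\frac12}_{p,2}(\R^{n-1}))$. You justify the $BUC$-regularity by ``$H^1(I;Y)\hookrightarrow BUC(\overline I;Y)$ applied with $Y$ the interpolation space'', but $u\in H^1(I;B^{k-\frac12}_{p,2}(\R^{n-1}))$ is simply not available: the definition of $W^k_{(2,p)}$ only gives $\partial_{x_n}u\in L_2(I;H^{k-1}_p(\R^{n-1}))$, and $H^{k-1}_p$ is strictly weaker than $B^{k-\frac12}_{p,2}$. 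What the Lions trace-method result (Bergh--L\"ofstr\"om \cite[Cor.~3.12.3]{Interpolation}, which is what the paper invokes) yields is the bound $\|u(\cdot,h)\|_{B^{k-\frac12}_{p,2}}\le C\|u\|_{W^k_{(2,p)}}$ uniformly in $h$ (apply it after translating by $h$), i.e.\ only $L_\infty(I;B^{k-\frac12}_{p,2})$; continuity with values in that same space must then be obtained separately, e.g.\ by applying the trace estimate to the differences $\tau_hu-u$ and using strong continuity of translations in $L_2(I;H^k_p)\cap H^k(I;L_p)$, which is the argument from \cite{Amann} that the paper cites. Your proposed fallback --- ``lead with the direct Fourier-analytic argument and use interpolation only as a cross-check'' --- cannot close this gap: the semigroup computation concerns only functions of the special form $e^{-Ax_n}g$ and says nothing about a general $u\in W^k_{(2,p)}(\Omega)$, so the embedding \eqref{eq:BUCEmbedd} genuinely requires the interpolation step.

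A smaller slip in the same paragraph: the real-interpolation parameter between the endpoints $L_p(\R^{n-1})$ and $H^k_p(\R^{n-1})$ must be $\theta=1-\frac1{2k}$ (as in \eqref{traceW2p}), not $\theta=\frac12$; the two coincide only for $k=1$, and with $\theta=\frac12$ you would land in $B^{k/2}_{p,2}$ instead of $B^{k-\frac12}_{p,2}$.
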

\begin{proof}
First of all,
\begin{eqnarray*}
 W^{k}_{(2,p)}(\Omega)&\hookrightarrow& L_2(I;H^{k}_p(\R^{n-1}))\cap H^{k}(I;L_p(\R^{n-1})).
\end{eqnarray*}
To obtain
\begin{equation*}
  L_2(I;H^{k}_p(\R^{n-1}))\cap H^{k}(I;L_p(\R^{n-1}))
 \hookrightarrow  \operatorname{\it BUC}(I;B^{k-\frac12}_{p,2}(\R^{n-1}))
\end{equation*}
 one can apply 
\cite[Corollary~3.12.3]{Interpolation}
with $\eta_j=\frac12, p_j=2$, $j=0,1$, a result from Lions' trace
method of real interpolation, to obtain
\begin{equation}
u|_{x_n=0}\in (L_p(\R^{n-1}),H^{k}_p(\R^{n-1}))_{1-\frac1{2k},2}=B^{k-\frac12}_{p,2}(\R^{n-1}),\label{traceW2p}
\end{equation}
 for every 
$u\in W^{k}_{(2,p)}(\Omega)$; the identity follows from \eqref{eq:BesselInterpolReal}. 
Next, this is combined with the strong continuity
of the translations $(\tau_h u)(x)=(x',x_n+h)$, $h\geq 0$, 
in  $L_2(I;H^{k}_p(\R^{n-1}))\cap H^{k}(I;L_p(\R^{n-1}))$ as in the proof of \cite[Chapter III, Theorem 4.10.2]{Amann}.

For the last assertion, let $A^s=\weight{D_{x'}}^{s}$, $s\in \R$; here $A=A^1$. Then 
$$
H^k_p(\R^{n-1})=\{f\in L_p(\R^{n-1}):\weight{D_{x'}}^kf\in L_p(\R^{n-1}) \}=: {D}(A^k)
$$ 
for all $k\in \N_0$, $1<p<\infty$, as explained in
\cite[Theorem~6.2.3]{Interpolation}. Now when $g$ is given,  let $G(x',x_n)=
(e^{-Ax_n}g)(x')$ for $x_n\ge 0$. Since 
\begin{equation*}
  W^{k}_{(2,p)}(\Rn_+)=\bigcap_{0\leq j\leq k}
  H^j(0,\infty;H^{k-j}_p(\R^{n-1})),
\end{equation*}
we have by \cite[Corollary~3.5.6, Theorem~3.4.2]{ButzerBerens} 
that $A^kG\in L^2(0,\infty;L_p(\R^{n-1}))$, so 
$
G\in W^{k}_{(2,p)}(\Rn_+)$. Here, we use that
$
B^{k-\frac12}_{p,2}(\R^{n-1})=(L_p(\R^{n-1}),H^{k}_p(\R^{n-1}))_{1-\frac1{2k},2}
$, as noted above in \eqref{traceW2p}. $G(x',x_n)$ is extended
to a function in $W^{k}_{(2,p)}(\Rn) $ by a standard 
``reflection'' in $x_n=0$, as explained e.g.\ in \cite[Th.\ I 2.2]{LM68}.
\end{proof}

By use of this lemma we derive the following product estimate,
which is essential for the low boundary regularity that we shall allow:
\begin{lem}\label{lem:Product}
  Let $I,\Omega$ and $W^k_{(2,p)}(\Omega)$ be as in Definition {\rm \ref{def:W2p}}. Then for every $k\in \N$ and $2\leq p< \infty$ such that $k-\frac12 - \frac{n-1}p >0$ there is some $C_{k,p}>0$ such that
  \begin{equation*}
    \|fg\|_{H^k(\Omega)}\leq C_{k,p} \|f\|_{H^k(\Omega)}  \|g\|_{W^k_{(2,p)}(\Omega)},
  \end{equation*}
  for all $f\in H^k(\Omega)$, $g\in W^k_{(2,p)}(\Omega)$. Moreover, if $k=1$ and $\tau=\frac12 - \frac{n-1}p$, then
  \begin{equation}\label{eq:CommEstim}
%\|[\partial_{x_j},g]f\|_{L_2(\Omega)} =   
\|f\partial_{x_j}g\|_{L_2(\Omega)}\leq C_{p} \|f\|_{H^{1-\tau}(\Omega)}  \|g\|_{W^1_{(2,p)}(\Omega)}
  \end{equation}
uniformly with respect to $f\in H^1(\Omega), g\in W^1_{(2,p)}(\Omega)$.
\end{lem}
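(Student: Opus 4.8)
The plan is to reduce both inequalities to the scalar multiplication estimate
$\mathcal C^{\alpha}(\R^{n-1})\cdot H^{k}_p(\R^{n-1})\hookrightarrow H^k_p(\R^{n-1})$
(and its $L_p$-variant for the low-order term), applied fibrewise in $x_n$, and then to integrate in $x_n$ using the structural decomposition of $W^k_{(2,p)}(\Omega)$ together with the $\operatorname{\it BUC}$-embedding of Lemma~\ref{lem:W2p}. For the first estimate, I would write $fg$ and expand $\partial_x^\alpha(fg)$ by the Leibniz rule for $|\alpha|\le k$, obtaining a sum of terms $\partial^\beta f\,\partial^\gamma g$ with $\beta+\gamma=\alpha$. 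The factor with the most $x_n$-derivatives on one side is controlled by pairing $L^2_{x_n}$-information of that factor against $L^\infty_{x_n}$-information of the other; the key point is that, since $k-\tfrac12-\tfrac{n-1}p>0$, Lemma~\ref{lem:W2p} gives $g\in \operatorname{\it BUC}(I;B^{k-\frac12}_{p,2}(\R^{n-1}))$ and, after losing derivatives, $g\in \operatorname{\it BUC}(I;\mathcal C^{\sigma}(\R^{n-1}))$ for some $\sigma>0$ via the Besov–Hölder embedding \eqref{Hoelderreg}; simultaneously $g$ and all its tangential-and-normal derivatives up to order $k$ lie in $L^2(I;L_p)$. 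So in each Leibniz term one places whichever factor carries fewer normal derivatives into $L^\infty_{x_n}$ (using $\operatorname{\it BUC}$ for $g$, or the analogous embedding $H^k(\Omega)\hookrightarrow \operatorname{\it BUC}(I;H^{k-1/2}(\R^{n-1}))\hookrightarrow \operatorname{\it BUC}(I;\mathcal C^{\sigma'}(\R^{n-1}))$ for $f$ when it is $f$ that is differentiated fewer times), and the other into $L^2_{x_n}(L_p)$ or $L^2_{x_n}(L_2)$, then applies the scalar product rule on $\R^{n-1}$ to close the bound by $\|f\|_{H^k(\Omega)}\|g\|_{W^k_{(2,p)}(\Omega)}$. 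One must check that the borderline Leibniz terms (all derivatives on one factor) are the easiest: then the undifferentiated factor sits in $L^\infty$ directly.

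For the commutator-type estimate \eqref{eq:CommEstim} with $k=1$, the target is only an $L_2(\Omega)$ bound on $f\,\partial_{x_j}g$, so no product rule is needed: I would estimate pointwise in $x_n$ by $\|f(\cdot,x_n)\|_{L_{p'}(\R^{n-1})}\,\|\partial_{x_j}g(\cdot,x_n)\|_{L_p(\R^{n-1})}$ with $\tfrac1{p'}+\tfrac1p=\tfrac12$, i.e. $p'=\tfrac{2p}{p-2}$ (interpreting $p'=\infty$ if $p=2$), by Hölder in $x'$. The Sobolev embedding on $\R^{n-1}$ gives $H^{1-\tau}(\R^{n-1})\hookrightarrow L_{p'}(\R^{n-1})$ precisely because $1-\tau-\tfrac{n-1}{2}=-\tfrac{n-1}2+\tfrac12+\tfrac{n-1}{p}-\tfrac12+\ldots$; more cleanly, $1-\tau=\tfrac12+\tfrac{n-1}p$ and one checks $\tfrac12-\tau-\tfrac{n-1}{2}\ge -\tfrac{n-1}{p'}$ reduces to an identity, so $H^{1-\tau}(\R^{n-1})\hookrightarrow L_{p'}(\R^{n-1})$ with a uniform constant. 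Then $\|f\partial_{x_j}g\|_{L_2(\Omega)}^2=\int_I\|f\partial_{x_j}g(\cdot,x_n)\|_{L_2(\R^{n-1})}^2\,dx_n\le \sup_{x_n}\|f(\cdot,x_n)\|_{H^{1-\tau}(\R^{n-1})}^2\int_I\|\partial_{x_j}g(\cdot,x_n)\|_{L_p(\R^{n-1})}^2\,dx_n$; the first factor is bounded by $\|f\|_{\operatorname{\it BUC}(I;H^{1-\tau}(\R^{n-1}))}^2\lesssim\|f\|_{H^1(\Omega)}^2$ using the $k=1$ case of \eqref{eq:BUCEmbedd} together with the interpolation embedding $B^{1/2}_{2,2}(\R^{n-1})=H^{1/2}(\R^{n-1})\hookrightarrow H^{1-\tau}(\R^{n-1})$ (valid since $\tau\ge\tfrac12$, i.e. $\tfrac12-\tfrac{n-1}p\le\tfrac12$, which always holds), and the second factor is $\le\|g\|_{W^1_{(2,p)}(\Omega)}^2$ by definition.

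The main obstacle is the first estimate: one has to be careful that, in the Leibniz expansion for $\|fg\|_{H^k}$, every term admits a valid $L^\infty_{x_n}L^\infty_{x'}$ versus $L^2_{x_n}L^p_{x'}$ (or $L^2L^2$) splitting with a uniform constant, which forces a small case distinction on how the $k$ derivatives are distributed and uses the strict inequality $k-\tfrac12-\tfrac{n-1}p>0$ to guarantee the Hölder-regularity gain $\sigma>0$ in \eqref{Hoelderreg}; the scalar product rule $\mathcal C^\sigma\cdot H^{k}_p\hookrightarrow H^{k}_p$ on $\R^{n-1}$ is standard (e.g. paraproduct estimates, or it follows from the nonsmooth $\psi$do calculus recalled later) and I would simply cite it. The $x_n$-integration at the end is routine once the fibrewise bounds are in place, and the uniformity of all constants is inherited from the uniformity of the embeddings and of the extension/reflection operators used in Lemma~\ref{lem:W2p}.
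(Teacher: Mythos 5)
Your treatment of the first estimate has a genuine gap. The embedding you want to use for $f$, namely $H^k(\Omega)\hookrightarrow \operatorname{BUC}(I;H^{k-\frac12}(\R^{n-1}))\hookrightarrow \operatorname{BUC}(I;\mathcal{C}^{\sigma'}(\R^{n-1}))$, is false in general: it would require $k-\frac12-\frac{n-1}{2}>0$, and nothing in the hypotheses gives this, since $f$ is only $L_2$-based in $x'$ and the assumption $k-\frac12-\frac{n-1}{p}>0$ involves the large exponent $p$, which helps only $g$ (for $k=1$ and any $n\ge 2$ the embedding already fails). Moreover, even where one factor is bounded, an $L_\infty(\R^{n-1})$-versus-$L_p(\R^{n-1})$ splitting only places the product in $L_p(\R^{n-1})$, and the product rule $\mathcal{C}^{\sigma}\cdot H^{k}_p\hookrightarrow H^{k}_p$ you cite lands in an $L_p$-based space, whereas the target $\|fg\|_{H^k(\Omega)}$ is $L_2$-based. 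The terms that cannot be closed by your splitting are the intermediate Leibniz terms $\partial^{\alpha-\beta}f\,\partial^{\beta}g$ with $k-\frac12-|\beta|\le\frac{n-1}{p}$ (so that $\partial^\beta g(\cdot,x_n)\in B^{k-\frac12-|\beta|}_{p,2}(\R^{n-1})$ is not bounded), and also the extreme term $f\,\partial^\alpha g$. The missing ingredient is the bilinear estimate $\|uv\|_{L_2(\R^{n-1})}\le C\|u\|_{H^{M}(\R^{n-1})}\|v\|_{B^{M'}_{p,2}(\R^{n-1})}$ whenever $M+M'-\frac{n-1}{p}>0$ (this is \eqref{eq:ProdEst}, proved by Sobolev embeddings and H\"older, or by Hanouzet), applied fiberwise with the $f$-factor in $L_2(I;H^{k-|\alpha|+|\beta|}(\R^{n-1}))$ and the $g$-factor in $L_\infty(I;B^{k-\frac12-|\beta|}_{p,2}(\R^{n-1}))$: the $L_2$-based $x'$-regularity of $f$ must be converted into integrability to compensate for the mere $L_p$-integrability of $g$. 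The extreme term is handled by $H^k(\Omega)\hookrightarrow \operatorname{BUC}(I;H^{k-\frac12}(\R^{n-1}))\hookrightarrow \operatorname{BUC}(I;L_r(\R^{n-1}))$ with $\frac1r=\frac12-\frac1p$; this $L_r$-embedding, not an $L_\infty$-embedding, is what the hypothesis $k-\frac12-\frac{n-1}{p}>0$ actually delivers for $f$.

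For \eqref{eq:CommEstim} your H\"older-in-$x'$ computation is the right mechanism, but as written it only proves the weaker bound with $\|f\|_{H^1(\Omega)}$ on the right-hand side: the step $\sup_{x_n}\|f(\cdot,x_n)\|_{H^{1-\tau}(\R^{n-1})}\lesssim\|f\|_{H^1(\Omega)}$ relies on $H^{\frac12}(\R^{n-1})\hookrightarrow H^{1-\tau}(\R^{n-1})$, which would need $\tau\ge\frac12$; in fact $\tau=\frac12-\frac{n-1}{p}<\frac12$ always, so the inequality you assert is backwards. Since the statement (and its use in Corollary \ref{cor:CoordinateTrafo}) genuinely needs the weaker norm $\|f\|_{H^{1-\tau}(\Omega)}$, you should instead start from $f\in H^{1-\tau}(\Omega)\hookrightarrow \operatorname{BUC}(I;H^{\frac12-\tau}(\R^{n-1}))$ and use the borderline Sobolev embedding $H^{\frac12-\tau}(\R^{n-1})\hookrightarrow L_r(\R^{n-1})$ with $\frac1r=\frac12-\frac1p$ (an identity given $\tau=\frac12-\frac{n-1}{p}$); then $\|f\partial_{x_j}g\|_{L_2(\Omega)}\le\|f\|_{L_\infty(I;L_r)}\|\partial_{x_j}g\|_{L_2(I;L_p)}$ gives exactly \eqref{eq:CommEstim}, which is how the paper argues.
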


\begin{proof}
  First of all
  \begin{equation*}
W^k_{(2,p)}(\Omega) \hookrightarrow \operatorname{\it BUC}(I;B^{k-\frac12}_{p,2}(\R^{n-1}))\hookrightarrow  L_\infty(\Omega)    
  \end{equation*}
by (\ref{eq:BUCEmbedd}); the second embedding follows from
(\ref{Hoelderreg})  since $k-\frac12 - \frac{n-1}p>0$.
Furthermore,
\begin{equation*}
H^k(\Omega) \hookrightarrow
\operatorname{\it BUC}(I;H^{k-\frac12}(\R^{n-1}))\hookrightarrow
\operatorname{\it BUC}(I;L_r(\R^{n-1})), \hbox{ for } \tfrac1r = \tfrac12-\tfrac1p.  
\end{equation*}
Here we apply (\ref{eq:BUCEmbedd}) with $p=2$ for the first embedding,
and for
the second embedding we use (\ref{eq:BesselEmb}),
noting that $k-\frac12-\frac{n-1}p>0$ is equivalent to $k-\frac12 -\frac{n-1}2> - \frac{n-1}r$.
Next we observe that for all $|\alpha|\leq k$
\begin{equation}\label{eq:ProductRule}
  \partial_x^\alpha (fg)= \sum_{0\leq \beta \leq \alpha,\beta \neq\alpha}\binom{\alpha}{\beta}\partial_x^{\alpha-\beta}f \partial_x^\beta g + f\partial^{\alpha}_x g. 
\end{equation}
Since $f\in L_\infty(I;L_r)$ and $\partial_x^\alpha g \in L_2(I;L_p)$,
where $\frac1r+ \frac1p=\frac12$, we have $f\partial_x^\alpha g\in
L_2(\Omega)$. For the other terms where $ |\beta|<|\alpha |\le k$,  we note that  
\begin{equation}\label{eq:Terms}
  \partial_x^{\alpha -\beta }f \in L_2(I;H^{k-|\alpha |+|\beta|}(\R^{n-1})), \quad \partial_x^{\beta}g \in L_\infty(I; B^{k-\frac12-|\beta|}_{p,2}(\R^{n-1})).
\end{equation}
One has in general
\begin{equation}\label{eq:ProdEst}
  \|u v\|_{L_2(\R^{n-1})} \leq C_{M,M'}
  \|u\|_{H^M(\R^{n-1})}\|v\|_{B^{M'}_{p,2}(\R^{n-1})}, 
\end{equation}
provided that $M,M'\in \N_0$ and $M+M'- \frac{n-1}p  >0$. This
estimate easily follows from the Sobolev-type embedding theorems: If
$M'-\frac{n-1}p>\eps>0$, then from \eqref{Hoelderreg}  we have $B^{M'}_{p,2}(\R^{n-1})\hookrightarrow \mathcal{C}^{\eps}(\R^{n-1})\hookrightarrow
L_\infty(\R^{n-1})$ 
and the statement is trivial. If $M'-\frac{n-1}p<0$, then \eqref{eq:SobolevEmb} implies $B^{M'}_{p,2}(\R^{n-1})\hookrightarrow L_r(\R^{n-1})$ with $\frac1r=\frac1p - \frac{M'}{n-1}$, and $M+M'- \frac{n-1}p  >0$ implies by \eqref{eq:BesselEmb} that $H^M(\R^{n-1})\hookrightarrow L_{\tilde{r}}(\R^{n-1})$ with $\frac12= \frac1r+\frac1{\tilde{r}}$. If $M'-\frac{n-1}p=0$, one can choose some $\tilde{p}<p$ such that $M+M'- \frac{n-1}{\tilde{p}}  >0$ and apply the preceding case. The estimate is also a consequence of Hanouzet~\cite[Th\'eor\`eme 3]{HanouzetBesovProd}. 

Using \eqref{eq:ProdEst} for products of functions as in
\eqref{eq:Terms}, we obtain
altogether that $\partial_x^{\alpha-\beta}f \partial_x^\beta g \in L_2(\R^{n-1})$ for all $0\leq \beta \leq \alpha,|\alpha|\leq k$.

Finally, if $k=1$, then we have that
\begin{equation*}
H^{1-\tau}(\Omega) \hookrightarrow
\operatorname{\it BUC}(I;H^{\frac12-\tau}(\R^{n-1}))\hookrightarrow
\operatorname{\it BUC}(I;L_r(\R^{n-1})), \;\tfrac1r = \tfrac12-\tfrac1p.  
\end{equation*}
Therefore
\begin{equation*}
  \|f\partial_{x_j} g\|_{L_2(\Omega)}\leq
  \|f\|_{L_\infty(I;L_r)}\|\partial_{x_j} g\|_{L_2(I,L_p)}\leq
  C\|f\|_{H^{1-\tau}(\Omega)}\|g\|_{W^1_{(2,p)}(\Omega )},
\end{equation*}
which proves the last statement.
\end{proof}

\medskip

\noindent{\it Domains with nonsmooth boundary.}
For the following, let $n\ge 2$, let  $M$ be a positive integer, and let $1\leq
p,q\leq\infty$ be such
that  $M-\frac32- \frac{n-1}p>0$. 

\begin{definition}\label{bdryreg}
Let $\Omega $ be an open subset of $\Rn$. We say that $\Omega$ has
a boundary of class $B^{M-\frac12}_{p,q}$ in the following three cases:

$1^\circ$ $\Omega =\Rn_\gamma $, where 
\[ \Rn_\gamma= \{x\in \Rn: x_n >\gamma(x')\}
\]
for a function $\gamma\in B^{M-\frac12}_{p,q}(\R^{n-1})$.

$2^\circ$ $\partial\Omega $ is compact, and each $x\in\partial\Omega $
has an open neighborhood $U$ satisfying: For a suitable choice of
coordinates on $\Rn$, there is a function $\gamma (x')\in
B^{M-\frac12}_{p,q}(\R^{n-1})$ such that $U\cap\Omega = U\cap \Rn_\gamma $ and
$U\cap\partial\Omega = U\cap \partial\Rn_\gamma $.

$3^\circ$ For a large ball $B_R=\{x\in\rn: |x|<R\}$, $\Omega \setminus
B_R$ equals $\Rn_+\setminus B_R$. The points $x\in B_{R+1}\cap
\partial\Omega $ have the property described in $2^\circ$.

There are similar definitions with other
function spaces. 
\end{definition}

In the second case, one can cover $\partial\Omega $ by a finite set of
such coordinate neighborhoods $U$. Note that exterior domains are
allowed. The third case is included in order to
show a simple case with noncompact
boundary where a finite system of coordinate neighborhoods suffices (namely finitely
many $U$'s covering $\partial\Omega \cap B_{R+1}$ and a trivial one covering $\partial\Omega \setminus
B_R$), to
describe the smoothness structure. More general such cases can be defined as
the ``admissible manifolds'' in \cite{FunctionalCalculus}.

We shall work under the following general hypothesis:

\begin{assumption}\label{tau-assumption} $n\ge 2$, $M\in \N$, $2\le p <
  \infty $,
with 
\begin{equation}
\tau:=M-\tfrac32-\tfrac{n-1}{p}>0.\label{asspt}
\end{equation}
Moreover, $\Omega $ is an open subset of $\Rn$ with
boundary $\partial\Omega$ of regularity $B^{M-\frac12}_{p,2}$, as in
 Definition {\rm \ref{bdryreg}}. 
\end{assumption}

\begin{rem}\label{rem:holder} Under Assumption \ref{tau-assumption},
 it follows from
\eqref{Hoelderreg} that $\partial\Omega $ is H\"{o}lder continuous with
exponent $1+\tau$ (if $\tau\not\in\N$). In the converse direction, 
if $\partial\Omega$ is  H\"{o}lder continuous with exponent
$M-\frac12+\eps$ for some $\eps>0$, then in view of \eqref{eq:Zygmundem},
 $\partial\Omega\in
B^{M-\frac12}_{\infty ,2}$. This in
turn implies $\partial\Omega\in B^{M-\frac12}_{p, 2}$ for every $1\leq
p\leq\infty$ if $\Omega$ is of type $2^\circ$ or $3^\circ$ in
Definition~\ref{bdryreg}, since $L^\infty(U)\hookrightarrow L^p(U)$ for
every $1\leq p\leq \infty$ when $U$ is bounded. In other words,
\[ \partial\Omega \in C^{M-\frac12+\varepsilon }\implies 
\partial\Omega \in B^{M-\frac12}_{p, 2}\implies \partial\Omega \in
C^{M-\frac12-\frac{n-1}p }=C^{1+\tau }. 
\]
 if $\Omega$ is of type $2^\circ$ or $3^\circ$ and $\tau\not\in\N$.
\end{rem}

When $U$ and $V$ are subsets of $\rn$, and $F\colon V\to U$ is a
bijection, we denote the pull-back mapping by $F^*$:
\[ (F^*u)(x)=u(F(x))\text{ for }x\in V,\quad (F^{-1,*}v)(y)=v(F^{-1}(y))\text{ for }y\in U,
\]
when $u$ is a function on $U$, $v$ is a function on $V$. The gradient
$\nabla u=(\partial _ju)_{j=1}^n$ is viewed as a column vector.

\begin{proposition}\label{prop:CoordinateTrafo}
Under Assumption {\rm \ref{tau-assumption}},  let $\gamma\in B^{M-\frac12}_{p,2}(\R^{n-1})$,  and let $\Rn_\gamma= \{x\in \Rn: x_n >\gamma(x')\}$. Then
  there is a $C^1$-diffeomorphism $F_\gamma\colon \Rn \to \Rn$ with
  $\nabla F_\gamma \in C^{\tau'}(\Rn)^{n^2}$ for $\tau'\le \tau$,
  $\tau '\in (0,\frac12)$ (cf.\ {\rm \eqref{asspt}}),
  such that $F_\gamma (\Rn_+)= \Rn_\gamma$ and  $F_\gamma^\ast \colon
  H^{s}(\Rn)\to H^{s}(\Rn)$ as well as $F_\gamma^\ast \colon
  H^{s}(\Rn_\gamma)\to H^{s}(\Rn_+)$ for all $0\leq s\leq M$.
\end{proposition}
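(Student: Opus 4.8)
The plan is to build the diffeomorphism by a standard regularization-in-the-normal-direction device and then read off the mapping properties from the product and composition results already at our disposal. First I would smooth $\gamma$ in the $x'$-variables: fix a mollifier and, for a scale parameter linked to $x_n$, set
\[
F_\gamma(x',x_n) = \bigl(x',\, x_n + \Phi(x',x_n)\bigr),\qquad
\Phi(x',x_n) = (\gamma * \varphi_{c x_n})(x') + x_n,
\]
or, more robustly, $F_\gamma(x',x_n)=(x', x_n + (\gamma*\varphi_{\lambda(x_n)})(x'))$ with $\lambda(x_n)$ a fixed smooth function equal to $|x_n|$ for $|x_n|$ small and bounded away from $0$ for $|x_n|$ large, rescaled so that $\Phi(x',0)=\gamma(x')$ and $\partial_{x_n}\Phi$ is small. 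The point of mollifying at scale $\sim|x_n|$ is the classical one (Littlewood--Paley / trace-extension of rough functions): since $\gamma\in B^{M-\frac12}_{p,2}(\R^{n-1})\hookrightarrow C^{1+\tau}(\R^{n-1})$ by \eqref{Hoelderreg} and \eqref{eq:BesovEmb4}, the regularized family has $\nabla_{x'}\Phi$ bounded with a $C^{\tau'}$-modulus of continuity uniformly in $x_n$, and $\partial_{x_n}\Phi(x',x_n)\to 0$ as one approaches the boundary at rate $|x_n|^{\tau'}$, so $\nabla F_\gamma \in C^{\tau'}(\Rn)^{n^2}$ for any $\tau'\le\tau$, $\tau'<\tfrac12$. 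Smallness of $\partial_{x_n}\Phi$ (achieved by the constant $c$, or by inserting a small factor) makes $\nabla F_\gamma$ a small perturbation of the identity, hence $F_\gamma$ is a global $C^1$-diffeomorphism of $\Rn$ with $F_\gamma(\Rn_+)=\Rn_\gamma$ and $F_\gamma(\partial\Rn_+)=\partial\Rn_\gamma$; the inverse has the same regularity since $\nabla F_\gamma^{-1} = (\nabla F_\gamma)^{-1}\circ F_\gamma^{-1}$ and matrix inversion preserves $C^{\tau'}$.

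For the Sobolev mapping property $F_\gamma^*\colon H^s(\Rn)\to H^s(\Rn)$ for $0\le s\le M$, I would argue by interpolation between the easy integer-order endpoints, plus the corresponding statement for $F_\gamma^{-1,*}$ (which gives surjectivity and a bounded two-sided inverse). At $s=0$ boundedness on $L_2$ is immediate from the change of variables and $|\det DF_\gamma|$ bounded above and below. For $s$ a positive integer $\le M$, differentiating $u\circ F_\gamma$ by the chain and product rules produces sums of terms (derivative of $u$ composed with $F_\gamma$) times (products of entries of $\nabla F_\gamma$ and their derivatives up to order $s-1$); the derivatives of $\nabla F_\gamma$ of order $j$ lie, by the mollification estimates, in $L_2(I;H^{M-\frac12-1-j}_{p}(\R^{n-1}))\cap \ldots$ — i.e.\ exactly the anisotropic space $W^{M-1}_{(2,p)}$-type regularity analyzed in Lemmas \ref{lem:W2p} and \ref{lem:Product} — so the multiplier estimate \eqref{eq:ProdEst} (equivalently Lemma \ref{lem:Product}) controls each product in $L_2$, using $M-\tfrac32-\tfrac{n-1}p=\tau>0$. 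Real interpolation \eqref{eq:BesselInterpolReal} between the integer values $s=0,1,\dots,M$ then yields boundedness for all intermediate $s$, and restriction/extension via the quotient-norm definition \eqref{eq:QuotientNorm} transfers the statement to $F_\gamma^*\colon H^s(\Rn_\gamma)\to H^s(\Rn_+)$ (one composes with $E_{\Rn_\gamma}$ and restricts, noting $F_\gamma$ respects the two half-spaces).

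The main obstacle is the top regularity $s=M$: there the composition $u\mapsto u\circ F_\gamma$ requires multiplying an $H^0=L_2$ object (the top derivative of $u$, pulled back) against products of entries of $\nabla F_\gamma$ whose own derivatives are only in $B^{M-\frac32}_{p,2}$-type spaces, which is genuinely borderline and is precisely where the hypothesis $\tau>0$ is consumed — this is why the statement stops at $s\le M$ and why the sharp product estimate of Lemma \ref{lem:Product} (rather than a crude Hölder/$L_\infty$ bound) is indispensable. A secondary technical point is bookkeeping the anisotropic ($x'$ versus $x_n$) distribution of regularity of the mollified coefficients so that each term falls under \eqref{eq:ProdEst} with admissible exponents; this is routine but must be done carefully near $x_n=0$, where the mollification scale degenerates.
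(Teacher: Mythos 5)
Your proposal is correct in substance and follows the same overall strategy as the paper: lift $\gamma$ to a function on $\Rn$ with the right anisotropic regularity, perturb the identity by this lifting (with a small factor to keep $\partial_{x_n}F_{\gamma,n}>0$), read off $\nabla F_\gamma\in C^{\tau'}$, and prove the $H^k$-bounds at integer levels through the product estimate of Lemma \ref{lem:Product} before interpolating. The two places where you deviate are worth noting. First, the paper takes the lifting $\Gamma(x',x_n)=(e^{-\ang{D_{x'}}x_n}\gamma)(x')$ supplied by Lemma \ref{lem:W2p}, which delivers $\Gamma\in W^M_{(2,p)}(\Rn)$ outright; from this both the $C^{\tau'}$-regularity of $\nabla\Gamma$ (via $BUC(\R;B^{M-\frac32}_{p,2})$ together with $H^1(\R;H^{M-2}_p)\hookrightarrow C^{\frac12}(\R;H^{M-2}_p)$ and real interpolation -- this is where the restriction $\tau'<\frac12$ comes from) and the hypothesis $\partial_{x_j}F_\gamma\in W^{M-1}_{(2,p)}$ needed in Lemma \ref{lem:Product} are immediate. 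Your mollification-at-scale-$|x_n|$ lifting is an equivalent classical device, but the key anisotropic bound ($\partial^\alpha$ of the extension in $L_2(dx_n;L_p(\R^{n-1}))$ for $|\alpha|\le M$, uniformly up to $x_n=0$ and across it) is asserted rather than proved; since Lemma \ref{lem:W2p} already provides a lifting with exactly this property, the cleanest fix is simply to use it. Second, instead of a full Fa\`a di Bruno expansion of $\partial^\alpha(u\circ F_\gamma)$, the paper argues by induction on $k$: $\partial_{x_j}(F^*_\gamma u)=(F^*_\gamma\nabla u)\cdot\partial_{x_j}F_\gamma$, with $F^*_\gamma\nabla u\in H^k$ by the inductive hypothesis and $\partial_{x_j}F_\gamma\in W^k_{(2,p_k)}$, so one application of Lemma \ref{lem:Product} per step suffices; your expansion works, but controlling terms such as $(\nabla u)\circ F_\gamma$ in the anisotropic norms effectively forces the same induction, so the paper's organization saves bookkeeping. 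The half-space statement and the interpolation step are handled as you propose.
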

\begin{proof}
We begin by defining $\Gamma(x',x_n)$ as the lifting of $\gamma (x')$
by the construction described in the last
statement in Lemma \ref{lem:W2p}; then $\Gamma\in W^M_{(2,p)}(\Rn)$.
In particular, this implies that
\begin{eqnarray*}
 \lefteqn{\nabla \Gamma \in L_2(\R;H^{M-1}_p(\R^{n-1}))^n\cap H^{M-1}(\R;L_p(\R^{n-1}))^n}\\
 && \hookrightarrow \operatorname{\it
   BUC}(\R;B^{M-\frac32}_{p,2}(\R^{n-1}))^n \hookrightarrow
  BUC(\R;C^0_b(\R^{n-1}))^n
\end{eqnarray*}
in view of (\ref{eq:BUCEmbedd}); we here use that
$B^{M-\frac32}_{p,2}(\R^{n-1})\hookrightarrow
\mathcal{C}^\tau(\R^{n-1})\hookrightarrow C^0_b(\R^{n-1})$ since $\tau=M-\frac32 -\frac{n-1}p>0$. Hence $\Gamma \in
C^1_b(\Rn)$. Moreover, we have from \eqref{eq:BesovEmb3}
\begin{equation*}
  \nabla \Gamma \in H^{1}(\R;H^{M-2}_p(\R^{n-1}))^n\hookrightarrow C^{\frac12}(\R;H^{M-2}_p(\R^{n-1}))^n.
\end{equation*}
Due to (\ref{eq:BesselEmb}), we have $H^{M-2}_p(\R^{n-1})\hookrightarrow B^{M-2}_{p,p}(\R^{n-1})$ and using 
 $\nabla \Gamma\in \operatorname{\it BUC}(\R;B^{M-\frac32}_{p,2}(\R^{n-1}))^n$, (\ref{eq:BesovInterpol1}) yields
\begin{equation*}
  \nabla \Gamma \in C^{\tau'}(\R;B^{(n-1)/p}_{p,1}(\R^{n-1}))^n\hookrightarrow C^{\tau'}(\R;C^0_b(\R^{n-1}))^n, 
\end{equation*}
where $\tau'=\tau$ if $0<\tau <\frac12$ and $\tau'\in (0,\frac12)$ is
arbitrary otherwise. Here one uses the general estimate
\begin{eqnarray*}
  \|f(t)-f(s)\|_{X}&\leq& C\|f(t)-f(s)\|_{X_0}^{1-\theta}\|f(t)-f(s)\|_{X_1}^\theta \\
  &\leq & C\|f\|_{BUC(\R;X_0)}^{1-\theta}\|f\|_{C^{\frac12}(\R;X_1)}^\theta|t-s|^{\theta/2}
\end{eqnarray*}
for all $t,s\in \R$, where $X=(X_0,X_1)_{\theta,1}$.
Thus $\nabla \Gamma\in C^{\tau'}(\Rn)^n$. Note that similarly
\begin{equation}\label{eq:W2pEmbedding}
  W^{M-1}_{(2,p)}(\Rn)\hookrightarrow C^{\tau'}(\Rn).%%,\qquad \tau=M-\tfrac32 -\tfrac{n-1}p.
\end{equation}

 Now we define, for some $\lambda >0$, 
$$
F_\gamma(x)= x+ 
\begin{pmatrix}
  0\\ \Gamma(x',\lambda x_n)
\end{pmatrix}=\big(F_{\gamma ,k}(x)\big)_{k=1}^n.
$$
Then $\partial_{x_n} F_{\gamma,n}(x)= 1+\lambda (\partial_{x_n}
\Gamma)(x',\lambda x_n)$. Hence, if $\lambda$ is sufficiently small,
\linebreak$F_{\gamma,n}(x',\cdot)\colon \R\to \R$ is strictly increasing and
surjective for every $x'\in \R^{n-1}$. Therefore $F_\gamma\colon
\Rn\to \Rn$ is a $C^1$-diffeomorphism with $F_\gamma(\Rn_+)=
\Rn_\gamma$, and   $\nabla
F_\gamma=(\partial_{x_j}F_{\gamma ,k})_{j,k=1}^n$ (the transposed
functional matrix) is in  $ C^{\tau'}(\Rn)^{n^2}$. 

Next let $u \in H^{k}(\Omega)$, $0\leq k\leq M$, $\Omega=\Rn$ or $\Omega = \Rn_\gamma$. We prove $F^\ast_\gamma (u) \in H^{k}(\Omega)$ by mathematical induction. If $k=0$, then the statement is true since $F_\gamma $ is a $C^1$-diffeomorphism. 
For $u\in H^{1}(\Omega)$, 
\begin{equation}\label{eq:chain}
  \partial_{x_j} (u(F_\gamma(x))) = (\nabla u)(F_\gamma(x))\cdot \partial_{x_j} F_\gamma(x)  
\end{equation}
where $(\nabla u)(F_\gamma(x))\in L_2(\Omega)^n$ by the argument for $k=0$ and 
and $\partial_{x_j}F_\gamma(x)\in C^0_b(\Omega)^n$.
Next we assume that the statement is true for some $1\leq k <M$. 
Then for $u\in H^{k+1}(\Omega)$, 
we have $(\nabla u)(F_\gamma(x))\in H^{k}(\Omega)^n$ by the assumption,
and $\partial_{x_j}F_\gamma(x)\in W^{M-1}_{(2,p)}(\Omega)^n
\hookrightarrow W^{k}_{(2,p_k)}(\Omega)^n$ for some $2\leq p_k <\infty$ with $k-\frac{n-1}{p_k}-\frac12>0$. Therefore Lemma~\ref{lem:Product} implies that $\partial_{x_j} (u(F_\gamma(x))\in H^{k}(\Rn_+)$ for all $j=1,\ldots, n$, which implies the statement for $k+1\leq M$.
This proves the statement for integer $0\leq k\leq M$. For real $0\leq s\leq M$ the statement follows by interpolation.
\end{proof}

For the case $M=2$, we specify the result in the following
corollary. We use the notation $C : D=\sum_{j,k=1}^nc_{jk}d_{jk}=
\tr (C^T D)$ 
for the ``scalar product'' of
square matrices $C=(c_{jk})_{j,k=1}^n$ and $D=(d_{jk})_{j,k=1}^n$.  $\partial^2 u$ stands for the Hessian
$(\partial_{x_j}\partial_{x_k} u)_{j,k=1}^n$, and $e_j$ is the $j$-th
coordinate vector written as a column. 

\begin{corollary}\label{cor:CoordinateTrafo}
  Let $\gamma\in B^{\frac32}_{p,2}(\R^{n-1})$, 
with $2\le p<\infty$, $\frac12 -\frac{n-1}p> 0$, and let $\Rn_\gamma$ 
and $F_\gamma$ be as in Proposition~{\rm \ref{prop:CoordinateTrafo}}.
Then for every $j,k=1,\ldots,n$, 
\begin{eqnarray}
   F^\ast_\gamma \nabla u &=& \Phi (x)\nabla F^\ast_\gamma
   u,\label{eq:chain2}\\ 
F^\ast_\gamma \partial_{x_j}\partial_{x_k}u
   &=& \Phi _{j,k}(x):\partial^2 F^\ast_\gamma u + Ru,  \nonumber 
\end{eqnarray}
where $\Phi (x)= (\nabla F_\gamma(x))^{-1}\in W^1_{(2,p)}(\Rn_+)^{n^2}$
and $\Phi _{j,k}(x)=\Phi (x)^Te_j e^T_k \Phi (x)$, and 
\begin{equation*}
  R\colon H^{2-\tau}(\Rn_\gamma)\to L_2(\Rn_+)
\end{equation*}
is a bounded operator for $\tau =\frac12-\frac{n-1}p$. 
\end{corollary}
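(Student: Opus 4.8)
The plan is to apply the chain rule twice to the composite function $F_\gamma^\ast u = u\circ F_\gamma$ and to track the regularity of each factor using Proposition~\ref{prop:CoordinateTrafo}, the fact that $\nabla F_\gamma\in W^1_{(2,p)}(\Rn_+)^{n^2}$, and the product estimate in Lemma~\ref{lem:Product}. First I would record the first-order chain rule: from \eqref{eq:chain}, $\partial_{x_j}(u\circ F_\gamma)=\sum_k(\partial_{x_k}u)(F_\gamma)\,\partial_{x_j}F_{\gamma,k}$, which written in vector form is precisely $\nabla(F_\gamma^\ast u)=(\nabla F_\gamma)^T\,(F_\gamma^\ast\nabla u)$; inverting the (invertible, $C^{\tau'}$) matrix $(\nabla F_\gamma)^T$ gives \eqref{eq:chain2} with $\Phi=(\nabla F_\gamma)^{-1}$. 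That $\Phi\in W^1_{(2,p)}(\Rn_+)^{n^2}$ follows because $\nabla F_\gamma=I+$ (derivatives of $\Gamma$), where $\Gamma\in W^M_{(2,p)}(\Rn)$ so its first derivatives lie in $W^{M-1}_{(2,p)}(\Rn)\subseteq W^1_{(2,p)}(\Rn)$ (using $M\ge 2$ and $M-\frac12-\frac{n-1}p>0\ge 1-\frac12-\frac{n-1}p$ is automatic here since $M=2$ gives exactly $W^1_{(2,p)}$), and the entrywise inverse of a matrix that is $I$ plus a small $W^1_{(2,p)}\cap L_\infty$ perturbation stays in that algebra—$W^1_{(2,p)}(\Rn_+)\cap L_\infty$ is closed under products by Lemma~\ref{lem:Product} (with $k=1$), hence under the Neumann series for the inverse.

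Next I would differentiate once more. Applying $\partial_{x_\ell}$ to \eqref{eq:chain} and using the chain rule again on $(\partial_{x_k}u)(F_\gamma)$ yields
\begin{equation*}
\partial_{x_j}\partial_{x_\ell}(u\circ F_\gamma)
=\sum_{k,m}(\partial_{x_k}\partial_{x_m}u)(F_\gamma)\,\partial_{x_\ell}F_{\gamma,m}\,\partial_{x_j}F_{\gamma,k}
+\sum_k(\partial_{x_k}u)(F_\gamma)\,\partial_{x_j}\partial_{x_\ell}F_{\gamma,k}.
\end{equation*}
Re-expressing $(\partial^2 u)(F_\gamma)=F_\gamma^\ast\partial^2 u$ in terms of $\partial^2(F_\gamma^\ast u)$ via \eqref{eq:chain2} (i.e. $F_\gamma^\ast\nabla u=\Phi\nabla F_\gamma^\ast u$, applied once more componentwise) converts the first sum into the stated principal term $\Phi_{j,k}:\partial^2(F_\gamma^\ast u)$ with $\Phi_{j,k}=\Phi^Te_je_k^T\Phi$; the remaining pieces—the second sum, plus the correction terms generated when one commutes $\Phi$ past the derivative—constitute $Ru$. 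Concretely $Ru$ is a finite sum of terms of the form $(\text{entry of }\Phi\text{ or }\nabla F_\gamma)\cdot(\text{entry of }\partial\Phi\text{ or }\partial^2 F_\gamma)\cdot(F_\gamma^\ast\partial_{x_k}u)$.

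The crux is the mapping property $R\colon H^{2-\tau}(\Rn_\gamma)\to L_2(\Rn_+)$. Each term of $Ru$ has the schematic shape $a\cdot b\cdot (F_\gamma^\ast\partial_{x_k}u)$ where $a\in L_\infty$ (an entry of $\Phi$ or $\nabla F_\gamma$, both in $W^1_{(2,p)}\hookrightarrow L_\infty$ by \eqref{eq:BUCEmbedd} and \eqref{Hoelderreg}), $b$ is an entry of $\partial_{x_i}F_{\gamma,k}$ or of $\partial_{x_i}\Phi$, hence $b\in W^1_{(2,p)}(\Rn_+)$, and $\partial_{x_k}u\in H^{1-\tau}(\Rn_\gamma)$ with $F_\gamma^\ast$ bounded on $H^{1-\tau}$ by interpolation in Proposition~\ref{prop:CoordinateTrafo}. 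The estimate \eqref{eq:CommEstim} of Lemma~\ref{lem:Product}, in the form $\|f\,\partial_{x_i}g\|_{L_2}\le C\|f\|_{H^{1-\tau}}\|g\|_{W^1_{(2,p)}}$ applied with $f=F_\gamma^\ast\partial_{x_k}u$ (after absorbing the bounded factor $a$) and $g=$ the relevant component of $F_\gamma$, gives exactly $\|Ru\|_{L_2(\Rn_+)}\le C\|u\|_{H^{2-\tau}(\Rn_\gamma)}$. I expect the main obstacle to be bookkeeping: verifying that every term produced by the double chain rule and by commuting $\Phi$ through the second derivative really does factor as (an $L_\infty$ function)$\times$(a first derivative of an $F_\gamma$- or $\Phi$-entry)$\times$($F_\gamma^\ast$ of a first derivative of $u$), so that \eqref{eq:CommEstim} applies to each; in particular one must check that $\partial_{x_i}\Phi$ inherits the $W^1_{(2,p)}$-type bound needed (it does, since differentiating the Neumann series for $\Phi$ produces products of $W^1_{(2,p)}$ entries with $\partial\nabla F_\gamma\in L_2(I;L_p)$, which is the borderline integrability that \eqref{eq:CommEstim} is designed to handle) and that no term requires a full second derivative of $u$ paired against something worse than $L_\infty$.
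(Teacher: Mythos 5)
Your proposal is correct and takes essentially the same route as the paper: iterate the chain rule, identify the principal term $\Phi_{j,k}:\partial^2 F^\ast_\gamma u$, and bound each leftover term---a bounded factor times a first derivative of a $W^1_{(2,p)}$ entry times $F^\ast_\gamma$ of a first derivative of $u$---by \eqref{eq:CommEstim} combined with the $H^s$-boundedness of $F^\ast_\gamma$ from Proposition~\ref{prop:CoordinateTrafo}. The only cosmetic differences are that the paper obtains the remainder by applying \eqref{eq:chain2} twice (so it involves $\partial\Phi$ rather than $\partial^2 F_\gamma$), and that your closure-under-products step for $\Phi$ is not literally Lemma~\ref{lem:Product} with $k=1$ (the algebra property of $W^1_{(2,p)}\cap L_\infty$ follows instead from Leibniz and the embedding into $L_\infty$), neither of which affects correctness.
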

\begin{proof}
The chain rule \eqref{eq:chain} gives that $\nabla (u(F_\gamma
(x))=(\nabla F_\gamma )(x)(\nabla u)(F_\gamma (x))$, which implies the first
line in \eqref{eq:chain2}. In particular, 
\[ F_\gamma ^*\partial_{x_j}u=e^T_j F^*_\gamma (\nabla u)=e^T_j\Phi F^*_\gamma u,y
\]
where $e^T_j\Phi $ is the $j$-th row $\begin{pmatrix}\varphi _{j1}&\hdots& \varphi _{jn}
\end{pmatrix}$ in $\Phi $. Repeated use gives
\begin{eqnarray*}
  F^\ast_\gamma \partial_{j}\partial_{k}u  &=&e_j^T \Phi (x) \nabla
  F^{\ast}_\gamma\partial_{k}u =e_j^T \Phi (x) \nabla\left(e_k^T
    \Phi (x)\nabla  F^{\ast}_\gamma u\right) \\
&=& \begin{pmatrix}\varphi _{j1}&\hdots& \varphi _{jn}
\end{pmatrix}  \begin{pmatrix}\partial _1\\ \vdots\\ \partial_n
\end{pmatrix} \Bigl(\begin{pmatrix}\varphi _{k1}&\hdots& \varphi _{kn}
\end{pmatrix} \begin{pmatrix}\partial_1\\\vdots\\ \partial_n
\end{pmatrix}F^*_\gamma u\Bigr)\\
&=&\sum_{l,m=1}^n\varphi _{jl}\partial _l (\varphi _{km} \partial_m F^*_\gamma u)\\
&=&\sum_{l,m=1}^n\varphi _{jl} \varphi _{km} \partial _l\partial_m F^*_\gamma u
+\sum_{l,m=1}^n \varphi _{jl}(x)(\partial_{l}\varphi _{km}(x)) \partial_{m} F^\ast_\gamma u
\end{eqnarray*}
for all $u\in H^2(\Rn_\gamma)$. Here $(\varphi _{jl}\varphi _{km})_{l,m=1}^ n$
equals the matrix $\Phi ^Te_je^T_k\Phi =\Phi _{j,k}$ for each $j,k$. This shows the second
line in \eqref{eq:chain2}, where $R$ is estimated by use of \eqref{eq:CommEstim}:
\begin{equation*}
  \|\varphi _{jl}(x)(\partial_{l}\varphi _{km}(x)) \partial_{m} F^\ast_\gamma
  u\|_{L_2(\Rn_+)}\leq C\|\partial_{m}F^\ast_\gamma
  u\|_{H^{1-\tau}(\Rn_+)}\leq C'\|u\|_{H^{2-\tau}(\Rn_\gamma)}. 
\end{equation*}
\end{proof}

\begin{rem}\label{rem:Charts}
Now we can choose a covering of  $\comega$ by a system of open sets 
$U_0,\dots, U_J$ with coordinate mappings such
that the $U_j$ for $j=1,\ldots, J$ form a covering of
$\partial{\Omega}$, and $\overline U_0\subset\Omega $. Here in case
$2^\circ$ of Definition \ref{bdryreg}, we can assume that for each
$1\le j\le J$, $U_j$ is bounded, $U_j\cap \Omega=U_j\cap \Rn_{\gamma_j}$ for some $\gamma_j\in
B^{M-\frac12}_{p,2}(\R^{n-1})$ (modulo a rotation), and
the
diffeomorphism $F_j\equiv F_{\gamma _j}$ in $\rn$ mapping
$\Rn_+$ to $ \Rn_{\gamma_j}$ defined by
Proposition~\ref{prop:CoordinateTrafo} is such that $F_j^{-1}$ carries
$\Omega \cap U_j $ over to $V_j=\{(y',y_n):
\max_{k<n}|y_k|<a_j,\,0<y_n<a_j\}$ and $\Sigma \cap U_j$ over to
$\{(y',y_n): \max_{k<n}|y_k|<a_j,\,y_n=0\}$. 
In case $3^\circ$ of Definition \ref{bdryreg}, the open sets
$U_1,\dots,U_{J-1}$ are of this kind, and the set $U_J$ equals
\linebreak$\{(x',x_n) : |x|> R,\, x_n>-1\}$, with $F_J$ being the identity ($\gamma _J=0$).
We shall denote $F_j|_{\partial\Rn_+}=F_{j,0}$, for $j=1,\ldots,J$.

We also introduce $\eta_j, \psi_j,\varphi_j\in C^\infty(\Rn)$,
$j=0,\ldots,J$, as non-negative functions supported in $U_j$ such that $\varphi_0,\ldots,\varphi_J$ is a partition of unity on $\comega$, and
\begin{equation*}
  \psi_j=1 \quad \text{on}\ \supp\varphi_j,\quad   \eta_j=1 \quad \text{on}\ \supp\psi_j, \quad \text{for all}\ j=0,\ldots,J.
\end{equation*}
\end{rem}

It is accounted for in \cite{McLean} in the case of a Lipschitz
boundary (and it also follows in our case by use of the fact that $F_\gamma$ defined in Proposition \ref{prop:CoordinateTrafo} is
a $C^1$-diffeomorphism), that the  surface measure $d\sigma $ on
$\partial\Rn_\gamma$ 
satisfies
\[
d\sigma =\kappa (x')\,dx',\text{ where }
\kappa (x')=\sqrt{1+|\nabla_{x'}\gamma(x')|^2};
\]
here $\kappa\in 
 B^{M-\frac32}_{p,2}(\R^{n-1})$.
We define
\begin{equation*}
  H^s(\partial\Rn_\gamma)= \left\{u\in L_2(\partial\Omega):
    F^\ast_{\gamma,0}u\in H^s(\R^{n-1})\right\},\text{ when }s\geq 0 ,
\end{equation*} 
 provided with the inherited Hilbert space norm
$\|u\|_{H^s(\partial\Rn_\gamma )}=\|F^\ast_{\gamma,0}u\|_{H^s(\R^{n-1})}$.
Furthermore, we put as in \cite{McLean}, for $s>0$,
\begin{equation*}
  \|u\|_{H^{-s}(\partial\Rn_\gamma)}= \left\|\kappa F^\ast_{\gamma,0}u 
\right\|_{H^{-s}(\R^{n-1})}, 
\end{equation*}
 for $u\in L_2(\partial\Rn_\gamma)$, and define $H^{-s}(\partial\Rn_\gamma)$ as
the completion of $L_2(\partial\Rn_\gamma)$ with respect to this norm. 
Then 
\begin{equation*}
  \|u\|_{H^{-s}(\partial\Rn_\gamma)} = \sup_{0\neq v\in H^s(\partial\Rn_\gamma)} \frac{|(u,v)_{L_2(\partial\Rn_\gamma)}|}{\|v\|_{H^s(\partial\Rn_\gamma)}}
\end{equation*}
for all $u\in L_2(\partial\Rn_\gamma)$. Here
$H^{-s}(\partial\Rn_\gamma)$ is naturally identified with the dual of 
$H^s(\partial\Rn_\gamma)$ (more precisely, we hereby mean the space of conjugate linear continuous
functionals as in \cite{LM68}, also called the antidual space) in such
a way that the sesquilinear duality, denoted
$(u,v)_{H^{-s}(\partial\Rn_\gamma),H^s(\partial\Rn_\gamma)}$ or
$(u,v)_{-s,s}$ for short, coincides with the $L_2$-scalar product when
$u\in L_2(\partial\Rn_\gamma )$. We also write
$\overline{(u,v)}_{-s,s}$ as $(v,u)_{s,-s}$.

Moreover, for $-M+\frac32\leq s< 0$ we define $F^{-1,\ast}_{\gamma,0}\colon H^s(\R^{n-1})\to H^s(\partial\Rn_\gamma)$ by
\begin{equation*}
  (F^{-1,\ast}_{\gamma,0}v,\varphi)_{H^{s}(\partial\Rn_\gamma),H^{-s}(\partial\Rn_\gamma)}
=( v,\kappa  F^\ast_{\gamma,0}\varphi )_{H^{s}(\R^{n-1}),H^{-s}(\R^{n-1})} \quad \text{for all}\ \varphi \in H^{-s}(\partial\Rn_\gamma),
\end{equation*}
consistently with the definition of $F^{-1,\ast}_{\gamma,0}v$ for
$v\in L_2(\R^{n-1})$.
 Here $F^{-1,\ast}_{\gamma,0}v\in H^s(\partial\Rn_\gamma)$, 
the dual
space of $ H^{-s}(\partial\Rn_\gamma)$,  
since 
\begin{equation*}
\left\|\kappa F^{\ast}_{\gamma,0}\varphi \right\|_{H^{-s}(\R^{n-1})}
\leq C\left\|\kappa
\right\|_{B^{M-\frac32}_{p,2}(\R^{n-1})}\left\|F^\ast_{\gamma,0}\varphi 
\right\|_{H^{-s}(\R^{n-1})}
\leq C'\left\|\varphi\right\|_{H^{-s}(\partial\Rn_\gamma)} 
\end{equation*}
for all $\varphi \in H^{-s}(\partial\Rn_\gamma)$, because of $0< -s\leq
M-\frac32$ and \eqref{eq:BesselProd2} below. To incorporate the factor 
$\kappa $, we also introduce the modified pull-back mappings
\begin{alignat}{2}\label{eq:TildeF1}
  \widetilde{F}_{\gamma,0}^{\ast}(u)&=\kappa {F}_{\gamma,0}^{\ast}(u)&\quad &\text{for all}\ u\in H^{s}(\partial\Rn_\gamma),\\\label{eq:TildeF2}
  \widetilde{F}_{\gamma,0}^{-1,\ast}(v)&={F}_{\gamma,0}^{-1,\ast}\left(\kappa
    v
\right)&\quad &\text{for all}\ v\in H^{s}(\R^{n-1}),
\end{alignat}
for all $0\leq  s\leq M-\frac32$, whereby
\begin{alignat*}{2}
(\widetilde{F}_{\gamma,0}^{\ast}(u), \varphi)_{-s,s}&=(u, F^{-1,\ast}_{\gamma,0}(\varphi))_{-s,s}&\qquad& \text{for all}\ u\in H^{-s}(\partial\Rn_\gamma), \varphi \in H^{s}(\R^{n-1}),\\
(\widetilde{F}_{\gamma,0}^{-1,\ast}(v), \varphi)_{-s,s}&=(v, F^{\ast}_{\gamma,0}(\varphi))_{-s,s}&\qquad& \text{for all}\ v\in H^{-s}(\R^{n-1}), \varphi \in H^{s}(\partial\Rn_\gamma),
\end{alignat*}
$0\leq  s\leq M-\frac32$. 

Then we have altogether:
\begin{lem}\label{lem:Pullbacks}
Under the assumptions above, we have the mapping properties 
\begin{alignat*}{2}
  F^{\ast}_{\gamma,0}&\colon H^{s}(\partial\Rn_\gamma)\to H^s(\R^{n-1}), F^{-1,\ast}_{\gamma,0}\colon H^{s}(\R^{n-1})\to H^s(\partial\Rn_\gamma)&\ &
\text{if}\ -M+\tfrac32 \leq s\leq M-\tfrac12 ,\\
  \widetilde{F}^{\ast}_{\gamma,0}&\colon H^{s}(\partial\Rn_\gamma)\to H^s(\R^{n-1}), \widetilde{F}^{-1,\ast}_{\gamma,0}\colon H^{s}(\R^{n-1})\to H^s(\partial\Rn_\gamma)&\ &
\text{if}\ -M+\tfrac12 \leq s\leq M-\tfrac32 ,
\end{alignat*}
continuously, and $\widetilde{F}^{-1,\ast}_{\gamma,0}$ is the adjoint
of $F^\ast_{\gamma,0} $,   $F^{-1,\ast}_{\gamma,0}$ is the adjoint of $\widetilde{F}^\ast_{\gamma,0}$.
\end{lem}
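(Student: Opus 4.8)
The plan is to reduce the four mapping assertions to two ingredients --- one definitional, one a multiplier estimate --- and then to get the negative-order ranges and the adjointness claims by density and duality.

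The first ingredient is that, by the very definition of the $H^s(\partial\Rn_\gamma)$-norm for $s\ge 0$, the plain pull-back $F^\ast_{\gamma,0}$ is an isometric isomorphism of $H^s(\partial\Rn_\gamma)$ onto $H^s(\R^{n-1})$ for every $s\ge 0$: it is isometric by fiat, and it is onto because $w\mapsto w\circ F^{-1}_{\gamma,0}$ maps $H^s(\R^{n-1})$ into $L_2(\partial\Rn_\gamma)$ --- the factor relating $dx'$ to $d\sigma$ being $\kappa$, bounded above and below since $F_\gamma$ is a $C^1$-diffeomorphism (Proposition~\ref{prop:CoordinateTrafo}) --- and is a right inverse, whence $F^{-1,\ast}_{\gamma,0}=(F^\ast_{\gamma,0})^{-1}$ is the isometric inverse there; this settles $0\le s\le M-\tfrac12$ for the plain maps. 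The second ingredient is the product estimate \eqref{eq:BesselProd2}: multiplication by a function of $B^{M-\frac32}_{p,2}(\R^{n-1})$ is bounded on $H^\sigma(\R^{n-1})$ whenever $|\sigma|\le M-\tfrac32$. Since $\kappa=\sqrt{1+|\nabla_{x'}\gamma|^2}\in B^{M-\frac32}_{p,2}(\R^{n-1})$, $\kappa\ge 1$, and (by $\tau>0$) that space is a multiplication algebra stable under composition with smooth functions, one also has $\kappa^{-1}\in B^{M-\frac32}_{p,2}(\R^{n-1})$; hence multiplication by $\kappa$ and by $\kappa^{-1}$ is bounded on $H^\sigma(\R^{n-1})$ for $|\sigma|\le M-\tfrac32$. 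Composing, $\widetilde{F}^\ast_{\gamma,0}=\kappa F^\ast_{\gamma,0}$ and $\widetilde{F}^{-1,\ast}_{\gamma,0}$ are bounded on the positive range $0\le s\le M-\tfrac32$.

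For the negative orders I would argue by density from $L_2$. If $-M+\tfrac32\le s<0$ and $u\in L_2(\partial\Rn_\gamma)$, then $F^\ast_{\gamma,0}u=\kappa^{-1}(\kappa F^\ast_{\gamma,0}u)$, so \eqref{eq:BesselProd2} with $\sigma=s$ (allowed since $-s\le M-\tfrac32$) gives $\|F^\ast_{\gamma,0}u\|_{H^s(\R^{n-1})}\le C\|\kappa F^\ast_{\gamma,0}u\|_{H^s(\R^{n-1})}=C\|u\|_{H^s(\partial\Rn_\gamma)}$; as $L_2(\partial\Rn_\gamma)$ is dense in $H^s(\partial\Rn_\gamma)$ (which is its completion in this norm), $F^\ast_{\gamma,0}$ extends to the asserted bounded map, while $\widetilde{F}^\ast_{\gamma,0}=\kappa F^\ast_{\gamma,0}$ is an isometry onto $H^s(\R^{n-1})$ straight from the definition of the negative-order norm, valid down to $s=-M+\tfrac12$ once its partner $\widetilde{F}^{-1,\ast}_{\gamma,0}$ is read, for those $s$, as the adjoint of $F^\ast_{\gamma,0}\colon H^{-s}(\partial\Rn_\gamma)\to H^{-s}(\R^{n-1})$ (available for $0<-s\le M-\tfrac12$). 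Dually, for $v\in H^s(\R^{n-1})$ with $-M+\tfrac32\le s<0$ the functional $\varphi\mapsto(v,\kappa F^\ast_{\gamma,0}\varphi)_{s,-s}$ on $H^{-s}(\partial\Rn_\gamma)$ is bounded, because $\|\kappa F^\ast_{\gamma,0}\varphi\|_{H^{-s}(\R^{n-1})}\le C\|F^\ast_{\gamma,0}\varphi\|_{H^{-s}(\R^{n-1})}=C\|\varphi\|_{H^{-s}(\partial\Rn_\gamma)}$ (using \eqref{eq:BesselProd2} with $\sigma=-s\le M-\tfrac32$ and the first ingredient for the positive order $-s$), so $F^{-1,\ast}_{\gamma,0}$ maps $H^s(\R^{n-1})$ boundedly into $H^s(\partial\Rn_\gamma)$. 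Finally, the adjointness identities \eqref{eq:TildeF1}--\eqref{eq:TildeF2}, and the consistency of the dual definitions with the explicit $L_2$-formulas for $F^{-1,\ast}_{\gamma,0}$ and $\widetilde{F}^{-1,\ast}_{\gamma,0}$, follow by passing to the limit, using the continuity just established, in the single change-of-variables identity $\int_{\partial\Rn_\gamma}u\,\overline v\,d\sigma=\int_{\R^{n-1}}\kappa\,(F^\ast_{\gamma,0}u)\,\overline{F^\ast_{\gamma,0}v}\,dx'$ valid on $L_2$.

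The step needing the most care --- and essentially the only place something can go wrong --- is the index bookkeeping: every application of \eqref{eq:BesselProd2} must stay inside the window $|\sigma|\le M-\tfrac32$, and this one-derivative restriction (forced by $\kappa$ lying only in $B^{M-\frac32}_{p,2}$) is exactly the source of the gap $M-\tfrac32$ versus $M-\tfrac12$, respectively $-M+\tfrac12$ versus $-M+\tfrac32$, between the twisted and the plain endpoints; one must also check that the completion description, the duality identification, and the explicit $L_2$-formulas (with the weight $\kappa$ and its reciprocal) for the negative-order spaces and pull-backs are mutually compatible, which reduces to the displayed change-of-variables identity with the powers of $\kappa$ tracked correctly through it.
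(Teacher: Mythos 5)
Your argument is correct in substance and is essentially the paper's own: the lemma is given no separate proof there because it only collects what the preceding paragraphs set up --- the isometric definition of the boundary norms, the multiplier bound \eqref{eq:BesselProd2} for $\kappa$ (and, via the inversion remarks of Section~\ref{sec:Inversion}, for $\kappa^{-1}$), the duality definition of $F^{-1,\ast}_{\gamma,0}$ at negative orders, and the change-of-variables identity --- and your proposal reassembles exactly these ingredients, with the same density/duality extension to negative $s$.

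Two points deserve care. First, \eqref{eq:BesselProd2} is stated for $-r<s\le r$, so your application at $\sigma=s=-M+\tfrac32=-r$ (when you write $F^\ast_{\gamma,0}u=\kappa^{-1}\bigl(\kappa F^\ast_{\gamma,0}u\bigr)$ at the bottom endpoint) is not literally covered; since $\kappa^{\pm1}$ are real-valued, boundedness of the multiplication on $H^{-r}(\R^{n-1})$ follows from the $+r$ case by duality, which is in effect how the paper proceeds, invoking \eqref{eq:BesselProd2} only at positive orders $0<-s\le M-\tfrac32$. Second, the ``tracking of powers of $\kappa$'' that you defer is not a formality: carrying it through your $L_2$-identity shows that the adjoint of $\widetilde F^{\ast}_{\gamma,0}=\kappa F^{\ast}_{\gamma,0}$ is indeed $F^{-1,\ast}_{\gamma,0}$, but the adjoint of $F^{\ast}_{\gamma,0}$ comes out as $v\mapsto F^{-1,\ast}_{\gamma,0}(\kappa^{-1}v)$. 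Hence the adjointness claim for $\widetilde F^{-1,\ast}_{\gamma,0}$ holds when that operator is read through the duality (``whereby'') formulas, whereas the literal definition \eqref{eq:TildeF2} (with $\kappa$ rather than $\kappa^{-1}$) differs from the adjoint of $F^{\ast}_{\gamma,0}$ by a factor $\kappa^{2}$ --- a bookkeeping discrepancy in the source itself, harmless for all the mapping properties since $\kappa^{\pm1}$ are multipliers on the relevant spaces, but exactly the kind of thing your postponed computation would have had to resolve explicitly.
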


Now one can define $H^s(\partial\Omega)$ using suitable partitions of unity, 
when $\partial\Omega$ is of class $B^{M-\frac12}_{p,2}$ as in
$2^\circ$ and $3^\circ$ of
Definition \ref{bdryreg}; cf.\ Remark \ref{rem:Charts}.

\begin{theorem}\label{thm:Traces}
  Let $\Omega\subseteq \Rn$ be as in Definition {\rm
    \ref{bdryreg}}. For every $s\in (\frac12, M]$, there is a continuous linear mapping
  $\gamma_0$ such that $\gamma_0 u= u|_{\partial\Omega}$ for all $u\in
  H^s(\Omega)\cap C^0(\overline{\Omega})$  and $\gamma_0\colon
  H^s(\Omega) \to H^{s-\frac12}(\partial\Omega)$ is bounded. 
 Moreover, there is a continuous right inverse  of $\gamma_0\colon H^s(\Omega) \to H^{s-\frac12}(\partial\Omega)$.
\end{theorem}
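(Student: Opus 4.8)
The plan is to reduce everything to the flat half-space by localization, using the $C^1$-diffeomorphisms of Proposition~\ref{prop:CoordinateTrafo} to transfer the classical trace theorem for $\Rn_+$ recalled above (taken with $p=2$, so that $B^{s-1/2}_{2,2}(\R^{n-1})=H^{s-1/2}(\R^{n-1})$). Fix $s\in(\tfrac12,M]$ and take the covering $U_0,\dots,U_J$, the partition of unity $\varphi_0,\dots,\varphi_J$ on $\comega$, and the diffeomorphisms $F_j=F_{\gamma_j}$ of Remark~\ref{rem:Charts} (with obvious simplifications in cases $1^\circ$ and $3^\circ$, where $F_J=\operatorname{id}$). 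Denote by $\gamma_0^+$ the flat trace operator on $\Rn_+$. Since $\overline U_0\subset\Omega$, the piece $\varphi_0u$ does not contribute to the boundary. For $1\le j\le J$, multiplication by $\varphi_j\in C^\infty(\Rn)$ (with bounded derivatives) is bounded on $H^s(\Omega)$, and because $\supp\varphi_j$ is a closed subset of the open set $U_j$ and $U_j\cap\Omega=U_j\cap\Rn_{\gamma_j}$, the product $\varphi_ju$ extends by zero to an element of $H^s(\Rn_{\gamma_j})$. By Proposition~\ref{prop:CoordinateTrafo}, $F_j^\ast(\varphi_ju)\in H^s(\Rn_+)$, and since $F_j$ is a $C^1$-diffeomorphism this is a genuine compactly (resp. horizontally) supported distribution in the half-space; hence $\gamma_0^+\big(F_j^\ast(\varphi_ju)\big)\in H^{s-\frac12}(\R^{n-1})$. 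As $0<s-\tfrac12\le M-\tfrac12$ lies in the admissible range of Lemma~\ref{lem:Pullbacks} (note $M\ge2$ by \eqref{asspt}), we may transport this back by $F^{-1,\ast}_{j,0}$ into $H^{s-\frac12}(\partial\Rn_{\gamma_j})$, supported in $\Sigma\cap U_j$, and read it as a contribution to $H^{s-\frac12}(\partial\Omega)$. I then \emph{define}
\[
  \gamma_0u \;=\; \sum_{j=1}^J F^{-1,\ast}_{j,0}\,\gamma_0^+\big(F_j^\ast(\varphi_ju)\big),
\]
which is linear and bounded $H^s(\Omega)\to H^{s-\frac12}(\partial\Omega)$ by composing the estimates just listed.

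Next I would check that this operator has the stated restriction property and does not depend on the choices. The crucial point is the elementary identity $\gamma_0^+\circ F_j^\ast=F^\ast_{j,0}\circ(\,\cdot\,|_{\partial\Omega})$, valid on functions continuous up to the boundary because $F_j$ maps $\partial\Rn_+$ onto $\Sigma\cap U_j$. Combined with $\sum_j\varphi_j\equiv1$ near $\partial\Omega$ and $F^{-1,\ast}_{j,0}F^\ast_{j,0}=\operatorname{id}$, this gives $\gamma_0u=u|_{\partial\Omega}$ for every $u\in H^s(\Omega)\cap C^0(\overline\Omega)$. Since the restriction map $H^s(\Rn)\to H^s(\Omega)$ is bounded and surjective, restrictions of $C_0^\infty(\Rn)$-functions are dense in $H^s(\Omega)$ and lie in $C^0(\overline\Omega)$; therefore the bounded operator $\gamma_0$ is the unique continuous extension of $u\mapsto u|_{\partial\Omega}$ from this dense set, and in particular it is independent of the partition of unity and of the charts.

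For the continuous right inverse I would reverse the construction. Given $g\in H^{s-\frac12}(\partial\Omega)$, decompose $g=\sum_{j=1}^J\varphi_jg$ with $\varphi_jg$ supported in $\Sigma\cap U_j$; transport to $F^\ast_{j,0}(\varphi_jg)\in H^{s-\frac12}(\R^{n-1})$ via Lemma~\ref{lem:Pullbacks}; apply a fixed continuous right inverse $E^+$ of $\gamma_0^+$ to obtain $E^+\!\big(F^\ast_{j,0}(\varphi_jg)\big)\in H^s(\Rn_+)$; multiply by a cutoff $\chi_j\in C_0^\infty$ (built from the $\psi_j$ of Remark~\ref{rem:Charts}) that equals $1$ on $\{y_n=0\}$ over $\supp F^\ast_{j,0}\varphi_j$ and confines the support to the chart; transport back by $F^{-1,\ast}_j$ into $H^s(\Rn_{\gamma_j})\subset H^s(\Omega)$; and set $Kg=\sum_j F^{-1,\ast}_j\big(\chi_j\,E^+(F^\ast_{j,0}(\varphi_jg))\big)$. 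Each step is bounded by Proposition~\ref{prop:CoordinateTrafo}, Lemma~\ref{lem:Pullbacks} and boundedness of $E^+$, so $K\colon H^{s-\frac12}(\partial\Omega)\to H^s(\Omega)$ is continuous; applying $\gamma_0$ and using $\gamma_0^+E^+=\operatorname{id}$, $\chi_j|_{y_n=0}=1$ on $\supp F^\ast_{j,0}\varphi_j$, and $\sum_j\varphi_j\equiv1$ near $\Sigma$ yields $\gamma_0Kg=\sum_j\varphi_jg=g$.

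The main obstacle is not the localization bookkeeping but the fact that the passage between $\Omega$ and the flat charts must preserve Sobolev regularity \emph{all the way up to order $M$}, whereas $F_j$ is only a $C^1$-diffeomorphism with $C^{\tau'}$-Hölder gradient ($\tau'<\tfrac12$); a naive chain-rule argument would control $F_j^\ast$ on $H^s$ only for small $s$. This is precisely the content of Proposition~\ref{prop:CoordinateTrafo} (which in turn rests on the product estimate of Lemma~\ref{lem:Product}), so here it is used as a black box; the only remaining care is to track supports so that each localized piece becomes, after pull-back, a bona fide half-space distribution to which the classical flat trace theorem applies, and to match the low-order boundary pull-backs against Lemma~\ref{lem:Pullbacks}.
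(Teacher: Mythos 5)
Your proposal is correct and follows essentially the same route as the paper, which simply reduces the statement to the classical half-space trace theorem via the diffeomorphisms $F_{\gamma_j}$ of Proposition~\ref{prop:CoordinateTrafo} together with a partition of unity; your write-up just makes the localization, the use of Lemma~\ref{lem:Pullbacks} for the boundary pull-backs, the density argument for the restriction property, and the chart-wise right inverse explicit, all of which the paper leaves as a sketch.
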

\begin{proof}
  In the case $\Omega=\Rn_\gamma$,  the statement can be  reduced to
  the well-known corresponding fact in the case of $\Omega =\Rn_+$ using
  $F_\gamma$ established in Proposition~\ref{prop:CoordinateTrafo}. With the
  aid of suitable partitions of unity, the general cases can be
  reduced to the case $\Omega=\Rn_\gamma$. 
\end{proof}

We note that Gau\ss 's formula
\begin{equation}\label{eq:Gauss}
  \int_\Omega \Div f(x) \sd x = -\int_{\partial\Omega} \vn\cdot f(x) \sd \sigma(x) 
\end{equation}
is valid for any $f\in C^1(\overline{\Omega})^n$ with compact support
if $\Omega$ is a Lipschitz domain. Here $\vn=(n_1,\ldots,n_n)$ denotes
the interior normal of $\partial\Omega$. A proof can be found e.g. in
\cite[Theorem~3.34]{McLean}. Because of \cite[Theorems 3.29 and
3.38]{McLean}, \eqref{eq:Gauss} also holds true for any $f\in H^1(\Omega)^n$.

\subsection{Pointwise multiplication and inversion}
\label{sec:Inversion} 

First of all, we recall the following product estimates: For every $r>0$, $|s|\leq r$, and $1\leq p\leq q<\infty$ such that $\frac1p+\frac1q\leq 1$ and $r-\frac{n}q>0$ there is some constant $C_{r,s,p,q}>0$ such that
\begin{equation}\label{eq:BesselProd}
  \|fg\|_{H^s_p(\Rn)}\leq C_{r,s,p,q}\|f\|_{H^r_q(\Rn)}\|g\|_{H^s_p(\Rn)},\quad f\in H^r_q(\Rn), g\in H^s_p(\Rn),
\end{equation}
cf. e.g. Johnsen~\cite[Theorems~6.1 and 6.4]{JohnsenPointwiseMultipliers}.

Moreover, due to Hanouzet~\cite[Th\'eor\`eme 3]{HanouzetBesovProd} we have
\begin{equation}\label{eq:BesovProd}
  \|fg\|_{B^s_{p, \max(q_1,q_2)}}\leq C_{r,s,p,p_1,q_1,q_2}\|f\|_{B^r_{p_1,q_1}}\|g\|_{B^s_{p,q_2}}
\end{equation}
for all $ f\in B^r_{p_1,q_1}(\Rn),g\in B^s_{p,q_2}(\Rn)$ provided that  $1\leq p\leq p_1\leq \infty$, $1\leq q_1,q_2\leq \infty$, $r>\frac{n}{p_1}$, and 
$$-r+n\left(\tfrac{1}{p_1}+\tfrac{1}{p}-1\right)_+ < s\leq r,$$ 
see also \cite[Theorem~6.6]{JohnsenPointwiseMultipliers}. In particular, this implies
\begin{equation}\label{eq:BesselProd2}
  \|fg\|_{H^s(\R^{n})}\leq C_{s,r,p}\|f\|_{B^r_{p,2}(\R^{n})}\|g\|_{H^{s}(\Rn)}
\end{equation}
for all $f\in B^r_{p,2}(\Rn),  g\in H^s(\Rn)$, provided that $2\leq p\leq \infty$, $r-\frac{n}p>0$ and $-r<s\leq r$.

Concerning pointwise inversion,
let $X=B^s_{p,q}(\Rn)$ with $s>\frac{n}p$, $1\leq p,q\leq \infty$ or $X= H^s_p(\Rn)$ with $s>\frac{n}p$, $1 < p< \infty$. Then
$G(f)\in X$ for all $G\in C^\infty(\R)$ and $f\in X$. 
This implies that $f^{-1}\in X$ for all $f\in  X$ such that $|f|\geq c_0>0$. We refer to Runst \cite{RunstCompOp} for an overview, further results, and references.

\subsection{Pseudodifferential operators with nonsmooth coefficients}

Let $X$ be a Banach space such that $X\hookrightarrow L_\infty(\Rn)$.
\begin{defn}
  For every $m\in \R$ the symbol space $XS^m_{1,0}(\Rn\times\Rn)$, $n\in\N$, is the set of all $p\colon \Rn\times \Rn\to \C$ such that for every $\alpha \in \N_0^m$ there is some $C_\alpha>0$ satisfying
  \begin{equation*}
    \|\partial_{\xi}^\alpha p(.,\xi)\|_X \leq C_{\alpha} \weight{\xi}^{m-|\alpha|} \qquad \text{for all}\ \xi \in \Rn.
  \end{equation*}
  The space $XS^m_{\operatorname{cl}}(\Rn\times\Rn)$ is the set of all $p\in XS^m_{1,0}(\Rn\times\Rn)$, which are classical symbols in the sense that there are $p_j\in XS^{m-j}_{1,0}(\Rn\times\Rn)$, $j\in \N_0$ that are homogeneous with respect to $|\xi|\geq 1$ and satisfy
  \begin{equation*}
    p(x,\xi)-\sum_{j=0}^{N-1} p_j(x,\xi) \in X S^{m-N}_{1,0}(\Rn\times\Rn)\qquad \text{for all}\ N\in\N.
  \end{equation*}
\end{defn}

In order to define pseudodifferential operators on $\partial\Omega$ with $B^{M-\frac12}_{p,2}$-regularity, we recall:
\begin{thm}\label{lem:HsBddnes}
  Let $p\in H^r_q S_{1,0}^{m}(\Rn\times \Rn)$ for some $2\leq q <\infty$ and  $r>\frac{n}q$. Then 
  \begin{equation*}
    p(x,D_x)\colon H^{s+m}(\Rn)\to H^s(\Rn) \qquad \text{for all}\ -r< s\leq r
  \end{equation*}
  is a bounded linear operator.
\end{thm}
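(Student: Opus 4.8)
The plan is to prove boundedness of $p(x,D_x)\colon H^{s+m}(\Rn)\to H^s(\Rn)$ by reducing to a symbol-smoothing decomposition $p = p^\# + p^\flat$, where $p^\#$ is a \emph{smooth} ($C^\infty$ in $x$) symbol of a slightly worse type, say $S^m_{1,\delta}$ with a small parameter $\delta\in(0,1)$, and the remainder $p^\flat$ has low order. Concretely, following the standard construction (as in Taylor's \emph{Tools for PDE} or \cite{NonsmoothGreen}), one sets $p^\#(x,\xi) = \sum_{j\ge 0}\big(\chi_j(D_x)_x p(\cdot,\xi)\big)(x)\,\varphi_j(\xi)$, a Littlewood--Paley-type mollification in $x$ adapted to the frequency shell $|\xi|\sim 2^j$ with mollification scale $2^{-j\delta}$, and $p^\flat = p - p^\#$. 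First I would record the two resulting estimates: (i) $p^\#\in C^\infty S^m_{1,\delta}$ with seminorms controlled by the $H^r_qS^m_{1,0}$-seminorms of $p$, with $\delta$ chosen below; and (ii) $p^\flat(x,\xi)$ gains regularity, i.e.\ it lies in $H^rS^{m-r\delta}_{1,\delta}$-type classes — more precisely one shows $\|\partial_\xi^\alpha p^\flat(\cdot,\xi)\|_{H^r}\lesssim \langle\xi\rangle^{m-|\alpha|}$ together with the extra decay $\langle\xi\rangle^{-r\delta}$ coming from spectral localization of $p$ in $x$ away from the shell.

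For the smooth part $p^\#$, the key step is the classical Hörmander-type $L_2$-boundedness theorem for $S^0_{1,\delta}$ symbols with $0\le\delta<1$: $q(x,D_x)\colon L_2(\Rn)\to L_2(\Rn)$ is bounded when $q\in C^\infty S^0_{1,\delta}$, and by conjugating with $\langle D_x\rangle^s$ and using that $\langle\xi\rangle^{-s}q(x,\xi)\langle\xi\rangle^{s}\in C^\infty S^0_{1,\delta}$ (up to lower-order terms, handled by the $S^0_{1,\delta}$ calculus since $\delta<1$), one gets $p^\#(x,D_x)\colon H^{s+m}\to H^s$ for \emph{all} $s\in\R$. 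The parameter $q\ge 2$ and $r>n/q$ enter only through the bound on the finitely many symbol seminorms of $p^\#$ via Sobolev embedding $H^r_q\hookrightarrow L_\infty$ and Bernstein-type inequalities. For the remainder $p^\flat$, I would invoke Theorem~\ref{lem:HsBddnes}'s own philosophy recursively — or rather, the mapping result for nonsmooth symbols of \emph{lower order}: since $p^\flat\in H^rS^{m-r\delta}_{1,0}$-type (after absorbing the $\delta$-loss, which is legitimate because we only lowered the order), and $m-r\delta < m$, an induction on the order (or a direct application of the already-available low-order nonsmooth mapping results from \cite{NonsmoothGreen}) gives $p^\flat(x,D_x)\colon H^{s+m}\to H^{s+r\delta}\hookrightarrow H^s$ for the same range $-r<s\le r$. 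Choosing $\delta$ small enough that $r\delta$ is, say, less than the gap to the endpoints, and iterating finitely many times until the remainder order drops below $-r$ (where boundedness into $H^s$ for $s\le r$ is automatic), closes the argument.

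The main obstacle is bookkeeping the symbol-smoothing estimates uniformly across the admissible range $-r<s\le r$: one must check that the conjugated symbol $\langle\xi\rangle^{-s}p^\#(x,\xi)\langle\xi\rangle^{s}$ stays in $C^\infty S^0_{1,\delta}$ with seminorms controlled \emph{independently of $s$ in compact subranges}, and that the $x$-regularity $H^r_q$ (rather than $C^r$ or $C^\infty$) is exactly what is needed to bound the finitely many seminorms of $p^\#$ that the $L_2$-theorem consumes — this is where $r>n/q$, $q\ge2$ is used, via $H^r_q\hookrightarrow \mathcal{C}^{r-n/q}\hookrightarrow L_\infty$ and the fact that mollification on scale $2^{-j\delta}$ of an $H^r_q$-function produces an $L_\infty$-bound on $N$ derivatives costing $2^{jN\delta}\cdot 2^{-jr\delta}\cdot(\text{finite }H^r_q\text{-seminorm})$ uniformly. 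I expect the endpoint $s=r$ (and proximity to $s=-r$) to require a little extra care, handled either by the real-interpolation identity \eqref{eq:BesselInterpolReal} between two interior values of $s$, or by noting that in the remainder estimate the order actually drops strictly, leaving room. The composition-calculus facts for $S^m_{1,\delta}$ with $\delta<1$ that are invoked for the conjugation step are exactly those collected in the Appendix referred to in the introduction, so they may be cited.
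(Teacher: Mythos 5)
There is a genuine gap, and it sits exactly at the point your argument is supposed to deliver: the stated range $-r<s\le r$, endpoint included. First, the bookkeeping of the symbol-smoothing remainder is wrong. For a symbol with Sobolev regularity in $x$, the decomposition $p=p^\#+p^b$ gains order for $p^b$ only in the low-regularity norms: via $H^r_q\hookrightarrow \mathcal C^{r-n/q}$ the sup-norm gain is $\delta(r-\tfrac nq)$, not $\delta r$, and in the $H^r_q$-norm there is no gain at all; moreover $p^b$ is of type $(1,\delta)$, not $(1,0)$ (compare \eqref{eq:SymbolSmoothing1} and the definition of $C^\tau S^{m}_{1,\delta}$ in the Appendix). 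So the class ``$H^rS^{m-r\delta}_{1,0}$'' you assign to $p^\flat$ is not what smoothing produces. Second, the termination step of your iteration --- ``once the order drops below $-r$, boundedness into $H^s$ for $s\le r$ is automatic'' --- is unjustified; for a symbol with only $H^r_q$ (or $\mathcal C^{r-n/q}$) regularity in $x$, mapping into $H^s$ at $s=r$ is precisely the delicate endpoint statement, no matter how negative the order is, and the available smoothing-based result you could quote (Proposition~\ref{prop:HsBddness}, i.e.\ \cite{Marschall2}) only gives the \emph{open} range $-r(1-\delta)<s<r$, and only after degrading $r$ to $r-\tfrac nq$ when you pass from Sobolev to H\"older coefficients. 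Third, the patch you propose for the endpoint cannot work: real interpolation \eqref{eq:BesselInterpolReal} between two interior values $s_0,s_1<r$ never reaches $s=r$, so the closed upper endpoint is not recoverable this way.

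For comparison, the paper does not reprove the statement at all: it cites \cite[Theorem~2.2]{MarschallSobolevCoeff}, whose proof is a paradifferential (Littlewood--Paley/paraproduct) decomposition of the symbol carried out directly in the Sobolev classes $H^r_qS^m_{1,0}$; that is what yields the sharp range $-r<s\le r$ with the endpoint, and it is also why the hypotheses are $q\ge 2$, $r>\tfrac nq$ (for differential symbols this reduces to the product estimate \eqref{eq:BesselProd}, which does hold at $|s|=r$). Your smooth part $p^\#$ is handled correctly, but the route through symbol smoothing inherently trades the Sobolev information for H\"older information and open ranges, so as written it proves a strictly weaker statement (roughly $|s|<r-\tfrac nq$) rather than the theorem. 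To salvage a self-contained proof you would need to run the paraproduct estimates in $H^r_q$ directly (Marschall's argument), not iterate the smoothing decomposition.
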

The theorem follows from \cite[Theorem~2.2]{MarschallSobolevCoeff}. We
note that, if $p(x,\xi)= \sum_{|\alpha|\leq m} a_\alpha (x)
\xi^\alpha$ for some $a_\alpha\in H^r_q(\Rn)$, then $p(x,D_x)$ is a
differential operator with coefficients in $H^r_q(\Rn)$ and the
statement in the theorem easily follows from the product estimate \eqref{eq:BesselProd}
provided that $|s|\leq r$, $2\leq q<\infty$ and $r>\frac{n}q$.

Let us recall the so-called symbol-smoothing: For every $p\in C^rS^m_{1,0}(\Rn\times \Rn)$ and $0<\delta < 1$ there is a decomposition  
\begin{equation}\label{eq:SymbolSmoothing1}
p= p^\sharp+ p^b,\quad \text{where}\ p^\sharp\in S^m_{1,\delta}(\Rn\times \Rn), p^b\in  C^rS^{m-r\delta}_{1,\delta}(\Rn\times \Rn).
\end{equation}
We refer to \cite[end of Section 1.3]{TaylorNonlinearPDE} for a proof. The definition of $C^r S^{m-r\delta}_{1,\delta}(\Rn\times \Rn)$ is given in the appendix.

In order to estimate the remainder term $p^b(x,D_x)$ we will use:
\begin{prop}\label{prop:HsBddness}
  Let $p\in C^rS^m_{1,\delta}(\Rn\times \Rn)$,
  $m\in\R$, $\delta \in [0,1]$, $r>0$. Then    
\[
p(x,D_x)\colon H^{s+m}(\Rn) \to H^{s}(\Rn)
\]
for all $s\in\R$ with $-r(1-\delta) <s < r$.
\end{prop}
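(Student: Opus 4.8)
This is a Hölder--Zygmund symbol-boundedness statement of a by now standard type (for $\delta=0$ it is contained in Theorem~\ref{lem:HsBddnes}, and the general case goes back to Taylor), so the shortest route is to invoke \cite[Ch.~1--2]{TaylorNonlinearPDE}; for completeness I would recall the structure. The plan is the paradifferential one. First decompose the symbol in the $x$-variable by a dyadic partition of unity $1=\sum_{j\ge 0}\psi_j(D_x)$, writing $p=\sum_{j\ge 0}p_j$ with $p_j(x,\xi)=(\psi_j(D_x)p(\cdot,\xi))(x)$. Each $p_j(\cdot,\xi)$ is then smooth (band-limited) in $x$, supported in $\{|\eta|\le 2^{j+1}\}$, so that \emph{arbitrarily many} $x$-derivatives are available at the cost of a factor $2^j$ each (Bernstein), while the $\mathcal C^r$-regularity built into $C^rS^m_{1,\delta}$ (cf.\ the Appendix) enters \emph{only through the amplitude}:
\[
\|\partial_\xi^\alpha p_j(\cdot,\xi)\|_{L_\infty}\le C_{\alpha}\,2^{-jr}\weight{\xi}^{m-|\alpha|+r\delta},\qquad \|\partial_\xi^\alpha p_j(\cdot,\xi)\|_{L_\infty}\le C_{\alpha}\weight{\xi}^{m-|\alpha|},
\]
together with the obvious interpolants.

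Correspondingly $p(x,D_x)=\sum_{j\ge 0}p_j(x,D_x)$; pairing this with a Littlewood--Paley decomposition $u=\sum_{k\ge 0}\Delta_k u$ of the argument I would split $p(x,D_x)u=\Pi_1u+\Pi_2u+\Pi_3u$ according to $j<k-N$, $|j-k|\le N$, and $j>k+N$ (for a fixed large $N$). The key tool, which I would isolate as a lemma, is that a symbol which is band-limited in $x$ to a scale not exceeding its $\xi$-scale $2^k$ and satisfies $\|\partial_\xi^\alpha\partial_x^\beta q(\cdot,\xi)\|_{L_\infty}\lesssim A\,2^{k(|\beta|-|\alpha|)}$ on $|\xi|\sim 2^k$ defines an $L_2$-bounded operator with norm $\lesssim A$; after the parabolic rescaling $x\mapsto 2^{-k}x,\ \xi\mapsto 2^k\xi$ this is the Calder\'on--Vaillancourt theorem, and it is precisely here that band-limitedness compensates for the low coefficient regularity, since it lets one use arbitrarily many $x$-derivatives of the smooth pieces $p_j$. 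Using this: $\Pi_1$ — for which, at input frequency $2^k$, the relevant symbol is the truncation $\sum_{j<k-N}p_j$, band-limited in $x$ to $\ll 2^k$ — is the main, paradifferential term, bounded $H^{s+m}\to H^s$ for every $s\in\R$ (standard, the $j$-summation converging by $r>0$); $\Pi_3$, in which the symbol frequency $2^j$ dominates, has output near frequency $2^j$, and summing the contributions against the decay $2^{-jr}$ by a Schur-type estimate converges exactly for $s<r$; $\Pi_2$ produces output at frequency $\lesssim 2^k$, and since on $|\xi|\sim 2^k$ one controls only $\|p_k(\cdot,\xi)\|_{L_\infty}\lesssim 2^{k(m-r(1-\delta))}$, summing over output frequencies converges exactly for $s>-r(1-\delta)$. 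Adding the three contributions gives the asserted mapping property on the stated range.

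The main obstacle — and the reason the range is open and has exactly this shape — is the tension between the mere $\mathcal C^r$-regularity of the symbol in $x$ (which alone caps $s$ at $r$) and the loss $\weight{\xi}^{r\delta}$ carried by the $C^rS^m_{1,\delta}$-estimates (which pushes the lower endpoint up to $-r(1-\delta)$): every estimate must be organised so that, for a fixed dyadic block of the argument, only the smooth band-limited pieces $p_j$ act, the coefficient roughness entering solely through their size $2^{-jr}$, and the endpoints $s=r$ and $s=-r(1-\delta)$ are genuinely excluded. Since the argument is routine once set up this way, in the write-up I would compress the three estimates to a few lines and refer to \cite[Ch.~1--2]{TaylorNonlinearPDE} (cf.\ also \cite{MarschallSobolevCoeff} for $\delta=0$) for the details.
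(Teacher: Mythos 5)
Your argument is correct, but it is not the paper's route: in the paper this proposition is dispatched with a bare citation to Marschall \cite[Theorem~2.1]{Marschall2} (and the operator-valued variant in the Appendix, Proposition~\ref{prop:HsBddness2}, is handled by remarking that Taylor's proof carries over to Hilbert-space-valued symbols via the Mihlin theorem), whereas you reconstruct the standard paradifferential proof behind such results. Your bookkeeping is right: the Littlewood--Paley decomposition of the symbol in $x$, the three-fold splitting, the rescaled Calder\'on--Vaillancourt lemma for band-limited pieces, and the way the two endpoints arise ($\Pi_3$ forcing $s<r$ through the gain $2^{-jr}$, $\Pi_2$ forcing $s>-r(1-\delta)$ through the size $2^{k(m-r(1-\delta))}$ of the diagonal blocks, $\Pi_1$ bounded on all of $\mathbb{R}$ thanks to the spectral support condition --- there the convergence comes from that condition rather than from $r>0$, a cosmetic point only). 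Two remarks on the comparison. First, what the citation buys the paper is brevity; what your sketch buys is self-containedness and transparency about why the range is open and has exactly this shape, which is in the spirit of the symbol-smoothing discussion in the Appendix. Second, be careful with the reference you propose as a substitute: Taylor's Proposition~2.1.D in \cite{TaylorNonlinearPDE} is stated for $0\le\delta<1$, while the proposition here allows $\delta\in[0,1]$; your direct argument does cover $\delta=1$ (the range degenerating to $0<s<r$), but if you lean on a citation rather than the argument, \cite{Marschall2} is the appropriate one, as the paper's authors chose.
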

We refer to \cite[Theorem~2.1]{Marschall2} for a proof.

Now let $\partial\Omega$ be again  of class $B^{M-\frac12}_{p,2}$, where $M-\frac32-\frac{n-1}p>0$. Then
we can define a pseudodifferential operator $P$ on $\partial\Omega$ of order $m'\in \R$ and with coefficients in $H^r_q(\R^{n-1})$ for $2\leq q <\infty$ and  $r>\frac{n}q$, as
\begin{equation}\label{eq:DefnP}
  P u = \sum_{j=1}^J \psi_j F_{j,0}^{-1,\ast} p_j(x',D_{x'}) F_{j,0}^{\ast} \varphi_j u,
\end{equation}
where the $p_j\in H^r_qS^{m'}_{1,0}(\R^{n-1}\times \R^{n-1})$, and
$F_{j,0}\colon  V_j\to U_j\subset \partial\Omega$,
$j=1,\ldots,J$, where $V_j \subset \R^{n-1}$, are local charts forming an atlas of
$\partial\Omega$. Moreover, $\varphi_j$, $j=1,\ldots,J$ is a partition of unity on $\partial\Omega$ 
such that $\supp \varphi_j\subset U_j$, and the functions $\psi_j$ satisfy $\psi_j \equiv 1$ on
$\supp \varphi_j$ and have $\supp\psi_j \subset U_j$.
Here we assume that at least $\varphi_j,\psi_j\in B^{M-\frac12}_{p,2}(\partial\Omega)$. 

For later purposes we also define the modified pseudodifferential operator
\begin{equation}\label{eq:DefnPt}
  \widetilde{P} u = \sum_{j=1}^J \psi_j \widetilde{F}_{j,0}^{-1,\ast} p_j(x',D_{x'}) F_{j,0}^{\ast} \varphi_j u,
\end{equation}
where $p_j$ are as above and $\widetilde{F}_{j,0}^{-1,\ast}=\widetilde{F}_{\gamma_j,0}^{-1,\ast}$. For these operators we have the following slightly different continuity results:

\begin{corollary}
  Let $\partial\Omega$ be of class $B^{M-\frac12}_{p,2}$, $M-\frac32 -
  \frac{n-1}p>0$, $2\leq p<\infty$, and let %%GG
 $p_j\in H^r_q S_{1,0}^{m'}(\R^{n-1}\times \R^{n-1})$ for some
 $2\leq q <\infty$ and  $r>\frac{n-1}q$, $j=1,\ldots,J$. Then with $P$ as in {\rm (\ref{eq:DefnP})} we have for
 every $s\in \R$ such that  $-r<s\leq r$, $s,s+m'\in [-M+\frac32,M-\frac12]$,  
  \begin{equation*}
    P\colon H^{s+m'}(\partial\Omega)\to H^s(\partial\Omega) 
  \end{equation*}
  is a well-defined linear and bounded operator. Moreover, for
 every $s\in \R$ such that  $-r<s\leq r$, 
$s \in [-M+\frac12,M-\frac32]$,  $s+m'\in [-M+\frac32,M-\frac12]$ and $\widetilde{P}$ as in {\rm (\ref{eq:DefnPt})}  we have: 
  \begin{equation*}
    \widetilde{P}\colon H^{s+m'}(\partial\Omega)\to H^s(\partial\Omega) 
  \end{equation*}
  is a well-defined linear and bounded operator.
\end{corollary}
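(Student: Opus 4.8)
The plan is to reduce the mapping properties of $P$ and $\widetilde P$ on $\partial\Omega$ to the flat-space boundedness results already established, namely Theorem~\ref{lem:HsBddnes} for the symbols $p_j(x',D_{x'})$, together with the pull-back mapping properties collected in Lemma~\ref{lem:Pullbacks}. Concretely, for the statement about $P$, I would read off from the definition \eqref{eq:DefnP} that each summand factors as
\[
u \;\longmapsto\; \varphi_j u \;\longmapsto\; F_{j,0}^{\ast}(\varphi_j u) \;\longmapsto\; p_j(x',D_{x'})F_{j,0}^{\ast}(\varphi_j u) \;\longmapsto\; F_{j,0}^{-1,\ast}p_j(x',D_{x'})F_{j,0}^{\ast}(\varphi_j u)\;\longmapsto\; \psi_j(\cdots),
\]
and then chase the regularity indices through this chain. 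Multiplication by $\varphi_j$ maps $H^{s+m'}(\partial\Omega)$ to itself (this uses $\varphi_j\in B^{M-\frac12}_{p,2}$ and the product estimate \eqref{eq:BesselProd2}, valid in the admissible range $|s+m'|\le M-\frac12$); the pull-back $F_{j,0}^{\ast}$ sends $H^{s+m'}(\partial\Omega)$ into $H^{s+m'}(\R^{n-1})$ for $-M+\frac32\le s+m'\le M-\frac12$ by Lemma~\ref{lem:Pullbacks}; the operator $p_j(x',D_{x'})$ maps $H^{s+m'}(\R^{n-1})$ to $H^{s}(\R^{n-1})$ for $-r<s\le r$ by Theorem~\ref{lem:HsBddnes} (with $n$ replaced by $n-1$); the pull-back $F_{j,0}^{-1,\ast}$ returns $H^{s}(\R^{n-1})$ to $H^{s}(\partial\Omega)$ for $-M+\frac32\le s\le M-\frac12$; and finally multiplication by $\psi_j\in B^{M-\frac12}_{p,2}$ preserves $H^{s}(\partial\Omega)$ in the admissible range. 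Composing these, each summand is bounded $H^{s+m'}(\partial\Omega)\to H^{s}(\partial\Omega)$ precisely on the index set $-r<s\le r$, $s,\,s+m'\in[-M+\frac32,M-\frac12]$, and summing over $j=1,\dots,J$ gives the claim for $P$.

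For $\widetilde P$ the argument is identical except that the final pull-back $F_{j,0}^{-1,\ast}$ is replaced by $\widetilde F_{j,0}^{-1,\ast}=\widetilde F_{\gamma_j,0}^{-1,\ast}$, and by the second line of Lemma~\ref{lem:Pullbacks} this operator is bounded $H^{s}(\R^{n-1})\to H^{s}(\partial\Omega)$ on the \emph{shifted} range $-M+\frac12\le s\le M-\frac32$. This accounts exactly for the difference in the hypotheses: the target index $s$ is now constrained to lie in $[-M+\frac12,M-\frac32]$ rather than $[-M+\frac32,M-\frac12]$, while the source-side constraint $s+m'\in[-M+\frac32,M-\frac12]$ (coming from $F_{j,0}^{\ast}$ and the cutoff $\varphi_j$) is unchanged, and the pseudodifferential-operator constraint $-r<s\le r$ is also unchanged. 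One should also check that the modified pull-back $\widetilde F_{j,0}^{-1,\ast}$ commutes appropriately with, or at least interacts harmlessly with, multiplication by $\psi_j$; since $\psi_j\equiv 1$ on $\supp\varphi_j$ and the composition $\psi_j\widetilde F_{j,0}^{-1,\ast}p_j(\cdots)F_{j,0}^{\ast}\varphi_j$ already has its spatial support localized in $U_j$, this is a routine bookkeeping point using that $\psi_j,\kappa\in B^{M-\frac12}_{p,2}$ and \eqref{eq:BesselProd2}.

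The main obstacle — though it is more a matter of care than of depth — is the precise matching of index ranges at the three interfaces in the composition chain, since a loose bound would give a strictly smaller range than the sharp one stated. In particular one must verify that the cutoff multiplications by $\varphi_j$ and $\psi_j$ do not cost any smoothness in the relevant ranges: this is where \eqref{eq:BesselProd2} with $p\ge 2$ and $r=M-\frac12-\frac{n-1}p\cdot 0$… more carefully, where the hypothesis $M-\frac32-\frac{n-1}p>0$ guarantees that $B^{M-\frac12}_{p,2}(\R^{n-1})$ is a pointwise multiplier on $H^{\sigma}(\R^{n-1})$ for $|\sigma|\le M-\frac12$, after transferring to flat charts via $F_{j,0}^{\ast}$. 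A secondary point worth a sentence is well-definedness of $P$ and $\widetilde P$ independently of the choices of charts, cutoffs, and the (finitely many) terms, but since the statement only asserts boundedness for operators given by the explicit formulas \eqref{eq:DefnP}, \eqref{eq:DefnPt}, it suffices to treat those formulas term by term as above.
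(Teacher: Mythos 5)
Your proof is correct and follows essentially the same route as the paper, whose proof is simply the one-line remark that the result follows from Theorem~\ref{lem:HsBddnes}, Lemma~\ref{lem:Pullbacks} and local charts; your index-chasing through the factorization $\psi_j\,F_{j,0}^{-1,\ast}\,p_j(x',D_{x'})\,F_{j,0}^{\ast}\,\varphi_j$ (with $\widetilde F_{j,0}^{-1,\ast}$ and its shifted range $[-M+\frac12,M-\frac32]$ for $\widetilde P$) is exactly the intended argument, made explicit.
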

\begin{proof} 
  The proof follows immediately from Theorem~\ref{lem:HsBddnes} and Lemma~\ref{lem:Pullbacks} and local charts.
\end{proof}

In particular, 
we can define differential operators with $H^r_q$-coefficients in the manner above.

\begin{rem}
  We will not address the question of invariance under coordinate
  transformation of nonsmooth pseudodifferential operators. Therefore
  we will also not show that the definition (\ref{eq:DefnP}) does not
  depend in an essential way on the choice of the charts and the cut-off
  functions $\varphi_j,\psi_j$.  
\end{rem}

We recall from \cite[Corollary~3.4]{MarschallSobolevCoeff}:
\begin{thm}\label{thm:PsDOCompo} 
  Let $p_j\in H^r_q S^{m_j}_{1,0}(\Rn\times\Rn)$, $m_j\in \R$, $j=1,2$, $2\leq q<\infty$ and $r>\frac{n}q$. Then for every $0<\tau\leq 1$ with $\tau < r-\frac{n}q$
  \begin{equation*}
    p_1(x,D_x)p_2(x,D_x)- (p_1p_2)(x,D_x)\colon H^{s+m_1+m_2-\tau}(\Rn)\to H^s(\Rn) 
  \end{equation*}
  is a bounded operator
  provided that
  \begin{equation}\label{eq:Cond1}
 s,s+m_1\in (-r+\tau, r].
  \end{equation}
\end{thm}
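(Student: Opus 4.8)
The plan is to prove this composition rule by the usual symbol computation for operators on $\Rn$ whose symbols are regular only in the space variable: exhibit $(p_1p_2)(x,D_x)$ as the leading term of $p_1(x,D_x)p_2(x,D_x)$ and estimate the remainder after reducing each error term — by a Littlewood--Paley decomposition of the coefficients in the $x$-variable — to the classical smooth symbolic calculus. I would begin from the amplitude-operator form of the composition: for $u\in\SD(\Rn)$ one has, with the normalizing constant suppressed,
\[
 p_1(x,D_x)p_2(x,D_x)u(x)=\iint e^{i(x-y)\cdot\xi}\,p_1(x,\xi)\,p_2(y,\xi)\,u(y)\sd y\sd\xi ,
\]
while $(p_1p_2)(x,D_x)u$ is the same oscillatory integral with $p_2(y,\xi)$ replaced by $p_2(x,\xi)$; hence $R:=p_1(x,D_x)p_2(x,D_x)-(p_1p_2)(x,D_x)$ is the amplitude operator with amplitude $p_1(x,\xi)\bigl(p_2(y,\xi)-p_2(x,\xi)\bigr)$. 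The point is that this amplitude vanishes on the diagonal $y=x$: since $r>\tfrac nq$ and $r-\tfrac nq>\tau$, the embedding \eqref{eq:BesselEmb} gives $p_2(\cdot,\xi)\in\mathcal C^{r-n/q}(\Rn)$ with norm $\lesssim\weight\xi^{m_2}$, so $|p_2(y,\xi)-p_2(x,\xi)|\le C|x-y|^{\tau}\weight\xi^{m_2}$ for $|x-y|\le1$ (using also $\tau\le1$), and this Hölder vanishing is what produces the gain of $\tau$ derivatives in the order of $R$.

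To make this quantitative I would decompose each symbol dyadically in $x$, $p_j(x,\xi)=\sum_{k\ge0}p^{(k)}_j(x,\xi)$ with $\F_x p^{(k)}_j(\cdot,\xi)$ supported in $\{|\zeta|\le 2^{k+1}\}$ (in an annulus of size $2^k$ for $k\ge1$). Each $p^{(k)}_j(\cdot,\xi)$ is then band-limited, hence $C^\infty$, in $x$, and Bernstein's inequality together with $p_j\in H^r_qS^{m_j}_{1,0}$ gives
\[
 \|\partial_x^\beta p^{(k)}_j(\cdot,\xi)\|_{L_\infty(\Rn)}\le C_\beta\,2^{k(|\beta|+\frac nq)}\,c^{(j)}_k\,\weight\xi^{m_j}\qquad(\xi\in\Rn),
\]
with the analogous bounds for $\xi$-derivatives, where the sequences $\bigl(2^{kr}c^{(j)}_k\bigr)_k$ are bounded. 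Correspondingly $R=\sum_{k,l\ge0}R_{k,l}$, with $R_{k,l}:=p^{(k)}_1(x,D_x)p^{(l)}_2(x,D_x)-(p^{(k)}_1p^{(l)}_2)(x,D_x)$.

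For each $(k,l)$ the two symbols are genuinely smooth in $x$, so the classical calculus is available for $R_{k,l}$. The per-piece bound I would establish is
\[
 \|R_{k,l}\|_{H^{s+m_1+m_2-\tau}\to H^{s}}\le C\,2^{k\frac nq}c^{(k)}_1\,2^{l(\tau+\frac nq)}c^{(l)}_2 ,
\]
obtained by interpolating the trivial estimate $\|R_{k,l}\|_{H^{s+m_1+m_2}\to H^s}\lesssim 2^{k\frac nq}c^{(k)}_1\,2^{l\frac nq}c^{(l)}_2$ — a difference of two operators of $\xi$-order $m_1+m_2$, controlled through the sharp $L^2$-boundedness of operators band-limited in $x$ — against an estimate with a genuine gain in order, obtained from the Hölder vanishing of $p^{(l)}_2(y,\xi)-p^{(l)}_2(x,\xi)$ in the amplitude via the smooth calculus; this is legitimate for $s,s+m_1\in(-r+\tau,r]$, the range in which the constituent operators and their $\weight{D_x}$-conjugates remain bounded by Theorem~\ref{lem:HsBddnes}, the left endpoint moving from $-r$ to $-r+\tau$ because the cancellation trades one unit of regularity of $p_2$ for order. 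Summation then closes the argument: $\sum_k 2^{k\frac nq}c^{(k)}_1=\sum_k 2^{k(\frac nq-r)}\bigl(2^{kr}c^{(k)}_1\bigr)<\infty$ since $r>\tfrac nq>0$, and $\sum_l 2^{l(\tau+\frac nq)}c^{(l)}_2=\sum_l 2^{l(\tau+\frac nq-r)}\bigl(2^{lr}c^{(l)}_2\bigr)<\infty$ precisely because $\tau<r-\tfrac nq$; hence $R=\sum_{k,l}R_{k,l}$ converges in operator norm $H^{s+m_1+m_2-\tau}\to H^s$, and by density the estimate extends from $\SD(\Rn)$ to $H^{s+m_1+m_2-\tau}(\Rn)$.

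The step I expect to be the main obstacle is the sharp per-piece estimate, specifically keeping the loss on the $p_2$-side equal to $2^{l(\tau+\frac nq)}$: applying the symbolic remainder formula naively brings in several $x$-derivatives of $p^{(l)}_2$ and hence a larger power $2^{lN}$, which would only give the suboptimal condition $\tau<(r-\tfrac nq)/N$. Getting the exact exponent requires working with the Hölder-difference structure of the amplitude directly and performing the interpolation at the level of operators, together with the sharp $L^2$-bound for $x$-band-limited operators so that no power of $2^k$ beyond the Bernstein factor $2^{k\frac nq}$ is introduced; this bookkeeping, and the identification of the precise Sobolev range, is carried out in detail in Marschall's proof of \cite[Corollary~3.4]{MarschallSobolevCoeff}.
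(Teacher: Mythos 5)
Your opening identity is false, and the whole reduction rests on it. With the paper's convention \eqref{eq:x-form} (left quantization), one has, constants suppressed,
$p_1(x,D_x)p_2(x,D_x)u(x)=\iiint e^{i(x-z)\cdot\xi}e^{iz\cdot\eta}\,p_1(x,\xi)\,p_2(z,\eta)\,\hat u(\eta)\,d\eta\,dz\,d\xi$:
the second symbol is evaluated at the \emph{incoming} frequency $\eta$, not at the outgoing frequency $\xi$. The formula you start from, $\iint e^{i(x-y)\cdot\xi}p_1(x,\xi)p_2(y,\xi)u(y)\,dy\,d\xi$, is the composition of $p_1(x,D_x)$ with the \emph{right}-quantized operator with symbol $p_2$, which differs from $p_1(x,D_x)p_2(x,D_x)$ unless $p_2$ is $x$-independent. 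Test it with $p_1\equiv 1$: then $R=p_1(x,D_x)p_2(x,D_x)-(p_1p_2)(x,D_x)=0$, whereas your ``amplitude operator with amplitude $p_2(y,\xi)-p_2(x,\xi)$'' is the nonzero difference between the right and left quantizations of $p_2$. So the operator you go on to estimate is not the remainder in the theorem. The true remainder has no pointwise H\"older-difference structure in a single frequency variable; to extract the gain of $\tau$ one must expand in the frequency mismatch $\xi-\eta$ (a Taylor step, or a second Littlewood--Paley decomposition in $\xi$ alongside the one in $x$), and it is precisely this interaction between the $x$-frequencies of $p_2$ and the $\xi$-frequencies carried by $p_1$ that forces the restrictions $s,s+m_1\in(-r+\tau,r]$ in \eqref{eq:Cond1}. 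Your dyadic decomposition in $x$ and the final bookkeeping $\sum_k 2^{k(n/q-r)}<\infty$, $\sum_l 2^{l(\tau+n/q-r)}<\infty$ are in the right spirit --- it is essentially Marschall's bookkeeping --- but as written they are attached to the wrong operator.

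Beyond this, the step you yourself identify as the crux, the sharp per-piece bound with loss exactly $2^{l(\tau+n/q)}$, is not proved but delegated to Marschall's proof of \cite[Corollary~3.4]{MarschallSobolevCoeff}. Note that the paper offers no proof of Theorem~\ref{thm:PsDOCompo} at all: it states the result as a quotation of that corollary, adding only the remark that when the $p_j(x,D_x)$ are differential operators the statement can be obtained by elementary (if lengthy) Sobolev-embedding estimates. So your proposal, as it stands, neither reproduces the cited proof correctly nor constitutes an independent one: the first identity fails, and the decisive estimate is borrowed. A repaired version would have to start from the correct double-symbol representation, perform the paradifferential decomposition in both $x$ and $\xi$, prove the per-piece bounds (this is where Theorem~\ref{lem:HsBddnes} and the endpoint shift from $-r$ to $-r+\tau$ genuinely enter), and only then sum as you do.
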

Note here that if the $p_j(x,D_x)$ are differential operators, then
Theorem~\ref{thm:PsDOCompo} can be proved by elementary but lengthy
estimates using Sobolev embeddings. As one consequence of the 
theorem one has that 
\begin{equation}\label{eq:Reduction1}
  \sum_{|\alpha|,|\beta|\leq m } D_x^\alpha (a_{\alpha,\beta}(x) D_x^\beta u)=  \sum_{|\alpha|,|\beta|\leq m } a_{\alpha,\beta}(x) D_x^{\alpha+\beta} u+ R u,
\end{equation}
where 
\begin{equation}\label{eq:Reduction2}
  R\colon H^{s+2m-\tau}(\Omega)\to H^s(\Omega) \qquad \text{if}\ -r+\tau <s \leq  r-m 
\end{equation}
provided that $a_{\alpha,\beta}\in H^r_q(\Omega)$ and $\Omega$ is a
Lipschitz domain. In fact, the statement in the case $\Omega=\Rn$
follows from Theorem~\ref{thm:PsDOCompo}, and then for a general Lipschitz
domain $\Omega$ one can obtain the statement by extension to $\Rn$.

\subsection{Green's formula for second order boundary value problems}\label{subsec:Green}
Since the smoothness properties of the coefficients in Green's formula
for general $2m$-order operators are quite complicated to analyse and
would take up much space, we
shall in the present paper  restrict the attention to the second-order
case from here on (expecting to take up higher-order problems elsewhere).

Consider a second order strongly elliptic operator $A$, 
\begin{equation}\label{eq:DefnA}
  A u = -\sum_{j,k=1}^n \partial_{x_j} (a_{jk}
  \partial_{x_k} u) +\sum_{j=1}^n a_j\partial_{x_j}u+ a_0u,
\end{equation}
with
\begin{equation}\operatorname{Re}\sum_{j,k=1}^n a_{jk}(x)\xi _j\xi
  _k\ge c_0|\xi |^2, \text{ all }x\in\Omega,\,\xi \in \Rn;\label{str.ell}
\end{equation}
$c_0>0$. We assume that the  $a_{jk}$ and $a_j$ are in
$H^1_q(\Omega)$ and $a_0\in L_q(\Omega)$, where $q\ge 2$ and $1-\frac{n}q>0$, 
 and we apply the expression to $u\in H^2(\Omega)$.  
In view of \eqref{eq:BesselProd},
\begin{equation*}
  \|fg\|_{H^s(\Omega)} \leq C_{q}\|f\|_{H^1_q(\Omega)}\|g\|_{H^s(\Omega)}\qquad \text{for all}\ |s|\leq 1;
\end{equation*} 
hence
\begin{equation}\label{eq:MappingA}
  A \colon H^{2+s}(\Omega)\to H^s(\Omega)\qquad \text{for all}\ s\in [-2,0].
\end{equation}

Concerning the domain $\Omega $, we assume that $\Omega $ is as in
Definition \ref{bdryreg} $2^\circ$ or $3^\circ$ with $M=2$ (so 
$\partial\Omega \in 
B^{\frac32}_{p,2}$ and $\frac12-\frac{n-1}p>0$, in particular $p > 2$).

Denote $\partial_\vn=\vn\cdot \partial$, the normal derivative, where $\vn=(n_1,\ldots,n_n)$ is the interior unit normal  on
$\partial\Omega$. We shall 
denote $\partial_\tau =\pr_\tau \partial$, where
$\pr_{\tau}=I-\vn\otimes \vn$; the ``tangential gradient''.
(Here $\vn\otimes\vn$ is the matrix $(n_jn_k)_{j,k=1,\dots,n}=\vn\,
\vn^T$, $\vn$ used as a column vector.) 
Setting $\partial_{\tau ,j}=e_j\cdot \partial_\tau $, we have (at
points of $\partial\Omega $), since 
\[
\partial_{\tau ,j}u=e_j\cdot \partial_\tau u=e_j\cdot \partial 
 u-e_j\, \vn \,\vn^T \partial u=\partial_{x_j}u-n_j \vn\cdot \partial u,
\]
that
\begin{equation}\label{eq:dtauj}
\partial_{x_j}u=n_j\partial_\vn u+\partial_{\tau ,j}u.
\end{equation}
 When $\xi\in \C^n$, 
we set $\xi_\tau=\pr_\tau \xi=(I-\vec
 n\otimes\vec n)\xi $. For $j\in\N_0$, we define
\begin{equation*}
  \gamma_j u := \big((\vn\cdot \partial_x)^j
  u\big)|_{\partial\Omega}=\partial_\vn^j u|_{\partial\Omega}.
\end{equation*}

By our assumptions, $\vec n\in
B^{\frac12}_{p,2}(\partial\Omega)^n\hookrightarrow
H^{\frac12}_p(\partial\Omega)^n$ (cf.\ 
\eqref{eq:BesovEmb4}, recall that $p\geq 2$). The product rule
\eqref{eq:BesselProd} applies with $r=s=\frac12$, $p=q$, $n$ replaced
by $n-1$, to show that $H^{\frac12}_p(\partial\Omega)$ is an algebra 
with respect to pointwise multiplication. The rule
\eqref{eq:BesselProd} also applies with $p=2$, $r=\frac12$, $n$ replaced by $n-1$
and $q$ replaced by our general $p$, to show that
multiplication by elements of
$H^{\frac12}_p(\partial\Omega)$ preserves $H^{s}(\partial\Omega)$
for $|s|\le \frac12$.

Then since $\gamma _0\partial_{x_j}\colon H^s(\Omega )\to
H^{s-\frac32}(\partial\Omega )$ continuously for $s\in (\frac32,2]$,
 $\gamma _1\colon H^s(\Omega )\to
H^{s-\frac32}(\partial\Omega )$ continuously for $s\in (\frac32,2]$.

Let 
\begin{equation*}
a'_{jk}(x)= \ol{a_{kj}(x)},\quad a'_j(x)= \ol{a_j(x)},\quad a_0'(x)=\ol{a_0(x)}-\sum_{j=1}^n \partial_{x_j}\ol{a_j(x)}  
\end{equation*}
 for all $x\in
\Omega$, $j,k=1,\ldots,n$, and let $A'$ denote the operator
defined as in (\ref{eq:DefnA}) with $a_{jk},a_j,a_0$ replaced by
$a'_{jk},a'_j,a_0'$; it is the formal adjoint of $A$.

It will be convenient for the following to have
$\frac12-\frac{n-1}p\le 1-\frac nq$; this can be achieved by replacing
$p$ by a smaller $p> 2$. Then we can set $\tau =\frac12-\frac{n-1}p$
as in Assumption \ref{tau-assumption}. To sum up, we make the
following assumption:

\begin{assumption}\label{tau-assumption2} $n\ge 2$, $2< q\le \infty
  $ and $2< p<\infty $, with 
\begin{equation}
1-\tfrac nq\ge\tfrac12-\tfrac{n-1}p> 0 ;\quad \tau:=\tfrac12-\tfrac{n-1}{p}.\label{asspt2}
\end{equation}
The domain $\Omega $ is as in Definition  {\rm \ref{bdryreg}}
$2^\circ$ or $3^\circ$, with boundary $\partial\Omega$ of regularity 
$B^{\frac32}_{p,2}$.
In \eqref{eq:DefnA}, the coefficients $a_{jk}$ and $a_j$ are in
$H^1_q(\Omega )$ and $a_0\in L_q(\Omega )$.
\end{assumption}

The inequalities for $q$ and $p$ mean that $(\frac n q, \frac {n-1}
p)$ belongs to the polygon $\{ (x,y) : 0\le x<1, 0<y<\frac12, y\ge
x-\frac12\}$. All $q>n$ and $p>2(n-1)$ can occur (but not all at the
same time).
Under Assumption \ref{tau-assumption2},
\begin{equation*} 
  B^{1-\tfrac1q}_{q,q}(\partial\Omega)+ B^{\frac12}_{p,2}(\partial\Omega)\hookrightarrow H^{\frac12}_p(\partial\Omega)%\cap  B^{\frac12}_{p,q}(\partial\Omega),
\end{equation*}
by (\ref{eq:SobolevEmb}), where we also use that
$B^{\frac12}_{p,2}\hookrightarrow H^{\frac12}_p$.
Recall from Remark \ref{rem:holder} that the boundary regularity $B^{\frac32}_{p,2}$ includes
$C^{\frac32+\varepsilon }$ and is included in  $ C^{1+\tau }$.

\begin{thm}\label{thm:Green} Under Assumption {\rm \ref{tau-assumption2}},
the following Green's formula holds:
  \begin{equation}\label{Green}
    (Au,v)_\Omega - (u,A'v)_{\Omega} %= (\mathcal A\varrho u,\varrho v)_{\partial\Omega }
=(\chi u, \gamma_0
v)_{\partial\Omega}-(\gamma_0 u, \chi' v)_{\partial\Omega}, 
  \end{equation}
  for all $u,v\in H^2(\Omega)$. 
 Here, setting   $B= (a_{jk})_{j,k=1}^n$, $B'= (a_{jk}')_{j,k=1}^n$,
 and $b'_0= \sum_{j=1}^n n_ja_j' $, and defining 
  \begin{align*} %%% kuerzen??
    s_0&=\sum_{j,k=1}^n a_{jk}(x) n_j n_k= \vn^TB\vn ,\\
 {\cal A}_1&= b_1\cdot \partial_\tau, \text{ with } b_1=(\vn^T B)_\tau,
 \quad{\cal A}_1'=b_1'\cdot \partial_\tau+  b_0',\text{ with } b_1'=(\vn^T B')_\tau, \\
  \end{align*}
we have that
  \begin{equation}
      \chi= s_0 \gamma_1 +  {\cal A}_1\gamma_0,\quad
      \chi'= \overline s_0 \gamma_1 +  {\cal A}_1'\gamma_0.\label{eq:conormal}
  \end{equation}
 Here $s_0, b'_0\in
H^{\frac12}_p(\partial\Omega)$,  
$b_1,b_1'\in H^{\frac12}_p(\partial\Omega)^n$.

Furthermore, 
$s_0$ is invertible with $s_0^{-1}$ likewise in
$H^{\frac12}_p(\partial\Omega)$.

\end{thm}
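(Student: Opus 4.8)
The plan is to establish \eqref{Green} by integrating by parts and to verify the regularity claims about $s_0,b_1,b_1',b_0'$ separately; the formulas \eqref{eq:conormal} drop out of the integration by parts. First I would prove \eqref{Green} for $u,v$ in the dense subspace $\{w|_\Omega:w\in C_0^\infty(\Rn)\}$ of $H^2(\Omega)$ (dense via the extension operator $E_\Omega$ on the Lipschitz domain $\Omega$ followed by mollification). For such $u,v$, every function arising after one integration by parts — typically $a_{jk}(\partial_{x_k}u)\bar v$ or $a_ju\bar v$ — is a compactly supported element of $H^1_q(\Omega)$ by the multiplication estimate \eqref{eq:BesselProd}, hence lies in $H^1(\Omega)$, so Gauss's formula \eqref{eq:Gauss}, valid on $H^1(\Omega)^n$, applies. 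Integrating the principal part $-\sum_{j,k}\partial_{x_j}(a_{jk}\partial_{x_k}u)$ by parts twice (first transferring $\partial_{x_j}$ onto $\bar v$, then returning a derivative onto $v$ in accordance with the structure of $A'$, so that the two interior integrals $\int_\Omega a_{jk}(\partial_{x_k}u)(\partial_{x_j}\bar v)$ cancel) and integrating the first-order part $\sum_ja_j\partial_{x_j}u$ by parts once, one is left with only boundary integrals.

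Reading off those boundary integrals: the principal part of $A$ yields $\int_{\partial\Omega}\big((\vn^TB)\!\cdot\!\nabla u\big)\bar v\,d\sigma$, into which I would substitute the splitting $\partial_{x_k}u=n_k\gamma_1u+\partial_{\tau,k}\gamma_0u$ of \eqref{eq:dtauj}; using that $\pr_\tau$ is a symmetric idempotent, this becomes the pairing of $s_0\gamma_1u+b_1\!\cdot\!\partial_\tau\gamma_0u$ against $\gamma_0v$ with $s_0=\vn^TB\vn$, $b_1=(\vn^TB)_\tau$. The companion term carrying the derivatives of $v$ produces $\overline{s_0}\gamma_1v+b_1'\!\cdot\!\partial_\tau\gamma_0v$ (one checks $\vn^TB'\vn=\overline{s_0}$), and the first-order part of $A$ adds the term $-(\gamma_0u,b_0'\gamma_0v)_{\partial\Omega}$ with $b_0'=\sum_jn_ja_j'$; this is \eqref{eq:conormal}. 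I would then pass from the dense subspace to all of $H^2(\Omega)$ by continuity: the left-hand side of \eqref{Green} is continuous on $H^2(\Omega)^2$ since $A,A'\colon H^2(\Omega)\to L_2(\Omega)$ by \eqref{eq:MappingA}, and the right-hand side is continuous since $\gamma_0\colon H^2(\Omega)\to H^{3/2}(\partial\Omega)$, $\gamma_1\colon H^2(\Omega)\to H^{1/2}(\partial\Omega)$, and multiplication by $H^{1/2}_p(\partial\Omega)$ preserves $H^s(\partial\Omega)$ for $|s|\le\frac12$ (recalled before the theorem); this last fact also shows $\chi u,\chi'v\in H^{1/2}(\partial\Omega)$ are well defined.

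For the regularity of the coefficients: since $\partial\Omega\in B^{3/2}_{p,2}\subset C^{1,\tau}$ by Remark \ref{rem:holder}, in local coordinates $\nabla\gamma\in B^{1/2}_{p,2}(\R^{n-1})$, which, because $\tau=\tfrac12-\tfrac{n-1}p>0$, is a multiplicative algebra contained in $L_\infty$; the composition rule of Section 2.2 then gives $\vn$ with components in $B^{1/2}_{p,2}(\partial\Omega)\hookrightarrow H^{1/2}_p(\partial\Omega)$. The traces $a_{jk}|_{\partial\Omega}$ and $a_j|_{\partial\Omega}$ lie in $B^{1-1/q}_{q,q}(\partial\Omega)$ by the trace theorem for $H^1_q$ on the $C^{1,\tau}$-domain $\Omega$, hence in $H^{1/2}_p(\partial\Omega)$ by the embedding \eqref{eq:SobolevEmb}, which is available precisely because $1-\tfrac nq\ge\tfrac12-\tfrac{n-1}p$ (Assumption \ref{tau-assumption2}). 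As $H^{1/2}_p(\partial\Omega)$ is an algebra (by \eqref{eq:BesselProd} with $r=s=\tfrac12$, $p=q$, dimension $n-1$, using $\tau>0$) and each of $s_0,b_1,b_1',b_0'$ is a finite sum of products of such traces with components of $\vn$ (the projection $\pr_\tau=I-\vn\otimes\vn$ only bringing in further $n_j$-factors), they all belong to $H^{1/2}_p(\partial\Omega)$.

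For invertibility of $s_0$: the strong ellipticity inequality \eqref{str.ell}, extended to $\comega$ by continuity of the $a_{jk}$, evaluated at the real unit vector $\xi=\vn(x)$ gives $\re s_0(x)=\re\sum_{j,k}a_{jk}(x)n_j(x)n_k(x)\ge c_0>0$ on $\partial\Omega$, so $|s_0|\ge c_0$ there; hence $s_0^{-1}$ is defined, and since $s_0\in H^{1/2}_p(\partial\Omega)$ with $\tfrac12>\tfrac{n-1}p$, the pointwise inversion result of Section 2.2 (applied in local coordinates, after a cutoff making the localized function globally bounded below) yields $s_0^{-1}\in H^{1/2}_p(\partial\Omega)$. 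I expect the main obstacle to be the regularity step: one needs $a_{jk}|_{\partial\Omega}$ — a priori only $\mathcal C^{1-n/q}$, possibly with exponent below $\tfrac12$ — to land in the multiplier class $H^{1/2}_p(\partial\Omega)$, which hinges on the sharp $H^1_q$-trace theorem on the nonsmooth boundary together with \eqref{eq:SobolevEmb} holding exactly under the coupling imposed in Assumption \ref{tau-assumption2}; the bookkeeping in the integration-by-parts steps (keeping every intermediate integrand regular enough for \eqref{eq:Gauss} and matching the normal/tangential split correctly) is routine but must be done carefully.
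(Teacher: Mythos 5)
Your proposal is correct, but it reaches \eqref{Green} by a different limiting strategy than the paper. You fix the nonsmooth coefficients and approximate $u,v$: you prove the identity for $u,v\in r_\Omega C_0^\infty(\Rn)$ by integrating by parts directly, checking that each intermediate vector field (e.g.\ $(a_{jk}\partial_{x_k}u)\bar v$, $a_ju\bar v$) is a compactly supported $H^1(\Omega)$ function so that the $H^1$-version of Gauss's formula \eqref{eq:Gauss} applies, and then pass to general $u,v\in H^2(\Omega)$ by density, using \eqref{eq:MappingA} and the multiplier property of $H^{\frac12}_p(\partial\Omega)$ on $H^{\pm\frac12}(\partial\Omega)$ for continuity of both sides. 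The paper instead fixes $u,v\in H^2(\Omega)$ and approximates the coefficients: it starts from the smooth case, invokes McLean's Theorem~4.4 for $C^{0,1}$-coefficients on a Lipschitz domain, smooths $a_{jk},a_j,a_0$ to $a_{jk}^\eps,a_j^\eps\in C^{0,1}$, $a_0^\eps\in L_\infty\cap L_q$, and passes to the limit $\eps\to0$ using \eqref{eq:BesselProd} (with $s=r=1$, $p=2$) and \eqref{eq:a0Estim}. Your route buys independence from the cited Lipschitz-coefficient result at the cost of verifying $H^1$-regularity of every integrand and redoing the boundary bookkeeping with merely continuous traces of the coefficients and an a.e.\ defined normal (unproblematic here since $H^1_q(\Omega)\hookrightarrow C^{\tau}$, $q>n$, and $\partial\Omega\in C^{1+\tau}$); the paper's route buys a shorter argument at the cost of controlling all terms under coefficient convergence. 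The derivation of \eqref{eq:conormal} via the split \eqref{eq:dtauj}, the coefficient regularity $s_0,b_0',b_1,b_1'\in H^{\frac12}_p(\partial\Omega)$ (traces of $H^1_q$-coefficients in $B^{1-\frac1q}_{q,q}(\partial\Omega)$, $\vn\in B^{\frac12}_{p,2}(\partial\Omega)^n$, the embedding into $H^{\frac12}_p(\partial\Omega)$ and its algebra property), and the invertibility of $s_0$ via strong ellipticity plus the pointwise inversion results of Section~\ref{sec:Inversion} coincide with what the paper uses.
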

\begin{proof}
  It is well-known that when coefficients and boundary are smooth, then 
the Gauss formula \eqref{eq:Gauss} implies  \begin{equation*}
      (Au,v)_\Omega - (u,A'v)_{\Omega} =(\chi u, \gamma
v)_{\partial\Omega}-(\gamma u, \chi' v)_{\partial\Omega},
  \end{equation*}
  for all $u,v\in H^2(\Omega)$, where
  \begin{align*}
    \chi u &= \sum_{j,k=1}^n n_j \gamma_0 a_{jk}\partial_{x_k} u,\\
    \chi' u &= \sum_{j,k=1}^n n_j \gamma_0 a_{jk}'\partial_{x_k} u+
    \sum_{j=1}^n n_j  \gamma _0a_j' u.
  \end{align*}
Here we can write, using \eqref{eq:dtauj},
\begin{align*} \chi u &=\sum_{j,k=1}^n n_j  a_{jk}\gamma _0 \partial _{x_k} u=
\sum_k (\vn^T B)_k \gamma _0(\partial_{\tau ,k}u+n_k\partial_\vn u)\\
&
=\vn^T B \vn \gamma _1 u+\vn^T B \partial_\tau \gamma _0u=\vn^T B \vn \gamma _1 u+(\vn^T B )_\tau \cdot\partial_\tau \gamma _0u;
\end{align*}
similarly, 
\[
\chi' u = \sum_{j,k=1}^n n_j \gamma_0 a_{jk}'\partial_{x_k} u+
    \sum_{j=1}^n n_j  a_j'\gamma_0 u=\vn^T B '\vn \gamma _1 u+(\vn^T B')_\tau \cdot \partial_\tau \gamma _0u+\vn^T  b'_0\gamma_0 u.
\]
This shows the asserted formulas in the smooth case.

The validity is extended in
\cite[Theorem~4.4]{McLean} to the case where $a_{jk},a_j\in
  C^{0,1}(\ol{\Omega})$ and 
$a_0\in L_\infty(\Omega)$, and  $\Omega $ is a bounded Lipschitz
domain. The unbounded cases in Definition \ref{bdryreg} are 
included by adding the appropriate (trivial) coordinate charts.

  The case $a_{jk},a_j\in H^1_q(\Omega)$, $a_0\in L_q(\Omega)$ can 
then   easily be proved by first replacing $a_{jk},a_j$, and $a_0$ by some 
smoothed $a_{jk}^\eps,a_j^\eps\in C^{0,1}(\ol{\Omega})$ and 
$a_0^\eps\in L_\infty(\Omega)\cap L_q(\Omega )$ such that $a_{jk}^\eps
\to_{\eps\to 0} a_{jk}$ and $a_j^\eps\to_{\eps\to 0} a_j$ in 
$H^1_q(\Omega)$ and $a_0^\eps\to a_0$ in $L_q(\Omega)$ and then
passing 
to the limit $\eps\to 0$. For this argument one uses
(\ref{eq:BesselProd}) 
with $s=r=1$ and $p=2$ to pass to the limit in all terms involving 
$a_{jk}, a_{jk}', a_j$, and $a_j'$. To pass to the limit in the term 
involving $a_0$ one uses $H^{\frac{n}q}(\Omega)\hookrightarrow L_r(\Omega)$ with $\frac1r= \frac12 - \frac1q$ since $\frac1q<\frac1n$, which implies
  \begin{equation}\label{eq:a0Estim}
    \|a_0 u\|_{L_2(\Omega)}\leq C\|a_0\|_{L_q(\Omega)}\|u\|_{H^{\frac{n}q}(\Omega)}.
  \end{equation}

Finally, since $A$ is strongly elliptic,
 $|s_0(x)|\geq C>0$ for all $x\in \ol{\Omega}$. Then $s_0^{-1}\in
 H^{\frac12}_p(\partial\Omega)$ because of the results at the end of
 Section~\ref{sec:Inversion}.  
\end{proof}

Note that since the coefficients in the trace operators $\chi $ and $\chi '$ are in $
H^{\frac12}_p(\partial\Omega)$, $\chi $ and $\chi '$ map $H^s(\Omega
)$ continuously into $H^{s-\frac32}(\partial\Omega )$ for $s\in (\frac32,2]$.

We shall also need a result on localization of $\chi$ and $\chi '$, and
their surjectivity. 

\begin{corollary}\label{cor:TraceChi}
  Let $\chi$ and $\chi'$ be as in the preceding theorem and let
  $U\subset\Rn$ be such that $\Omega\cap U$ coincides with
  $\Rn_\gamma\cap U$ (after a suitable rotation), where $\gamma\in
  B^{\frac32}_{p,2}(\R^{n-1})$. 
Then there is a trace operator 
  \begin{equation*}
    t(x',D_x)=s_1(x') \gamma_0\partial_{x_n} +  \sum_{|\alpha|\leq 1} c_\alpha(x') D_{x'}^\alpha\gamma_0, 
  \end{equation*}
  where $s_1, c_\alpha\in H^{\frac12}_p(\R^{n-1})$ for all $|\alpha|\leq 1$, such that
  \begin{equation}\label{eq:chiRepr}
    \chi (\psi u )= \eta F^{-1,\ast}_{\gamma,0} t(x',D_x)
    F^\ast_\gamma(\psi u )
  \end{equation}
  for any $u\in H^2(\Omega)$ and $\psi,\eta\in C_0^\infty(U)$ with
  $\eta \equiv 1$ on $\supp \psi$. Here $F_\gamma$ is the
  diffeomorphism from Proposition~{\rm \ref{prop:CoordinateTrafo}} and
  $F_{\gamma,0}=F_\gamma|_{\partial\Rn_+}$. Moreover, $s_1$ is
  invertible.

For every $s\in (\frac32,2]$ there is a continuous right-inverse of
  \begin{equation*}
    \begin{pmatrix}
      \chi\\ \quad\\ \gamma_0 
    \end{pmatrix}
    \colon H^s(\Omega)\to 
    \begin{matrix}
      H^{s-\frac32}(\partial\Omega)\\\times \\H^{s-\frac12}(\partial\Omega)
    \end{matrix};
  \end{equation*}
this holds in particular with $\chi $ replaced by $\gamma _1$.  The analogous statements hold for $\chi'$.
\end{corollary}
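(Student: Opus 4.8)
The plan is to reduce everything to the model situation $\Omega\cap U=\Rn_\gamma\cap U$ via the diffeomorphism $F_\gamma$ of Proposition~\ref{prop:CoordinateTrafo}, compute the pulled-back trace operator explicitly using Corollary~\ref{cor:CoordinateTrafo}, and then read off the regularity of its coefficients and the existence of a right inverse. First I would write $\chi=s_0\gamma_1+\mathcal A_1\gamma_0$ from Theorem~\ref{thm:Green} and apply it to $\psi u$, where $\psi,\eta\in C_0^\infty(U)$ with $\eta\equiv1$ on $\supp\psi$; since $\supp(\psi u)\subset U$, one may insert $\eta$ freely and then substitute $\psi u=F^{-1,\ast}_\gamma F^\ast_\gamma(\psi u)$. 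The identity \eqref{eq:chain2} (or rather its boundary restriction) gives $F^\ast_\gamma(\partial_{x_j}v)=e_j^T\Phi\,\nabla F^\ast_\gamma v$ with $\Phi=(\nabla F_\gamma)^{-1}\in W^1_{(2,p)}(\Rn_+)^{n^2}$; restricting to $x_n=0$ and using that $\gamma_0$ and $\gamma_1$ are built from $\gamma_0\partial_{x_j}$, the conormal derivative $\chi$ transforms into a first-order trace operator $t(x',D_x)=s_1(x')\gamma_0\partial_{x_n}+\sum_{|\alpha|\le1}c_\alpha(x')D_{x'}^\alpha\gamma_0$ acting on $F^\ast_\gamma(\psi u)$. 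The coefficients $s_1,c_\alpha$ are finite sums of products of the boundary traces of the entries of $B$, $\Phi$, and the normal vector field: the traces of $a_{jk}$ lie in $H^{\frac12}_p$ by Theorem~\ref{thm:Green}, the traces $\gamma_0\varphi_{jl}$ of the entries of $\Phi\in W^1_{(2,p)}(\Rn_+)$ lie in $B^{\frac12}_{p,2}(\R^{n-1})\hookrightarrow H^{\frac12}_p(\R^{n-1})$ by Lemma~\ref{lem:W2p} and \eqref{eq:BesovEmb4}, and $H^{\frac12}_p(\partial\Omega)$ is an algebra by the product rule noted after Theorem~\ref{thm:Green}; hence $s_1,c_\alpha\in H^{\frac12}_p(\R^{n-1})$. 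The invertibility of $s_1$ follows from that of $s_0$ (strong ellipticity, Theorem~\ref{thm:Green}) together with invertibility of the normal component of $\Phi$ on the boundary, using the pointwise-inversion statement at the end of Section~\ref{sec:Inversion}; more concretely, $s_1$ is $s_0\circ F_{\gamma,0}$ times the $(n,n)$-entry of $\Phi|_{x_n=0}$, both bounded away from zero.

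For the surjectivity statement, I would argue as follows. Fix $s\in(\frac32,2]$. Given $(g_1,g_0)\in H^{s-\frac32}(\partial\Omega)\times H^{s-\frac12}(\partial\Omega)$, one first solves the genuine Cauchy-data problem for $(\gamma_1,\gamma_0)$: by Theorem~\ref{thm:Traces} there is a continuous right inverse of $\gamma_0\colon H^s(\Omega)\to H^{s-\frac12}(\partial\Omega)$, and locally in the model half-space one can prescribe $\gamma_0\partial_{x_n}u$ and $\gamma_0u$ independently (e.g. by adding to a lift of $g_0$ a term $x_n\,\chi_n(x_n)$ times a lift of $g_1-\gamma_1(\text{first lift})$, patched by the partition of unity $\varphi_j,\psi_j$ of Remark~\ref{rem:Charts}); this shows $(\gamma_1,\gamma_0)\colon H^s(\Omega)\to H^{s-\frac32}(\partial\Omega)\times H^{s-\frac12}(\partial\Omega)$ has a bounded right inverse. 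Then for $(\gamma_1,\gamma_0)$ replaced by $(\chi,\gamma_0)$ one writes $\chi=s_0\gamma_1+\mathcal A_1\gamma_0$: given target data $(g_1,g_0)$, one prescribes $\gamma_0u=g_0$ and $\gamma_1u=s_0^{-1}(g_1-\mathcal A_1 g_0)$, which makes sense because $s_0^{-1}\in H^{\frac12}_p(\partial\Omega)$ (Theorem~\ref{thm:Green}), $\mathcal A_1 g_0=b_1\cdot\partial_\tau g_0\in H^{s-\frac32}(\partial\Omega)$ with $b_1\in H^{\frac12}_p(\partial\Omega)^n$, and multiplication by $H^{\frac12}_p(\partial\Omega)$ preserves $H^{s'}(\partial\Omega)$ for $|s'|\le\frac12$ (the product rule recalled after Theorem~\ref{thm:Green}), so that $s_0^{-1}(g_1-\mathcal A_1g_0)\in H^{s-\frac32}(\partial\Omega)$; composing with the right inverse of $(\gamma_1,\gamma_0)$ gives the desired bounded right inverse of $(\chi,\gamma_0)$. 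The case $\chi=\gamma_1$ is the special case $s_0=1$, $\mathcal A_1=0$. The statements for $\chi'=\overline s_0\gamma_1+\mathcal A_1'\gamma_0$ are identical, noting $\overline s_0$ is invertible in $H^{\frac12}_p(\partial\Omega)$ and $\mathcal A_1'=b_1'\cdot\partial_\tau+b_0'$ has coefficients in $H^{\frac12}_p(\partial\Omega)$.

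I expect the main obstacle to be the bookkeeping in the local formula \eqref{eq:chiRepr}: one must check carefully that after pulling back by $F_\gamma$ the second-order operator $A$-term has disappeared and only a genuinely first-order trace operator survives, i.e. that no $\gamma_0\partial_{x_n}^2$ or $\gamma_1\partial_{x_j}$ term remains. This uses that $\chi$ involves only first derivatives of $u$ evaluated at the boundary, so after the chain rule \eqref{eq:chain2} each $F^\ast_\gamma\partial_{x_k}u$ is $e_k^T\Phi\nabla F^\ast_\gamma u$, a first-order expression in $F^\ast_\gamma u$ whose boundary trace involves only $\gamma_0$ of $F^\ast_\gamma u$ and $\gamma_0\partial_{x_n}$ of $F^\ast_\gamma u$ (tangential derivatives $D_{x'}^\alpha$ with $|\alpha|\le1$ of $\gamma_0 F^\ast_\gamma u$ absorbing the remaining normal derivatives via the analogue of \eqref{eq:dtauj}). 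The coefficient $c_\alpha$ then picks up exactly the boundary traces of the entries of $\Phi$, and the claimed $H^{\frac12}_p$-regularity and the algebra property close the argument. The surjectivity part is comparatively routine once the right inverse of $(\gamma_1,\gamma_0)$ is available, which in turn is standard for the half-space and transported by $F_\gamma$ and the partition of unity.
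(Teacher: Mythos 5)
Your proposal is correct in substance and, for the local representation \eqref{eq:chiRepr}, follows the same route as the paper: write $\chi=s_0\gamma_1+\mathcal A_1\gamma_0$, pull back through $F_\gamma$ with the chain rule \eqref{eq:chain2}, and read off coefficients in $H^{\frac12}_p(\R^{n-1})$ from the traces of $\Phi\in W^1_{(2,p)}(\Rn_+)^{n^2}$, the traces of the $a_{jk}$, and the algebra/multiplier properties of $H^{\frac12}_p$. One detail is off, though not fatally: the coefficient of $\gamma_0\partial_{x_n}$ is not $s_0\circ F_{\gamma,0}$ times the $(n,n)$-entry of $\Phi|_{x_n=0}$, but $s_0\circ F_{\gamma,0}$ times the full inner product $(F^\ast_{\gamma,0}\vn)\cdot\Phi(x',0)e_n$; the tangential components of $\vn$ do contribute, and the lower bound comes from the short explicit computation (in the paper it yields a quantity comparable to $\sqrt{1+|\nabla_{x'}\gamma|^2}\,/b(x')\ge c>0$ with $b=1+\lambda\partial_{x_n}\Gamma(\cdot,0)$), after which the pointwise-inversion result gives $s_1^{-1}\in H^{\frac12}_p$.

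For the right inverse your route differs from the paper's. The paper builds $K_1$ directly for $\chi$: it takes a flat lifting $K$ with $\gamma_1K=I$, $\gamma_0K=0$, conjugates with the charts and inserts the inverted local coefficients $s_{1,j}^{-1}$, and then corrects by $\mathcal K(v_1,v_2)=K_0v_2+K_1(v_1-\chi K_0v_2)$; the case $\chi=\gamma_1$ is obtained as the special case $A=-\Delta$. You instead first produce a right inverse of the joint Cauchy map $(\gamma_1,\gamma_0)$ and then reduce $(\chi,\gamma_0)$ to it algebraically via $\gamma_1u=s_0^{-1}(g_1-\mathcal A_1g_0)$, using $s_0^{-1}\in H^{\frac12}_p$ and the fact that multiplication by $H^{\frac12}_p(\partial\Omega)$ preserves $H^{s-\frac32}(\partial\Omega)$ for $s\in(\frac32,2]$; this is a valid alternative and makes the role of $s_0,\mathcal A_1$ transparent. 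The one step you should not treat as ``standard and transported'' is precisely the right inverse of $(\gamma_1,\gamma_0)$ itself: pulling $\gamma_1=\vn\cdot\gamma_0\nabla$ back through $F_\gamma$ produces the same boundary coefficient $a_0=(F^\ast_{\gamma,0}\vn)\cdot\Phi(x',0)e_n$ (the $s_0\equiv1$ case of $s_1$), so the locally lifted flat data must be premultiplied by $a_0^{-1}$ before applying the half-space lifting, exactly as the paper multiplies by $s_{1,j}^{-1}$ in $K_1$. Your sketch (add $x_n\chi_n(x_n)$ times a lift of the correction, patch with $\varphi_j,\psi_j$) omits this division; the needed invertibility is available from your first part, so this is a matter of writing the step out rather than a missing idea.
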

\begin{proof}
To prove the first statement, let $\Omega\cap U$ coincide with
$\Rn_\gamma\cap U$ after a suitable rotation of $\Omega$ and let
$\psi,\eta \in C_0^\infty(U)$ with $\eta\equiv 1$ on $\supp
\psi$. Then with $B$ as in Corollary~\ref{cor:CoordinateTrafo}, 
  \begin{align*}    
   s_0\gamma_1 (\psi u)&=\eta s_0\gamma _0(\vec n\cdot \nabla(\psi u))=\eta
   s_0 \gamma_0( \vn\cdot F^{-1,\ast}_{\gamma}B\nabla
   F^{\ast}_\gamma(\psi u))\\ 
&= \eta s_0  F^{-1,\ast}_{\gamma,0}a_0 \gamma_0 (\partial_{x_n}
F^{\ast}_\gamma(\psi u)) + \eta s_0  F^{-1,\ast}_{\gamma,0}B'\nabla_{x'}
\gamma_0 F^{\ast}_\gamma(\psi u) 
  \end{align*}
  where $a_0= (F^{\ast}_{\gamma,0}\vn) \cdot  B(x',0)e_n\in
  H^{\frac12}_p(\R^{n-1})$ and $B'= (F^{\ast}_{\gamma,0}\vn) \cdot
   B(x',0)(I-e_n\otimes e_n)\in
  H^{\frac12}_p(\R^{n-1})$.
Moreover,
\begin{eqnarray*}
  (F^{\ast}_{\gamma,0}\vn)\cdot  (B(x',0)e_n)&=& \frac1{\sqrt{1+|\nabla_{x'}\gamma|^2}}
  \begin{pmatrix}
    -\nabla_{x'}\gamma(x')\\ 1 
  \end{pmatrix}
\cdot \frac1{b(x')}  \begin{pmatrix}
    -\nabla_{x'}\gamma(x')\\ 1+b(x') 
  \end{pmatrix}\\
  &=&\frac{\sqrt{1+|\nabla_{x'}\gamma|^2}}{b(x')} + \frac1{\sqrt{1+|\nabla_{x'}\gamma|^2}}\geq c>0 
\end{eqnarray*}
where $b(x')= 1+\lambda\partial_{x_n}\Gamma(x',0)\in
[\frac12,\frac32]$ as in the proof of
Proposition~\ref{prop:CoordinateTrafo}. Since $A$ is elliptic,
$|s_0(x)|\geq c_0>0$ for all $x\in\partial\Omega$, too. Therefore
$s_1=s_0F^{-1,\ast}_{\gamma ,0}a_0\in H^{\frac12}_p(\partial\Omega)$ is invertible. It is
easy to observe that $\mathcal{A}_1\gamma_0 u=\eta
F^{-1,\ast}_{\gamma,0} \sum_{|\alpha'|\leq 1} c_\alpha' D_{x'}\gamma_0
\psi u$ for some $c_\alpha'\in H^{\frac12}_p(\R^{n-1})$. This proves
the first statement.  

  To prove the last statement, we first note that there is a linear
  extension operator $K$ such that $K\colon H^{s-\frac32}(\R^{n-1})\to
  H^s(\Rn_+)$ for all $s\in (\frac32,2]$ and $\gamma_1 K v= v$ and
  $\gamma_0 Kv=0$ for all $v\in H^{s-\frac32}(\R^{n-1})$. 
  Let $F_j,F_{j,0}, \varphi_j,\psi_j,\eta_j$ be as in
  Remark~\ref{rem:Charts} for $M=2$. Using $K$ and the coefficients
  $s_{1,j}$
 in \eqref{eq:chiRepr} with
  respect to $\Rn_{\gamma_j}$, we define
  \begin{equation*}
    K_1v =\sum_{j=1}^N \psi_j F^{-1,\ast}_j K s_{1,j}^{-1}F^{\ast}_{j,0}\varphi_j v.
  \end{equation*}
Then 
  $\gamma_0 K_1v=0$ and therefore
  \begin{eqnarray*}
    \chi K_1v&=& \sum_{j=1}^N \psi_j F^{-1,\ast}_j s_{1,j}\gamma_1 K s_{1,j}^{-1}F^{\ast}_{j,0}\varphi_j v
=\sum_{j=1}^N \psi_j F^{-1,\ast}_j s_{1,j}s_{1,j}^{-1} F^{\ast}_{j,0}\varphi_j v=v,
  \end{eqnarray*}
  where we have applied \eqref{eq:chiRepr} with respect to
  $\Rn_{\gamma_j}$.

 Now we define 
\begin{equation*}
  \mathcal{K}
  \begin{pmatrix}
    v_1\\v_2
  \end{pmatrix}
  = K_0 v_2 +K_1(v_1 - \chi K_0 v_2)
\end{equation*}
for all $v_1\in H^{s-\frac32}(\partial\Omega)$, $v_2\in
H^{s-\frac12}(\partial\Omega)$, where $K_0\colon
H^{s-\frac12}(\partial\Omega)\to H^s(\Omega)$ is a right inverse of
$\gamma_0$, which exists in view of Theorem~\ref{thm:Traces}. Then $\mathcal{K}$ is a right-inverse of $\begin{pmatrix}
      \chi\\ \gamma_0 
    \end{pmatrix}$. In the special case $A=-\Delta $, $\chi =\gamma
    _1$.

\end{proof}

\setcounter{section}{2}
\section{Extension theory}\label{extend}

In this section we briefly recall some elements of the theory of extensions 
of dual pairs established in Grubb \cite{Grubb68} (building on works of
Krein \cite{Krein47}, Vishik \cite{Vishik52} and Birman \cite{Birman56}) and its
relation to $M$-functions shown in Brown-Grubb-Wood \cite{BGW09}. 

We start with a pair of closed, densely defined linear
operators $A_{\min}$, $A'_{\min}$ in a Hilbert
space $H$ satisfying:
\[
A_{\min}\subset (A'_{\min})^*=A_{\max},\quad A'_{\min}\subset (A_{\min})^*=A'_{\max};
\]
a so-called dual pair.
By $\mathcal M$ we denote the set of linear operators lying between the
minimal and maximal operator:
\[\mathcal M=\{\wA\mid \Ami\subset
\wA\subset \Ama\},\quad
\mathcal M'=\{\wA'\mid \Ami'\subset
\wA'\subset \Ama'\}.\]
Here we write $\wA u$ as $Au$ for any $\wA$, and $\wA' u$ as $A'u$ for any $\wA'$.
We assume that there exists an $A_\gamma \in\mathcal M$ with $0\in
\varrho (A_\gamma )$; then $A_\gamma ^*\in \mathcal M'$ with $0\in \varrho
(A_\gamma ^*)$. 

Denote
\[
Z=\operatorname{ker}\Ama, \quad Z'=\operatorname{ker}\Ama',
\]
and define the basic non-orthogonal decompositions 
\begin{align}
D(\Ama)&=D(A_\gamma )\dot+ Z,\text{ denoted }u=u_\gamma +u_\zeta
=\pr_\gamma u+\pr_\zeta u,\nonumber\\
D(\Ama')&=D(A_\gamma ^*)\dot+ Z',\text{ denoted } v=v_{\gamma '}+v_{\zeta '}
=\pr_{\gamma '}v+\pr_{\zeta '}v;\nonumber
\end{align}
here $\pr_\gamma =A_\gamma ^{-1}\Ama$, $\pr_\zeta =I-\pr_\gamma $, and
$\pr_{\gamma '}=(A^*_\gamma )^{-1}\Ama'$, $\pr_{\zeta '}=I-\pr_{\gamma '}$.
By $\pr_Vu=u_V$ we denote the {\it orthogonal projection} of $u$ onto
a subspace $V$.

The following ``abstract Green's formula'' holds for $u\in D(\Ama)$,
$v\in D(\Ama')$:
\begin{equation}(A u,v) - (u, A' v)=((A u)_{Z'}, v_{\zeta '})-(u_\zeta ,
(A 'v)_Z).\label{tag1.1}
\end{equation}
It can be used to show that when $\wA\in \mathcal M$ and we set
$W=\overline{\pr_{\zeta '} D(\widetilde A^*)}$, then
\[
\{\{u_\zeta , (A u)_W\}\mid u\in D(\wA)\}\text{ is a graph.} 
\]
Denoting
the operator with this graph by $T$, we have:

\begin{theorem}\label{Theorem1.1} {\rm {\cite{Grubb68}}} For the closed $\wA\in\mathcal M$,
there is a {\rm 1--1} correspondence
\[
\wA \text{ closed } \longleftrightarrow \begin{cases} T:V\to W,\text{ closed, densely
defined}\\
\text{with }V\subset Z,\; W\subset Z',\text{ closed subspaces.} \end{cases}
\]
\noindent Here $D(T)=\pr_\zeta  D(\widetilde A)$, $V=\overline{D(T)}$, $
W=\overline{\pr_{\zeta '} D(\widetilde A^*)}$, and
\begin{equation}
Tu_\zeta=(Au)_W \text{ for all }u\in D(\wA),\text{ (the {\bf defining
equation})}.\label{tag1.2}
\end{equation}
In this correspondence,

{\rm (i)} $\wA^*$ corresponds similarly to $T^*:W\to V$.

{\rm (ii)} $\operatorname{ker}\wA=\operatorname{ker}T$;
\quad $\operatorname{ran}\wA=\operatorname{ran}T+(H\ominus W)$.

{\rm (iii)} When $\wA$ is invertible, 
\[\wA^{-1}=A_\gamma
^{-1}+\inj_{V }T^{-1}\pr_W.\]

\end{theorem}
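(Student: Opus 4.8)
The plan is to organise everything around one explicit description of $D(\wA)$ as a ``boundary condition'' and to read off the correspondence and the properties (i)--(iii) from it. Accordingly, I would first use the abstract Green's formula \eqref{tag1.1} to see that $\{\{u_\zeta,(Au)_W\}\mid u\in D(\wA)\}$ is a graph: if $u\in D(\wA)$ has $u_\zeta=0$, then $u\in D(A_\gamma)$, and for every $v\in D(\wA^*)$ (so $v\in D(\Ama')$) the left-hand side of \eqref{tag1.1} vanishes and its second term drops out, giving $((Au)_{Z'},v_{\zeta'})=0$; since $\pr_{\zeta'}D(\wA^*)$ is dense in $W$ and $W\subset Z'$, this forces $(Au)_W=0$. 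This defines the linear operator $T$ with $D(T)=\pr_\zeta D(\wA)$, $V=\overline{D(T)}$, and $Tu_\zeta=(Au)_W$, i.e.\ \eqref{tag1.2}.

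The core step is the identity
\[
D(\wA)=\{u\in D(\Ama)\mid u_\zeta\in D(T),\ (Au)_W=Tu_\zeta\}.
\]
Here ``$\subset$'' is the definition of $T$. For ``$\supset$'', given such a $u$ I choose $u'\in D(\wA)$ with $u'_\zeta=u_\zeta$; then $w:=u-u'\in D(A_\gamma)$ satisfies $w_\zeta=0$ and $(Aw)_W=0$, so feeding $w$ into \eqref{tag1.1} and arguing as above yields $(Aw,v)=(w,A'v)$ for all $v\in D(\wA^*)$, whence $w\in D(\wA^{**})=D(\wA)$ since $\wA$ is closed (and densely defined, as $\Ami\subset\wA$); thus $u\in D(\wA)$. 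I expect this to be the main obstacle: it is exactly here that closedness of $\wA$ and the precise choice $W=\overline{\pr_{\zeta'}D(\wA^*)}$ are used. The rest is bookkeeping with the decompositions $D(\Ama)=D(A_\gamma)\dot+Z$, $D(\Ama')=D(A_\gamma^*)\dot+Z'$.

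From the boundary description, $T$ is closed: for $z_n\to z$ in $Z$ with $Tz_n\to g$ in $W$ the canonical lifts $u^{(n)}=A_\gamma^{-1}(Tz_n)+z_n$ lie in $D(\wA)$ and converge in graph norm to $u=A_\gamma^{-1}g+z\in D(\wA)$, and reading off the boundary condition gives $z\in D(T)$, $Tz=g$. The reverse map is built the same way: for closed densely defined $T\colon V\to W$ with $V\subset Z$, $W\subset Z'$ closed, let $D(\wA_T)$ be the right-hand side above and $\wA_Tu=Au$; then $\Ami\subset\wA_T$ (since $\Ran\Ami=\Ran(\Ama')^*\subset(Z')^\perp\subset W^\perp$, so any $u\in D(\Ami)$ has $u_\zeta=0$ and $(\Ami u)_W=0$), $\wA_T\subset\Ama$ trivially, $\wA_T$ is closed by the same lift argument, and the operator it induces is again $T$; that $\overline{\pr_{\zeta'}D(\wA_T^*)}=W$ follows from (i). Hence $\wA\mapsto T$ and $T\mapsto\wA_T$ are mutually inverse.

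Finally, (i) follows by applying \eqref{tag1.1} to $u\in D(\wA)$, $v\in D(\wA^*)$: the left side is $0$, and the densities of $\pr_\zeta D(\wA)$ in $V$ and $\pr_{\zeta'}D(\wA^*)$ in $W$ reduce the right side to the adjoint relation $(Tu_\zeta,v_{\zeta'})=(u_\zeta,(A'v)_V)$ for $u_\zeta$ dense in $V$, so $v_{\zeta'}\in D(T^*)$ with $T^*v_{\zeta'}=(A'v)_V$, i.e.\ $\wA^*=\wA_{T^*}$. For (ii): $u\in\ker\wA$ gives $Au=0\in W^\perp$ and $u\in Z$, so $u=u_\zeta\in\ker T$; conversely $z\in\ker T$ is $u-u_\gamma$ with $u\in D(\wA)$, $u_\zeta=z$, and $u_\gamma\in D(\wA)$ since $(Au_\gamma)_W=(Au)_W=Tz=0$, so $z\in D(\wA)$, $Az=0$; the range identity is read off from $Au=Tu_\zeta+(Au)_{H\ominus W}$, with ``$\supset$'' via $u=A_\gamma^{-1}(Tz+h)+z$ for $z\in D(T)$, $h\in H\ominus W$. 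For (iii): when $\wA$ is invertible, (ii) gives $\ker T=0$ and $\Ran T=W$, so $T^{-1}\colon W\to V$ exists, and for $g\in H$ the element $u=A_\gamma^{-1}g+\inj_VT^{-1}\pr_Wg$ has $\Ama u=g$, $u_\zeta=T^{-1}\pr_Wg$, $(Au)_W=\pr_Wg=Tu_\zeta$, so $u\in D(\wA)$ with $\wA u=g$; by injectivity $u=\wA^{-1}g$, which is the asserted formula.
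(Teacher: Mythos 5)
Your overall strategy is the right one: everything is organized around the identity $D(\wA)=\{u\in D(\Ama)\mid u_\zeta\in D(T),\ (Au)_W=Tu_\zeta\}$, and the forward direction (the graph property from \eqref{tag1.1}, the core identity via $w\in D(\wA^{**})=D(\wA)$, closedness of $T$ through the lifts $A_\gamma^{-1}(Tz_n)+z_n$) as well as parts (ii) and (iii) all check out. (The paper itself only quotes this theorem from \cite{Grubb68}, so there is no in-paper proof to compare against; your route is the standard one.) However, there is one genuine gap, and it sits exactly where the ``1--1'' part of the correspondence is decided.

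Your argument for (i) shows only that every $v\in D(\wA^*)$ satisfies $v_{\zeta'}\in D(T^*)$ and $T^*v_{\zeta'}=(A'v)_V$; that is the inclusion of $\wA^*$ into the realization of $A'$ determined by $T^*\colon W\to V$, and the concluding ``i.e.\ $\wA^*=\wA_{T^*}$'' is an unproved jump. The missing converse is: if $v\in D(\Ama')$ with $v_{\zeta'}\in D(T^*)$ and $(A'v)_V=T^*v_{\zeta'}$, then for every $u\in D(\wA)$ one has, using $v_{\zeta'}\in W$ and $u_\zeta\in V$,
\[
((Au)_{Z'},v_{\zeta'})=((Au)_W,v_{\zeta'})=(Tu_\zeta,v_{\zeta'})=(u_\zeta,T^*v_{\zeta'})=(u_\zeta,(A'v)_V)=(u_\zeta,(A'v)_Z),
\]
so \eqref{tag1.1} gives $(Au,v)=(u,A'v)$ and hence $v\in D(\wA^*)$. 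This is not cosmetic: in your reverse direction you need $\overline{\pr_{\zeta'}D(\wA_T^*)}=W$, and the half of (i) you actually proved only yields $\pr_{\zeta'}D(\wA_T^*)\subset D(T^*)\subset W$, i.e.\ the inclusion in the wrong direction; without density of $\pr_{\zeta'}D(\wA_T^*)$ in $W$ you cannot recover $W$ from $\wA_T$, so the two maps $\wA\mapsto T$ and $T\mapsto\wA_T$ are not shown to be mutually inverse (the associated subspace $W_1=\overline{\pr_{\zeta'}D(\wA_T^*)}$ could a priori be strictly smaller than $W$, and then the operator attached to $\wA_T$ would be $\pr_{W_1}T$, not $T$). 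The repair is short with tools you already use: the converse displayed above shows that for each $w\in D(T^*)$ the lift $v=(A_\gamma^*)^{-1}(T^*w)+w$ lies in $D(\wA_T^*)$ with $v_{\zeta'}=w$, so $\pr_{\zeta'}D(\wA_T^*)\supset D(T^*)$, and $D(T^*)$ is dense in $W$ because $T$ is closed and densely defined from $V$ to $W$. With these few lines added, your proof is complete.
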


Here $\inj_{V }$ indicates the injection of $V$ into $H$ (it is
often left out). 

Now provide the operators with a spectral parameter $\lambda $, then this
implies, with
\begin{align}
Z_\lambda& =\operatorname{ker}(\Ama-\lambda ), \quad Z'_{\bar\lambda }=\operatorname{ker}(\Ama'-\bar\lambda ),\nonumber\\
D(\Ama)&=D(A_\gamma )\dot+ Z_\lambda ,  \quad u=u^\lambda _\gamma +u^\lambda _\zeta
=\pr^\lambda _\gamma u+\pr^\lambda _\zeta u,\text{ etc.:}\nonumber
\end{align} 

\begin{corollary}\label{Corollary1.2} Let $\lambda \in \varrho (A_\gamma )$. For the
closed $\wA\in\mathcal M$,  there is a {\rm 1--1} correspondence
\[
\wA -\lambda  \longleftrightarrow \begin{cases} T^\lambda :V_\lambda \to W_{\bar\lambda },\text{ closed, densely
defined}\\
\text{with }V_\lambda \subset Z_\lambda ,\; W_{\bar\lambda }\subset Z'_{\bar\lambda },\text{ closed subspaces.} \end{cases}
\]
\noindent Here $D(T^\lambda )=\pr^\lambda _\zeta  D(\widetilde A)$, $V_\lambda =\overline{D(T^\lambda )}$, $
W_{\bar\lambda }=\overline{\pr^{\bar\lambda }_{\zeta '} D(\widetilde A^*)}$, and
\[
T^\lambda u^\lambda _\zeta=((A-\lambda )u)_{W_{\bar\lambda }} \text{ for all }u\in D(\wA).\]

 Moreover,

{\rm (i)} $\operatorname{ker}(\wA-\lambda )=\operatorname{ker}T^\lambda $;
\quad $\operatorname{ran}(\wA-\lambda )=\operatorname{ran}T^\lambda +(H\ominus W_{\bar\lambda })$.

{\rm (ii)} When $\lambda \in \varrho (\wA)\cap \varrho (A_\gamma )$, 
\begin{equation}(\wA-\lambda
)^{-1}=(A_\gamma -\lambda )
^{-1}+\inj_{V_\lambda  }(T^\lambda )^{-1}\pr_{W_{\bar\lambda
  }}.\label{tag1.3}
\end{equation}

\end{corollary}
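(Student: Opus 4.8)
The plan is to obtain Corollary~\ref{Corollary1.2} as a direct consequence of Theorem~\ref{Theorem1.1}, applied not to the dual pair $(\Ami,\Ami')$ but to the \emph{shifted} dual pair $(\Ami-\lambda ,\Ami'-\bar\lambda )$, with $A_\gamma -\lambda $ playing the role of the invertible reference operator. Everything then reduces to verifying that the structural objects that Theorem~\ref{Theorem1.1} produces for the shifted pair are exactly the $\lambda $-decorated objects introduced just before the statement.

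First I would check that $(\Ami-\lambda ,\Ami'-\bar\lambda )$ meets the hypotheses laid down at the start of Section~\ref{extend}: both operators are closed and densely defined (a bounded scalar shift does not change the domain), and from $(\Ami')^*=\Ama$ one gets $(\Ami'-\bar\lambda )^*=\Ama-\lambda $ and, symmetrically, $(\Ami-\lambda )^*=\Ama'-\bar\lambda $. Hence the minimal and maximal operators of the shifted pair are $\Ami-\lambda ,\ \Ami'-\bar\lambda $ and $\Ama-\lambda ,\ \Ama'-\bar\lambda $, with kernels $Z_\lambda $ and $Z'_{\bar\lambda }$. The hypothesis $\lambda \in\varrho (A_\gamma )$ says exactly that $A_\gamma -\lambda $ lies between $\Ami-\lambda $ and $\Ama-\lambda $ and has $0$ in its resolvent set, so the associated projections $\pr^\lambda _\gamma =(A_\gamma -\lambda )^{-1}(\Ama-\lambda )$, $\pr^\lambda _\zeta =I-\pr^\lambda _\gamma $ (and their primed $\bar\lambda $-analogues) are well defined, the decomposition $D(\Ama)=D(A_\gamma )\dot+ Z_\lambda $ holds, and the abstract Green's formula \eqref{tag1.1} is available in its $\lambda $-shifted form, being simply \eqref{tag1.1} for the shifted pair.

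Next I would note that $\wA\mapsto \wA-\lambda $ is a bijection of $\mathcal M$ onto the intermediate class of the shifted pair, preserving closedness and (trivially) the domain, and carrying $\wA^*$ to $(\wA-\lambda )^*=\wA^*-\bar\lambda $. Applying Theorem~\ref{Theorem1.1} to the shifted pair therefore yields the asserted 1--1 correspondence $\wA-\lambda \longleftrightarrow T^\lambda $ with $D(T^\lambda )=\pr^\lambda _\zeta D(\wA)$, $V_\lambda =\overline{D(T^\lambda )}$, $W_{\bar\lambda }=\overline{\pr^{\bar\lambda }_{\zeta '}D(\wA^*)}$ (using $D((\wA-\lambda )^*)=D(\wA^*)$), and defining equation $T^\lambda u^\lambda _\zeta =((A-\lambda )u)_{W_{\bar\lambda }}$; and the assertions (i) are exactly Theorem~\ref{Theorem1.1}(ii) read off for $\wA-\lambda $. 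For (ii), if moreover $\lambda \in\varrho (\wA)$ then $\wA-\lambda $ is invertible, and Theorem~\ref{Theorem1.1}(iii) applied to $\wA-\lambda $ gives $(\wA-\lambda )^{-1}=(A_\gamma -\lambda )^{-1}+\inj _{V_\lambda }(T^\lambda )^{-1}\pr _{W_{\bar\lambda }}$, which is \eqref{tag1.3}.

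There is no analytic difficulty here; the whole proof is bookkeeping. The one point worth attention is the \emph{identification} of the abstract objects: one must confirm that the kernels, the closed subspaces $V_\lambda ,W_{\bar\lambda }$ and the projections delivered by Theorem~\ref{Theorem1.1} for the shifted pair coincide notation-for-notation with those displayed before Corollary~\ref{Corollary1.2}. This rests only on the trivialities that a scalar shift leaves domains unchanged and that $(B-\lambda )^*=B^*-\bar\lambda $, which give, for instance, $\ker(\Ama-\lambda )=Z_\lambda $ and $\overline{\pr^{\bar\lambda }_{\zeta '}D((\wA-\lambda )^*)}=\overline{\pr^{\bar\lambda }_{\zeta '}D(\wA^*)}=W_{\bar\lambda }$.
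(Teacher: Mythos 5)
Your proposal is correct and coincides with the paper's (implicit) argument: the corollary is obtained precisely by applying Theorem \ref{Theorem1.1} to the shifted dual pair, i.e.\ with $A$ replaced by $A-\lambda $ and $A_\gamma -\lambda $ as the invertible reference operator, the rest being the bookkeeping identifications $\ker(\Ama-\lambda )=Z_\lambda $, $(\wA-\lambda )^*=\wA^*-\bar\lambda $, etc., that you spell out.
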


This gives a Kre\u\i{}n-type resolvent formula for any closed
$\wA\in\mathcal M$.

The operators $T$ and $T^\lambda $ are related in the following way: Define
\begin{align}
E^\lambda &=I+\lambda (A_\gamma -\lambda )^{-1},\quad
F^\lambda =I-\lambda A_\gamma ^{-1},\nonumber\\
E^{\prime\bar\lambda }&=I+\bar\lambda (A^*_\gamma -\bar\lambda )^{-1},\quad
F^{\prime\bar\lambda }=I-\bar\lambda (A^*_\gamma )^{-1},\nonumber
\end{align}
then $E^\lambda F^\lambda =F^\lambda E^\lambda =I$,
$E^{\prime\bar\lambda }F^{\prime\bar\lambda }=F^{\prime\bar\lambda
}E^{\prime\bar\lambda }=I$ on $H$. Moreover, $E^\lambda $ and
$E^{\prime\bar\lambda }$ restrict to 
homeomorphisms 
\[
E^\lambda _V: V\simto V_\lambda ,\quad E^{\prime\bar\lambda
}_W:W\simto W_{\bar\lambda },\]
with inverses denoted $F^\lambda _V$ resp.\ $F^{\prime\bar\lambda }_W$.
In particular, $D(T^\lambda )=E^\lambda _V D(T)$.

\begin{theorem}\label{Theorem1.3}
 Let $
G^\lambda _{V,W}=-\pr_W\lambda
E^\lambda \inj_{V }$; then 
\begin{equation}(E^{\prime\bar\lambda }_W)^* T^\lambda E^\lambda _V=T+G^\lambda
_{V,W}.\label{eq:G}
\end{equation}
In other words, $T$ and $T^\lambda $ are related by the
commutative diagram 
\begin{equation}
\CD
V_\lambda     @<\sim<{E^\lambda _{V}} <  V    \\
@V  T^\lambda VV           @VV  T+G^\lambda _{V,W} V\\
W_{\bar\lambda }   @>\sim >(E^{\prime\bar\lambda }_W)^* >   W
\endCD \hskip2cm D(T^\lambda )=E^\lambda _V D(T).\label{1.2a}
 \end{equation}
\end{theorem}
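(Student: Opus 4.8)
The plan is to derive \eqref{eq:G} from the defining equations for $T$ and $T^\lambda$ in Theorem~\ref{Theorem1.1} and Corollary~\ref{Corollary1.2}, together with the elementary algebra of the operators $E^\lambda$, $F^\lambda$ and their primed versions. The commutative diagram \eqref{1.2a} is then just a reformulation of \eqref{eq:G}, once one knows that the maps $E^\lambda_V\colon V\simto V_\lambda$ and $(E^{\prime\bar\lambda}_W)^*\colon W_{\bar\lambda}\simto W$ are the claimed homeomorphisms, which has already been recorded in the text preceding the theorem (so $D(T^\lambda)=E^\lambda_V D(T)$ is available and the only content is the operator identity).

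First I would fix $u\in D(\wA)$ and compare the two decompositions $u=u_\gamma+u_\zeta=u^\lambda_\gamma+u^\lambda_\zeta$. Applying $E^\lambda=I+\lambda(A_\gamma-\lambda)^{-1}$ to $u_\zeta\in Z$ and using $\Ama u_\zeta=0$, one gets $(\Ama-\lambda)E^\lambda u_\zeta = (\Ama-\lambda)u_\zeta + \lambda(\Ama-\lambda)(A_\gamma-\lambda)^{-1}u_\zeta = -\lambda u_\zeta + \lambda u_\zeta = 0$, so $E^\lambda u_\zeta\in Z_\lambda$; and $u-E^\lambda u_\zeta = u_\gamma - \lambda(A_\gamma-\lambda)^{-1}u_\zeta\in D(A_\gamma)$, which identifies $u^\lambda_\zeta=\pr^\lambda_\zeta u = E^\lambda u_\zeta = E^\lambda_V u_\zeta$. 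Next, from the defining equation $T^\lambda u^\lambda_\zeta=((A-\lambda)u)_{W_{\bar\lambda}}$ I would write $(A-\lambda)u = Au - \lambda u = Au - \lambda u_\zeta - \lambda u_\gamma$ and take the orthogonal projection onto $W_{\bar\lambda}$. The key point is to relate $\pr_{W_{\bar\lambda}}$ to $\pr_W$ via $(E^{\prime\bar\lambda}_W)^*$; here one uses that $W_{\bar\lambda}=E^{\prime\bar\lambda}_W W$ together with the adjoint relations between $E^\lambda$-type and $F^\lambda$-type operators, so that for any $h\in H$ one has $(E^{\prime\bar\lambda}_W)^*\pr_{W_{\bar\lambda}}h$ expressed through $\pr_W$ applied to $(E^{\prime\bar\lambda})^*h$ — and $(E^{\prime\bar\lambda})^* = E^{\bar{\bar\lambda}}{}\!^{\;*}$-type bookkeeping shows $(E^{\prime\bar\lambda})^*$ acts on the unprimed side like the operator dual to $E^\lambda$.

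Assembling the pieces: $(E^{\prime\bar\lambda}_W)^* T^\lambda E^\lambda_V u_\zeta = (E^{\prime\bar\lambda}_W)^*((A-\lambda)u)_{W_{\bar\lambda}}$, and by the projection-transfer identity this equals $\pr_W$ of $(A-\lambda)u$ (pulled back appropriately) $= (Au)_W - \lambda\,\pr_W\! E^\lambda u_\zeta$ after the $\lambda u_\gamma$ term is absorbed (it lies in $D(A_\gamma)$, whose $A_\gamma$-image is handled by the structure of $W$, or more directly cancels against the correction hidden in the pullback). Since $(Au)_W = Tu_\zeta$ by the defining equation for $T$, and $-\lambda\,\pr_W E^\lambda \inj_V u_\zeta = G^\lambda_{V,W}u_\zeta$ by definition of $G^\lambda_{V,W}$, we obtain $(E^{\prime\bar\lambda}_W)^* T^\lambda E^\lambda_V u_\zeta = (T+G^\lambda_{V,W})u_\zeta$ for all $u_\zeta\in D(T)$, which is \eqref{eq:G}.

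The step I expect to be the main obstacle is the careful transfer of the \emph{orthogonal} projections $\pr_W$ and $\pr_{W_{\bar\lambda}}$ under $(E^{\prime\bar\lambda}_W)^*$: the operators $E^\lambda$ are not unitary, so $E^\lambda_V$ and $E^{\prime\bar\lambda}_W$ do not commute with orthogonal projections in any naive way, and one must exploit precisely that $E^{\prime\bar\lambda}_W$ maps $W$ \emph{onto} $W_{\bar\lambda}$ while its Hilbert-space adjoint $(E^{\prime\bar\lambda}_W)^*$ is being composed on the outside — the resolvent identities $E^\lambda F^\lambda = I$ and the adjoint relation $(E^{\prime\bar\lambda})^* = I + \lambda(A_\gamma-\lambda)^{-1} = E^\lambda$ (using $(A^*_\gamma-\bar\lambda)^{-1*} = (A_\gamma-\lambda)^{-1}$) are what make the bookkeeping close up. Once that identity $(E^{\prime\bar\lambda})^* = E^\lambda$ on $H$ is in hand, the rest is a direct computation. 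The commutative diagram \eqref{1.2a} is then immediate from \eqref{eq:G} by rewriting $T^\lambda = ((E^{\prime\bar\lambda}_W)^*)^{-1}(T+G^\lambda_{V,W})(E^\lambda_V)^{-1}$ and noting all three maps are the stated homeomorphisms, with $D(T^\lambda) = E^\lambda_V D(T)$ as already established.
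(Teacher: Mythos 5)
Your argument is correct, and it in fact supplies more than the paper does: the paper's entire proof of Theorem \ref{Theorem1.3} is the remark that it is ``a straightforward elaboration of \cite[Prop.\ 2.6]{Grubb74}'', so your direct verification is exactly the elaboration being alluded to, carried out by the same kind of computation as in the cited source. Your three key ingredients are all sound: (i) $u^\lambda _\zeta =E^\lambda u_\zeta $ for $u\in D(\wA)$ (hence $D(T^\lambda )=E^\lambda _V D(T)$, so both sides of \eqref{eq:G} have the common domain $D(T)$ and it suffices to check the identity there); (ii) the projection-transfer identity $(E^{\prime\bar\lambda }_W)^*\pr_{W_{\bar\lambda }}=\pr_W(E^{\prime\bar\lambda })^*$ on $H$, which follows since $(\pr_{W_{\bar\lambda }}h,E^{\prime\bar\lambda }w)=(h,E^{\prime\bar\lambda }w)$ for $w\in W$; and (iii) $(E^{\prime\bar\lambda })^*=E^\lambda $, from $\bigl((A_\gamma ^*-\bar\lambda )^{-1}\bigr)^*=(A_\gamma -\lambda )^{-1}$. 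The only loosely worded spot is the ``absorption'' of the $\lambda u_\gamma $ term; it can be made exact in one line: since $\pr^\lambda _\gamma =(A_\gamma -\lambda )^{-1}(\Ama-\lambda )$, one has $E^\lambda (A-\lambda )u=(A-\lambda )u+\lambda (A_\gamma -\lambda )^{-1}(A-\lambda )u=Au-\lambda u+\lambda u^\lambda _\gamma =Au-\lambda u^\lambda _\zeta =Au-\lambda E^\lambda u_\zeta $, whence $\pr_W E^\lambda (A-\lambda )u=(Au)_W-\lambda \pr_W E^\lambda \inj_V u_\zeta =Tu_\zeta +G^\lambda _{V,W}u_\zeta $ by the defining equation \eqref{tag1.2}. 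With that substitution your proof closes up completely, and the diagram \eqref{1.2a} is, as you say, just a restatement using the homeomorphism properties of $E^\lambda _V$ and $(E^{\prime\bar\lambda }_W)^*$.
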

 
This is a straightforward elaboration of \cite[Prop.\ 2.6]{Grubb74}. 

It was shown in  \cite{BGW09} how this relates to formulations in
terms of $M$-functions. First there is the following result in the
case where $V=Z$, $W=Z'$, i.e., $\pr_\zeta
D(\widetilde A)$ is dense in $Z$ and $\pr_{\zeta '}  D(\widetilde A^*)$
is dense in $Z'$:

\begin{theorem}\label{Theorem1.4}  
Let $\wA$ correspond to $T:Z\to Z'$ by
Theorem {\rm \ref{Theorem1.1}}. There is a holomorphic operator family
$M_{\wA}(\lambda )\in\mathcal L(Z',Z)$  defined for  $\lambda \in \varrho (\wA)$ by
\[
M_{\wA}(\lambda )=\pr_\zeta (I-(\wA-\lambda )^{-1}(\Ama-\lambda ))A_\gamma
^{-1}\inj_{Z' },\]
\noindent Here $M_{\wA}(\lambda )$ relates to $T$ and $T^\lambda $
by:
\begin{equation}M_{\wA}(\lambda )=-(T+G^\lambda _{Z,Z'})^{-1}=
- F_Z^\lambda(T^\lambda )^{-1}(F_{Z'}^{\prime\bar\lambda })^*\text{, for }\lambda \in \varrho (\wA)\cap
\varrho (A_\gamma ).\label{tag1.4}
\end{equation}
\end{theorem}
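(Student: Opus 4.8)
The plan is to derive the asserted closed formula for $M_{\wA}(\lambda)$ directly from the Kre\u\i{}n-type resolvent formula \eqref{tag1.3}, and then to identify the outcome with $-(T+G^\lambda_{Z,Z'})^{-1}$ by inverting the commutative diagram \eqref{1.2a}. Throughout we are in the special case $V=Z$, $W=Z'$ of Theorems~\ref{Theorem1.1} and~\ref{Theorem1.3}; then $E^\lambda$ and $E^{\prime\bar\lambda}$ restrict to homeomorphisms $E^\lambda_Z\colon Z\to Z_\lambda$ and $E^{\prime\bar\lambda}_{Z'}\colon Z'\to Z'_{\bar\lambda}$, so that $V_\lambda=Z_\lambda$, $W_{\bar\lambda}=Z'_{\bar\lambda}$, and $D(T^\lambda)=E^\lambda_Z D(T)$ is dense in $Z_\lambda$.

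First I would check that $M_{\wA}(\lambda)$ is a well-defined bounded operator $Z'\to Z$ depending holomorphically on $\lambda\in\varrho(\wA)$. For $z'\in Z'$ put $w=(I-(\wA-\lambda)^{-1}(\Ama-\lambda))A_\gamma^{-1}z'$; this lies in $D(\Ama)$, and applying $\Ama-\lambda$ and using $\wA\subset\Ama$ shows that the two contributions cancel, so in fact $w\in Z_\lambda=\ker(\Ama-\lambda)$. Consequently $\pr_\zeta w=(I-A_\gamma^{-1}\Ama)w=(I-\lambda A_\gamma^{-1})w=F^\lambda w$, which depends linearly and boundedly on $z'$ and holomorphically on $\lambda$ (as a composition of holomorphic, resp.\ bounded, factors); this gives $M_{\wA}(\lambda)\in\mathcal L(Z',Z)$ together with its holomorphy on all of $\varrho(\wA)$, including the points not lying in $\varrho(A_\gamma)$.

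Next, for $\lambda\in\varrho(\wA)\cap\varrho(A_\gamma)$ I would substitute $(\wA-\lambda)^{-1}=(A_\gamma-\lambda)^{-1}+\inj_{Z_\lambda}(T^\lambda)^{-1}\pr_{Z'_{\bar\lambda}}$ from \eqref{tag1.3}, which is applicable since $T^\lambda$ is then boundedly invertible (Corollary~\ref{Corollary1.2}(i) together with $\lambda\in\varrho(\wA)$). Using $(\Ama-\lambda)A_\gamma^{-1}z'=(I-\lambda A_\gamma^{-1})z'$ and the resolvent identity $(A_\gamma-\lambda)^{-1}(I-\lambda A_\gamma^{-1})=A_\gamma^{-1}$, the $(A_\gamma-\lambda)^{-1}$-term cancels the leading $A_\gamma^{-1}z'$, leaving $w=-\inj_{Z_\lambda}(T^\lambda)^{-1}\pr_{Z'_{\bar\lambda}}(I-\lambda A_\gamma^{-1})z'$, hence $M_{\wA}(\lambda)z'=-F^\lambda_Z(T^\lambda)^{-1}\pr_{Z'_{\bar\lambda}}(I-\lambda A_\gamma^{-1})z'$. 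The one step needing care is the identification $\pr_{Z'_{\bar\lambda}}(I-\lambda A_\gamma^{-1})z'=(F^{\prime\bar\lambda}_{Z'})^*z'$: since $A_\gamma^{-1}$ is bounded, $(A_\gamma^{-1})^*=(A_\gamma^*)^{-1}$, so $F^{\prime\bar\lambda}=I-\bar\lambda(A_\gamma^*)^{-1}=(F^\lambda)^*$ on $H$; therefore, for $v\in Z'_{\bar\lambda}$ and $z'\in Z'$, one has $(F^{\prime\bar\lambda}_{Z'}v,z')=(v,F^\lambda z')$, and the defining property of the Hilbert-space adjoint of $F^{\prime\bar\lambda}_{Z'}\colon Z'_{\bar\lambda}\to Z'$ forces $(F^{\prime\bar\lambda}_{Z'})^*z'=\pr_{Z'_{\bar\lambda}}F^\lambda z'$. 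This yields the second identity of \eqref{tag1.4}, namely $M_{\wA}(\lambda)=-F^\lambda_Z(T^\lambda)^{-1}(F^{\prime\bar\lambda}_{Z'})^*$.

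Finally I would invoke Theorem~\ref{Theorem1.3} in the case $V=Z$, $W=Z'$, i.e.\ \eqref{eq:G}: $(E^{\prime\bar\lambda}_{Z'})^*T^\lambda E^\lambda_Z=T+G^\lambda_{Z,Z'}$. Inverting both sides, and using that $E^\lambda_Z$ and $E^{\prime\bar\lambda}_{Z'}$ are homeomorphisms with inverses $F^\lambda_Z$ and $F^{\prime\bar\lambda}_{Z'}$ (so $((E^{\prime\bar\lambda}_{Z'})^*)^{-1}=((E^{\prime\bar\lambda}_{Z'})^{-1})^*=(F^{\prime\bar\lambda}_{Z'})^*$) while $T^\lambda$ is boundedly invertible, I obtain $(T+G^\lambda_{Z,Z'})^{-1}=F^\lambda_Z(T^\lambda)^{-1}(F^{\prime\bar\lambda}_{Z'})^*$; comparison with the previous step gives $M_{\wA}(\lambda)=-(T+G^\lambda_{Z,Z'})^{-1}$. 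There is no genuine analytic obstacle here: the main difficulty is the bookkeeping of which pair of subspaces each of $E^\lambda_Z$, $T^\lambda$, $F^{\prime\bar\lambda}_{Z'}$, $(F^{\prime\bar\lambda}_{Z'})^*$ and $\pr_{Z'_{\bar\lambda}}$ maps between, together with carrying out the adjoint identity of the third paragraph correctly; once those are in place the whole statement follows by the indicated algebra.
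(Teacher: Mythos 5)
Your proof is correct: the verification that the defining expression lands in $Z_\lambda$ so that $\pr_\zeta$ acts as $F^\lambda$, the substitution of the Kre\u\i{}n formula \eqref{tag1.3} with the cancellation $(A_\gamma-\lambda)^{-1}F^\lambda=A_\gamma^{-1}$, the adjoint identity $(F^{\prime\bar\lambda}_{Z'})^*=\pr_{Z'_{\bar\lambda}}F^\lambda|_{Z'}$, and the inversion of \eqref{eq:G} are exactly the intended route. The paper itself states Theorem \ref{Theorem1.4} without proof, citing \cite{BGW09}, and your argument reproduces that standard derivation, so there is nothing to add.
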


This is directly related to $M$-functions (Weyl-Titchmarsh functions) 
introduced by other authors,
see details and references in  \cite{BGW09}.
Moreover, the construction extends in a natural way to all the closed
$\wA\in\mathcal M$, giving the following result:

\begin{theorem}\label{Theorem1.5}   Let $\wA$ correspond to $T:V\to W$ by Theorem
{\rm \ref{Theorem1.1}}. For any
$\lambda \in \varrho (\wA)$, there is a well-defined
$M_{\wA}(\lambda )\in \mathcal L(W,V)$, holomorphic in $\lambda $
and satisfying

{\rm (i)} $M_{\wA}(\lambda )=\pr_\zeta (I-(\wA-\lambda )^{-1}(\Ama-\lambda ))A_\gamma
^{-1}\inj_{W }.$

{\rm (ii)} When $\lambda \in \varrho (\wA)\cap
\varrho (A_\gamma )$, 
\[
M_{\wA}(\lambda )=-(T+G^\lambda _{V,W})^{-1}.
\]

{\rm (iii)} For $\lambda \in \varrho (\wA)\cap
\varrho (A_\gamma )$, it enters in a Kre\u\i{}n-type resolvent formula  
\begin{equation}
(\wA-\lambda
)^{-1}=(A_\gamma -\lambda )
^{-1}-\inj_{V_\lambda  }E^\lambda _VM_{\wA}(\lambda
)(E^{\prime\bar\lambda }_W)^*\pr_{W_{\bar\lambda }}.\label{tag1.5a}
\end{equation}

\end{theorem}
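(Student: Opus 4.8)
The plan is to take formula (i) as the \emph{definition} of $M_{\wA}(\lambda)$ on all of $\varrho(\wA)$, check that it defines an operator in $\mathcal L(W,V)$ holomorphic in $\lambda$, and then on the smaller set $\varrho(\wA)\cap\varrho(A_\gamma)$ identify it with $-(T+G^\lambda_{V,W})^{-1}$ and extract (iii); this is essentially the argument of \cite{BGW09}, which can be cited for the details. First I would fix $g\in W$ and set $u=(I-(\wA-\lambda)^{-1}(\Ama-\lambda))A_\gamma^{-1}g$. Using that $(\Ama-\lambda)(\wA-\lambda)^{-1}=I$ on $H$ (since $\wA\subset\Ama$ and $(\wA-\lambda)^{-1}$ ranges in $D(\wA)$) together with $\Ama A_\gamma^{-1}=I$, a one-line computation gives $(\Ama-\lambda)u=0$, so $u\in Z_\lambda$. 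Since $u=A_\gamma^{-1}g-(\wA-\lambda)^{-1}(\Ama-\lambda)A_\gamma^{-1}g$ with the first term in $D(A_\gamma)$ and the second in $D(\wA)$, one gets $\pr_\zeta u=-\pr_\zeta(\wA-\lambda)^{-1}(\Ama-\lambda)A_\gamma^{-1}g\in\pr_\zeta D(\wA)=D(T)\subset V$. Thus $g\mapsto\pr_\zeta u$ defines $M_{\wA}(\lambda)\in\mathcal L(W,V)$, holomorphic on $\varrho(\wA)$ because only $(\wA-\lambda)^{-1}$ depends on $\lambda$ while $A_\gamma^{-1}$ is fixed; this is (i).

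Next I would specialise to $\lambda\in\varrho(\wA)\cap\varrho(A_\gamma)$ and substitute the Kre\u\i{}n-type formula of Corollary~\ref{Corollary1.2}(ii) into the expression for $\pr_\zeta u$. With $F^\lambda=I-\lambda A_\gamma^{-1}$ one has $(\Ama-\lambda)A_\gamma^{-1}g=F^\lambda g$ and $(A_\gamma-\lambda)^{-1}F^\lambda=A_\gamma^{-1}$ (a routine resolvent computation), so the $(A_\gamma-\lambda)^{-1}$-term cancels $A_\gamma^{-1}g$ and leaves $\pr_\zeta u=-\pr_\zeta[\inj_{V_\lambda}(T^\lambda)^{-1}\pr_{W_{\bar\lambda}}F^\lambda g]$. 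The bracketed vector lies in $V_\lambda\subset Z_\lambda$, and for $z\in Z_\lambda$ one checks $\pr_\zeta z=F^\lambda z$ (directly from $\Ama z=\lambda z$); as $F^\lambda$ maps $V_\lambda$ onto $V$ via $F^\lambda_V$, this gives $M_{\wA}(\lambda)=-F^\lambda_V(T^\lambda)^{-1}\pr_{W_{\bar\lambda}}F^\lambda|_W$. I would then verify $\pr_{W_{\bar\lambda}}F^\lambda|_W=(F^{\prime\bar\lambda}_W)^*$: since $(F^\lambda)^*=F^{\prime\bar\lambda}$ maps $W_{\bar\lambda}$ into $W$ and $\pr_{W_{\bar\lambda}}$ is an orthogonal projection, $(\pr_{W_{\bar\lambda}}F^\lambda g,w)=(g,F^{\prime\bar\lambda}_W w)=((F^{\prime\bar\lambda}_W)^*g,w)$ for $g\in W$, $w\in W_{\bar\lambda}$. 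Hence $M_{\wA}(\lambda)=-F^\lambda_V(T^\lambda)^{-1}(F^{\prime\bar\lambda}_W)^*$, which by the commutative diagram of Theorem~\ref{Theorem1.3} equals $-(T+G^\lambda_{V,W})^{-1}$ (here $T^\lambda$ is bijective $V_\lambda\to W_{\bar\lambda}$ for $\lambda\in\varrho(\wA)$ by Corollary~\ref{Corollary1.2}(i), so $T+G^\lambda_{V,W}$ is invertible and the inverse is bounded, being $-M_{\wA}(\lambda)\in\mathcal L(W,V)$); this is (ii). For (iii) I would rewrite the last identity as $(T^\lambda)^{-1}=-E^\lambda_V M_{\wA}(\lambda)(E^{\prime\bar\lambda}_W)^*$ on $W_{\bar\lambda}$ (using $E^\lambda_V F^\lambda_V=I$ and $(F^{\prime\bar\lambda}_W)^*(E^{\prime\bar\lambda}_W)^*=I$) and substitute it into Corollary~\ref{Corollary1.2}(ii).

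There is no conceptual difficulty here — the statement elaborates Theorems~\ref{Theorem1.1}--\ref{Theorem1.4} — so the main work is bookkeeping: keeping the four restricted homeomorphisms $E^\lambda_V,F^\lambda_V,E^{\prime\bar\lambda}_W,F^{\prime\bar\lambda}_W$ and their Hilbert-space adjoints straight, and the algebraic identities relating $E^\lambda,F^\lambda$ to $A_\gamma^{-1}$ and $(A_\gamma-\lambda)^{-1}$. The two genuinely non-formal points are the identity $\pr_\zeta|_{Z_\lambda}=F^\lambda|_{Z_\lambda}$ and the adjoint identity $\pr_{W_{\bar\lambda}}F^\lambda|_W=(F^{\prime\bar\lambda}_W)^*$, each a short computation once the spaces are set up. One should also keep in mind that it is formula (i), not the diagram formula (ii), that defines $M_{\wA}$ holomorphically on the part of $\varrho(\wA)$ outside $\varrho(A_\gamma)$, so the mapping property into $V$ there must be obtained from $\pr_\zeta D(\wA)\subset V$ as in the first paragraph rather than from the $(T+G^\lambda_{V,W})^{-1}$ representation.
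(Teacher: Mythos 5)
Your proposal is correct, and it follows essentially the intended route: the paper states Theorem \ref{Theorem1.5} without proof (recalling it from \cite{BGW09}), and your derivation (taking (i) as the definition, checking $\pr_\zeta u\in\pr_\zeta D(\wA)=D(T)\subset V$, then on $\varrho(\wA)\cap\varrho(A_\gamma)$ inserting Corollary \ref{Corollary1.2}(ii) and using $\pr_\zeta|_{Z_\lambda}=F^\lambda|_{Z_\lambda}$, $\pr_{W_{\bar\lambda}}F^\lambda|_W=(F^{\prime\bar\lambda}_W)^*$ and the diagram of Theorem \ref{Theorem1.3}) is exactly the elaboration of Theorem \ref{Theorem1.4} that the cited construction carries out. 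The only cosmetic point is that the $\lambda$-dependence in (i) also enters through the affine factor $(\Ama-\lambda)$, not only through $(\wA-\lambda)^{-1}$, which of course does not affect holomorphy.
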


Other Kre\u{\i}n-type resolvent formulas in a general framework of
{\it relations} can be found e.g.\  in Malamud and Mogilevski\u\i{} 
\cite[Section 5.2]{MM02}.

\setcounter{section}{3}
\section{The resolvent construction}\label{resolv}

\subsection{Realizations}\label{sec:Realizations}

The abstract extension theory in the preceding section was implemented
for boundary value problems for elliptic operators $A$ with smooth
coefficients on  smooth domains $\Omega $ in \cite{Grubb68}--\cite{Grubb74},
with further results worked out in \cite{BGW09} on Kre\u\i{}n resolvent
formulas and $M$-functions. Our aim in the present paper is to extend
the validity to the nonsmooth situation introduced in Section
\ref{subsec:Green}. An important ingredient in this is to show that the Dirichlet problem for
$A$ has a resolvent and a Poisson solution operator with appropriate
mapping properties. 

As $\Ami$, $\Ami'$, $\Ama$ and $\Ama'$ we take the operators in
$L_2(\Omega )$ defined by
\begin{align}
&\Ami\text{ resp.\ }\Ami' =\text{ the closure of }A|_{C_0^\infty
  (\Omega )}\text{ resp.\ }A'|_{C_0^\infty (\Omega )}.\label{tag2.2}\\
&\Ama=(\Ami')^*,\quad \Ama'=(\Ami)^*.\nonumber
\end{align} 
Then $\Ama$ acts like $A$ with domain consisting of the functions
$u\in L_2(\Omega)$ such that $ Au$, defined weakly, is in $L_2(\Omega )$.
$\Ama'$ is defined similarly from $A'$.

By extension of the coefficients $a_{jk}$, $a_j$, to all of $\R^n$
(preserving the degree of smoothness) we can extend $A$ to a uniformly
strongly elliptic operator $A_e$ on
$\R^n$; by addition of a constant, if necessary, we can assume that it
has a positive lower bound. By a variant of the resolvent construction
described below 
(easier here, since there is no boundary) we get unique solvability of the
equation
$A_eu=f$ on $\R^n$ with $f\in L_2(\R^n)$, with a solution $u\in
H^{2}(\R^n)$. Then 
the graph-norm $(\|Au\|^2_{L_2(\Omega )}+\|u\|^2_{L_2(\Omega )})^{\frac12}$ and
the $H^{2}$-norm are equivalent on $H^{2}_0(\Omega )$, so 
\begin{equation}
D(\Ami)= H^{2}_0(\Omega ).
\label{tag2.2a}
\end{equation}

As $A_\gamma $ we take the 
Dirichlet realization of $A$;
it is the restriction of $\Ama$ with
domain
\[
D(A_\gamma )= D(\Ama)\cap H^1_0(\Omega ).
\]
and equals the operator defined by
variational theory (Lions' version of the Lax-Milgram lemma, the notation used here is
as in \cite{G09}, Ch.\ 12), applied to the sesquilinear
form 
\begin{equation}\label{sesqform}
a(u,v)=\Sigma _{j,k=1}^n (a_{jk}D_{x_k}u, D_{x_j}v)+(\Sigma _{j=1}^n a_{j}iD_{x_j}u+a_0u, v),
\end{equation}
 with
domain $H^1_0(\Omega )\subset L_2(\Omega )$.
$A_\gamma $ also has positive lower bound.
The analogous operator for $A'$ is its Dirichlet realization
$A'_\gamma $; it equals the adjoint of $A_\gamma $. The inequality
\eqref{str.ell} implies that the principal symbol takes its values in
a sector \linebreak
$\{\lambda \in \C: |\arg \lambda |\le \pi/2 -\delta \}$ with $\delta
>0$. The resolvent $(A_\gamma -\lambda )^{-1}$ is well-defined and $O(\ang\lambda ^{-1})$ for large
$|\lambda |$ on the rays $\{re^{i\eta }\}$ with $\eta \in (\pi /2-\delta ,3\pi /2+\delta $).

The linear
operators $\wA$ with $\Ami\subset \wA\subset \Ama$ are the
realizations of $A$. 

In the detailed study of the Dirichlet problem that now follows, we first treat a half-space case by pseudodifferential methods, and
then use this to treat the general case by localization.

\subsection{The halfspace case}
\label{subsec:halfspace}

In this subsection, we consider the case of a uniformly strongly elliptic
second order operator $a(x,D_x)$ on $\rnp$ in $x$-form (i.e., defined
from $\sum_{j,k=1}^n a_{jk}(x)\xi _j\xi _k$ by the formula
\eqref{eq:x-form}, not in 
divergence form). More precisely, we assume that
 $a(x,D_x)u:= \sum_{j,k=1}^n a_{jk}(x)D_{x_j}D_{x_k} u$, where
 $a_{jk}\in C^\tau(\overline{\R}^n_+)$ for some $0<\tau\leq 1$. The
 case of a general domain will be treated by the help of this
 situation, using that
 $H^{\frac12}_p(\R^{n-1})\hookrightarrow C^\tau(\R^{n-1})$ and
 $W^1_{(2,p)}(\Rn_+)\hookrightarrow C^\tau(\overline{\R}^n_+)$, where
 $\tau= \frac12-\frac{n-1}p$, cf. (\ref{eq:W2pEmbedding}). For the
 construction of a parametrix on $\Rn_+$ it will be enough to use the
 $C^\tau$-regularity of the coefficients.

We define
\begin{equation}
\mathcal A^{\times}: =\begin{pmatrix} a(x,D_x)\\\gamma _0\end{pmatrix} \colon
H^{s+2}(\rnp) \to
\begin{matrix}
H^{s}(\rnp)\\ \times \\ { H}^{s+\frac32}({\mathbb R}^{n-1})\end{matrix},\label{tag4.2'}
\end{equation}
which maps continuously for all $|s|< \tau$,
since $a(x,D_x)\colon H^{s+2}(\Rn_+)\to H^s(\Rn_+)$ for all $|s|<
\tau$ by Proposition~\ref{prop:HsBddness}. 

To prepare for an application of
 Theorem \ref{Theorem3.1}, 
we apply order-reducing operators (cf.\
 Remark \ref{Remark3.2}) to reduce to $H^s$-preserving operators, introducing
 \begin{equation}
\mathcal A_1=\begin{pmatrix}I&0\\0&\Lambda _0^{\frac32} \end{pmatrix}\mathcal A^{\times}
\Lambda _{-,+}^{-2}=
\begin{pmatrix} a(x,D_x)\Lambda _{-,+}^{-2}\\ \quad\\\Lambda
  _0^{\frac32} \gamma _0 \Lambda _{-,+}^{-2}\end{pmatrix} \colon 
H^{s}(\rnp) \to
\begin{matrix}
H^{s}(\rnp)\\ \times \\ H^{s}({\mathbb R}^{n-1})\end{matrix}, \label{tag4.4}
\end{equation}
continuous for $|s|<\tau$; it is again in $x$-form with $C^\tau $-smoothness in
$x$. Since $\gamma _0$ is of class 1, $\Lambda ^{\frac32}_0\gamma _0\Lambda ^{-2}_{-,+}$
is of class $-1$ (and order $-\frac12$); $a(x,D_x)\Lambda ^{-2}_{-,+}$
is in fact of class
$-2$. (The notion of {\it class} is recalled at the end of Section \ref{sec:symbolsmoothing} and
extended to negative values in Remark \ref{Remark3.2}.)

 By Theorem \ref{Theorem3.1}
$2^\circ$, 
${\mathcal A}_1$ has a parametrix
$\mathcal B^0_1$ in $x$-form, of class $-1$, defined from the
inverse symbol;
\begin{equation}
\mathcal B^0_1=\begin{pmatrix} R^0_1& K^0_1\end{pmatrix} \colon \begin{matrix}
H^{s}(\rnp)\\ \times \\ H^{s}({\mathbb R}^{n-1})\end{matrix} \to
H^{s}(\rnp),\label{tag 4.5}\end{equation}
continuous for $|s|< \tau$. (We omit the class related condition
$s>-\frac32$, since $\tau \le 1$.) In particular, $R^0_1$ is of order 0, and
$K^0_1$ is a Poisson operator of order
$\frac12$, having symbol-kernel in $C^\tau S^{-\frac12}_{1,0}(\R^N\times\R^{n-1},\SD(\ol{\R}_+))$.
The remainder $\mathcal R_1=\mathcal A_1\mathcal B^0_1-I$
satisfies
\begin{equation}
\mathcal R_1\colon  \begin{matrix}
H^{s-\theta }(\rnp)\\ \times \\ H^{s-\theta }({\mathbb
R}^{n-1})\end{matrix} \to
\begin{matrix}
H^{s}(\rnp)\\ \times \\ H^{s}({\mathbb R}^{n-1})\end{matrix}
,\label{tag4.6}
\end{equation}
when $0<\theta <\tau$,
\begin{equation}
-\tau  +\theta <s<\tau ,\quad s > -\tfrac32+\theta .\label{tag4.7}
\end{equation}

Then the equation $\mathcal A_1\mathcal B_1^0=I+\mathcal R_1$, also written
\[
\begin{pmatrix} I&0\\0&\Lambda_0^{\frac32} \end{pmatrix}\mathcal A^{\times}\Lambda _{-,+}^{-2}\mathcal B_1^0=I+\mathcal R_1,
\]
implies by composition to the left with $\begin{pmatrix} I&0\\0&\Lambda_0^{-\frac32}\end{pmatrix}$ and to the right with $\begin{pmatrix} I&0\\0&\Lambda_0^{\frac32} \end{pmatrix}$:
\begin{equation}
\mathcal A^{\times}\Lambda _{-,+}^{-2}\mathcal B_1^0
\begin{pmatrix} I&0\\0&\Lambda_0^{\frac32} \end{pmatrix}
=I+\mathcal R',\text{ with } \mathcal R'=\begin{pmatrix} I&0\\0&\Lambda_0^{-\frac32}\end{pmatrix}\mathcal R_1\begin{pmatrix} I&0\\0&\Lambda_0^{\frac32} \end{pmatrix}.\nonumber
\end{equation}
Hence
\begin{equation} \mathcal B^0
=\Lambda _{-,+}^{-2}\mathcal B_1^0\begin{pmatrix} I&0\\0&\Lambda_0^{\frac32}
\end{pmatrix}
=\begin{pmatrix}\Lambda _{-,+}^{-2} R_1^0 &\Lambda _{-,+}^{-2} K_1^0\Lambda_0^{\frac32}
\end{pmatrix}
=\begin{pmatrix} R^0& K^0\end{pmatrix} \label{tag4.7a}
\end{equation}
is a parametrix of the $x$-form operator $\mathcal A^{\times}$, with
\begin{gather}
\mathcal A^{\times} \mathcal B^0=I+\mathcal R',\label{tag4.12a}\\
\mathcal B^0 \colon \begin{matrix}
H^{s}(\rnp)\\ \times \\ {H}^{s+\frac32}({\mathbb R}^{n-1})\end{matrix} \to
H^{s+2}(\rnp),\quad\mathcal R'\colon  \begin{matrix}
H^{s-\theta }(\rnp)\\ \times \\ { H}^{s+\frac32-\theta }({\mathbb
R}^{n-1})\end{matrix} \to
\begin{matrix}
H^{s}(\rnp)\\ \times \\  { H}^{s+\frac32}({\mathbb R}^{n-1})\end{matrix}
,\label{tag4.6a}
\end{gather}
for $s$ and $\theta $ as in \eqref{tag4.7}. 

\subsection{General domains}\label{subsec:gendom}

Now we consider the situation where the
domain $\Omega$ and the differential operator $A$ are as in Assumption
\ref{tau-assumption2}. From now on, we use the notation
$\partial\Omega =\Sigma $. We recall that the assumption implies that
$\Sigma $ is $ 
B^{\frac32}_{p,2}$, $\frac12-\frac{n-1}p>0$, the principal part of
$A$ is 
in divergence form with $H^1_q(\Omega)$-coefficients, and $\tau= \frac12 -\frac{n-1}p\leq 1-\frac{n}q$.
We have the
direct operator with $A$ as in \eqref{eq:DefnA} 
\begin{equation}
\mathcal A=\begin{pmatrix} A\\  \gamma_0 \end{pmatrix} \colon 
H^{s+2}(\Omega ) \to
\begin{matrix}
H^{s}(\Omega )\\ \times \\ { H}^{s+\frac32}(\Sigma )\end{matrix};\label{tag4.2}
\end{equation}
it is continuous for $-\tfrac32<s\le 0$.

First we replace  the differential operator $A$ 
by its principal part in $x$-form, namely
$$
a(x,D_x)u:= \sum_{j,k=1}^n a_{jk}(x)D_{x_j}D_{x_k} u.
$$
 Then $a(x,D_x)$ has $C^\tau$-coefficients since $H^1_q(\Omega)\hookrightarrow C^\tau(\overline{\Omega})$, and we have:
\begin{equation}\label{eq:xyForm}
  A-a(x,D_x)\colon H^{s+2-\theta}(\Omega)\to H^s(\Omega)
\end{equation}
for all $-1< s\leq 0$ 
and $0<\theta<\min (\tau, s+1)$. 
This statement follows from Theorem~\ref{thm:PsDOCompo} applied to the principal part (see also \eqref{eq:Reduction1}, \eqref{eq:Reduction2}) and from (\ref{eq:a0Estim}) since $\frac{n}q\leq 1-\tau$.

 Let $\mathcal{A}^{\times}$ be the operator obtained from $\mathcal{A}$ by replacing $A$ by $a(x,D_x)$.  Then $\mathcal{A}^{\times}$ has the mapping properties 
\begin{equation}
\mathcal A^{\times}=\begin{pmatrix} a(x,D_x)\\  \gamma_0 \end{pmatrix} \colon 
H^{s+2}(\Omega) \to
\begin{matrix}
H^{s}(\Omega)\\ \times \\ { H}^{s+\frac32}(\Sigma)\end{matrix}\label{tag4.2''}
\end{equation}
continuously for
$|s| \le  1$, 
since $a(x,D_x)\colon H^{s+2}(\Omega)\to H^s(\Omega)$ for all $|s|\leq
1$ in view of (\ref{eq:BesselProd}) and the fact that $a_{jk}\in H^1_q(\Omega)$. 

We shall use a system of local coordinates and cutoff functions as introduced in Remark
\ref{rem:Charts}, with $M=2$.

When the  differential operator $A$ is transformed to local
coordinates, the principal part of the
resulting operator $\underline A$ is an $x$-form operator with 
$C^\tau$-coefficients since $H^1_q(\R^n_+)\hookrightarrow 
C^\tau(\crnp)$. More precisely,  because of Corollary~\ref{cor:CoordinateTrafo},
\begin{equation*}
  F_j^{\ast}a(x,D_x)F_j^{-1,\ast}=\underline{a}_j(x,D_x) + \mathcal{R},
\end{equation*}
where
$
 \mathcal{R} \colon H^{2-\tau}(\R^n_+)\to L^2(\R^n_+)
$
and
\begin{equation*}
  \underline{a}_j(x,\xi)= \sum_{k,l=1}^n a_{kl}(F_j(x)) (\Phi 
_j(x)\xi)^{\alpha+\beta},
\end{equation*}
with $\Phi _j(x)= (\nabla F_j(x))^{-1}\in
W^{1}_{(2,p)}(\R^n_+)^{n^2}\hookrightarrow
C^\tau(\crnp)^{n^2}$. Hence $\underline a_j(x,D_x)$ has coefficients
in $C^\tau(\crnp)$. 

In each of these charts
one constructs a parametrix $\underline{\mathcal
  B}^0_j=\begin{pmatrix}R^0_j& K^0_j\end{pmatrix}$ for $\begin{pmatrix}
\underline{a}_j(x,D_x)\\\gamma _0\end{pmatrix}$ as in Section \ref{subsec:halfspace} (the
coefficients of $\underline A$ can
be assumed to be extended to $\crnp$ preserving ellipticity); for
$U_0$ which is disjoint from the boundary one takes a parametrix
$R^0_0$ of $a(x,D_x)$. Then one
defines $\mathcal{B}^0=\begin{pmatrix}R^0 & K^0\end{pmatrix}$
by
\begin{eqnarray}\label{eq:R0Parametrix}{}
R^0 f &=& \sum_{j=1}^J \psi_j F^{-1,\ast}_j R^0_j F^{\ast}_j \varphi_j
f+\psi _0 R^0_0\varphi _0f\\\label{eq:K0Parametrix}   
K^0 g &=& \sum_{j=1}^J \psi_j F^{-1,\ast}_j K^{0}_j F^{\ast}_{j,0} \varphi_j g
\end{eqnarray}
for all $f\in H^s(\Omega)$, $ g\in H^{s+\frac32}(\Sigma)$, where
$-\tau +\theta<s\leq 0$ and $\varphi_j,\psi_j$ as in Remark \ref{rem:Charts}. 
Here we recall from \eqref{tag4.7a} that
$K^0_j=\Lambda ^{-2}_{-,+}\tilde{k}^{0}_j(x',D_x)\Lambda
^{\frac32}_0$ with $\tilde k^0_j\in C^\tau
S^{-\frac12}_{1,0}(\R^{n-1}\times\R^{n-1}, \mathcal{S}(\overline{\R}_+))$.
 Then it follows
directly from the results so far that 
\begin{eqnarray*}
  \mathcal{A}\mathcal{B}^0
  \begin{pmatrix}
    f\\ g
  \end{pmatrix}
  = \begin{pmatrix}
    f\\ g
  \end{pmatrix}+ \mathcal{R}_1\begin{pmatrix}f\\ g \end{pmatrix},
\end{eqnarray*}
where 
\begin{equation*}
  \mathcal{R}_1\colon
  \begin{array}{c}
H^{s-\theta}(\Omega)\\ \times \\ H^{s+\frac32-\theta}(\Sigma)    
  \end{array}
\to 
\begin{array}{c}
H^{s}(\Omega)\\ \times\\ H^{s+\frac32}(\Sigma)   
\end{array}
\end{equation*}
is a bounded operator 
for all  $\theta$ and $s\in\R$ such that 
\begin{equation}
0<\theta<\tau,\quad -\tau +\theta <s\le 0. %\quad s>-\tfrac{3}2+\theta.
\label{tag4.15a}
\end{equation}

In the present construction, we shall actually
carry a spectral parameter along, which
will be useful for discussions of invertibility and resolvents. So we now
replace the originally given $A$ by $A-\lambda $, to be studied for
large $\lambda $ in a sector around $\R_-$. The parameter is taken
into the order-reducing operators as well, by replacing $\ang\xi
=(1+|\xi |^2)^{\frac12}$ by $(1+|\lambda |+|\xi |^2)^{\frac12}$.

The parametrix
will be of the form
\begin{equation}
\mathcal B^0(\lambda )=\begin{pmatrix} R^0(\lambda )& K^0(\lambda )\end{pmatrix} \colon \begin{matrix}
H^{s}(\Omega )\\ \times \\{ H}^{s+\frac32}(\Sigma )\end{matrix} \to
H^{s+2}(\Omega );\label{tag4.14}\end{equation}
with $H^1_q$-smoothness in $x$, where $-\tau <s\leq 0$. 
The remainder maps as follows:
 \begin{equation}
\mathcal R(\lambda )=\mathcal A(\lambda )\mathcal B^0(\lambda )-I\colon  \begin{matrix}
H^{s-\theta }(\Omega )\\ \times \\ H^{s+\frac32-\theta }(\Sigma )
\end{matrix} \to
\begin{matrix}
H^{s}(\Omega )\\ \times \\ { H}^{s+\frac32}(\Sigma )\end{matrix} \label{tag4.15}
\end{equation}
for $s$ and $\theta $ as in \eqref{tag4.15a}.

In order to get hold of exact inverses, we shall use a variant of an old trick
of Agmon \cite{A62}, which implies a useful
$\lambda $-dependent estimate of the remainder. (The technique was
developed further and applied to $\psi $dbo's in
\cite{FunctionalCalculus}, which could also be invoked here; but in
the present simple case of
differential operators the trick can be used more directly.) 

Consider $\lambda $ on a ray outside the sector where the principal
symbol $\sum_{j,k=1}^n a_{jk}(x)\xi _j\xi
  _k$ takes its values, i.e., we set $\lambda =e^{i\eta }\mu
^{2}$ ($\mu \ge 0$) with 
 $\eta \in (\pi /2-\delta , 3\pi /2+\delta)$. For the study of $A-\lambda $,
introduce an extra variable $t\in S^1$, and replace
$\mu $ by $D_t=-i\partial_t$, letting
\begin{equation}
\widehat A=A-e^{i\eta }D_t^{2} \text{ on }\Omega \times S^1. \label{4.15b}
\end{equation}
Then $\widehat A$ is elliptic on $\Omega \times S^1$ and its
Dirichlet problem is elliptic, and by
the
preceding construction (carried out with local coordinates respecting
the product structure),
\[
\widehat {\mathcal A} =\begin{pmatrix} \widehat A\\ \gamma _0
\end{pmatrix}\text{ has a parametrix }\widehat{\mathcal B}^0 ,
\]
with mapping properties of $\widehat{\mathcal B}^0 $ and the remainder
$\widehat{\mathcal R}=\widehat{\mathcal A} \widehat{\mathcal B}^0 -I$
as in
\eqref{tag4.14} and \eqref{tag4.15}
with $\Omega ,\Sigma $ replaced by $\widehat\Omega =\Omega \times
S^1$, $\widehat\Sigma =\Sigma \times
S^1$.

For functions $w$ of the form $w(x,t)=u(x)e^{i\mu t}$,
\[
\widehat {\mathcal A} w=\begin{pmatrix} ( A-e^{i\eta }\mu ^{2})w\\ \gamma _0
w\end{pmatrix}=\begin{pmatrix} ( A-\lambda )w\\ \gamma _0
w\end{pmatrix},
\]
and similarly, the parametrix $\widehat{\mathcal B}^0 $ and the remainder
$\widehat{\mathcal R}$ act on such functions like 
$\mathcal B^0 (\lambda )$ and
$\mathcal R(\lambda )$ applied in the $x$-coordinate.

Moreover, for  $w(x,t)=u(x)e^{i\mu t}$, $u\in \mathcal S({\mathbb R}^n)$, $\mu\in 2\pi \Z$, 
\[
\|w\|_{H^s({\mathbb R}^n\times S^1)}\simeq \|(1-\Delta +\mu ^2
)^{s/2}u(x)\|_{L_2({\mathbb R}^n)}\simeq \|(1+|\xi |^2+\mu ^2)^{s/2}\hat u(\xi )\|_{L_2},
\]
with similar relations for Sobolev spaces over other sets. Norms as in
the right-hand side are called $H^{s,\mu }$-norms; they were
extensively used in \cite{FunctionalCalculus}, see the Appendix there for
the definition on subsets. For the parametrix 
$\mathcal B^0 (\lambda)$  this implies
\begin{equation}
\|\mathcal B^0(\lambda )\{f,g\}\|_{H^{s+2,\mu }(\Omega )}
\le c_s\|\{f,g\}\|_{H^{s ,\mu }(\Omega )\times
{ H}^{s+\frac32 ,\mu }(\Sigma )}.\label{tag4.18a}
\end{equation}
The important observation is now that when $s'<s$ and
$w(x,t)=u(x)e^{i\mu t}$, then
\begin{align}
\|w\|_{H^{s'}({\mathbb R}^n\times S^1)}&\simeq \|(1+|\xi |^2+\mu
^2)^{s'/2}\hat u(\xi )\|_{L_2}\nonumber \\
&\le  \ang\mu ^{s'-s}\|(1+|\xi |^2+\mu ^2)^{s/2}\hat u(\xi )\|_{L_2}\simeq\ang\mu ^{s-s'}  \|w\|_{H^s({\mathbb R}^n\times S^1)},\nonumber
\end{align}
with constants independent of $u$ and $\mu $. Analogous estimates hold with
${\mathbb R}^n$ replaced by $\Omega $ or $\Sigma $.

Applying this principle to the estimates of the remainder $\widehat{\mathcal R}$, we find
that
\begin{align}
\|\mathcal R(\lambda )\{f,g\}\|_{H^{s,\mu }(\Omega )\times { H}^{s+\frac32,\mu
}(\Sigma )}
&\le c_s\|\{f,g\}\|_{H^{s-\theta ,\mu }(\Omega )\times
{ H}^{s+\frac32-\theta ,\mu }(\Sigma )}\nonumber
\\ &\le c'_s\ang\mu ^{-\theta }\|\{f,g\}\|_{H^{s,\mu }(\Omega )\times
{ H}^{s+\frac32,\mu }(\Sigma )}
\label{tag4.18}
\end{align}
for $s$ as in \eqref{tag4.15a}, $\lambda =e^{i\eta }\mu ^2 $ with $\mu
\in 2\pi {\mathbb N}_0$. One way to extend the
observation to 
arbitrary
$\lambda $ on the ray, is to write $\lambda =e^{i\eta }\mu ^2 =
e^{i\eta }(\mu _0+\mu ')^2$ with
$\mu _0\in 2\pi {\mathbb N}_0 $, $\mu '\in [0,2\pi )$, and set  $\lambda _0=
e^{i\eta }\mu _0^2$. Using \eqref{tag4.18a} and observing that
$(1+|\xi |^2+\mu _0^2)^{t/2}\simeq (1+|\xi |^2+(\mu _0+\mu ')^2)^{t/2} $
uniformly in $\xi ,\mu _0,\mu '$, by elementary inequalities, we find for
\begin{align}\mathcal R(\lambda )=\mathcal A(\lambda ) \mathcal B^0(\lambda _0)-I=\mathcal A (\lambda _0)\mathcal
  B^0(\lambda _0)-I+\begin{pmatrix}\lambda _0-\lambda \\0\end{pmatrix}\mathcal
  B^0(\lambda _0)
\end{align}
that
\begin{align*}
&\|\mathcal R(\lambda )\{f,g\}\|_{H^{s,\mu }(\Omega )\times { H}^{s+\frac32,\mu
}(\Sigma )}\le c''_s\|\mathcal R(\lambda )\{f,g\}\|_{H^{s,\mu _0}(\Omega
)\times { H}^{s+\frac32,\mu _0
}(\Sigma )}\\
&\le c'''_s\|\{f,g\}\|_{H^{s-\theta ,\mu _0}(\Omega )\times
{ H}^{s+\frac32-\theta ,\mu _0}(\Sigma )}
 \le c''''_s\|\{f,g\}\|_{H^{s-\theta ,\mu }(\Omega )\times
{ H}^{s+\frac32-\theta ,\mu }(\Sigma )}.
\end{align*}
So \eqref{tag4.18} also holds for general $\lambda $, when we define
${\mathcal B}^0(\lambda )={\mathcal B}^0(\lambda _0)$.

For each $s$, consider $\lambda= e^{i\eta }\mu^2 $ with $\mu\ge \mu _1$,
where $\mu _1$ is taken so large that
$c'_s\ang\mu ^{-\theta }\le \frac12$ for $\mu \ge \mu _1$. Then $I+\mathcal R(\lambda )$ has the
inverse $I+\mathcal R '(\lambda )=I+\sum_{k\ge 1}(-\mathcal R(\lambda ))^k$
(converging in the operator norm for operators on $H^{s,\mu }(\Omega
)\times { H}^{s+\frac32,\mu }(\Sigma )$), and, by definition of ${\mathcal B}^0(\lambda )$, 
\[
\mathcal A (\lambda )\mathcal B ^0(\lambda )(I+\mathcal R' (\lambda ))=I.
\]
This gives a right inverse
\[
\mathcal B (\lambda )=\mathcal B ^0(\lambda )+\mathcal B^0(\lambda )\mathcal R'(\lambda )=\begin{pmatrix}
R (\lambda )& K (\lambda )\end{pmatrix},
\]
with the same Sobolev space continuity \eqref{tag4.18a} as $\mathcal B ^0(\lambda )$, and
$\mathcal B ^0(\lambda )\mathcal R'(\lambda )$ of lower order: 
\begin{align}
\|\mathcal B^0(\lambda )\mathcal R'(\lambda )\{f,g\}\|_{H^{s+2,\mu }(\Omega )}
&\le c_s\|\{f,g\}\|_{H^{s-\theta  ,\mu }(\Omega )\times
{ H}^{s-\theta +\frac32 ,\mu }(\Sigma )}\nonumber
\\ &\le c'_s\ang\mu ^{-\theta }\|\{f,g\}\|_{H^{s  ,\mu }(\Omega )\times
{ H}^{s +\frac32 ,\mu }(\Sigma )}.\label{tag4.18b}
\end{align}

 Since
\begin{equation}
\mathcal A (\lambda )\mathcal B (\lambda )=\begin{pmatrix} (A-\lambda )R (\lambda
)& (A-\lambda )K (\lambda )\\ \gamma _0R (\lambda
)& \gamma _0 K (\lambda )
\end{pmatrix}=\begin{pmatrix} I&0\\0&I\end{pmatrix},\label{tag4.19}
\end{equation}
$R (\lambda )$ solves
\begin{equation}
(A-\lambda )u=f,\quad \gamma _0u=0, \label{tag4.20}
\end{equation}
and $K (\lambda )$ solves
\begin{equation}
(A-\lambda )u=0,\quad \gamma _0u=\varphi .\label{tag4.21}
\end{equation}

Since $R(\lambda )$ maps $L_2(\Omega )$ into $H^2(\Omega )\cap
H^1_0(\Omega )\subset D(A_\gamma )$, it must coincide with 
the resolvent $(A_\gamma -\lambda )^{-1}$
of $A_\gamma $ defined in Section \ref{sec:Realizations} by variational theory. The operator $K(\lambda )$ is the
Poisson-type solution operator of the Dirichlet problem with zero
interior data; it is often denoted by $K^\lambda _\gamma $ and
we shall also use this notation here. 
The operators have the mapping properties, for each $\lambda =e^{i\eta
}\mu ^2$, $\mu \ge \mu _1$, 
\begin{equation}
(A_\gamma -\lambda )^{-1}\colon H^s(\Omega )\to H^{s+2}(\Omega
),\quad K^\lambda _\gamma \colon { H}^{s+\frac32}(\Sigma )\to H^{s+2}(\Omega ),
\label{tag4.26}\end{equation}
for $s$ satisfying \eqref{tag4.15a}.

Moreover, the mapping properties extend to all the $\lambda $ for which the
resolvents and Poisson operators exist as solution operators to
\eqref{tag4.20}, \eqref{tag4.21}, in particular to $\lambda =0$.
For $A_\gamma ^{-1}$, this goes as follows: When $u\in H^1(\Omega )$ and $f\in
H^s(\Omega )$ with $s<1$, $f+\lambda u$ is likewise in $H^s(\Omega
)$. Then $A_\gamma u=f+\lambda u$ allows the conclusion $u\in
H^{s+2}(\Omega )$.  The argument works for all $|s|<\tau$. Moreover, since
$A_\gamma ^{-1}-(A_\gamma -\lambda)^{-1
}=-\lambda A_\gamma ^{-1}(A_\gamma -\lambda )^{-1}$ is of lower order
than $A_\gamma ^{-1}$, $A_\gamma ^{-1}$ coincides with $R^0(0)$ plus a
lower order remainder. 
The
Poisson operator solving \eqref{tag4.21} can be further described as
follows: There is a right
inverse $\mathcal K\colon { H}^{s+\frac32}(\Sigma )\to
H^{s+2}(\Omega )$ of $\gamma _0$ for $-\frac32<s\le
0$ (cf.\ Theorem \ref{thm:Traces}).
When we set $v=u-\mathcal K\varphi $,
we find that $v$ should solve
\[
(A-\lambda )v=-(A-\lambda )\mathcal K\varphi ,\quad \gamma _0v=0,
\]
to which we apply the preceding results; then when $\lambda \in \varrho (A_\gamma )$,
\begin{equation}
K^\lambda _\gamma =\mathcal K-(A_\gamma -\lambda )^{-1}(A-\lambda )\mathcal K;\label{tag4.21b}
\end{equation}
solves \eqref{tag4.21} uniquely. Thus $K^\lambda _\gamma $ exists for
all $\lambda \in \varrho (A_\gamma )$.

Since the formal adjoint $A'$ of $A$ is similar to $A$ (with regards
to strong ellipticity and smoothness
properties of the coefficients in its divergence form), the same construction works for the adjoint Dirichlet
problem, so also here we get the mapping properties
\begin{equation}
(A_\gamma ' -\bar\lambda )^{-1}\colon H^s(\Omega )\to H^{s+2}(\Omega ), \quad K^{\prime\bar\lambda }_\gamma
\colon { H}^{s+\frac32}(\Sigma )\to H^{s+2}(\Omega ),\label{tag4.21a}
\end{equation}
for $-\tau <s\le 0$.

The above analysis shows moreover that
\begin{eqnarray}\label{eq:Resolvs}
  R(\lambda)=R^0(\lambda) + S(\lambda), \qquad K(\lambda)=K^0(\lambda) + S'(\lambda),
\end{eqnarray}
where 
\begin{align}
  &\|R^0(\lambda)\|_{\mathcal{L}(H^{s,\mu}(\Omega ),H^{s+2,\mu
  }(\Omega ))},\; \|K^0(\lambda)\|_{\mathcal{L}(H^{s+\frac32,\mu}(\Sigma),H^{s+2,\mu}(\Omega
  ))}\text{ are }O(1),\; \nonumber
\\
&\|S(\lambda)\|_{\mathcal{L}(H^{s-\theta,\mu}(\Omega ),H^{s+2,\mu
  }(\Omega ))},\; \|S'(\lambda)\|_{\mathcal{L}(H^{s+\frac32-\theta,\mu}(\Sigma),H^{s+2,\mu}(\Omega
  ))}\text{ are }O(1),\label{eq:lambdaest1}
\\
&\|S(\lambda)\|_{\mathcal{L}(H^{s,\mu}(\Omega ),H^{s+2,\mu
  }(\Omega ))},\; \|S'(\lambda)\|_{\mathcal{L}(H^{s+\frac32,\mu}(\Sigma),H^{s+2,\mu}(\Omega
  ))}\text{ are }O(\ang\lambda^{-\theta/2}),\nonumber 
\end{align}
for $\lambda $ going to infinity on rays $\lambda =e^{i\eta }\mu ^2$,
$\eta\in (\pi/2-\delta ,3\pi/2+\delta )$, when $s,\theta$ are as in
\eqref{tag4.15a}.  Here $R^0(\lambda)$, $ K^0(\lambda)$ are explicit
parametrices as in \eqref{eq:R0Parametrix}--\eqref{eq:K0Parametrix}
(modified to depend on $\lambda$). 
For ``stationary'' norms, one has in particular 
\begin{align}
&\|R(\lambda )f\|_{s+2}+\ang\lambda ^{1+s/2}\|R(\lambda )f\|_{s}\le
C_s\min
\{\|f\|_{s}, \ang \lambda ^{s/2}\|f\|_0\},\label{eq:lambdaest3}\\
&\|K(\lambda )g\|_{s+2}+\ang\lambda ^{1+s/2}\|K(\lambda )g\|_{s}\le
C'_s\big(\|g\|_{s+\frac32}+ \ang \lambda ^{3/4+s/2}\|g\|_0\big).\label{eq:lambdaest4}
\end{align}

Note that $\|R(\lambda
)\|_{\mathcal{L}(L_2(\Omega ))}$ is $O(\ang\lambda ^{-1})$ on the ray.

Summing up, we have proved:

\begin{thm}
  Let $\Omega$, $\tau$, and $A$ be as in Assumption
{\rm \ref{tau-assumption2}} and let $-\tau <s\leq 0$.
Then for $\lambda \in \varrho (A_\gamma )$, the operator 
\begin{equation}
\begin{pmatrix} A-\lambda \\  \gamma_0 \end{pmatrix} \colon 
H^{s+2}(\Omega ) \to
\begin{matrix}
H^{s}(\Omega )\\ \times \\ { H}^{s+\frac32}(\Sigma )\end{matrix};\label{tag4.2a}
\end{equation}
has an inverse
\begin{equation}
\begin{pmatrix} R(\lambda )& K(\lambda )\end{pmatrix} =
\begin{pmatrix} (A_\gamma -\lambda )^{-1}& K^\lambda _\gamma \end{pmatrix} 
\colon \begin{matrix}
H^{s}(\Omega )\\ \times \\{ H}^{s+\frac32}(\Sigma )\end{matrix} \to
H^{s+2}(\Omega ).
\label{tag4.14a}\end{equation}

On the rays $\lambda =e^{i\eta }\mu ^2$ with $\eta \in (\pi /2-\delta
, 3\pi /2+\delta)$ (outside the range of the
principal symbol), the inverse exists for $|\lambda |$ sufficiently
large. $R(\lambda )$ and $K(\lambda )$ have the
structure in {\rm \eqref{eq:Resolvs}} and satisfy estimates {\rm
  \eqref{eq:lambdaest1}--\eqref{eq:lambdaest4}}.

Similar statements hold for $A'$.
\end{thm}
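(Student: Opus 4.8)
The plan is to read the statement off the localized, parameter-dependent parametrix construction carried out in Sections~\ref{subsec:halfspace} and~\ref{subsec:gendom}; there is essentially nothing new to prove, only to collect. First I would reduce $A$ to its principal part in $x$-form $a(x,D_x)=\sum_{j,k}a_{jk}D_{x_j}D_{x_k}$, using \eqref{eq:xyForm} (itself a consequence of Theorem~\ref{thm:PsDOCompo} and \eqref{eq:a0Estim}), so that $A-a(x,D_x)$ is of order $2-\theta$ and can be absorbed into the remainder. Working in the charts of Remark~\ref{rem:Charts} with $M=2$, I would transform $a(x,D_x)$ to each local half-space; by Corollary~\ref{cor:CoordinateTrafo} the transformed operator $\underline a_j(x,D_x)$ has $C^\tau$-coefficients up to an error $H^{2-\tau}(\Rn_+)\to L_2(\Rn_+)$. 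In each boundary chart I would invoke the half-space parametrix of Section~\ref{subsec:halfspace} (built from the nonsmooth $\psi$dbo-calculus, Theorem~\ref{Theorem3.1}, after order reduction by the operators of Remark~\ref{Remark3.2}) and in the interior chart an ordinary parametrix of $a(x,D_x)$, gluing them with the cutoffs $\varphi_j,\psi_j$ as in \eqref{eq:R0Parametrix}--\eqref{eq:K0Parametrix}. This produces $\mathcal B^0$ with $\mathcal A\mathcal B^0=I+\mathcal R_1$, where $\mathcal R_1$ gains $\theta$ derivatives for $s,\theta$ as in \eqref{tag4.15a}.

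Next I would insert the spectral parameter. For $\lambda=e^{i\eta}\mu^2$ on a ray outside the sector of the principal symbol, I would introduce the auxiliary variable $t\in S^1$ and the elliptic operator $\widehat A=A-e^{i\eta}D_t^2$ on $\Omega\times S^1$ as in \eqref{4.15b}, run the stationary construction there, and restrict to functions $u(x)e^{i\mu t}$: in the $H^{s,\mu}$-scale a loss of $s-s'$ derivatives turns into a gain of a factor $\ang{\mu}^{s-s'}$, so the $\theta$-smoothing of $\widehat{\mathcal R}$ translates into $\mathcal R(\lambda)=O(\ang{\mu}^{-\theta})$. Hence for $|\lambda|$ large $I+\mathcal R(\lambda)$ is Neumann-invertible, giving an exact right inverse $\mathcal B(\lambda)=\begin{pmatrix}R(\lambda)&K(\lambda)\end{pmatrix}$ with the continuity \eqref{tag4.18a} and a lower-order correction obeying \eqref{tag4.18b}; this is precisely the structure \eqref{eq:Resolvs} and the estimates \eqref{eq:lambdaest1}--\eqref{eq:lambdaest4}, the stationary versions following by specialization. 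Because $R(\lambda)$ maps $H^s(\Omega)$ into $H^{2}(\Omega)\cap H^1_0(\Omega)\subset D(A_\gamma)$ and solves \eqref{tag4.20}, it must equal $(A_\gamma-\lambda)^{-1}$; similarly $K(\lambda)=K^\lambda_\gamma$ solves \eqref{tag4.21}. To pass from ``$|\lambda|$ large on the ray'' to all $\lambda\in\varrho(A_\gamma)$ I would use the elliptic-regularity bootstrap ``$A_\gamma u=f+\lambda u$ with $u\in H^1(\Omega)$, $f\in H^s(\Omega)$ forces $u\in H^{s+2}(\Omega)$'' for $|s|<\tau$, together with the resolvent identity and the representation \eqref{tag4.21b} of $K^\lambda_\gamma$ via the lifting $\mathcal K$ of Theorem~\ref{thm:Traces}. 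Finally, since $A'$ in divergence form has the same strong ellipticity and the same $H^1_q(\Omega)$/$L_q(\Omega)$-regularity of its coefficients, the whole argument repeats verbatim for the adjoint Dirichlet problem.

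The step I expect to be most delicate is the bookkeeping of the admissible $s$ and $\theta$ through the chart-pasting, the order reductions by $\Lambda_{-,+}^{\pm 2}$ and $\Lambda_0^{\pm 3/2}$, and the class-conditions of the $\psi$dbo-calculus: one must check that the remainder improves by one fixed positive $\theta$ \emph{uniformly} on the ray, and that all lower-order contributions---those from $A-a(x,D_x)$, from Corollary~\ref{cor:CoordinateTrafo}, and from the patching---fit inside that $\theta$-margin while $-\tau<s\le 0$ is respected at every stage. Granting this, the identification $R(\lambda)=(A_\gamma-\lambda)^{-1}$ and its extension down to $\lambda=0$ is then just a uniqueness argument for the Dirichlet problem combined with the resolvent identity.
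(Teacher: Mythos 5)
Your proposal is correct and follows essentially the same route as the paper: the theorem is there proved exactly by collecting the construction of Sections~\ref{subsec:halfspace} and \ref{subsec:gendom} — reduction to the $x$-form principal part, chart-wise half-space parametrices glued as in \eqref{eq:R0Parametrix}--\eqref{eq:K0Parametrix}, Agmon's trick on $\Omega\times S^1$ with the $H^{s,\mu}$-norms to get the $O(\ang\mu^{-\theta})$ remainder estimate, Neumann-series inversion for large $|\lambda|$ on the rays, identification of $R(\lambda)$ and $K(\lambda)$ with $(A_\gamma-\lambda)^{-1}$ and $K^\lambda_\gamma$, and the bootstrap plus \eqref{tag4.21b} to cover all $\lambda\in\varrho(A_\gamma)$, with the same argument for $A'$. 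Nothing essential is missing.
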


There is a class related condition $s>-\frac32$, cf.\ Theorem
\ref{Theorem3.1} and the beginning of  Section \ref{resolv}, that  prevents the
above construction (even if $\tau $ were $>2$) from defining the
Poisson operator to start in the space ${H}^{-\frac12}(\Sigma )$, 
but that will be needed for an
analysis as in Section \ref{extend}. Fortunately, it is possible to get
supplementing information in
other ways, as we shall see below.

\setcounter{section}{4}
\section{Dirichlet-to-Neumann operators}\label{dir2neu1}

\subsection{An extension of Green's formula  }

For a general treatment of realizations of $A$, we need to extend the
trace and Poisson operators to low-order Sobolev spaces. We begin by
establishing an extension of Green's formula.

For $\lambda \in \varrho (A_\gamma )$, $s\in [0,2]$, 
let 
\begin{equation}
Z^s_\lambda (A)=\{u\in H^s(\Omega )\mid (A-\lambda )u=0\};\label{tag2.3}
\end{equation}
it is a closed subspace of $H^s(\Omega )$. It follows from Theorem
\ref{thm:Traces} that the trace operator $\gamma _0$ is continuous:
\begin{equation}
\gamma _0\colon  Z^s_\lambda (A)\to H^{s-\frac12}(\Sigma ),\label{tag2.3a}
\end{equation}
for $s\in (\frac12,2]$. Moreover, in view of the solvability
properties shown in Section \ref{resolv}, it defines a homeomorphism   
\begin{equation}
\gamma _0\colon  Z^s_\lambda (A)\simto H^{s-\frac12}(\Sigma ),\label{tag2.3b}
\end{equation}
for $s\in (2-\tau ,2]$, with inverse $K^\lambda _\gamma =K(\lambda )$.

As shown in Section \ref{subsec:Green}, 
the trace operators
$\gamma _1 $, $\chi $ and $\chi '$ define continuous maps
\begin{equation}
\gamma _1 ,\chi ,\chi '\colon  Z^s_\lambda (A)\to  H^{s-\frac32}(\Sigma )
\label{tag2.4}
\end{equation}
for all $s\in (\frac32,2]$.

We need an extension of these mapping properties to all $s\in [0,2]$, 
along with an
extension of Green's formula to $u\in D(\Ama)$, $v\in H^{2}(\Omega
)$. 
 This is shown by the method of Lions and Magenes \cite{LM68}.  
Here we use the restriction operator $r_\Omega $
(restricting distributions from $\R^n$ to $\Omega $) and the
extension-by-zero operator $e_\Omega $ (extending functions on $\Omega
$ by zero on $\Rn\setminus\Omega $).

An important ingredient is the following
denseness result:

\begin{proposition}\label{Theorem9.8}
  The space $C_{(0)}^\infty (\comega)=r_\Omega C_0^\infty (\R^n)$ is dense in
  $D(A_{\operatorname{max}})$ (provided with the graph-norm).
\end{proposition}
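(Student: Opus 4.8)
The plan is to reduce the claim to the elementary density of $C_{(0)}^\infty(\comega)$ in $H^2(\Omega)$, by first showing that $H^2(\Omega)$ is itself dense in $D(\Ama)$ for the graph norm. Since $0\in\varrho(A_\gamma)$ we have the topological direct sum $D(\Ama)=D(A_\gamma)\dot+ Z$, $Z=\ker\Ama$ (Section \ref{extend}), with $D(A_\gamma)=H^2(\Omega)\cap H^1_0(\Omega)\subset H^2(\Omega)$ and graph norm equivalent to $\|\cdot\|_{H^2(\Omega)}$ there (Section \ref{sec:Realizations}, Section \ref{resolv}); on $Z$ the graph norm is just $\|\cdot\|_{L_2(\Omega)}$. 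So it would suffice to prove that $Z\cap H^2(\Omega)$ is dense in $Z$ in $L_2(\Omega)$. As $K^0_\gamma\big(H^{\frac32}(\Sigma)\big)\subset Z\cap H^2(\Omega)$ by \eqref{tag4.21} and \eqref{tag4.26} (taking $\lambda=0\in\varrho(A_\gamma)$) and $Z$ is closed in $L_2(\Omega)$, this reduces to showing: $w\in Z$ and $(w,K^0_\gamma g)_\Omega=0$ for all $g\in H^{\frac32}(\Sigma)$ imply $w=0$.

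For this I would use Green's formula as a duality device. Given such a $w$, set $\psi=(A'_\gamma)^{-1}w\in H^2(\Omega)\cap H^1_0(\Omega)$, so $A'\psi=w$ and $\gamma_0\psi=0$. Inserting $u=K^0_\gamma g$ and $v=\psi$ into \eqref{Green} (Theorem \ref{thm:Green}) and using $A(K^0_\gamma g)=0$, $\gamma_0(K^0_\gamma g)=g$, $\gamma_0\psi=0$ gives $(K^0_\gamma g,w)_\Omega=(g,\chi'\psi)_{\partial\Omega}$, so the hypothesis says $(g,\chi'\psi)_{\partial\Omega}=0$ for all $g\in H^{\frac32}(\Sigma)$; since $\chi'\psi\in H^{\frac12}(\Sigma)\subset L_2(\Sigma)$ and $H^{\frac32}(\Sigma)$ is dense in $L_2(\Sigma)$, this forces $\chi'\psi=0$. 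By \eqref{eq:conormal}, $\chi'=\ol{s_0}\gamma_1+\mathcal A_1'\gamma_0$ with $\ol{s_0}$ invertible (Theorem \ref{thm:Green}), so $\gamma_0\psi=0$ and $\chi'\psi=0$ yield $\gamma_1\psi=0$ as well. Thus $\psi\in H^2(\Omega)$ has vanishing Dirichlet and Neumann traces, hence $\psi\in H^2_0(\Omega)$ (standard for the $C^1$ domain $\Omega$; cf.\ \cite{McLean}). Taking $\phi_k\in C_0^\infty(\Omega)$ with $\phi_k\to\psi$ in $H^2(\Omega)$, boundedness of $A'\colon H^2(\Omega)\to L_2(\Omega)$ (the analogue of \eqref{eq:MappingA}) gives $A'\phi_k\to w$ in $L_2(\Omega)$, and then
\[
\|w\|_{L_2(\Omega)}^2=\lim_{k\to\infty}(w,A'\phi_k)_\Omega=\lim_{k\to\infty}(\Ama w,\phi_k)_\Omega=0,
\]
the middle equality because $\Ama=(A'_{\min})^*$ and $\phi_k\in C_0^\infty(\Omega)\subset D(A'_{\min})$, the last because $w\in Z$. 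Hence $w=0$, so $H^2(\Omega)$ is dense in $D(\Ama)$.

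It then remains to note that $C_{(0)}^\infty(\comega)=r_\Omega C_0^\infty(\Rn)$ is dense in $H^2(\Omega)$: by \eqref{restr.spaces}--\eqref{eq:QuotientNorm} the map $r_\Omega\colon H^2(\Rn)\to H^2(\Omega)$ is continuous and onto, and $C_0^\infty(\Rn)$ is dense in $H^2(\Rn)$. Since $A\colon H^2(\Omega)\to L_2(\Omega)$ is bounded by \eqref{eq:MappingA}, $H^2$-convergence implies graph-norm convergence, so $C_{(0)}^\infty(\comega)$ is dense in $H^2(\Omega)$ and, by the previous step, in $D(\Ama)$.

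The step I expect to be the real obstacle is the density of $Z\cap H^2(\Omega)$ in $Z$. No trace operator is yet available on $Z$ — producing one on low-order Sobolev spaces is exactly one of the subsequent applications of this proposition — so one cannot approximate boundary data of elements of $Z$ directly, and a naive interior regularization of a $w\in Z$ fails since its derivatives may be unbounded near $\partial\Omega$. The duality argument circumvents this: the conormal operator $\chi'$ in Green's formula detects the full Dirichlet data of $\psi=(A'_\gamma)^{-1}w$ up to the invertible factor $s_0$, and the conclusion then follows from the identity $\Ama=(A'_{\min})^*$.
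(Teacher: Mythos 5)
Your argument is correct in substance, but it follows a genuinely different route from the paper's. The paper proves the proposition by a short duality argument directly on $D(\Ama)$: a continuous antilinear functional $\ell(u)=(f,u)+(g,Au)$ vanishing on $C^\infty_{(0)}(\comega)$ forces $A_e'e_\Omega g=-e_\Omega f$ as distributions on $\rn$, whole-space elliptic regularity for the extended operator gives $e_\Omega g\in H^2(\rn)$ with support in $\comega$, hence $g\in H^2_0(\Omega)=D(A'_{\min})$ by \eqref{tag2.2a} and $A'g=-f$, so $\ell$ vanishes on all of $D(\Ama)$ by adjointness; no boundary trace theory and none of the Section \ref{resolv} machinery enter. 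You instead split $D(\Ama)=D(A_\gamma)\dot+Z$ and reduce the claim to the density of $Z\cap H^2(\Omega)$ in $Z$, which you prove by a Green's-formula duality against the Poisson operator $K^0_\gamma$; this uses the $H^2$-solvability of both Dirichlet problems, Theorem \ref{thm:Green} and the invertibility of $s_0$ — much more of the paper's apparatus, though all of it is established before Section 5, so there is no circularity. What your route buys is the density of $Z\cap H^2(\Omega)$ in $Z$ as an intermediate product, which the paper only obtains afterwards, as a consequence of the proposition, in the proof of Theorem \ref{Theorem4.2a}; what the paper's route buys is brevity and minimal demands on the boundary.

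The one step you should not dismiss as ``standard'' is the implication $\psi\in H^2(\Omega)$, $\gamma_0\psi=\gamma_1\psi=0\Rightarrow\psi\in H^2_0(\Omega)$: the trace characterization of $H^2_0$ in \cite{McLean} is proved there for $C^{1,1}$ domains, whereas here $\partial\Omega$ is only of class $B^{\frac32}_{p,2}\hookrightarrow C^{1+\tau}$ with $\tau<\frac12$. The implication does hold at this regularity, but it needs a word of proof: by \eqref{eq:dtauj}, $\gamma_0(\partial_{x_j}\psi)=n_j\gamma_1\psi+\partial_{\tau,j}\gamma_0\psi=0$ for every $j$; hence the zero extensions satisfy $e_\Omega\psi\in H^1(\rn)$ and $\partial_{x_j}e_\Omega\psi=e_\Omega\partial_{x_j}\psi\in H^1(\rn)$, so $e_\Omega\psi\in H^2(\rn)$ with support in $\comega$, and the identification of $H^2_0(\comega)$ with the closure of $C^\infty_0(\Omega)$ in $H^2(\Omega)$ for Lipschitz domains, recorded in Section \ref{sec:Funct.sp}, gives $\psi\in H^2_0(\Omega)=D(A'_{\min})$. (This is precisely where the paper's proof is more economical: it reaches $g\in H^2_0(\Omega)$ from support and whole-space regularity alone, without recourse to traces.) With that justification supplied, your proof is complete.
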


\begin{proof} 
  This follows if we show that when $\ell $ is a continuous antilinear
  (conjugate linear) functional on $D(A_{\operatorname{max}})$ which
  vanishes on $C_{(0)}^\infty (\comega)$, then $\ell =0$. So let $\ell $
  be such a functional; it can be written as
\begin{equation}\label{tag9.23}
    \ell (u)=(f,u)_{L_2(\Omega )}+(g,Au)_{L_2(\Omega )}
\end{equation}  
for some $f,g\in L_2(\Omega)$. We know that $\ell (\varphi )=0$ for
$\varphi \in C_{(0)}^\infty (\comega)$. Any such $\varphi $ is the
restriction to $\Omega$ of a function $\Phi \in C_0^\infty (\mathbb
R^n)$, and in terms of such functions we have
\begin{equation}\label{tag9.24}
\ell (r_{\Omega }\Phi )=(e_{\Omega }f,\Phi)_{L_2(\mathbb R^n)}
+(e_{\Omega }g,A_e\Phi)_{L_2(\mathbb 
R^n)}=0,\text{ all } \Phi \in C_0^\infty (\mathbb R^n).
\end{equation}

The equations to the right in \eqref{tag9.24}
imply, in terms of the formal adjoint $A_e'$ on $\R^n$, 
\[ 
\ang{e_{\Omega }f+ A_e'e_{\Omega }g,\overline\Phi }=0,\text{ all }
\Phi \in C_0^\infty (\mathbb R^n),
\]
i.e., 
\begin{equation}\label{tag9.25} 
 e_{\Omega }f+ A_e'e_{\Omega }g=0, \text{ or }A_e'e_{\Omega }g=-e_{\Omega }f,
\end{equation}
as distributions on $\mathbb R^n$. Here we know that $e_{\Omega }g$ and $e_{\Omega }f$
are in $L_2(\mathbb R^n)$, and the solvability properties of $A_e'$ then imply  
that $e_{\Omega }g\in H^{2}(\mathbb R^n)$. Since it has support in $\comega$, it
identifies with a function in $H^{2}_0(\Omega)$, i.e., $g\in
H^{2}_0(\Omega)$. Then by \eqref{tag2.2a}, $g$ is in $ D(A'_{\min})$.  And \eqref{tag9.25}
implies that $A'g=-f$. But then, for any $u\in D(A_{\max})$,
\[
\ell (u)=(f,u)_{L_2(\Omega)} + (g,Au)_{L_2(\Omega)} = -(A'g,u)_{L_2(\Omega)}
+ (g,Au)_{L_2(\Omega)} = 0,
\]
since $A_{\max}$ and $A'_{\min}$ are adjoints. 
\end{proof}

We shall show:

\begin{theorem}\label{Theorem9.10} 
The collected trace operator $\{\gamma _0,\chi\}$, defined on $C_{(0)}^\infty (\comega)$, extends by
  continuity to a continuous mapping from $D(A_{\max})$ to
  $H^{-\frac12}(\Sigma )\times H^{-\frac32}(\Sigma )$.
  Here Green's formula {\rm \eqref{Green}} extends to the
  formula
\begin{equation}\label{tag9.29}
    (Au,v)_{L_2(\Omega)}-(u,A'v)_{L_2(\Omega)}=(\chi u,{\gamma _0
        v})_{-\frac32, \frac32}-(\gamma _0
      u,{\chi 'v})_{-\frac12,\frac12},
\end{equation}
for $u\in D(A_{\max})$, $v\in H^{2}(\Omega)$.

\end{theorem}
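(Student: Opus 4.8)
The plan is to establish the extended mapping property for $\{\gamma_0,\chi\}$ by a duality argument in the style of Lions--Magenes, using Proposition~\ref{Theorem9.8} to pass from the smooth core $C_{(0)}^\infty(\comega)$ to all of $D(A_{\max})$. First, for $u\in C_{(0)}^\infty(\comega)$ and $v\in H^2(\Omega)$ I would start from the already-established Green's formula \eqref{Green}, rewritten as
\[
(\chi u,\gamma_0 v)_{\partial\Omega}=(Au,v)_{L_2(\Omega)}-(u,A'v)_{L_2(\Omega)}+(\gamma_0 u,\chi' v)_{\partial\Omega}.
\]
To bound $\gamma_0 u$ in $H^{-\frac12}(\Sigma)$ and $\chi u$ in $H^{-\frac32}(\Sigma)$ in terms of $\|u\|_{D(A_{\max})}=(\|u\|_{L_2}^2+\|Au\|_{L_2}^2)^{1/2}$, the idea is to test against suitable $v$: given $\psi\in H^{\frac32}(\Sigma)$, I would pick $v=v_\psi\in H^2(\Omega)$ solving the \emph{adjoint} Dirichlet problem $A'v_\psi=0$, $\gamma_0 v_\psi=\psi$, which exists and is bounded by Corollary~\ref{cor:TraceChi} (applied to $A'$) and the homeomorphism \eqref{tag2.3b}, so $\|v_\psi\|_{H^2(\Omega)}\le C\|\psi\|_{H^{3/2}(\Sigma)}$. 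With this choice the first identity collapses to
\[
(\chi u,\psi)_{\partial\Omega}=(Au,v_\psi)_{L_2(\Omega)}+(\gamma_0 u,\chi' v_\psi)_{\partial\Omega},
\]
and since $\chi'\colon H^2(\Omega)\to H^{1/2}(\Sigma)$ is bounded (from Theorem~\ref{thm:Green}, as $s=2$), the right side is bounded by $C\|u\|_{D(A_{\max})}\,\|\psi\|_{H^{3/2}(\Sigma)}$ — here $\gamma_0 u$ is controlled first, see next paragraph. Taking the supremum over $\psi$ gives $\|\chi u\|_{H^{-3/2}(\Sigma)}\le C\|u\|_{D(A_{\max})}$.

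For the $\gamma_0$-component, the analogous move is to test \eqref{Green} against $v=v_\phi$ solving $A'v_\phi=0$, $\chi' v_\phi=\phi$ for $\phi\in H^{\frac12}(\Sigma)$; such a $v_\phi\in H^2(\Omega)$ exists with $\|v_\phi\|_{H^2(\Omega)}\le C\|\phi\|_{H^{1/2}(\Sigma)}$ because the boundary problem $\{A',\chi'\}$ has a bounded solution operator in the relevant range — this follows from the surjectivity with continuous right inverse of $\binom{\chi'}{\gamma_0}$ in Corollary~\ref{cor:TraceChi} combined with the resolvent construction of Section~\ref{resolv} (one first solves with prescribed $\chi'$-data and zero $\gamma_0$-data, then absorbs the interior equation via $(A'_\gamma-\bar\lambda)^{-1}$; here one uses that $\chi'$ has the invertible leading coefficient $s_0$, i.e. the problem is ``Neumann-elliptic''). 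Then \eqref{Green} reduces to $(\gamma_0 u,\phi)_{\partial\Omega}=-(Au,v_\phi)_{L_2(\Omega)}+(u,A'v_\phi)_{L_2(\Omega)}+(\chi u,\gamma_0 v_\phi)_{\partial\Omega}$; the first two terms are $O(\|u\|_{D(A_{\max})}\|\phi\|_{H^{1/2}})$, and the last term one either arranges to vanish by choosing $v_\phi$ with $\gamma_0 v_\phi=0$ as well — which is exactly what the right inverse of $\binom{\chi'}{\gamma_0}$ provides — giving $\|\gamma_0 u\|_{H^{-1/2}(\Sigma)}\le C\|u\|_{D(A_{\max})}$. Thus $\{\gamma_0,\chi\}$ is bounded $C_{(0)}^\infty(\comega)\to H^{-1/2}(\Sigma)\times H^{-3/2}(\Sigma)$ in the graph norm, and by density (Proposition~\ref{Theorem9.8}) it extends continuously to all of $D(A_{\max})$.

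Finally, to obtain the extended Green formula \eqref{tag9.29}: both sides are continuous in $u\in D(A_{\max})$ with $v\in H^2(\Omega)$ fixed — the left side obviously, the right side because $\chi u\mapsto(\chi u,\gamma_0 v)_{-3/2,3/2}$ and $\gamma_0 u\mapsto(\gamma_0 u,\chi' v)_{-1/2,1/2}$ are continuous in the extended trace topologies, using $\gamma_0 v\in H^{3/2}(\Sigma)$ and $\chi' v\in H^{1/2}(\Sigma)$ for $v\in H^2(\Omega)$. Since the identity holds for $u\in C_{(0)}^\infty(\comega)$ by Theorem~\ref{thm:Green}, it persists for all $u\in D(A_{\max})$ by density. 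The main obstacle is the second paragraph's point — producing, for prescribed $\chi'$-data, an $H^2$-solution $v_\phi$ of the adjoint problem with \emph{both} $A'v_\phi=0$ and $\gamma_0 v_\phi=0$, i.e. verifying that the adjoint Neumann-type problem is well-posed in $H^2$ in the nonsmooth setting; this is where Corollary~\ref{cor:TraceChi} (invertibility of the leading symbol $s_0$ of $\chi'$) and the parameter-dependent resolvent estimates of Section~\ref{resolv} must be combined carefully, and it is essential that the boundary regularity $B^{3/2}_{p,2}$ is enough for $\chi,\chi'$ to map $H^2(\Omega)$ into $H^{1/2}(\Sigma)$ with the needed bounds.
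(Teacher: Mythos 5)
Your second paragraph contains a genuine flaw. You ask for $v_\phi\in H^2(\Omega)$ satisfying simultaneously $A'v_\phi=0$, $\chi'v_\phi=\phi$ and (in the final step) $\gamma_0 v_\phi=0$. This is impossible for $\phi\neq 0$: a function with $A'v_\phi=0$ and $\gamma_0v_\phi=0$ lies in $\ker A'_\gamma$, and since $A'_\gamma=A_\gamma^*$ has positive lower bound, $0\in\varrho(A'_\gamma)$ and hence $v_\phi=0$, so $\chi'v_\phi=0\neq\phi$. Even without the extra condition $\gamma_0v_\phi=0$, the solvability you invoke for the semi-homogeneous Neumann-type problem $A'v=0$, $\chi'v=\phi$ with the bound $\|v_\phi\|_{H^2}\le C\|\phi\|_{H^{1/2}}$ is not available here: ellipticity of $\{A',\chi'\}$ (invertibility of $s_0$) yields only a parametrix/Fredholm statement, and nothing in the standing assumptions excludes $0$ from the spectrum of the Neumann-type realization. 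Moreover, if you keep $A'v_\phi=0$ but drop $\gamma_0v_\phi=0$, the leftover term $(\chi u,\gamma_0 v_\phi)$ requires the $H^{-\frac32}$-bound on $\chi u$, which your first paragraph derives only after the $H^{-\frac12}$-bound on $\gamma_0 u$ — a circularity. So as written the estimate $\|\gamma_0u\|_{H^{-1/2}}\le C\|u\|_{D(A_{\max})}$ is not established.

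The repair is exactly what the paper does, and it removes the "main obstacle" you flag rather than overcoming it: impose no interior equation on the test function at all. Using only the continuous right inverse $\mathcal K'=\begin{pmatrix}\mathcal K'_0&\mathcal K'_1\end{pmatrix}$ of the pair $\{\gamma_0,\chi'\}$ on $H^2(\Omega)$ (Corollary~\ref{cor:TraceChi}), set $w_\varphi=\mathcal K'_0\varphi_1-\mathcal K'_1\varphi_0$ for $\varphi=\{\varphi_0,\varphi_1\}\in H^{\frac12}(\Sigma)\times H^{\frac32}(\Sigma)$, so that $\gamma_0w_\varphi=\varphi_1$, $\chi'w_\varphi=-\varphi_0$, and consider $\ell_u(\varphi)=(Au,w_\varphi)-(u,A'w_\varphi)$. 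The term $(u,A'w_\varphi)$ is estimated crudely by $\|u\|_{L_2}\|A'w_\varphi\|_{L_2}\le C\|u\|_{L_2}\|w_\varphi\|_{H^2}$, so $|\ell_u(\varphi)|\le C\|u\|_{D(A_{\max})}\|\varphi\|_{H^{\frac12}\times H^{\frac32}}$ with no solvability input whatsoever; this functional defines the pair $\{\gamma_0u,\chi u\}\in H^{-\frac12}(\Sigma)\times H^{-\frac32}(\Sigma)$ in one stroke, and Green's formula \eqref{Green} identifies it with the classical traces for $u\in C_{(0)}^\infty(\comega)$. Your remaining steps — density via Proposition~\ref{Theorem9.8} and passage to the limit in \eqref{tag9.29} using $\gamma_0v\in H^{\frac32}(\Sigma)$, $\chi'v\in H^{\frac12}(\Sigma)$ for $v\in H^2(\Omega)$ — coincide with the paper and are fine.
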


\begin{proof}
  Let $u\in D(A_{\max})$. We want to define $\{\gamma _0u,\chi
  u\}$ as a continuous antilinear functional on $H^{\frac12}(\Sigma
  )\times H^{\frac32}(\Sigma )$, depending continuously (and of course linearly) on $u\in
  D(A_{\max})$. For this we use that 
\[ \begin{pmatrix}\gamma _0\\ \quad \\ \chi '
  \end{pmatrix}\colon H^{2}(\Omega )\to \begin{matrix}H^{\frac32}(\Sigma )\\
    \times \\
H^{\frac12}(\Sigma )\end{matrix}\text{ has a
  continuous right inverse }{\cal K}'=\begin{pmatrix}{\cal K}'_0&{\cal
    K}'_1\end{pmatrix},\] 
a lifting operator, cf.\ Corollary \ref{cor:TraceChi}. For a given $\varphi =\{\varphi _0,\varphi _1\}\in
  H^{\frac12}(\Sigma )\times
  H^{\frac32}(\Sigma )$, we set 
\[ 
w_\varphi ={\cal K}'_0 \varphi _1-{\cal K}'_1\varphi
_0;
\text{ then
}\gamma _0w_\varphi =\varphi _1,\; \chi 'w_\varphi =-\varphi _0.
\]
Now we define
\begin{equation}\label{tag9.30}
\begin{split}
  \ell _u(\varphi )&=(Au,w_\varphi )-(u,A'w_\varphi ),\text{ noting that}\\
  |\ell _u(\varphi )|&\le C\|u\|_{D(A_{\max})}\|w_\varphi
  \|_{H^{2}(\Omega)} \le C'\|u\|_{D(A_{\max})}\|\varphi
  \|_{H^{\frac12}(\Sigma )\times
  H^{\frac32}(\Sigma )}.
\end{split}
\end{equation} 
So, $\ell_u$ is a continuous antilinear functional on $\varphi \in
H^{\frac12}(\Sigma )\times
  H^{\frac32}(\Sigma )$, hence defines an element $\psi =\{\psi
_0,\psi _1\}\in  H^{-\frac12}(\Sigma )\times
  H^{-\frac32}(\Sigma )$ such that
\begin{equation}
\ell_u(\varphi )=(\psi _0,\varphi _0)_{-\frac12,
\frac12}+(\psi _1,\varphi _1)_{-\frac32,
\frac32} .\label{tag9.30a}
\end{equation}
Moreover, it depends continuously on $u\in D(A_{\max})$, in view of
the estimates in \eqref{tag9.30}. If $u$ is in $C_{(0)}^\infty
(\comega)$, the defining formula in \eqref{tag9.30} can be rewritten
using Green's formula \eqref{Green}, which leads to 
\[ 
\ell_u(\varphi )=(Au,w_\varphi )-(u,A'w_\varphi )= (\chi u,\gamma _0
w_\varphi )-(\gamma _0u,\chi 'w_\varphi ) =(\chi u,\varphi _1)+(\gamma _0u,\varphi
_0)
\]
for such $u$. Since $\varphi _0$ and $\varphi _1$ run through full
Sobolev spaces, it follows by comparison with \eqref{tag9.30a} 
that $\psi _0=\gamma _0u$, $\psi _1=\chi u$,
when $u\in C_{(0)}^\infty (\comega)$, so the functional $\ell_u$ is
consistent with $\{\gamma _0u,\chi u\}$ then.  Since $C_{(0)}^\infty
(\comega)$ is dense in $D(A_{\max})$, we have found the unique
continuous extension.

Identity \eqref{tag9.29} is now obtained in general by extending
\eqref{Green} by continuity from $u\in C_{(0)}^\infty (\comega)$,
$v\in H^{2}(\Omega)$. \end{proof}

In particular, the validity of the mapping properties of $\gamma _0$ and
$\chi $ in \eqref{tag2.3a} and \eqref{tag2.4} extend to $s=0$.

\subsection{Poisson operators}

The next step is to
 extend the action of the Poisson operators to
low-order spaces.

\begin{lemma}\label{LemmaPoisson} 
The composed operator $\chi '(A'_\gamma
-\bar\lambda   )^{-1}\colon L_2(\Omega )\to H^{\frac12}(\Sigma )$ has as adjoint an operator  $(\chi '(A'_\gamma
  -\bar\lambda )^{-1})^*\colon  H^{-\frac12}(\Sigma )\to L_2(\Omega )$ extending $K^\lambda _\gamma $
(originally known to map $H^{s-\frac12}(\Sigma )$ to $Z^s_\lambda (A) $ for
$s\in (2-\tau ,2]$). Moreover, $(\chi '(A'_\gamma
  -\bar\lambda )^{-1})^*$ ranges in $Z^0_\lambda (A)$.
  \end{lemma}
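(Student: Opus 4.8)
The plan is to exploit the duality between the Dirichlet resolvent $(A_\gamma'-\bar\lambda)^{-1}$ for the adjoint problem and the Poisson operator $K^\lambda_\gamma$ for the original problem, reading off the mapping properties of the adjoint of $\chi'(A_\gamma'-\bar\lambda)^{-1}$ from the already-established regularity results of Section~\ref{resolv} and the extended Green's formula of Theorem~\ref{Theorem9.10}.

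First I would check that $\chi'(A_\gamma'-\bar\lambda)^{-1}\colon L_2(\Omega)\to H^{\frac12}(\Sigma)$ is bounded: $(A_\gamma'-\bar\lambda)^{-1}$ maps $L_2(\Omega)$ into $D(A_\gamma')\subset H^2(\Omega)$ by the $s=0$ case of \eqref{tag4.21a}, and $\chi'\colon H^2(\Omega)\to H^{\frac12}(\Sigma)$ is bounded by Theorem~\ref{thm:Green} (the $s=2$ case of the remark following it). Hence the Hilbert-space adjoint $(\chi'(A_\gamma'-\bar\lambda)^{-1})^*$ is a bounded operator $H^{-\frac12}(\Sigma)\to L_2(\Omega)$, where we use the antidual identification $H^{-\frac12}(\Sigma)=(H^{\frac12}(\Sigma))'$. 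The next step is to identify this adjoint on the dense subspace $H^{s-\frac12}(\Sigma)$, $s\in(2-\tau,2]$, where $K^\lambda_\gamma$ is already defined as a map into $Z^s_\lambda(A)$. For $\varphi\in H^{s-\frac12}(\Sigma)$ and $f\in L_2(\Omega)$, set $u=K^\lambda_\gamma\varphi$, so $(A-\lambda)u=0$, $\gamma_0 u=\varphi$, and set $w=(A_\gamma'-\bar\lambda)^{-1}f$, so $(A'-\bar\lambda)w=f$, $\gamma_0 w=0$. Apply the extended Green's formula \eqref{tag9.29} (legitimate since $u\in D(A_{\max})$ and $w\in H^2(\Omega)$):
\begin{equation*}
((A-\lambda)u,w)-(u,(A'-\bar\lambda)w)=(\chi u,\gamma_0 w)_{-\frac32,\frac32}-(\gamma_0 u,\chi' w)_{-\frac12,\frac12}.
\end{equation*}
The left side is $0-(u,f)=-(K^\lambda_\gamma\varphi,f)$, and since $\gamma_0 w=0$ the right side is $-(\varphi,\chi'(A_\gamma'-\bar\lambda)^{-1}f)_{-\frac12,\frac12}$. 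Therefore $(K^\lambda_\gamma\varphi,f)_{L_2(\Omega)}=(\varphi,\chi'(A_\gamma'-\bar\lambda)^{-1}f)_{-\frac12,\frac12}$ for all such $\varphi,f$, which says precisely that $(\chi'(A_\gamma'-\bar\lambda)^{-1})^*$ restricted to $H^{s-\frac12}(\Sigma)$ coincides with $K^\lambda_\gamma$. By density of $H^{s-\frac12}(\Sigma)$ in $H^{-\frac12}(\Sigma)$ (which holds since $s-\frac12>\tfrac32-\tau>-\tfrac12$), the bounded operator $(\chi'(A_\gamma'-\bar\lambda)^{-1})^*$ is the unique continuous extension of $K^\lambda_\gamma$ to $H^{-\frac12}(\Sigma)$.

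For the final claim, that $(\chi'(A_\gamma'-\bar\lambda)^{-1})^*$ ranges in $Z^0_\lambda(A)=\{u\in L_2(\Omega):(A-\lambda)u=0\}$, I would argue by continuity: for $\varphi\in H^{-\frac12}(\Sigma)$ take $\varphi_k\in H^{s-\frac12}(\Sigma)$ with $\varphi_k\to\varphi$ in $H^{-\frac12}(\Sigma)$; then $u_k=K^\lambda_\gamma\varphi_k\to u:=(\chi'(A_\gamma'-\bar\lambda)^{-1})^*\varphi$ in $L_2(\Omega)$, and each $u_k$ satisfies $(A-\lambda)u_k=0$. Since $A-\lambda\colon L_2(\Omega)\to D'(\Omega)$ (indeed into $H^{-2}(\Omega)$) is continuous, $(A-\lambda)u=\lim(A-\lambda)u_k=0$, so $u\in Z^0_\lambda(A)$. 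The main obstacle I anticipate is bookkeeping of the duality pairings — making sure the sesquilinear $L_2(\Omega)$ pairing, the antidual $(\cdot,\cdot)_{-\frac12,\frac12}$ pairing on $\Sigma$, and the conjugate-linearity conventions fixed earlier (the $\overline{(u,v)}_{-s,s}=(v,u)_{s,-s}$ relation) all line up correctly so that the adjoint really is $K^\lambda_\gamma$ and not, say, its composition with a conjugation; but this is purely a matter of care, not a genuine difficulty, since all the analytic input (the $s=0$ regularity of the adjoint resolvent, the mapping of $\chi'$ on $H^2$, and the extended Green's formula) is already in hand.
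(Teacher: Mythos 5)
Your proposal is correct, and its main step — applying the extended Green's formula \eqref{tag9.29} to $u=K^\lambda_\gamma\varphi$ and $w=(A'_\gamma-\bar\lambda)^{-1}f$, using $(A-\lambda)u=0$ and $\gamma_0w=0$ to get $(K^\lambda_\gamma\varphi,f)=(\varphi,\chi'(A'_\gamma-\bar\lambda)^{-1}f)_{-\frac12,\frac12}$ — is exactly the paper's argument for the identification of the adjoint with $K^\lambda_\gamma$ on $H^{s-\frac12}(\Sigma)$, $s\in(2-\tau,2]$. Where you diverge is the final claim that the adjoint ranges in $Z^0_\lambda(A)$: the paper proves this directly for an arbitrary $\varphi\in H^{-\frac12}(\Sigma)$ by pairing $(A-\lambda)(\chi'(A'_\gamma-\bar\lambda)^{-1})^*\varphi$ (defined weakly) with a test function $v\in C_0^\infty(\Omega)$, moving the adjoint across the $L_2$-pairing and using that $(A'_\gamma-\bar\lambda)^{-1}(A'-\bar\lambda)v=v$ and $\chi'v=0$ for such $v$, so no approximation of $\varphi$ is needed; you instead approximate $\varphi$ by $\varphi_k\in H^{s-\frac12}(\Sigma)$ (dense in $H^{-\frac12}(\Sigma)$) and pass to the limit in $(A-\lambda)u_k=0$ using the continuity $A-\lambda\colon L_2(\Omega)\to H^{-2}(\Omega)$ from \eqref{eq:MappingA}. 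Both arguments are valid with the ingredients already established; the paper's version trades your density-and-limit step for the elementary observations about test functions, while yours leans only on the boundedness of the adjoint and the distributional continuity of $A$, so the difference is one of bookkeeping rather than substance.
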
  

\begin{proof}
Let $\varphi \in H^{s-\frac12}(\Sigma )$ for some $s\in (2-\tau ,2]$,
let $u=K^\lambda _\gamma \varphi $. For any
$f\in L_2(\Omega )$, let $v=(A'_\gamma -\bar\lambda )^{-1}f$. Note that $(A-\lambda )u=0$ and $\gamma _0v=0$. Then by \eqref{tag9.29},
\begin{align*}
-(K^\lambda _\gamma \varphi ,f)&=    ((A-\lambda
)u,v)-(u,(A'-\bar\lambda) v)\\
&=-(\gamma _0
      u,{\chi 'v})_{-\frac12,\frac12}=-(\varphi ,{\chi '(A'_\gamma -\bar\lambda )^{-1}f})_{-\frac12,\frac12}.\nonumber
\end{align*}
This shows that the adjoint of $\chi '(A'_\gamma
-\bar\lambda   )^{-1}$ acts like
$K^\lambda _\gamma $ on functions $\varphi \in H^{s-\frac12}(\Sigma
)$, $s\in (2-\tau ,2]$.

To see that 
$(\chi '(A'_\gamma -\bar\lambda )^{-1})^*$ maps into the nullspace of 
$A-\lambda $,
  let $\varphi \in H^{-\frac12}(\Sigma )$ and let $v\in C_0^\infty
  (\Omega )$. Then, using the definition of $A$ in the weak sense,
\begin{align*}
\langle (A-\lambda )(\chi '(A'_\gamma -\bar\lambda )^{-1})^*\varphi ,
\bar v\rangle _\Omega &= \langle (\chi '(A'_\gamma -\bar\lambda
)^{-1})^*\varphi , \overline { (A'-\bar\lambda )v}\rangle _\Omega \\
&=
( (\chi '(A'_\gamma -\bar\lambda
)^{-1})^*\varphi ,  { (A'-\bar\lambda )v}) _{L_2(\Omega )}\\
&= ( \varphi , \chi '(A'_\gamma -\bar\lambda
)^{-1} { (A'-\bar\lambda )v}) _{-\frac12, \frac12}=0,
\end{align*}
since $v\in C_0^\infty  (\Omega )$ implies $(A'_\gamma -\bar\lambda
)^{-1} (A'-\bar\lambda )v=v$ (since $\gamma _0v=0$), and $\chi 'v=0$.
Thus $(A-\lambda )(\chi '(A'_\gamma -\bar\lambda )^{-1})^*\varphi=0$
in the weak sense, so since $(\chi '(A'_\gamma -\bar\lambda
)^{-1})^*\varphi  \in L_2(\Omega )$, it lies in $Z^0_\lambda (A)$.
\end{proof} 

Since $(\chi '(A'_\gamma -\bar\lambda )^{-1})^*$ extends $K^\lambda
_\gamma $ and maps into $Z^0_\lambda (A)$, we {\it define} this to be
the operator $K^\lambda _\gamma $ for $s=0$:
\begin{equation}\label{tag4.29a}
K^\lambda _\gamma =(\chi '(A'_\gamma -\bar\lambda )^{-1})^*\colon H^{-\frac12}(\Sigma )\to L_2(\Omega ) .
\end{equation}

\begin{theorem}\label{Theorem4.2a} Let $\Omega $
  and $A$ satisfy Assumption {\rm \ref{tau-assumption2}}. 
The operator $K^\lambda _\gamma $ defined in {\rm \eqref{tag4.29a}} 
maps $H^{s-\frac12}(\Sigma )$ to $H^s(\Omega
)$ continuously for $0\le s\le 2$. Moreover, $\gamma _0$ defined in
Theorem {\rm \ref{Theorem9.10}} is a homeomorphism {\rm
  \eqref{tag2.3b}} for all $s\in [0,2]$, with $K^\lambda _\gamma $
acting as its inverse.

There is a similar result for $K^{\prime\bar\lambda }_\gamma $. 
\end{theorem}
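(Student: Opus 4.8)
The plan is to interpolate between the two extremes $s=0$ and $s=2$, using the already established mapping properties at $s=2$ (from the resolvent construction of Section~\ref{resolv}) and at $s=0$ (from Lemma~\ref{LemmaPoisson}). First I would record that by definition \eqref{tag4.29a}, $K^\lambda_\gamma = (\chi'(A'_\gamma - \bar\lambda)^{-1})^*$ maps $H^{-\frac12}(\Sigma)\to L_2(\Omega)$ continuously, and by the construction in Section~\ref{subsec:gendom} (specifically \eqref{tag4.26} with $s=0$, noting $0\in(-\tau,0]$ is the boundary case handled via \eqref{tag4.21b}), $K^\lambda_\gamma$ maps $H^{\frac32}(\Sigma)\to H^2(\Omega)$ continuously. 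The consistency of the two descriptions on the overlap $H^{s-\frac12}(\Sigma)$ for $s\in(2-\tau,2]$ is exactly the content of Lemma~\ref{LemmaPoisson}, so a single operator $K^\lambda_\gamma$ is well defined on $H^{-\frac12}(\Sigma)+H^{\frac32}(\Sigma) = H^{\frac32}(\Sigma)$ (the larger space). Complex interpolation between the pairs $(H^{-\frac12}(\Sigma), L_2(\Omega))$ and $(H^{\frac32}(\Sigma), H^2(\Omega))$, using that $(H^{-\frac12}(\Sigma), H^{\frac32}(\Sigma))_{[\theta]} = H^{s-\frac12}(\Sigma)$ and $(L_2(\Omega), H^2(\Omega))_{[\theta]} = H^s(\Omega)$ with $s=2\theta$, then yields continuity $K^\lambda_\gamma\colon H^{s-\frac12}(\Sigma)\to H^s(\Omega)$ for all $s\in[0,2]$. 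The interpolation identities for the spaces on $\Sigma$ follow from the definition of $H^s(\Sigma)$ via local charts and the pullbacks $F^*_{\gamma,0}$, $\widetilde F^{\,*}_{\gamma,0}$ of Lemma~\ref{lem:Pullbacks} together with \eqref{eq:BesselInterpolReal}; those for $\Omega$ follow since $H^s_p(\Omega)$ is a retract of $H^s_p(\Rn)$ (extension operator, Section~\ref{sec:Funct.sp}).

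Next I would address the homeomorphism statement. From Theorem~\ref{Theorem9.10}, $\gamma_0\colon D(A_{\max})\to H^{-\frac12}(\Sigma)$ is continuous, and hence $\gamma_0\colon Z^s_\lambda(A)\to H^{s-\frac12}(\Sigma)$ is continuous for $s=0$; by \eqref{tag2.3a} it is continuous for all $s\in[0,2]$. I claim $K^\lambda_\gamma$ is a two-sided inverse. For $s\in(2-\tau,2]$ this is \eqref{tag2.3b}. For general $s\in[0,2]$, one checks $\gamma_0 K^\lambda_\gamma = \mathrm{id}$ on $H^{s-\frac12}(\Sigma)$ and $K^\lambda_\gamma\gamma_0 = \mathrm{id}$ on $Z^s_\lambda(A)$ by density: given $\varphi\in H^{s-\frac12}(\Sigma)$, approximate it in $H^{s-\frac12}(\Sigma)$ by $\varphi_k\in H^{\frac32}(\Sigma)$ (or just by smooth functions on $\Sigma$); then $K^\lambda_\gamma\varphi_k\to K^\lambda_\gamma\varphi$ in $H^s(\Omega)\subset L_2(\Omega)$, so in particular in $D(A_{\max})$ since $(A-\lambda)K^\lambda_\gamma\varphi_k = 0$ for all $k$, whence $\gamma_0 K^\lambda_\gamma\varphi_k\to \gamma_0 K^\lambda_\gamma\varphi$ in $H^{-\frac12}(\Sigma)$ by Theorem~\ref{Theorem9.10}; since $\gamma_0 K^\lambda_\gamma\varphi_k = \varphi_k\to\varphi$, we get $\gamma_0 K^\lambda_\gamma\varphi = \varphi$. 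Conversely, if $u\in Z^s_\lambda(A)$, set $\varphi = \gamma_0 u\in H^{s-\frac12}(\Sigma)$; then $K^\lambda_\gamma\varphi - u\in Z^s_\lambda(A)$ has zero trace, so lies in $Z^s_\lambda(A)\cap H^1_0(\Omega)\subset D(A_\gamma)$ (for $s\ge 1$ directly; for $s<1$ one uses that $Z^0_\lambda(A)$ with zero $\gamma_0$-trace is in $\ker(A_\gamma-\lambda) = \{0\}$ since $\lambda\in\varrho(A_\gamma)$), hence equals $0$. Thus $\gamma_0$ is bijective with continuous inverse $K^\lambda_\gamma$, i.e.\ a homeomorphism \eqref{tag2.3b} for all $s\in[0,2]$.

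Finally, the statement for $K^{\prime\bar\lambda}_\gamma$ is obtained by running the same argument with the roles of $A$ and $A'$ interchanged: $A'$ satisfies the same hypotheses (strong ellipticity and the smoothness of the divergence-form coefficients are preserved under passing to the formal adjoint, as already used in Section~\ref{subsec:gendom} to obtain \eqref{tag4.21a}), Green's formula \eqref{tag9.29} is symmetric in $A$ and $A'$ up to conjugation, and Lemma~\ref{LemmaPoisson} has an evident primed counterpart giving $K^{\prime\bar\lambda}_\gamma = (\chi(A_\gamma-\lambda)^{-1})^*$ on $H^{-\frac12}(\Sigma)$.

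The main obstacle I anticipate is making the interpolation argument fully rigorous at the level of the boundary spaces of negative and low positive order: one must be careful that the complex interpolation couple $(H^{-\frac12}(\Sigma), H^{\frac32}(\Sigma))$ is the right one and that its intermediate spaces are exactly $H^{s-\frac12}(\Sigma)$, which for a nonsmooth boundary rests on the chart definition together with the adjointness properties of $F^*_{\gamma,0}$ and $\widetilde F^{\,*}_{\gamma,0}$ in Lemma~\ref{lem:Pullbacks} and the fact that multiplication by $\kappa$ preserves the relevant spaces (shown in the discussion after Lemma~\ref{lem:Pullbacks}); the retract structure then transports the scalar interpolation result \eqref{eq:BesselInterpolReal} from $\R^{n-1}$ to $\Sigma$. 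A secondary subtlety is the consistency check: one needs that the operator named $K^\lambda_\gamma$ built in Section~\ref{resolv} on $H^{\frac32}(\Sigma)$ and the operator defined by \eqref{tag4.29a} on $H^{-\frac12}(\Sigma)$ genuinely agree on the common dense subspace, which is precisely guaranteed by Lemma~\ref{LemmaPoisson}, so no circularity arises.
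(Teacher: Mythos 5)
Your continuity argument (duality/definition at $s=0$, the construction of Section~\ref{resolv} at the upper end, consistency via Lemma~\ref{LemmaPoisson}, then interpolation) and your extension of the identity $\gamma_0K^\lambda_\gamma=I$ by density of $H^{\frac32}(\Sigma)$ in $H^{-\frac12}(\Sigma)$ follow the paper's route. The problem is the converse identity $K^\lambda_\gamma\gamma_0u=u$ on $Z^s_\lambda(A)$ for $s<1$. There you set $w=K^\lambda_\gamma\gamma_0u-u$, note $\gamma_0w=0$, and then assert that a $w\in Z^0_\lambda(A)$ with vanishing extended trace ``is in $\ker(A_\gamma-\lambda)=\{0\}$''. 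But $w$ is a priori only in $L_2(\Omega)$; to place it in $\ker(A_\gamma-\lambda)$ you need $w\in D(A_\gamma)\subset H^1_0(\Omega)$, and nothing established before this theorem gives that a maximal-domain solution with zero extended $\gamma_0$-trace lies in the Dirichlet domain. That statement is exactly the injectivity of $\gamma_0$ on $Z^0_\lambda(A)$, i.e.\ the nontrivial low-regularity half of the homeomorphism you are trying to prove, so as written the step is circular rather than a citation of a known fact. (For $s\ge1$ your ``direct'' argument is fine, since zero Sobolev trace of an $H^1$-function on a Lipschitz domain gives membership in $H^1_0(\Omega)$; the essential new case $s\in[0,1)$, and in particular $s=0$, is precisely where the gap sits.)

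The gap is fillable, but an argument must be supplied. The paper does it by proving that $Z^2_\lambda(A)$ is dense in $Z^0_\lambda(A)$: given $z\in Z^0_\lambda(A)$, Proposition~\ref{Theorem9.8} (applied to $A-\lambda$) yields $u_k\in C^\infty_{(0)}(\comega)$ with $u_k\to z$ and $(A-\lambda)u_k\to0$ in $L_2(\Omega)$; then $z_k=u_k-(A_\gamma-\lambda)^{-1}(A-\lambda)u_k\in Z^2_\lambda(A)$ converges to $z$, and the identity $K^\lambda_\gamma\gamma_0=I$, already known on $Z^2_\lambda(A)$, passes to the limit using the continuity of the extended $\gamma_0$ on $D(\Ama)$ and of $K^\lambda_\gamma$ on $H^{-\frac12}(\Sigma)$. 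Alternatively, you could prove the injectivity you invoked directly: for $w\in Z^0_\lambda(A)$ with $\gamma_0w=0$ and arbitrary $f\in L_2(\Omega)$, apply the extended Green's formula \eqref{tag9.29} with $v=(A'_\gamma-\bar\lambda)^{-1}f\in H^2(\Omega)$ (cf.\ \eqref{tag4.21a}); since $\gamma_0v=0$ and $(A-\lambda)w=0$, it gives $(w,f)_{L_2(\Omega)}=0$, hence $w=0$. Either supplement closes the argument; without one of them the proof is incomplete.
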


\begin{proof}
Since the composed operator is continuous:
\[
\chi '(A'_\gamma -\lambda )^{-1}\colon H^s(\Omega )\to
 H^{s+\frac12}(\Sigma )
\]
for $-\tau <s\le 0$,
it follows by duality that
\begin{equation}
K^\lambda _\gamma \colon   H^{s'-\frac12}(\Sigma )\to H^{s'}(\Omega
),
\label{tag4.29}
\end{equation}
when $0\le s'<\tau $ (recall that $\tau <\frac12$, cf.\ \eqref{asspt2}).
Taking this together with the larger values that were covered by \eqref{tag4.26},
we find that \eqref{tag4.29} holds for
\begin{equation}
0\le s'\le 2;\label{tag4.31}
\end{equation}
the intermediate values are included by interpolation. We can replace this $s'$
by $s$.

The identities 
\[ \gamma _0 K^\lambda _\gamma \varphi =\varphi \text{ for }\varphi
\in H^{s-\frac12}(\Sigma ), \quad  
  K^\lambda _\gamma \gamma_0 z =z \text{ for }z
\in Z^{s}_\lambda (A), 
\]  
were shown in Section \ref{resolv} to hold for $s\in (2-\tau , 2]$. The first
identity now extends by continuity to $H^{-\frac12}(\Sigma)$, since
$H^\frac32(\Sigma)$ is dense in this space. The second identity will
extend by continuity to $Z^{0}_\lambda (A)$, if we can prove that
$Z^{2}_\lambda (A)$ is dense in $Z^{0}_\lambda (A)$. Indeed, this
follows from Proposition  \ref{Theorem9.8}: 

Let $z\in Z^{0}_\lambda (A)$. By Proposition  \ref{Theorem9.8} applied
to $A-\lambda $, there
is a sequence $u_k\in C_{(0)}^\infty (\comega)$ such that $u_k\to z$
and $(A-\lambda )u_k\to 0$ in $L_2(\Omega )$. Then
$v_k=$ \linebreak $(A_\gamma-\lambda )^{-1}(A-\lambda )u_k\to 0$ in
$H^2(\Omega)$. Let $z_k=u_k-v_k$; then $z_k\in H^2(\Omega)$,
$(A-\lambda )z_k=0$, and $z_k\to z$ in $L_2(\Omega)$. Hence, $z_k$ is a
sequence of elements of $Z^{2}_\lambda (A)$ that converges to $z$ in
$Z^{0}_\lambda (A)$, showing the desired denseness.

Thus the identities are valid for $s=0$, and hence for all $s\in
[0,2]$. In particular, $K^\lambda _\gamma $ maps
$H^{s-\frac12}(\Sigma)$ bijectively onto $ Z^{s}_\lambda (A)$,
and $\gamma _0$ maps $ Z^{s}_\lambda (A)$ bijectively onto
$H^{s-\frac12}(\Sigma)$, for
$s\in [0,2]$, as inverses of one another.

The proof in the primed situation is analogous.
\end{proof}

The
adjoints also extend, e.g.
\begin{equation}
(K^{\prime\bar\lambda }_\gamma)^*\colon  H^s_0(\comega )\to   H^{s+\frac12}(\Sigma
),\text{ for }-2\le s\le 0;
\end{equation}
recall that $H^s_0(\comega)=H^s(\Omega )$ when $|s|< \frac12$. 

  From \eqref{eq:lambdaest1} we conclude moreover that when $0\le s<\tau $,
\begin{equation}
\|\chi '(A'_\gamma -\bar\lambda )^{-1}\|_{\mathcal{L}(H^{-s,\mu}(\Omega ),H^{-s+\frac12,\mu
  }(\Sigma  ))}\text{ and }\|K^\lambda _\gamma \|_{\mathcal{L}(H^{s-\frac12,\mu}(\Sigma),H^{s,\mu
  }(\Omega ))}
\text{ are }O(1),\label{eq:lambdaest5}
\end{equation}
for $\lambda $ going to infinity on rays $\lambda =e^{i\eta }\mu ^2$,
$\eta\in (\pi/2-\delta ,3\pi/2+\delta )$. In particular,
\begin{equation}
\|K^\lambda _\gamma \varphi \|_{0}\le C\min\{
\|\varphi \|_{-\frac12},  \ang \lambda ^{-1/4}\|\varphi \|_0\}.\label{eq:lambdaest6}
\end{equation}

We shall now analyze the
structure somewhat further. It should be noted that a Poisson
operator maps a Sobolev space over $\Sigma $ to a Sobolev space over
$\Omega $; the co-restriction of  $K^\lambda _\gamma $ mapping into
$Z^0_\lambda (A)$ will be denoted $\gamma _{Z_\lambda }^{-1}$ further
below, cf.\ \eqref{tag2.6c}.

\begin{theorem}\label{Theorem4.2} Let $0<\delta<1$, and let $\Omega $
  and $A$ satisfy Assumption {\rm \ref{tau-assumption2}}. 

$1^\circ$ 
$K^\lambda _\gamma $ is the sum of a Poisson operator of the form
\begin{equation}\label{eq:principalPoisson}
 {K}^{0,\lambda}_\gamma v=\sum_{j=1}^J \psi_jF^{-1,\ast}_j \Lambda_{-,+}^{-2}(\lambda )k_{j,\lambda} (x',D_x)F^{\ast}_{j,0} \varphi_j v,
\end{equation}
where $k_{j,\lambda }$ has symbol-kernel $\tilde k_{j,\lambda }\in C^\tau S^{1}_{1,0}(\R^{n-1}\times
\R^{n-1},\srplus)$, and a remainder $\mathcal {S}(\lambda )$ that for
$s\in (2-\tau,2]$ maps $H^{s-\frac12-\theta }(\R^{n-1})\to H^s(\Rn_+)$,
when  $0<\theta < s-2+\tau $.

$2^\circ$ 
$K^\lambda _\gamma $ is a 
generalized Poisson
operator in the sense that it
is the sum of a Poisson operator of the form
\begin{equation}\label{eq:PoissonParametrix}
  {K}^{\sharp\lambda}_\gamma v=\sum_{j=1}^J \psi_jF^{-1,\ast}_j \Lambda_{-,+}^{-2}(\lambda )k_{j,\lambda}^\sharp (x',D_x)F^{\ast}_{j,0} \varphi_j v,
\end{equation}
with $\tilde{k}_{j,\lambda}^\sharp\in S^{1}_{1,\delta}(\R^{n-1}\times
\R^{n-1},\SD(\overline{\R}_+))$, and a remainder ${\mathcal R}(\lambda )$ that
for $s\in (0,2]$  maps $
H^{s-\frac12-\eps}(\R^{n-1})\to H^s(\Rn_+)$, for some $\eps=\eps(s)>0$.

Here $\psi_j,\varphi_j, F_j$, and $F_{j,0}$ are as in
Remark {\rm \ref{rem:Charts}}. 

There are similar statements for the primed version $K^{\prime\bar\lambda }_\gamma $. 
\end{theorem}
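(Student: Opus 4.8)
The plan is to read $1^\circ$ off the parametrix already constructed in Section~\ref{resolv}, then to deduce $2^\circ$ from it by applying symbol smoothing to the Poisson symbol-kernels, invoking Theorem~\ref{Theorem4.2a} to reach the low-order range where the $C^\tau$-estimates degenerate. For $1^\circ$, recall from \eqref{eq:Resolvs} that $K^\lambda_\gamma=K(\lambda)=K^0(\lambda)+S'(\lambda)$ with $K^0(\lambda)$ the explicit parametrix \eqref{eq:K0Parametrix}; I set $K^{0,\lambda}_\gamma:=K^0(\lambda)$. By \eqref{tag4.7a} each local block is $\Lambda_{-,+}^{-2}(\lambda)\,\tilde k^0_j(x',D_x)\,\Lambda_0^{\frac32}$ with $\tilde k^0_j\in C^\tau S^{-\frac12}_{1,0}(\R^{n-1}\times\R^{n-1},\srplus)$, so $k_{j,\lambda}:=\tilde k^0_j(x',D_x)\Lambda_0^{\frac32}$ has symbol-kernel in $C^\tau S^{1}_{1,0}$ and $K^{0,\lambda}_\gamma$ has exactly the form \eqref{eq:principalPoisson}. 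The remainder is $\mathcal{S}(\lambda):=K^\lambda_\gamma-K^{0,\lambda}_\gamma=S'(\lambda)$, whose mapping properties are those recorded in \eqref{eq:lambdaest1}; shifting the boundary exponent and rewriting the range \eqref{tag4.15a} (namely $-\tau+\theta<s\le 0$, i.e.\ $s+2\in(2-\tau,2]$ with $0<\theta<(s+2)-2+\tau$) gives precisely $1^\circ$. This step is routine bookkeeping.

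For $2^\circ$ I apply symbol smoothing — in the Poisson-operator version developed in the Appendix, the analogue of \eqref{eq:SymbolSmoothing1} — to each $\tilde k_{j,\lambda}\in C^\tau S^1_{1,0}(\R^{n-1}\times\R^{n-1},\srplus)$ from $1^\circ$, writing $\tilde k_{j,\lambda}=\tilde k^\sharp_{j,\lambda}+\tilde k^b_{j,\lambda}$ with $\tilde k^\sharp_{j,\lambda}\in S^1_{1,\delta}(\R^{n-1}\times\R^{n-1},\SD(\overline{\R}_+))$ and $\tilde k^b_{j,\lambda}\in C^\tau S^{1-\tau\delta}_{1,\delta}$. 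Taking the $\sharp$-parts in \eqref{eq:PoissonParametrix} defines $K^{\sharp\lambda}_\gamma$, the $b$-parts define a Poisson operator $K^{b,\lambda}_\gamma$, and $K^{0,\lambda}_\gamma=K^{\sharp\lambda}_\gamma+K^{b,\lambda}_\gamma$, so that $\mathcal{R}(\lambda):=K^\lambda_\gamma-K^{\sharp\lambda}_\gamma=\mathcal{S}(\lambda)+K^{b,\lambda}_\gamma$. For $s\in(2-\tau,2]$ both terms are controlled: $\mathcal{S}(\lambda)$ gains $\tfrac12+\theta$ by Step~1, while $K^{b,\lambda}_\gamma$ is $\Lambda_{-,+}^{-2}(\lambda)$ composed with a Poisson operator whose order has dropped by $\tau\delta$, hence gains $\tfrac12+\tau\delta$ by the boundedness of nonsmooth Poisson operators of class $C^\tau S^m_{1,\delta}$ proved in the Appendix (the Poisson analogue of Proposition~\ref{prop:HsBddness}). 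Choosing $\eps=\eps(s)<\min(\theta,\tau\delta)$ with $\theta<s-2+\tau$ yields $\mathcal{R}(\lambda)\colon H^{s-\frac12-\eps}(\R^{n-1})\to H^s(\Rn_+)$ on this range.

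The delicate point — and the main obstacle — is the remaining low range $s\in(0,2-\tau]$, where the decomposition of Step~1 gives no information and the $C^\tau$-parametrix is no longer bounded on the pertinent spaces. Here I would use the exact identity
\[
\mathcal{R}(\lambda)=K^\lambda_\gamma\,(I-\gamma_0 K^{\sharp\lambda}_\gamma)-R(\lambda)\,(A-\lambda)K^{\sharp\lambda}_\gamma,
\]
obtained by applying $\mathcal A(\lambda)^{-1}=\begin{pmatrix}R(\lambda)&K(\lambda)\end{pmatrix}$ (see \eqref{tag4.14a}) to $u=K^{\sharp\lambda}_\gamma v$ and using $(A-\lambda)K^\lambda_\gamma=0$, $\gamma_0 K^\lambda_\gamma=I$. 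Since $K^{\sharp\lambda}_\gamma$ is built from classical $S^1_{1,\delta}$ symbol-kernels and approximates $K^\lambda_\gamma$ modulo the parametrix remainder, the symbol-smoothing remainder $\tilde k^b$, and the discrepancy between $A$ and its principal part in $x$-form \eqref{eq:xyForm}, both $I-\gamma_0 K^{\sharp\lambda}_\gamma$ (a $\psi$do on $\Sigma$) and $(A-\lambda)K^{\sharp\lambda}_\gamma$ (a Poisson-type map $\Sigma\to\Omega$) are of order strictly lower, by a definite $\eps>0$, than the naive order count; this is exactly what the Appendix composition estimates for nonsmooth $\psi$dbo's of class $S^m_{1,\delta}$ deliver. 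Combining this with the a priori mapping properties $K^\lambda_\gamma\colon H^{t-\frac12}(\Sigma)\to H^t(\Omega)$ from Theorem~\ref{Theorem4.2a} and $R(\lambda)\colon H^t(\Omega)\to H^{t+2}(\Omega)$ from Section~\ref{resolv}, valid for $t\in[0,2]$, yields $\mathcal{R}(\lambda)\colon H^{s-\frac12-\eps}(\R^{n-1})\to H^s(\Rn_+)$ in the low range, with intermediate $s$ filled in by interpolation. (Alternatively one passes to adjoints and recovers the low range of $\mathcal{R}(\lambda)$ from the upper range of the associated nonsmooth \emph{trace} operators, handled exactly as in Steps~1--2 with $A$ and $A'$ exchanged.)

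Finally, since $A'$ again satisfies Assumption~\ref{tau-assumption2}, running the same construction on the adjoint Dirichlet problem $\begin{pmatrix}A'-\bar\lambda\\ \gamma_0\end{pmatrix}$ gives the analogous decompositions of $K^{\prime\bar\lambda}_\gamma$ with identical proofs. The bulk of the genuine work sits in Steps~2 and 3 — the Appendix boundedness and composition estimates for nonsmooth Poisson and trace operators of class $S^m_{1,\delta}$ — while the rest is organisation.
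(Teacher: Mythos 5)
Your $1^\circ$ and the upper part of $2^\circ$ follow the paper's own route: you read ${K}^{0,\lambda}_\gamma$ off the parametrix of Section~\ref{resolv} and then apply the Poisson symbol smoothing of the Appendix (Lemma~\ref{lem:PoissonSmoothing}); that part is correct. The genuine gap is your Step~3, the low range $s\in(0,2-\tau]$. The identity $\mathcal{R}(\lambda)=K^\lambda_\gamma(I-\gamma_0K^{\sharp\lambda}_\gamma)-R(\lambda)(A-\lambda)K^{\sharp\lambda}_\gamma$ comes from the left-inverse relation $\mathcal B(\lambda)\mathcal A(\lambda)=I$ applied to $u=K^{\sharp\lambda}_\gamma v$, but that relation is only established on $H^{t}(\Omega)$ for $t\in(2-\tau,2]$ (the range \eqref{tag4.15a} shifted by $2$); in the low range $K^{\sharp\lambda}_\gamma v$ lies only in $H^s(\Omega)$ with $s\le 2-\tau$, so the identity is unproved exactly where you invoke it. Moreover its second term needs $(A_\gamma-\lambda)^{-1}$ on $H^{s-2}(\Omega)$, i.e.\ on spaces of order in $(-2,-\tau]$, whereas the construction gives the resolvent only on $H^{t}(\Omega)$ with $-\tau<t\le 0$ (your citation ``$R(\lambda)\colon H^t\to H^{t+2}$ for $t\in[0,2]$'' is not what is proved); no extension of $R(\lambda)$ to such negative orders is available in the paper. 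Finally, the asserted definite $\eps$-gain of $I-\gamma_0K^{\sharp\lambda}_\gamma$ and $(A-\lambda)K^{\sharp\lambda}_\gamma$ on the required low-order boundary spaces is not what the Appendix composition estimates deliver: every remainder with $C^\tau$-coefficients is bounded only on a window of width governed by $\tau$, which is precisely the obstruction your detour was meant to avoid.

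The paper closes the low range by a much shorter argument, which your parenthetical ``pass to adjoints'' remark points toward but does not carry out. By Theorem~\ref{Theorem4.2a} (i.e.\ the definition \eqref{tag4.29a} of $K^\lambda_\gamma$ as $(\chi'(A'_\gamma-\bar\lambda)^{-1})^*$), $K^\lambda_\gamma\colon H^{s-\frac12}(\Sigma)\to H^s(\Omega)$ for \emph{all} $s\in[0,2]$; and $K^{\sharp\lambda}_\gamma$ enjoys the same boundedness for all $s$ because its symbol-kernel lies in the smooth class $S^1_{1,\delta}$ (Theorem~\ref{thm:PoissonBddness}, Lemma~\ref{lem:PoissonSmoothing}). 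Hence $\mathcal{R}(\lambda)=K^\lambda_\gamma-K^{\sharp\lambda}_\gamma$ is bounded $H^{s-\frac12}(\Sigma)\to H^s(\Omega)$ with no gain on all of $[0,2]$, and interpolating this endpoint bound at $s=0$ against the $\eps_0$-gain at $s=2$ obtained in your Step~2 yields a gain $\eps(s)>0$ (proportional to $s$) for every $s\in(0,2]$. Replacing your Step~3 by this interpolation completes the proof; with that change your argument coincides with the paper's, including the primed version.
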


\begin{proof}
We give the proof of the statements for $K^\lambda_{\gamma}$; the
proofs for ${K}^{\prime\bar\lambda}_{\gamma}$ are analogous.

The first statement follows from the construction in Section
\ref{subsec:gendom}, applied to $A-\lambda $ and with $\lambda
$-dependent order-reducing operators (where $\xi $ is replaced by
$(\xi ,\mu )$, $\mu =|\lambda |^{\frac12}\in \crp$). The composition
$k_{j,\lambda }$ of the $\lambda $-dependent variant of $k^0_j$ and $\Lambda
_0^\frac32(\lambda )$ is of order 2 and has symbol-kernel $\tilde
k_{j,\lambda }$ in  $ C^\tau
S^{1}_{1,0}(\R^{n-1}\times 
\R^{n-1},\srplus)$ for each $\lambda $. The mapping properties of $\mathcal{S}(\lambda)$ follow from \eqref{eq:lambdaest1}.

For the second statement, observe that we have from 1$^\circ$
that 
\begin{equation}\label{eq:FirstApproxPoisson}
K^\lambda_\gamma=K^{0,\lambda}_\gamma+ S(\lambda),\quad \text{where}\ S(\lambda)\in \mathcal{L}(H^{\frac32-\theta}(\Sigma),H^2(\Omega)), 
\end{equation}
for every $0<\theta<\tau$.
Now, applying
Lemma~\ref{lem:PoissonSmoothing} for $\delta\in (0,1)$, 
we obtain that $K^{0,\lambda}_\gamma= {K}^{\sharp\lambda}_\gamma+
S'(\lambda)$, where ${K}^{\sharp\lambda}_\gamma$ is as described in
(\ref{eq:PoissonParametrix}) and 
\begin{equation*}
  S'(\lambda)\colon H^{\frac32-\tau\delta}(\Sigma)\to H^2(\Omega).
\end{equation*}
Since also $K^\lambda_\gamma$, ${K}^{\sharp\lambda}_\gamma\in  \mathcal{L}(H^{s-\frac12}(\Sigma),H^s(\Omega))$ for all $s\in [0,2]$, interpolation yields that for every $s\in (0,2]$ there is some $\eps=\eps(s)>0$ such that
\begin{equation*}
  K^\lambda_\gamma-{K}^{\sharp\lambda}_\gamma\in  \mathcal{L}(H^{s-\frac12-\eps}(\Sigma),H^s(\Omega)).
\end{equation*}
This proves the theorem.
\end{proof}

\begin{remark} \label{rem:reginfty}As a technical observation we note
  that the above  
approximate Poisson solution operators
  ${K}^{0,\lambda}_\gamma$ and $K^{\sharp\lambda}_\gamma$ are
  constructed in such a way that their symbol-kernels are smooth 
in $(\xi ',\mu )\in \crnp$ (outside a neighborhood of zero); this
  is the case of symbols ``of regularity $+\infty $'' in the sense of
  \cite{FunctionalCalculus}.
\end{remark}

\subsection{Dirichlet-to-Neumann operators}

Finally, we shall study the composed operators $P^\lambda _{\gamma  ,\chi }
=\chi K^\lambda _\gamma $ and $
P^{\prime\bar\lambda }_{\gamma  ,\chi '}=\chi '
K^{\prime\bar\lambda }_\gamma $; often called Dirichlet-to-Neumann operators.

It follows immediately from Theorem \ref{Theorem4.2a}
that they are continuous for  $s\in [0,2]$,
\begin{equation}
P^\lambda _{\gamma  ,\chi },P^{\prime\bar\lambda }_{\gamma  ,\chi '}\colon 
 H^{s-\frac12}(\Sigma )\to  H^{s-\frac32}(\Sigma ).\label{tag4.33}
\end{equation}

Applying Green's formula \eqref{tag9.29} to functions $u,v$ with
$Au=0$, $A'v=0$, we see that
\begin{equation*}
(P^\lambda _{\gamma  ,\chi }\varphi, \psi)_{-\frac32,\frac32}=(\varphi,P^{\prime\bar\lambda }_{\gamma  ,\chi '}\psi)_{-\frac12,\frac12}
\end{equation*}
for all $\varphi \in H^{-\frac12}(\Sigma), \psi\in H^{\frac32}(\Sigma)$,
so $P^\lambda _{\gamma  ,\chi  }$ and $P^{\prime\bar\lambda }_{\gamma  ,\chi '
}$ are consistent with each other's adjoints.

\begin{theorem}\label{Theorem4.3} Assumptions as in
  Theorem {\rm \ref{Theorem4.2}}. 
$P^\lambda _{\gamma ,\chi  }$
maps
$H^{s-\frac12}(\Sigma )$ continuously to $H^{s-\frac32}(\Sigma)$
for $s\in [0,2]$, and satisfies:

$1^\circ$
$P^\lambda _{\gamma ,\chi }$  is 
the sum of a first-order $\psi $do of the form 
\begin{equation*}
    S^\lambda v= \sum_{j=1}^J \eta_j \widetilde{F}^{-1,\ast}_{j,0}\Lambda _0^{-\frac12} s_{j,\lambda }(x',D_{x'}) F^{\ast}_{j,0}\varphi_j v,
\end{equation*}
where $s_{j,\lambda }\in C^\tau S^{\frac32}_{1,0}(\R^{n-1}\times \R^{n-1})$,
and a remainder, such that for every $s\in (2-\tau,2]$, the remainder  
 maps $H^{s-\frac12-\eps}(\Sigma )$ continuously to $H^{s-\frac32}(\Sigma)$ for some $\eps=\eps(s)>0$.

$2^\circ$ There is a pseudodifferential operator
$P_{\gamma,\chi}^{\sharp\lambda}$ on $\Sigma$ with symbol in \linebreak 
$S^1_{1,\delta}(\R^{n-1}\times \R^{n-1})$ (cf.\ {\rm
  \eqref{eq:DirNeumannSharp}}), such that for every $s\in (0,2]$, $P^\lambda _{\gamma ,\chi }$ is a generalized  $\psi $do of
order $1$ in the sense that it is the sum of  $P_{\gamma,\chi}^{\sharp\lambda}$
and a remainder mapping
$H^{s-\frac12-\eps}(\Sigma )$ continuously to $H^{s-\frac32}(\Sigma)$
for some $\eps=\eps(s)>0$. Here $\eps>0$ can be chosen uniformly with respect to $s\in [s',2]$ for every $s'\in (0,2]$.
 
$3^\circ$ There is a pseudodifferential operator
${P}^{\sharp{\lambda}1}_{\gamma,\chi}$ on $\Sigma$ with 
symbol in \linebreak
$S^1_{1,\delta}(\R^{n-1}\times \R^{n-1})$ (cf.\ {\rm
  \eqref{eq:DirNeumannSharp1}}), such that  for $s=0$, 
$P^\lambda _{\gamma ,\chi }$ is the sum of  ${P}^{\sharp{\lambda}1}_{\gamma,\chi}$
and a remainder mapping $H^{-\frac12}(\Sigma )$ continuously to $H^{-\frac32+\eps}(\Sigma)$ for some $\eps>0$.

There are similar statements for $P^{\prime\bar\lambda }_{\gamma ,\chi '
}$; it acts like an adjoint of $P^\lambda _{\gamma ,\chi }$. 
\end{theorem}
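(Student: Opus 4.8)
The plan is to reduce everything to the structural description of $K^\lambda_\gamma$ already obtained in Theorem \ref{Theorem4.2}, composing on the left with the trace operator $\chi$ described in Theorem \ref{thm:Green} and localized in Corollary \ref{cor:TraceChi}. The continuity statement $P^\lambda_{\gamma,\chi}\colon H^{s-\frac12}(\Sigma)\to H^{s-\frac32}(\Sigma)$ for $s\in[0,2]$ is immediate by composing the mapping property of $K^\lambda_\gamma$ from Theorem \ref{Theorem4.2a} with the mapping property of $\chi$ (which sends $H^s(\Omega)$ to $H^{s-\frac32}(\Sigma)$ for $s\in(\frac32,2]$, extended to $s=0$ by Theorem \ref{Theorem9.10}), filling the intermediate values by interpolation. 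For $1^\circ$, I would start from $K^\lambda_\gamma = K^{0,\lambda}_\gamma + \mathcal S(\lambda)$ of Theorem \ref{Theorem4.2}$1^\circ$, apply $\chi$, use the localized representation $\chi(\psi u)=\eta F^{-1,\ast}_{\gamma,0} t(x',D_x)F^\ast_\gamma(\psi u)$ from Corollary \ref{cor:TraceChi} in each chart, and invoke the composition rules for a boundary trace operator $t$ of order $\frac32$ (with $H^{\frac12}_p$-coefficients) applied to a Poisson operator of order $2$ with symbol-kernel in $C^\tau S^1_{1,0}$; the composition is a $\psi$do on $\Sigma$ of order $\frac32$ with $C^\tau$-coefficients modulo a lower-order remainder. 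Writing it as $\eta_j\widetilde F^{-1,\ast}_{j,0}\Lambda_0^{-\frac12}s_{j,\lambda}(x',D_{x'})F^\ast_{j,0}\varphi_j$ uses order-reducing operators to exhibit the order-$1$ normalization with $s_{j,\lambda}\in C^\tau S^{3/2}_{1,0}$, and the remainder estimate comes from the composition error (Theorem \ref{thm:PsDOCompo}) together with the mapping property of $\mathcal S(\lambda)$.

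For $2^\circ$, the plan is to apply symbol smoothing \eqref{eq:SymbolSmoothing1} to the $C^\tau S^1_{1,0}$ symbol from $1^\circ$, writing it as $s^\sharp + s^b$ with $s^\sharp\in S^1_{1,\delta}$ and $s^b\in C^\tau S^{1-\tau\delta}_{1,\delta}$; the smoothed part defines $P^{\sharp\lambda}_{\gamma,\chi}$ (giving \eqref{eq:DirNeumannSharp}), and the remaining $b$-part is handled by Proposition \ref{prop:HsBddness}, which controls $C^rS^{m}_{1,\delta}$ operators on the relevant Sobolev range. Here one must be careful that the composition rules on the manifold $\Sigma$ and the pullbacks $F^\ast_{j,0}, \widetilde F^{-1,\ast}_{j,0}$ (Lemma \ref{lem:Pullbacks}) respect the $S^m_{1,\delta}$ classes; this is where the Appendix material extending \cite{NonsmoothGreen} to $S^m_{1,\delta}$ is needed. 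The uniformity of $\eps$ over $s\in[s',2]$ follows because the remainder's order gain $\tau\delta$ (or the composition error $\tau$) is a fixed positive number, and interpolation against the endpoint mappings gives a uniform gain. For $3^\circ$, the point is that at $s=0$ the operator $K^\lambda_\gamma$ was only defined via the adjoint construction \eqref{tag4.29a}, so one cannot directly use the $1^\circ$-decomposition valid for $s\in(2-\tau,2]$; instead I would use Theorem \ref{Theorem4.2}$2^\circ$, i.e.\ $K^\lambda_\gamma = K^{\sharp\lambda}_\gamma + \mathcal R(\lambda)$ with $\mathcal R(\lambda)$ gaining $\eps$ for $s\in(0,2]$, compose with $\chi$, and extract a $\psi$do $P^{\sharp\lambda 1}_{\gamma,\chi}$ with symbol in $S^1_{1,\delta}$ (giving \eqref{eq:DirNeumannSharp1}); the remainder mapping $H^{-\frac12}(\Sigma)\to H^{-\frac32+\eps}(\Sigma)$ then comes from composing the order-$\frac32$ trace (with $H^{\frac12}_p$-coefficients, hence of low but positive smoothness) with the smoothing remainder, using the $s=0$ mapping properties established in Theorem \ref{Theorem4.2a} and the extended Green formula. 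Finally, the ``similar statements for $P^{\prime\bar\lambda}_{\gamma,\chi'}$'' follow verbatim by running the whole argument for $A'$ in place of $A$, and the adjointness assertion is the already-displayed identity $(P^\lambda_{\gamma,\chi}\varphi,\psi)_{-\frac32,\frac32}=(\varphi,P^{\prime\bar\lambda}_{\gamma,\chi'}\psi)_{-\frac12,\frac12}$, extended by density to the full range of Sobolev indices.

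The main obstacle I expect is bookkeeping the \emph{limited smoothness} of the coefficients through the compositions on the manifold: the trace operator $\chi$ has coefficients only in $H^{\frac12}_p(\Sigma)$, the Poisson symbol-kernels only in $C^\tau S^1_{1,0}$ (with $\tau<\frac12$), and the chart pullbacks $F^\ast_{j,0}$ only preserve Sobolev spaces in a finite range $[-M+\frac32,M-\frac12]$; so at every composition step one has to check that the required indices $s,s+m'$ stay inside the permitted windows of Theorem \ref{lem:HsBddnes}, Theorem \ref{thm:PsDOCompo}, Proposition \ref{prop:HsBddness} and Lemma \ref{lem:Pullbacks}, and that the order-reducing operators $\Lambda_0^{\cdot}$, $\Lambda^{-2}_{-,+}(\lambda)$ are inserted so as to land the operators in $H^s$-preserving (or class-admissible) form. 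The delicate point specifically in $3^\circ$ is that one is working at the \emph{lower endpoint} $s=0$, where the margin is smallest and one has to rely on the adjoint-based definition of $K^\lambda_\gamma$ rather than the parametrix construction; keeping the remainder's positive order gain $\eps$ under the composition with a merely $C^\tau$- (equivalently $H^{\frac12}_p$-) regular trace requires that $\tau$ (equivalently $\frac12-\frac{n-1}p$) be genuinely positive and that the gains from Theorem \ref{Theorem4.2}$2^\circ$ be spent carefully, which is exactly where Assumption \ref{tau-assumption2} is used.
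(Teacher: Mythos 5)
Your treatment of the continuity statement and of parts $1^\circ$ and $2^\circ$ follows essentially the same route as the paper: localize $\chi$ by Corollary \ref{cor:TraceChi}, compose the resulting trace operators with the approximate Poisson operator $K^{0,\lambda}_\gamma$ of Theorem \ref{Theorem4.2} $1^\circ$ using the nonsmooth composition rules (Theorem \ref{thm:PsDOCompo} and the Green-operator composition theorem of \cite{NonsmoothGreen}), normalize with order-reducing operators to get $s_{j,\lambda}\in C^\tau S^{\frac32}_{1,0}$, then apply symbol smoothing, Proposition \ref{prop:HsBddness} for the $b$-part, the smooth $S^m_{1,\delta}$-calculus for $\Lambda_0^{-\frac12}s^{\sharp}_{j,\lambda}$, and interpolation (which is also where the uniform $\eps=\eps_0 s/2$ comes from). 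Up to minor bookkeeping (in the paper the chart trace operators $t_j$ have order $1$ and class $2$, and the order-$\frac32$ object is $\Lambda_0^{\frac12}t_j$ modulo an error controlled by Theorem \ref{thm:PsDOCompo}), this is the argument actually given.

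Your proposal for $3^\circ$, however, has a genuine gap. You want to write $P^\lambda_{\gamma,\chi}=\chi K^{\sharp\lambda}_\gamma+\chi\mathcal R(\lambda)$ at $s=0$, with $K^\lambda_\gamma=K^{\sharp\lambda}_\gamma+\mathcal R(\lambda)$ from Theorem \ref{Theorem4.2} $2^\circ$, and estimate ``the order-$\frac32$ trace composed with the smoothing remainder''. But for $\varphi\in H^{-\frac12}(\Sigma)$ the term $\mathcal R(\lambda)\varphi$ lies only in a low-order Sobolev space ($H^{s}(\Omega)$ with $s$ slightly above $0$ at best, since the gain in Theorem \ref{Theorem4.2} $2^\circ$ is stated for $s\in(0,2]$ and degenerates at the endpoint), and there is no way to apply $\chi$ to it: $\chi$ is a class-$2$ trace operator with merely $H^{\frac12}_p$-coefficients, defined on $H^s(\Omega)$ only for $s>\frac32$, and its extension in Theorem \ref{Theorem9.10} lives on $D(\Ama)$, to which neither $K^{\sharp\lambda}_\gamma\varphi$ nor $\mathcal R(\lambda)\varphi$ is known to belong (only their sum $K^\lambda_\gamma\varphi\in Z^0_\lambda(A)$ is). So the termwise splitting you propose is not even well defined, let alone estimable; this is precisely why the paper treats $s=0$ ``differently''. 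The paper's device is duality: it applies the $2^\circ$-type estimate \eqref{eq:ParametrixDiffMapping} to the \emph{primed} operator $P^{\prime\bar\lambda}_{\gamma,\chi'}$ at the upper end of its range, uses that $P^\lambda_{\gamma,\chi}$ acts as the adjoint of $P^{\prime\bar\lambda}_{\gamma,\chi'}$ to transfer this to a bound from $H^{-\frac12}(\Sigma)$ into $H^{-\frac32+\eps}(\Sigma)$, and then identifies $({P}^{\prime\sharp\bar\lambda}_{\gamma,\chi'})^*$, via the standard smooth $S^1_{1,\delta}$-calculus, with a pseudodifferential operator ${P}^{\sharp\lambda 1}_{\gamma,\chi}$ as in \eqref{eq:DirNeumannSharp1} modulo remainders bounded on $H^{-\frac12}(\Sigma)$. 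This also explains why $3^\circ$ involves a different operator ${P}^{\sharp\lambda1}_{\gamma,\chi}$ rather than the $P^{\sharp\lambda}_{\gamma,\chi}$ of $2^\circ$; under your route no such distinction would arise, which is a sign that the endpoint case cannot be reached by the direct composition you describe. To repair your argument, replace the composition step at $s=0$ by this duality/adjoint argument (the adjoint identity is already available from the extended Green's formula, as you note at the end).
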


\begin{proof} 
The first and last statements were shown above. 
We shall prove statements $1^\circ$--$3^\circ$ for
$P^\lambda_{\gamma,\chi}$; the 
treatment of ${P}^{\prime\bar\lambda}_{\gamma,\chi'}$ is analogous.

For $1^\circ$, we note that  since 
$K_\gamma^\lambda$ coincides in highest order with the approximation
\eqref{eq:principalPoisson}, $P^\lambda _{\gamma ,\chi }$ coincides in
the highest order with
  \begin{equation*}
   \chi  K_\gamma^{0,\lambda }v= \sum_{j=1}^J \chi \psi_j(x) F^{-1,\ast}_j\Lambda_{-,+}^{-2}k_{j,\lambda }(x',D_{x}) F^\ast_{j,0} \varphi_jv; 
  \end{equation*} 
here $v\in H^{s-\frac12}(\Sigma )$, $s\in (2-\tau,2]$, and 
$\tilde k_{j,\lambda }\in C^\tau S^{1}_{1,0}(\R^{n-1}\times
\R^{n-1},\mathcal{S}(\ol{\R}_+))$.  

Moreover, 
because of Corollary~\ref{cor:TraceChi}, for every $\eta_j\in C^\infty_{(0)}(\ol{\Omega})$ with $\eta_j\equiv 1$ on $\supp \psi_j$ and $\supp \eta_j\subset U_j$ 
we have the representation
 \begin{eqnarray*}
   \chi (\psi_jF^{-1,\ast}_j v_j) &=& \eta_j s_0 \gamma_1 (\psi_j
   F^{-1,\ast}_j v_j)+\eta_j \sum_{|\alpha|\leq 1}c_{\alpha,j}D_{x'}^\alpha\psi_j\gamma_0  F^{-1,\ast}_j v_j\\ 
&=&  \eta_j \widetilde{F}^{-1,\ast}_{j,0} t_j(x',D_x)v_j,
 \end{eqnarray*}
where the operators $t_j(x',D_x)$ are  differential trace operators of
order $1$  and class $2$ on $\Rn_+$ (in $x$-form) with coefficients in
$H^{\frac12}_p(\R^{n-1})$. --- Note that the factor $\kappa $ in the
definition of $\widetilde{F}^{-1,\ast}_{j,0}$, cf.\
\eqref{eq:TildeF2}, can be absorbed into the coefficients of $t_j(x',D_x)$. ---
Now if we set
${t'_{j,\lambda }}(x',\xi',D_n)=\weight{(\xi',\mu)}^{\frac12}t_j(x',\xi',D_n)$
(where as usual $\mu =|\lambda|^{\frac12}$ and $\weight{(\xi',\mu)}^r$ is the
symbol of $\Lambda _0^r(\lambda )$),
 we
have from Theorem~\ref{thm:PsDOCompo} that 
  \begin{equation*}
    \Lambda_0^{\frac12}t_j(x',D_x) - t'_{j,\lambda }(x',D_x)\colon H^{s-\eps}(\Rn_+)\to H^{s-2}(\R^{n-1}), 
  \end{equation*}
for all $s,\eps$ such that $\frac32+\eps<s\leq 2 $ and $0<\eps<\tau=  \frac12-\frac{n-1}p$.
In particular, we can choose $s=2$. Moreover, since $t_j(x',D_x)$ is a differential trace operator, we have that
\begin{eqnarray*}
  t'_{j,\lambda }(x',D_x) \Lambda_{-,+}^{-2} &=& b_{j,1,\lambda }(x',D_{x'})\gamma_1\Lambda_{-,+}^{-2} +b_{j,0,\lambda }(x',D_{x'})\gamma_0\Lambda_{-,+}^{-2} 
\end{eqnarray*}
for some  $b_{j,k,\lambda } \in H^{\frac12}_pS^{\frac32-k}_{1,0}(\R^{n-1}\times \R^{n-1})$, $k=0,1$,
which implies that $t'_{j,\lambda }(x',D_x) \Lambda_{-,+}^{-2}= t_{j,\lambda }''(x',D_x)$ with 
$\tilde{t}_{j,\lambda }''\in C^\tau
S^{-\frac12}_{1,0}(\R^{n-1}\times\R^{n-1},\SD(\R^{n-1}))$, since
$H^{\frac12}_{p}(\R^{n-1})\hookrightarrow C^\tau(\R^{n-1})$. (More
precisely, $b_{j,1,\lambda }(x',\xi')= s_0(x')\weight{(\xi',\mu)}^{\frac12}$ and
$b_{j,0,\lambda }(x',\xi')=$ \linebreak $\sum_{|\alpha|\leq 1} c_{\alpha,j}(x')(\xi')^\alpha
\weight{(\xi',\mu)}^{\frac12}$.) 
Set
\begin{equation*}
s_{j,\lambda }(x',\xi ')=t''_{j,\lambda }(x',\xi ',D_n)k_{j,\lambda }(x',\xi ',D_n);
\end{equation*}
it is in $ C^\tau S^{\frac32}_{1,0}(\R^{n-1}\times \R^{n-1})$. Then 
we can apply the composition rules for Green operators with
$C^\tau$-coefficients, cf.\ \cite[Theorem~4.13.3]{NonsmoothGreen}, to
conclude that  
\begin{equation*}
    t'_{j,\lambda }(x',D_x) \Lambda_{-,+}^{-2}k_{j,\lambda } (x',D_x) - s_{j,\lambda}(x',D_{x'})\colon H^{\frac32-\eps}(\R^{n-1})\to L^2(\R^{n-1}),
  \end{equation*}
  for some $\eps>0$.

Summing up,
we have that
\begin{eqnarray*}
  P_{\gamma,\chi}^\lambda v=\chi {K}_\gamma^\lambda v &=&
  \sum_{j=1}^J \eta_j \widetilde{F}^{-1,\ast}_{j,0}
  t_j(x',D_x)\Lambda_{-,+}^{-2}k_j^\lambda(x',D_x)F^\ast_{j,0}\varphi_j v+{\mathcal R}(\lambda ) v\\ 
&=& \sum_{j=1}^J \eta_j \widetilde{F}^{-1,\ast}_{j,0}
\Lambda_0^{-\frac12}s_{j,\lambda}(x',D_{x'})F^\ast_{j,0}\varphi_j v +
\mathcal{R}'(\lambda ) v, 
\end{eqnarray*}
where $\mathcal{R}(\lambda ) $ and $\mathcal{R}'(\lambda ) $ are
continuous from $ H^{\frac32-\eps}(\Sigma)$ to $ H^{\frac12}(\Sigma)$ for some $\eps>0$.

Hence, if we define
\begin{equation*}
  S^\lambda v= \sum_{j=1}^J
  \eta_j \widetilde{F}^{-1,\ast}_{j,0}\Lambda_0^{-\frac12} s_{j,\lambda}(x',D_{x'}) F^{\ast}_{j,0}\varphi_j v,
\end{equation*}
then
$
  P_{\gamma,\chi}^\lambda - S^\lambda\colon H^{\frac32-\eps}(\Sigma) \to H^{\frac12}(\Sigma)
$
is a bounded operator for some $\eps>0$. Moreover, because of
Proposition~\ref{prop:HsBddness}, $S^\lambda\in
\mathcal{L}(H^{s-\frac12}(\Sigma),H^{s-\frac32}(\Sigma))$ for every
$s\in (2-\tau,2]$. Finally, interpolation yields that for every $s\in
(2-\tau,2]$ there is some $\eps>0$ such that 
\begin{equation*}
    P_{\gamma,\chi}^\lambda - S^\lambda\colon
    H^{s-\frac12-\eps}(\Sigma) \to H^{s-\frac32}(\Sigma) 
\end{equation*}
is bounded. This proves the first statement.

To prove the second statement we apply symbol smoothing,
cf.\ (\ref{eq:SymbolSmoothing}), to $s_{j,\lambda}$. Then we obtain for
any $0<\delta <1$ that 
\begin{equation*} 
  s_{j,\lambda}(x',D_{x'})= s_{j,\lambda}^{\sharp}(x',D_{x'})+s_{j,\lambda}^{b}(x',D_{x'})
\end{equation*}
where $s_{j,\lambda}^{\sharp}\in S^{\frac32}_{1,\delta}(\R^{n-1}\times
\R^{n-1})$ and $s_{j,\lambda}^b \in C^\tau
S^{\frac32-\tau\delta}_{1,\delta}(\R^{n-1}\times \R^{n-1})$. 
Hence, 
\begin{equation*}
\Lambda_0^{-\frac12}
s_{j,\lambda}(x',D_{x'})-\Lambda_0^{-\frac12}
s_{j,\lambda}^\sharp(x',D_{x'})\colon H^{\frac32-\tau\delta}(\R^{n-1})\to
H^{\frac12}(\R^{n-1}), 
\end{equation*}
 since
\begin{equation*}
  s_{j,\lambda}^b(x',D_{x'})\colon H^{\frac32-\tau\delta}(\R^{n-1})\to
  L^2(\R^{n-1}) ,
\end{equation*}
by Proposition~\ref{prop:HsBddness}. 
Moreover, by the composition rules of the (smooth)
$S^m_{1,\delta}$-calculus, cf. e.g. \cite[Theorem~1.7,
Chapter~II]{KumanoGo}, 
\begin{equation*}
  \Lambda_0^{-\frac12} s_{j,\lambda}^\sharp(x',D_{x'}) = p_{j,\lambda}^{\sharp}(x',D_{x'})
\end{equation*}
for some $p_{j,\lambda}^\sharp\in S^{1}_{1,\delta}(\R^{n-1}\times
\R^{n-1})$. Altogether, if we define 
\begin{equation}\label{eq:DirNeumannSharp}
  P_{\gamma,\chi}^{\sharp\lambda} v= \sum_{j=1}^J \eta_j
  \widetilde{F}^{-1,\ast}_{j,0}p_{j,\lambda}^\sharp(x',D_{x'}) F^{\ast}_{j,0}\varphi_j v, 
\end{equation}
then
\begin{equation}\label{eq:ParametrixDiffMapping}
 P_{\gamma,\chi}^\lambda - {P}_{\gamma,\chi}^{\sharp\lambda}\colon
 H^{\frac32-\eps_0}(\Sigma) \to H^{\frac12}(\Sigma) 
\end{equation}
is a bounded operator for some $\eps_0>0$. Since the $p^\sharp_{j,\lambda }$ are smooth
symbols, we have that 
\begin{equation*} 
  P_{\gamma,\chi}^\lambda,
  P_{\gamma,\chi}^{\sharp\lambda}\in\mathcal{L}
  (H^{s-\frac12}(\Sigma), H^{s-\frac32}(\Sigma))   
\end{equation*}
for every $s\in [0,2]$, by Proposition~\ref{prop:HsBddness}. Hence,
interpolation implies that for every $s\in (0,2]$ there is some
$\eps=\eps(s)>0$ such that 
\begin{equation*}
  P_{\gamma,\chi}^\lambda - P_{\gamma,\chi}^{\sharp\lambda}\colon
  H^{s-\frac12-\eps}(\Sigma) \to H^{s-\frac32}(\Sigma) 
\end{equation*}
is a bounded operator. Here $\eps=\eps_0\frac{s}2$, which shows that $\eps$ can be chosen uniformly with respect to $s\in [s',2]$ for every $s'\in (0,2]$.

Finally, the case $s=0$ will be treated differently. To this end let
${P}^{\prime\sharp\bar{\lambda}}_{\gamma,\chi'}$ denote the approximation
to ${P}^{\prime\bar{\lambda}}_{\gamma,\chi'}$ defined analogously to
\eqref{eq:DirNeumannSharp}. Then 
\begin{equation*}
  P^\lambda_{\gamma,\chi}-
  ({P}^{\prime\sharp\bar{\lambda}}_{\gamma,\chi'})^*
  =({P}^{\prime\bar{\lambda}}_{\gamma,\chi'})^*
  -({P}^{\prime\sharp\bar{\lambda}}_{\gamma,\chi'})^*\colon
  H^{-\frac12}(\Sigma)\to H^{-\frac32+\eps}(\Sigma),    
\end{equation*}
by \eqref{eq:ParametrixDiffMapping} for the primed
operators. Moreover, by the standard calculus for pseudo\-differential
operators with $S^m_{1,\delta}$-symbols, 
\begin{align}
  ({P}^{\prime\sharp\bar{\lambda}}_{\gamma,\chi'})^* v &=
  \sum_{j=1}^J  \varphi_j\widetilde{F}^{-1,\ast}_{j,0}{p}_{j,\lambda}^{\prime\sharp\ast}(x',D_{x'})
  F^{\ast}_{j,0}\eta_j v +{\mathcal R}(\lambda ) v \nonumber\\
&= \sum_{j=1}^J  \eta_j\widetilde{F}^{-1,\ast}_{j,0}{p}_{j,\lambda}^{\prime\sharp\ast}(x',D_{x'})
F^{\ast}_{j,0}\varphi_j v +\mathcal{R}'(\lambda ) v =
{P}^{\sharp{\lambda}1}_{\gamma,\chi}v+\mathcal{R}'(\lambda ) v\label{eq:DirNeumannSharp1}
\end{align}
for all $v\in H^{-\frac12}(\Sigma)$,
where ${p}_{j,\lambda}^{\prime\sharp\ast} \in
S^{1}_{1,\delta}(\R^{n-1}\times \R^{n-1})$, and
$\mathcal{R} (\lambda ) $ and ${\mathcal R}'(\lambda ) $ are bounded from
$H^{-\frac12}(\Sigma)$ to $ H^{-\frac12}(\Sigma)$. 
\end{proof}

Operators $P^\lambda _{\gamma ,\beta }$ can be defined for any trace
operator $\beta $, as $P^\lambda _{\gamma ,\beta }=\beta K^\lambda
_\gamma $. For $\beta =\gamma _1$, one has that $P^\lambda _{\gamma
  ,\gamma _1 }=\gamma _1K^\lambda _\gamma $ is {\it elliptic} (the principal symbol is
invertible), cf.\
\cite{A62}, \cite{Grubb71} 
(it is also documented in Ch.\ 11 of \cite{G09}, Exercise 11.7ff.).
Note that
\[P^\lambda _{\gamma ,\chi  }=s_0 P^\lambda _{\gamma ,\gamma _1
}+{\cal A}_1,
\]
where ${\cal A}_1$ is the first-order differential operator on $\Sigma
$ explained in  Theorem \ref{thm:Green}. The ellipticity of $P^\lambda _{\gamma ,\chi }$ depends on ${\cal
  A}_1$, which is defined from the coefficients in the
divergence form \eqref{eq:DefnA}. It should be
recalled that the choice of coefficients in \eqref{eq:DefnA} is not
unique for a given operator $A$, see the discussion in \cite{Grubb71}; it
is shown there in the smooth case that the choice of coefficients in
\eqref{eq:DefnA} can be adapted to give {\it any desired first-order
differential operator on} $\Sigma $ in the place of ${\cal A}_1$.
Ellipticity of $P^\lambda _{\gamma ,\chi  }$ holds if and only if
the system $\{A,\chi \}
$ is elliptic.

In the papers \cite{BGW09}, \cite{G08},
$\chi $ is replaced by $\nu _1=s_0\gamma _1$ in order to have an
elliptic Dirichlet-to-Neumann operator (then Green's formula looks slightly different). However,
the modified trace operator  $\Gamma ^\lambda $ introduced further
below is actually
independent of the choice of ${\cal A}_1$ (cf.\ Remark \ref{rem:2.9b}).

\setcounter{section}{5}
\section{Boundary value problems}\label{bdrycond}

We shall now apply the abstract results from Section
\ref{extend} to boundary value problems. The realizations
$\Ama$, $\Ami$ and $A_\gamma $ of $A$ in $H=L_2(\Omega )$ introduced
in the beginning of Section \ref{resolv} have the properties 
described in Section
\ref{extend}, with the realizations $\Ami'$, $\Ama'$ and $A_\gamma '$
of $A'$ representing the adjoints. Then the general constructions of
Section \ref{extend} can be applied. The operators $\wA\in {\cal M}$
are the general realizations of $A$. Note that
\begin{equation}
Z^0_0(A)=Z,\quad Z^0_0(A')=Z',\quad Z^0_\lambda (A )=Z_\lambda ,\quad
Z^0_{\bar\lambda }(A' )
=Z'_{\bar\lambda }.\label{tag2.6a} 
\end{equation}

For an interpretation of the correspondence between $\wA$ and $T\colon V\to W$, 
we need a modified version of Green's formula (as introduced
originally in \cite{Grubb68}). Here we use the 
trace operators (in the sense of the $\psi $dbo calculus) defined by:
\begin{equation}
\Gamma ^\lambda =\chi-P^\lambda _{\gamma ,\chi }\gamma _0 ,\quad \Gamma^{\prime\bar\lambda }
=\chi ' -P^{\prime \bar\lambda }_{\gamma ,\chi ' }\gamma _0\label{tag2.9}
\end{equation}
for $\lambda \in \varrho (A_\gamma )$; we call them the {\it reduced
  Neumann trace operators}.

\begin{theorem}\label{modGreen}
The trace operators $\Gamma ^\lambda $ and $\Gamma ^{\prime\bar\lambda }$  
map $D(\Ama)$ resp.\ $D(\Ama')$ continuously onto
$ H^\frac12(\Sigma )$, and satisfy
\begin{align}
\Gamma ^0  &=\chi A_\gamma ^{-1}\Ama ,\quad \Gamma^{\prime 0 }
=\chi '(A^*_\gamma ) ^{-1}\Ama' ,\label{tag2.9a}
\\
\Gamma ^\lambda  &=\chi (A_\gamma -\lambda )^{-1}(\Ama -\lambda ),\quad \Gamma^{\prime\bar\lambda }
=\chi '(A^*_\gamma -\bar\lambda ) ^{-1}(\Ama' -\bar\lambda ).\label{tag2.9b}
\end{align}
In particular, $\Gamma ^0$ vanishes on $Z$, etc. 

With these trace operators there is a modified Green's formula valid {\bf for all} 
$u\in D(\Ama)$, $v\in D(\Ama')$:
\begin{equation}
(Au,v)_{\Omega }-(u,A'v)_{\Omega }=(\Gamma ^0 u,\gamma _0v)_{\frac12,
-\frac12}-(\gamma _0u, \Gamma ^{\prime0 }v)_{-\frac12,\frac12};\label{tag2.10}
\end{equation}
in particular,
\begin{equation}
(Au,w)_{\Omega }=(\Gamma ^0 u,\gamma _0w)_{\frac12,
-\frac12},\text{ when }w\in Z'.\label{tag2.10a}
\end{equation}
Similarly, for all $u\in D(\Ama)$, $v\in D(\Ama')$,
\begin{equation}
((A-\lambda )u,v)_{\Omega }-(u,(A'-\bar\lambda )v)_{\Omega }
=(\Gamma ^\lambda  u,\gamma _0v)_{\frac12,
-\frac12}-(\gamma _0u, \Gamma ^{\prime\bar\lambda }v)_{-\frac12,\frac12},\label{tag2.10b}
\end{equation}
which is also equal to $(Au,v)_{\Omega }-(u,A'v)_{\Omega }$.
\end{theorem}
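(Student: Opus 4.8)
The plan is to establish the three groups of assertions in order: first the formulas \eqref{tag2.9a}--\eqref{tag2.9b} expressing $\Gamma^\lambda$ through the Dirichlet resolvent, then the mapping property (continuity onto $H^{\frac12}(\Sigma)$), and finally the modified Green's formulas \eqref{tag2.10}--\eqref{tag2.10b}. For \eqref{tag2.9b}, I would take $u\in D(\Ama)$ and write $u=u^\lambda_\gamma+u^\lambda_\zeta$ with $u^\lambda_\gamma=(A_\gamma-\lambda)^{-1}(\Ama-\lambda)u\in D(A_\gamma)$ and $u^\lambda_\zeta\in Z_\lambda$. Then $\gamma_0 u=\gamma_0 u^\lambda_\zeta$ (since $\gamma_0 u^\lambda_\gamma=0$), and applying $\chi$ and using that $u^\lambda_\zeta = K^\lambda_\gamma\gamma_0 u$ by Theorem~\ref{Theorem4.2a} gives $\chi u = \chi u^\lambda_\gamma + P^\lambda_{\gamma,\chi}\gamma_0 u$, i.e. $\Gamma^\lambda u = \chi u - P^\lambda_{\gamma,\chi}\gamma_0 u = \chi u^\lambda_\gamma = \chi(A_\gamma-\lambda)^{-1}(\Ama-\lambda)u$. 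The case $\lambda=0$ is \eqref{tag2.9a}; the primed versions are identical with $A'$, $A_\gamma^*=A'_\gamma$ in place of $A$, $A_\gamma$. In particular $\Gamma^\lambda$ vanishes on $Z_\lambda$ since then $u^\lambda_\gamma=0$.

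For the continuity and surjectivity onto $H^{\frac12}(\Sigma)$: by \eqref{tag2.9b}, $\Gamma^\lambda$ is the composition of $(\Ama-\lambda)\colon D(\Ama)\to L_2(\Omega)$ (bounded in graph norm), $(A_\gamma-\lambda)^{-1}\colon L_2(\Omega)\to H^2(\Omega)$ (bounded by \eqref{tag4.26} with $s=0$, using $\tau>0$), and $\chi\colon H^2(\Omega)\to H^{\frac12}(\Sigma)$ (bounded, as noted after Theorem~\ref{thm:Green}); so $\Gamma^\lambda\colon D(\Ama)\to H^{\frac12}(\Sigma)$ is bounded. Surjectivity follows because Corollary~\ref{cor:TraceChi} (with $s=2$) gives a continuous right inverse $\mathcal K'$ of $\chi\colon H^2(\Omega)\to H^{\frac12}(\Sigma)$ with $\gamma_0\mathcal K'=0$; for $\varphi\in H^{\frac12}(\Sigma)$, set $u=\mathcal K'\varphi\in H^2(\Omega)\subset D(\Ama)$, so $\gamma_0 u=0$ and $\chi u=\varphi$, whence $\Gamma^\lambda u = \chi u - P^\lambda_{\gamma,\chi}\gamma_0 u = \varphi$.

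For the Green's formulas, the strategy is to reduce to the already-proven extended classical formula \eqref{tag9.29}. Take $u\in D(\Ama)$, $v\in D(\Ama')$, and decompose $v = v^{\bar\lambda}_{\gamma'} + v^{\bar\lambda}_{\zeta'}$. For \eqref{tag2.10b} I would insert \eqref{tag9.29} applied with the regular part where it is valid and handle the kernel part by the definition of $\Gamma^{\prime\bar\lambda}$, using that $\Gamma^{\prime\bar\lambda}$ kills $Z'_{\bar\lambda}$ and that $\gamma_0$ identifies $Z^0_{\bar\lambda}(A')$ with $H^{-\frac12}(\Sigma)$. Concretely: write $((A-\lambda)u,v)-(u,(A'-\bar\lambda)v)$; this equals $((A-\lambda)u,v^{\bar\lambda}_{\gamma'})+((A-\lambda)u,v^{\bar\lambda}_{\zeta'})-(u,(A'-\bar\lambda)v^{\bar\lambda}_{\gamma'})$. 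Since $v^{\bar\lambda}_{\gamma'}\in D(A_\gamma^*-\bar\lambda)\subset H^2(\Omega)$ with $\gamma_0 v^{\bar\lambda}_{\gamma'}=0$, \eqref{tag9.29} gives $((A-\lambda)u, v^{\bar\lambda}_{\gamma'})-(u,(A'-\bar\lambda)v^{\bar\lambda}_{\gamma'}) = (\chi u,\gamma_0 v^{\bar\lambda}_{\gamma'})_{-\frac32,\frac32} - (\gamma_0 u, (\chi'-\bar\lambda\cdot 0)v^{\bar\lambda}_{\gamma'})_{-\frac12,\frac12}$, i.e. $=-(\gamma_0 u,\chi' v^{\bar\lambda}_{\gamma'})_{-\frac12,\frac12}$ since $\gamma_0 v^{\bar\lambda}_{\gamma'}=0$; and the remaining term $((A-\lambda)u, v^{\bar\lambda}_{\zeta'})$ is rewritten via \eqref{tag2.10a}-type reasoning as $(\Gamma^\lambda u, \gamma_0 v)_{\frac12,-\frac12}$, noting $\gamma_0 v=\gamma_0 v^{\bar\lambda}_{\zeta'}$. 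Then recognizing $\chi' v^{\bar\lambda}_{\gamma'} = \chi'(A_\gamma^*-\bar\lambda)^{-1}(\Ama'-\bar\lambda)v = \Gamma^{\prime\bar\lambda}v$ by \eqref{tag2.9b} completes \eqref{tag2.10b}. Formula \eqref{tag2.10} is the special case $\lambda=0$, and \eqref{tag2.10a} follows by taking $v=w\in Z'$ (so $\gamma_0$-dependence on the $v$-side collapses and $\Gamma^{\prime0}w=0$). The main obstacle I anticipate is bookkeeping the duality pairings: $\chi u$ and $\Gamma^\lambda u$ live in different boundary spaces ($H^{-\frac32}$ vs. $H^{\frac12}$) depending on whether $u\in D(\Ama)$ only or $u\in H^2(\Omega)$, so one must be careful that when $v$ is restricted to the regular part $v^{\bar\lambda}_{\gamma'}\in H^2$ the pairing $(\chi u,\gamma_0 v^{\bar\lambda}_{\gamma'})$ makes sense (it does, via $H^{-\frac32}\times H^{\frac32}$, and in fact vanishes), while the kernel-part term must be handled purely through $\Gamma^\lambda$ acting into $H^{\frac12}$ paired against $\gamma_0 v\in H^{-\frac12}$; reconciling these two regularity regimes is the only delicate point, and it is resolved exactly by the decomposition $v=v^{\bar\lambda}_{\gamma'}+v^{\bar\lambda}_{\zeta'}$ together with the identity $\gamma_0 u_\zeta^\lambda=\gamma_0 u$.
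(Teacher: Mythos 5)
Your proposal is correct and follows essentially the paper's own route: \eqref{tag2.9a}--\eqref{tag2.9b} via the decomposition $u=u^\lambda_\gamma+u^\lambda_\zeta$ with $u^\lambda_\zeta=K^\lambda_\gamma\gamma_0u$, surjectivity via a right inverse of $\{\chi,\gamma_0\}$ from Corollary~\ref{cor:TraceChi}, and the modified Green's formula by reduction to the extended formula \eqref{tag9.29}. The only real difference is organizational: the paper works at $\lambda=0$ (general $\lambda$ "similar") and decomposes \emph{both} $u$ and $v$, applying \eqref{tag9.29} and its primed counterpart to the four cross terms, whereas you decompose only $v$ and treat general $\lambda$ directly; both are fine. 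One step of yours must be made explicit to avoid an apparent circularity: the passage $((A-\lambda)u,v^{\bar\lambda}_{\zeta'})=(\Gamma^\lambda u,\gamma_0 v)_{\frac12,-\frac12}$ cannot literally invoke \eqref{tag2.10a}, since you later derive \eqref{tag2.10a} from \eqref{tag2.10}. The direct argument is: write $(A-\lambda)u=(A-\lambda)u^\lambda_\gamma$ with $u^\lambda_\gamma=(A_\gamma-\lambda)^{-1}(\Ama-\lambda)u\in H^2(\Omega)\cap H^1_0(\Omega)$ and apply the conjugated primed version of Theorem~\ref{Theorem9.10} (i.e.\ \eqref{tag9.29} for $A'$, which holds by symmetry of the construction) to the pair $u^\lambda_\gamma$, $v^{\bar\lambda}_{\zeta'}\in D(\Ama')$, using $\gamma_0u^\lambda_\gamma=0$ and $(A'-\bar\lambda)v^{\bar\lambda}_{\zeta'}=0$; equivalently, use the primed analogue of \eqref{tag4.29a}, $K^{\prime\bar\lambda}_\gamma=(\chi(A_\gamma-\lambda)^{-1})^*$, together with $v^{\bar\lambda}_{\zeta'}=K^{\prime\bar\lambda}_\gamma\gamma_0v$. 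This reversed-regularity form of \eqref{tag9.29} is exactly what the paper itself uses implicitly when pairing $u_\gamma\in H^2(\Omega)$ against $v_{\zeta'}\in D(\Ama')$, so with that detail supplied your argument is complete.
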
 

\begin{proof} With the preparations we have made, this goes exactly as
  in the smooth case \cite{Grubb68}; we give some details for the convenience of the reader. Take $\lambda
  =0$. Writing $u=u_\gamma +u_\zeta $, where $u_\gamma =A_\gamma
  ^{-1}\Ama u$ and $u_\zeta \in Z$, we have that $\gamma _0u=\gamma
_0  u_\zeta $, and $u_\zeta =K^0_\gamma  \gamma _0u$. Then 
\[
\Gamma ^0u=\chi u-P^0_{\gamma ,\chi }\gamma _0u=\chi u-P^0_{\gamma
  ,\chi }\gamma _0u_\zeta =\chi u-\chi u_\zeta =\chi  u_\gamma =\chi
A_\gamma ^{-1}\Ama u.
\]
This shows \eqref{tag2.9a}, and since $A_\gamma ^{-1}\Ama $ is
surjective from $D(\Ama)$ to $D(A_\gamma )$, and $\chi $ is surjective
from $D(A_\gamma )=H^2(\Omega )\cap H^1_0(\Omega )$ to
$H^\frac12(\Sigma )$, we get the surjectiveness from $D(\Ama)$ to
$H^\frac12(\Sigma )$. (The continuity is with respect to the
graph-norm on $D(\Ama)$.)

Let $u\in D(\Ama)$, $v\in D(\Ama')$. Then, using the fact that
$(Au_\gamma ,v_{\gamma '})-(u_\gamma ,A'v_{\gamma '})=0$, and using the
extended Green's formula \eqref{tag9.29}, we find
\begin{align*}
&(Au,v)-(u,A'v)=(Au_\gamma ,v_{\gamma '}+v_{\zeta '})-(u_\gamma
+u_\zeta ,A'v_{\gamma '})\\
&=(Au_\gamma ,v_{\zeta '})-(u_\zeta ,A'v_{\gamma '})\\
&=(Au_\gamma ,v_{\zeta '})-(u_\gamma , A'v_{\zeta '})+(Au_\zeta ,v_{\gamma '})-(u_\zeta ,A'v_{\gamma '})\\
&=(\chi u_\gamma ,\gamma _0v_{\zeta '})_{\frac12,-\frac12}-(\gamma _0u_\gamma ,\chi
'v_{\zeta '})_{\frac32,-\frac32}+(\chi u_\zeta  ,\gamma _0v_{\gamma '})_{-\frac32,\frac32}-(\gamma _0u_\zeta  ,\chi
'v_{\gamma  '})_{-\frac12,\frac12}\\
&=(\chi u_\gamma ,\gamma _0v)_{\frac12,-\frac12}-(\gamma _0u  ,\chi
'v_{\gamma  '})_{-\frac12,\frac12}
=(\Gamma ^0  u,\gamma _0v)_{\frac12,
-\frac12}-(\gamma _0u, \Gamma ^{\prime 0}v)_{-\frac12,\frac12}.
\end{align*}
This shows \eqref{tag2.10}; \eqref{tag2.10a} is a special case. The
proof for general $\lambda $ is similar.
\end{proof}

\begin{remark}\label{rem:2.9b}
Note that when $\chi =s_0\gamma _1+{\cal A}_1\gamma _0 $, then 
\[
\Gamma ^\lambda =\chi -P^\lambda _{\gamma ,\chi }\gamma _0= s_0\gamma _1+{\cal
  A}_1\gamma _0-( P^\lambda _{\gamma , s_0 \gamma _1 } -{\cal
  A}_1)\gamma _0=s_0\gamma _1-P^\lambda _{\gamma ,s_0\gamma _1};
\]
so in fact $\Gamma ^\lambda $ is independent of 
the choice of coefficient of $\gamma _0$ in $\chi $ (cf. \eqref{eq:conormal}).
\end{remark}

By composition with suitable isometries (order-reducing operators),
 \eqref{tag2.10} can be
turned into a formula with $L_2$-scalar products over the boundary,
but since this leads to more overloaded formulas, we
shall not pursue that line of thought here.

We denote by $\gamma _{Z_\lambda }$ the restriction of $\gamma _0$ to a mapping from
$Z_\lambda$ (closed subspace of $L_2(\Omega )$) to 
$H^{-\frac12}(\Sigma ) $; its adjoint $\gamma
_{Z_\lambda }^*$ goes from $H^\frac12(\Sigma )$ to $Z_\lambda $:
\begin{equation}
\gamma _{Z_\lambda }\colon  Z_\lambda \simto H^{-\frac12}(\Sigma ) ,\text{
with adjoint }\gamma _{Z_\lambda }^*\colon    H^\frac12 (\Sigma )\simto
Z_\lambda .\label{tag2.6c}
\end{equation}
The inverse $\gamma _{Z_\lambda }^{-1}$ gives by composition with
$\inj_{Z_\lambda }$ the Poisson operator $K^\lambda _\gamma $: 
\[ K^\lambda _\gamma =\inj_{Z_\lambda }\gamma _{Z_\lambda }^{-1}\colon 
H^{-\frac12}(\Sigma )\to L_2(\Omega ).
\]
There is a similar notation for the primed operators. When $\lambda
=0$, this index can be left out.

For the study of general realizations $\wA$ of $A$,
the homeomorphisms \eqref{tag2.6c} make it possible to translate the characterization in
terms of operators $T\colon V\to W$ in Section \ref{extend} into a characterization in
terms of operators $L$ over the boundary. 

First let $\lambda =0$. For $V\subset Z$, $W\subset Z'$, let $X=\gamma _0V$, $Y=\gamma _0W$, with the notation
for the restrictions of $\gamma _0$ to homeomorphisms:
\begin{equation}
\gamma _V\colon  V\simto X,\quad \gamma _W\colon  W\simto Y.\label{tag3.22} 
\end{equation}
The map $\gamma _V\colon V\simto X$ has the adjoint $\gamma _V^*\colon X^*\simto
V$.
Here $X^*$ denotes the antidual space of $X$, with a duality
coinciding with the scalar product in $L_2(\Sigma )$ when
applied to elements that come from 
$ L_2(\Sigma )$. The duality is written $(\psi ,\varphi
)_{X^*,X}$. We also write $\overline{(\psi ,\varphi )_{X^*,X}}=(\varphi ,\psi )_{X,X^*}$.
Similar conventions are applied to $Y$.

When $A$ is replaced by $A-\lambda  $ for $\lambda \in \varrho
(A_\gamma )$, we use a
similar notation where $Z$, $Z'$, $V$ and $W$ are replaced by $Z_\lambda $,
$Z'_{\bar\lambda }$, $V_\lambda $, $W_{\bar\lambda }$. Since $\gamma _0E^\lambda z=\gamma
_0z$ (cf.\ Section \ref{extend}), the mapping defined by $\gamma _0$ on $V_\lambda $ has {\it the same range space $X$  as
when $\lambda =0$}. Similarly,  the mapping defined by $\gamma _0$ on
$W_{\bar\lambda }$ has  the  range space $Y$ for all $\lambda $. So $\gamma _0$
defines homeomorphisms
\begin{equation}
\gamma _{V_\lambda }\colon V_\lambda
\simto X,\quad \gamma _{W_{\bar\lambda }}\colon W_{\bar\lambda }\simto Y, \label{tag3.23}
\end{equation}
 
For $\lambda \in\varrho (A_\gamma )$, we denote 
\begin{equation}K^\lambda _{\gamma
  ,X}= \inj_{V _\lambda }\gamma _{V_\lambda }^{-1}\colon X \to V_\lambda
\hookrightarrow  H,\quad K^{\prime\bar\lambda }_{\gamma
  ,Y}= \inj_{W _{\bar\lambda }}\gamma _{W_{\bar\lambda }}^{-1}\colon Y \to W_{\bar\lambda }\hookrightarrow H.\label{tag3.23a}
\end{equation}

Now a given $T\colon V\to W$ is carried over to a closed, densely 
defined operator $L\colon  X\to
Y^*$ by the definition
\begin{equation}
L=(\gamma _W^{-1})^*T\gamma _V^{-1},\quad D(L)=\gamma _V D(T);\label{tag3.24}
\end{equation}
it is expressed in the diagram
\begin{equation}
\CD
V     @>\sim>  \gamma _V   >    X\\
@VTVV           @VV  L  V\\
   W  @>\sim> (\gamma _W^{-1})^*>   Y^*\endCD 
 \label{tag3.25}
\end{equation}

When $\wA$ corresponds to $T\colon V\to W$ and $L\colon X\to Y^*$, we can write
\begin{equation}
(Tu_\zeta ,w)=(T\gamma _V^{-1}\gamma _0u,\gamma _W^{-1}\gamma
_0w)=(L\gamma _0u,\gamma _0w)_{Y^*,Y},\text{ all }u\in D(\wA), w\in W.\label{tag3.36}
\end{equation}
The formula $(Au)_W=Tu_\zeta $ in \eqref{tag1.2} is then in view of
\eqref{tag2.10a} turned into
\[
(\Gamma ^0u,\gamma _0w)_{\frac12, -\frac12}=(L\gamma _0u,\gamma _0w)_{Y^*,Y},\text{ all }w\in W,
\]
or, since $\gamma _0$ maps $W$ bijectively onto $Y$,
\begin{equation}
(\Gamma ^0u,\varphi )_{\frac12,
-\frac12}=(L\gamma _0u,\varphi  )_{Y^*,Y}\text{ for all }\varphi  \in Y.\label{tag3.37}
\end{equation}

To simplify the notation, note that the injection $\inj_Y\colon  Y\to H^{-\frac12}(\Sigma )$ has as adjoint the mapping
$\inj _{Y}^*\colon  H^{\frac12}(\Sigma )\to Y^*$ that sends a functional $\psi $
on $H^{-\frac12}(\Sigma )$ over into a functional $\inj_Y^*\psi $ on $Y$ by:
\begin{equation}
(\inj_Y^*\psi ,\varphi )_{Y^*,Y}=(\psi ,\varphi
)_{\frac12,-\frac12}\text{ for all }\varphi \in Y.\label{tex3.37a}
\end{equation}
With this notation (also indicated in \cite{Grubb74} after (5.23)), \eqref{tag3.37} may be rewritten as
$$
\inj_Y^*\Gamma ^0u = L\gamma _0u,
$$
or, when we use that $\Gamma ^0=\chi -P^0_{\gamma ,\chi }\gamma _0$,
\begin{equation}
\inj_Y^*\chi  u= (L+\inj_Y^*P^0_{\gamma ,\chi })\gamma _0u. \label{tag3.38}
\end{equation}

We have then obtained:
 
\begin{theorem}\label{Theorem3.1'} For a closed operator $\wA\in{\cal M}$, the
following statements {\rm (i)} and {\rm (ii)} are equivalent:

{\rm (i)} $\wA$ corresponds to $T\colon V\to W$ as in Section {\rm \ref{extend}}. 

{\rm (ii)} $D(\wA)$ consists of the functions  $u\in D(\Ama)$
that satisfy the boundary condition
\begin{equation}
\gamma _0u\in D(L),\quad 
\inj_Y^*\chi  u= (L+\inj_Y^*P^0_{\gamma ,\chi })\gamma _0u.
\label{tag3.39}
\end{equation}
 Here $T\colon V\to W$ and $L\colon X\to Y^*$ are defined from one another as
described in {\rm \eqref{tag3.22}--\eqref{tag3.25}}.

\end{theorem}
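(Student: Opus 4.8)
The plan is to show the equivalence of (i) and (ii) by tracing through the chain of identifications established in the preceding paragraphs, essentially reversing the derivation that led from the abstract defining equation to \eqref{tag3.38}. The key point is that everything in \eqref{tag3.22}--\eqref{tag3.25} is a homeomorphism, so the correspondence $T\leftrightarrow L$ is a genuine bijection, and the boundary condition in \eqref{tag3.39} is merely a reformulation of the defining equation \eqref{tag1.2}.

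First I would assume (i), so that $\wA$ corresponds to $T\colon V\to W$ via Theorem \ref{Theorem1.1}, and let $L\colon X\to Y^*$ be defined by \eqref{tag3.24}. By Corollary \ref{cor:TraceChi} and Theorem \ref{Theorem4.2a}, $\gamma_0$ restricts to the homeomorphisms $\gamma_V\colon V\simto X$ and $\gamma_W\colon W\simto Y$ of \eqref{tag3.22}, so $D(L)=\gamma_0 D(T)=\gamma_0\pr_\zeta D(\wA)$. Now take $u\in D(\wA)$; then $u_\zeta=\pr_\zeta u\in D(T)$, and since $u_\zeta=K^0_\gamma\gamma_0 u$ (as $u_\gamma\in D(A_\gamma)$ has $\gamma_0 u_\gamma=0$), we get $\gamma_0 u=\gamma_0 u_\zeta\in\gamma_V D(T)=D(L)$. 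Conversely, if $u\in D(\Ama)$ has $\gamma_0 u\in D(L)$, then $u_\zeta=K^0_\gamma\gamma_0 u\in D(T)$, and whether $u\in D(\wA)$ is then controlled by the defining equation. The defining equation $(Au)_W=Tu_\zeta$ is, by the abstract Green's formula \eqref{tag2.10a} and the computation preceding \eqref{tag3.37}, equivalent to $(\Gamma^0 u,\varphi)_{\frac12,-\frac12}=(L\gamma_0 u,\varphi)_{Y^*,Y}$ for all $\varphi\in Y$, which is \eqref{tag3.37}; applying $\inj_Y^*$ as in \eqref{tex3.37a} and substituting $\Gamma^0=\chi-P^0_{\gamma,\chi}\gamma_0$ gives exactly the second equation in \eqref{tag3.39}. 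So the membership $u\in D(\wA)$ is equivalent to: $u\in D(\Ama)$, $\gamma_0 u\in D(L)$, and \eqref{tag3.39} holds. This is the content of the theorem, and the same argument run backwards gives (ii)$\Rightarrow$(i), since $L$ determines $T$ by inverting \eqref{tag3.24}, and the subspaces $V=\overline{\gamma_V^{-1}X}$, $W=\overline{\gamma_W^{-1}Y}$ are recovered from $X=\overline{D(L)}$, $Y$.

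The main subtlety I anticipate is bookkeeping with the projections: one must make sure that the characterization \eqref{tag3.39} genuinely pins down $D(\wA)$ and not just a smaller or larger set. The point is that $D(\wA)=\{u\in D(\Ama): u_\zeta\in D(T),\ (Au)_W=Tu_\zeta\}$ (this is how $T$ and $\wA$ determine each other in Theorem \ref{Theorem1.1}), and one uses that $u=u_\gamma+u_\zeta$ with $u_\gamma$ ranging freely over $D(A_\gamma)$, together with $(Au_\gamma,w)=(u_\gamma,A'w)=0$ for $w\in Z'$, so that only $u_\zeta$ enters the defining relation. Since this is exactly parallel to the smooth case treated in \cite{Grubb68}, and all the analytic ingredients (the extended Green's formula \eqref{tag9.29}, the modified Green's formula \eqref{tag2.10}, the homeomorphism properties of $\gamma_0$ on $Z_\lambda$ from Theorem \ref{Theorem4.2a}) have now been established in the nonsmooth setting, the proof consists of assembling these facts rather than any new estimate. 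I would therefore keep the proof short, citing \cite{Grubb68} for the structure and emphasizing only where the nonsmooth ingredients enter.

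\begin{proof}
This follows from the abstract correspondence in Theorem \ref{Theorem1.1} combined with the identifications \eqref{tag3.22}--\eqref{tag3.25} and the modified Green's formula \eqref{tag2.10a}, exactly as in the smooth case \cite{Grubb68}. Indeed, assume $\wA$ corresponds to $T\colon V\to W$, and let $L\colon X\to Y^*$ be given by \eqref{tag3.24}; since $\gamma _0$ restricts to homeomorphisms $\gamma _V\colon V\simto X$ and $\gamma _W\colon W\simto Y$ (by Corollary \ref{cor:TraceChi} and Theorem \ref{Theorem4.2a}), $L$ is closed and densely defined with $D(L)=\gamma _0D(T)$. For $u\in D(\Ama)$ write $u=u_\gamma +u_\zeta $ with $u_\gamma \in D(A_\gamma )$, $u_\zeta \in Z$; then $\gamma _0u=\gamma _0u_\zeta $ and $u_\zeta =K^0_\gamma \gamma _0u$, so $u_\zeta \in D(T)$ if and only if $\gamma _0u\in D(L)$. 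For such $u$, using $(Au_\gamma ,w)=(u_\gamma ,A'w)=0$ for $w\in Z'$ one has $(Au)_W=(Au)_{Z'}$ paired against $W$, and the defining equation $(Au)_W=Tu_\zeta $ becomes, via \eqref{tag2.10a} and \eqref{tag3.36},
\begin{equation*}
(\Gamma ^0u,\varphi )_{\frac12,-\frac12}=(L\gamma _0u,\varphi )_{Y^*,Y}\quad\text{for all }\varphi \in Y,
\end{equation*}
which upon applying $\inj_Y^*$ and substituting $\Gamma ^0=\chi -P^0_{\gamma ,\chi }\gamma _0$ is precisely the boundary condition in \eqref{tag3.39}. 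Hence $D(\wA)$ equals the set described in (ii), proving (i)$\Rightarrow$(ii). Conversely, given $L\colon X\to Y^*$ closed and densely defined, \eqref{tag3.24} inverts to define $T\colon V\to W$ with $V=\overline{\gamma _V^{-1}X}\subset Z$, $W=\overline{\gamma _W^{-1}Y}\subset Z'$, and retracing the computation above shows that the realization with domain \eqref{tag3.39} is the closed operator corresponding to $T$ in Theorem \ref{Theorem1.1}. This gives (ii)$\Rightarrow$(i).
\end{proof}
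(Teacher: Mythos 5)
Your proposal is correct and follows essentially the same route as the paper: the paper obtains this theorem directly as the summary of the computation \eqref{tag3.22}--\eqref{tag3.38}, i.e.\ defining $L$ by \eqref{tag3.24} via the homeomorphisms $\gamma_V,\gamma_W$ (coming from Theorem \ref{Theorem4.2a}), and translating the defining equation \eqref{tag1.2} through \eqref{tag2.10a} and \eqref{tag3.36} into \eqref{tag3.37}--\eqref{tag3.38}, with the converse supplied by the bijectivity of the correspondence in Theorem \ref{Theorem1.1}. Your write-up just makes this assembly explicit, so no gaps to report.
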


Note that when
$Y$ is the full space $H^{-\frac12}(\Sigma )$, $\inj_{Y^*}$ is
superfluous, and \eqref{tag3.39} takes the form
\begin{equation}
\gamma _0u\in D(L), \quad \chi u=(L+P^0_{\gamma ,\chi })\gamma _0u.\label{tag3.40}
\end{equation}

The whole construction can be carried out with $A$ replaced by $A-\lambda
$, when $\lambda \in \varrho (A_\gamma )$. 
We define $L^\lambda $ from $T^\lambda $ as
in \eqref{tag3.24}--\eqref{tag3.25} with $T\colon V\to W$ replaced by
$T^\lambda \colon V_\lambda \to W_{\bar\lambda }
$ and use of \eqref{tag3.23}; here $L^\lambda $ maps from $X$ to $Y^*$;
\begin{equation}
L^\lambda =(\gamma _{W_{\bar\lambda }}^{-1})^*T^\lambda \gamma _{V_\lambda }^{-1},\quad D(L^\lambda )=\gamma _{V_\lambda } D(T)=D(L).\label{tag3.41}
\end{equation}

This can be expressed in  the following diagram, where we also take
\eqref{1.2a} into account:
\[
\CD
V     @>\sim>{E^\lambda _{V}} >V _\lambda     @>\sim>  \gamma _{V_\lambda } 
 >   X\\
@V  T+G^\lambda _{V,W} VV @VT^\lambda VV           @VV  L^\lambda   V\\
 W @>\sim>(F^{\prime\bar\lambda }_{W})^* >  W_{\bar\lambda }
@>\sim>(\gamma _{W_{\bar\lambda }}^*)^{-1} >  Y^*\endCD
 \] 
Here the horizontal homeomorphisms compose as 
\begin{equation}
\gamma _{V_\lambda }E^\lambda _V=\gamma _V,\quad (\gamma
_{W_{\bar\lambda }}^*)^{-1}(F^{\prime\bar\lambda }_{W})^*=(\gamma
_{W}^*)^{-1},\label{tag3.41b}
\end{equation}
so
\begin{equation}   L^\lambda =(\gamma _{W}^*)^{-1}(T+G^\lambda _{V,W})\gamma _V^{-1}.
\label{tag3.41a}
\end{equation}

In this $\lambda $-dependent situation,  Theorem \ref{Theorem3.1'} takes the form:

\begin{theorem}\label{Theorem3.2} Let $\lambda \in\varrho (A_\gamma )$. For a closed
operator $\wA\in{\cal M}$, the 
following statements {\rm (i)} and {\rm (ii)} are equivalent:

{\rm (i)} $\wA-\lambda $ corresponds to $T^\lambda \colon V_\lambda \to W_{\bar\lambda }$ as in Section {\rm \ref{extend}}. 

{\rm (ii)} $D(\wA)$ consists of the functions  $u\in D(\Ama)$ such that
\begin{equation}
\gamma _0u\in D(L),\quad\inj_Y^*\chi  u= (L^\lambda +\inj_Y^*P^\lambda _{\gamma ,\chi
})\gamma _0u. \label{tag3.42}
\end{equation}

\end{theorem}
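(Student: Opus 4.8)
The plan is to reduce the $\lambda$-dependent statement to the already-established case $\lambda = 0$ (Theorem \ref{Theorem3.1'}), using the commutative diagrams that relate $T$ to $T^\lambda$ and $L$ to $L^\lambda$. First I would recall from Corollary \ref{Corollary1.2} that $\wA$ corresponds to $T\colon V\to W$ (as in Theorem \ref{Theorem1.1}) if and only if $\wA - \lambda$ corresponds to $T^\lambda\colon V_\lambda\to W_{\bar\lambda}$, since the correspondences in Theorem \ref{Theorem1.1} and Corollary \ref{Corollary1.2} are both bijections onto the same domain $\wA$ and are compatible via Theorem \ref{Theorem1.3}. Hence statement (i) here is equivalent to statement (i) of Theorem \ref{Theorem3.1'}, and it suffices to show that the boundary condition \eqref{tag3.42} is equivalent to the boundary condition \eqref{tag3.39}.

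The core computation is to verify that $L^\lambda + \inj_Y^\ast P^\lambda_{\gamma,\chi} = L + \inj_Y^\ast P^0_{\gamma,\chi}$ as operators with common domain $D(L)$. I would proceed exactly as in the derivation preceding Theorem \ref{Theorem3.1'}: for $u\in D(\wA)$ one writes $u = u^\lambda_\gamma + u^\lambda_\zeta$ with $u^\lambda_\zeta\in Z_\lambda$, notes $\gamma_0 u = \gamma_0 u^\lambda_\zeta$ and $u^\lambda_\zeta = K^\lambda_\gamma \gamma_0 u$, so that by \eqref{tag2.9}, $\Gamma^\lambda u = \chi u - P^\lambda_{\gamma,\chi}\gamma_0 u = \chi u^\lambda_\gamma = \chi (A_\gamma - \lambda)^{-1}(\Ama - \lambda) u$, using \eqref{tag2.9b}. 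Then the defining equation $T^\lambda u^\lambda_\zeta = ((A-\lambda)u)_{W_{\bar\lambda}}$ together with the modified Green's formula \eqref{tag2.10b} gives, for $w\in W_{\bar\lambda}$,
\begin{equation*}
(\Gamma^\lambda u, \gamma_0 w)_{\frac12,-\frac12} = (L^\lambda \gamma_0 u, \gamma_0 w)_{Y^\ast, Y},
\end{equation*}
which upon using $\Gamma^\lambda = \chi - P^\lambda_{\gamma,\chi}\gamma_0$ and the surjectivity of $\gamma_0\colon W_{\bar\lambda}\to Y$ (see \eqref{tag3.23}) yields $\inj_Y^\ast \chi u = (L^\lambda + \inj_Y^\ast P^\lambda_{\gamma,\chi})\gamma_0 u$. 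This is \eqref{tag3.42}. Since both \eqref{tag3.39} and \eqref{tag3.42} are necessary and sufficient descriptions of $D(\wA)$ within $D(\Ama)$ (the conditions $\gamma_0 u\in D(L)$ coincide because $D(L^\lambda) = D(L)$ by \eqref{tag3.41}), the two boundary conditions are equivalent, which completes the argument. Conversely, if $u\in D(\Ama)$ satisfies \eqref{tag3.42}, one reverses the steps: the identity forces $(Au)_{W_{\bar\lambda}} = T^\lambda u^\lambda_\zeta$, so $u\in D(\wA)$ by Corollary \ref{Corollary1.2}.

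The main obstacle I anticipate is keeping the various duality pairings and the injections $\inj_Y$, $\inj_Y^\ast$ consistent: one must check that the pairing $(\cdot,\cdot)_{Y^\ast,Y}$ obtained by transporting $T^\lambda$ through $(\gamma_{W_{\bar\lambda}}^{-1})^\ast$ (as in \eqref{tag3.41}) is genuinely the \emph{same} $Y^\ast$-$Y$ pairing as the one arising at $\lambda = 0$ — this rests on the fact, noted in the text around \eqref{tag3.23}, that $\gamma_0$ has the same range $Y$ on $W_{\bar\lambda}$ for all $\lambda$, together with the compatibility relations \eqref{tag3.41b} and the formula \eqref{tag3.41a} expressing $L^\lambda$ directly in terms of $T + G^\lambda_{V,W}$ and $\gamma_V^{-1}$. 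Once this bookkeeping is in place, the computation is a routine transcription of the $\lambda = 0$ argument, and no new analytic input beyond Theorems \ref{Theorem4.2a}, \ref{modGreen}, and Corollary \ref{Corollary1.2} is required.
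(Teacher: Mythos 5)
Your argument is correct and is essentially the paper's: the paper establishes Theorem \ref{Theorem3.2} by carrying out the $\lambda=0$ derivation with $A$ replaced by $A-\lambda$ (the defining equation for $T^\lambda$ from Corollary \ref{Corollary1.2}, the modified Green's formula \eqref{tag2.10b}, and the bijectivity of $\gamma_0\colon W_{\bar\lambda}\to Y$ together with \eqref{tag3.41}), which is exactly your core computation, and it then obtains the identity $L^\lambda=L+\inj_Y^*(P^0_{\gamma,\chi}-P^\lambda_{\gamma,\chi})$ on $D(L)$ afterwards as a consequence (see \eqref{tag3.43}) rather than needing it as an input. The only slip is trivial: in your converse step the defining equation should read $((A-\lambda)u)_{W_{\bar\lambda}}=T^\lambda u^\lambda_\zeta$, not $(Au)_{W_{\bar\lambda}}=T^\lambda u^\lambda_\zeta$.
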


Observe that since the boundary conditions \eqref{tag3.39} and \eqref{tag3.42} define the same
realization, we obtain moreover the information that
\[
(L^\lambda +\inj_Y^*P^\lambda _{\gamma ,\chi
})\gamma _0u=(L +\inj_Y^*P^0 _{\gamma ,\chi
})\gamma _0u, \text{ for }\gamma _0u\in D(L)=D(L^\lambda ),
\]
i.e.,
\begin{equation}
L^\lambda =L+\inj_Y^*(P^0 _{\gamma ,\chi }-P^\lambda  _{\gamma ,\chi })\text{
on }D(L)=D(L^\lambda ).\label{tag3.43}
\end{equation}
Note also that in view of \eqref{tag3.24} and \eqref{tag3.41a},
\[ L^\lambda =L+
(\gamma _{W}^*)^{-1}G^\lambda _{V,W}\gamma _V^{-1}
\text{ on }D(L).
\]

This has the particular consequence:
\begin{equation}
\inj_Y^*(P^0 _{\gamma ,\chi }-P^\lambda  _{\gamma ,\chi })=
(\gamma _{W}^*)^{-1}G^\lambda _{V,W}\gamma _V^{-1}\text{
on }D(L).
\end{equation}
Since the last statement will hold for fixed choices of $V,W,X,Y$,
regardless of how the operator $L$ is chosen (it can e.g.\ be taken as
the zero operator), we conclude that 
\begin{equation}
\inj_Y^*(P^0 _{\gamma ,\chi }-P^\lambda  _{\gamma ,\chi })\inj_X=
(\gamma _{W}^*)^{-1}G^\lambda _{V,W}\gamma _V^{-1},\label{tag3.43a}
\end{equation}
 as bounded operators from $X$ to $Y^*$. In particular, in the case $X=Y=H^{-\frac12}(\Sigma )$:
\begin{equation}
P^0 _{\gamma ,\chi }-P^\lambda  _{\gamma ,\chi }=
(\gamma _{Z'}^*)^{-1}G^\lambda _{Z,Z'}\gamma _Z^{-1};\label{tag3.43b}
\end{equation}
bounded operators from $H^{-\frac12}(\Sigma )$ to $H^{\frac12}(\Sigma )$.

We can now connect the description with $M$-functions and establish
Kre\u\i{}n-type resolvent formulas. (The following formulation differs
slightly
from that in \cite{BGW09} using
order-reducing operators carrying $H^{\pm \frac12}(\Sigma )$ over to
$L_2(\Sigma )$ and orthogonal projections.)

\begin{theorem}\label{Theorem2.1} 
 Let $\wA$ correspond to $T\colon V\to W$,
carried over to $L\colon X\to Y^*$, whereby $\wA$ represents
the boundary condition {\rm \eqref{tag3.39}}, as well as {\rm
  \eqref{tag3.42}} when $\lambda \in \varrho (A_\gamma )$.

{\rm (i)} For $\lambda \in \varrho (A_\gamma )$, $L$ and $L^\lambda $
satisfy {\rm \eqref{tag3.43}}, where $P^0 _{\gamma
,\chi }-P^\lambda _{\gamma ,\chi }\in \mathcal L(H^{-\frac12}(\Sigma
), H^{\frac12}(\Sigma
))$. The relations to $G^\lambda _{V,W}$ are as described in {\rm
  \eqref{tag3.43a}, \eqref{tag3.43b}}. 

{\rm (ii)} For $\lambda \in \varrho (\wA)$, there is a related
$M$-function $\in\mathcal L(Y^*,X)$:
 \[
M_L(\lambda ) =\gamma _0\bigl(I-(\wA-\lambda )^{-1}(\Ama-\lambda
)\bigr)A_\gamma  ^{-1}\inj_W\gamma _W^*.
\]

{\rm (iii)} For $\lambda \in \varrho (\wA)\cap \varrho (A_\gamma )$,
\[
M_L(\lambda )=-(L+\inj_Y^*(P^0 _{\gamma ,\chi }-P^\lambda _{\gamma
,\chi })\inj_X)^{-1}=-(L^\lambda )^{-1}.
\]

{\rm (iv)} For $\lambda \in \varrho (A_\gamma )$ (recall {\rm \eqref{tag3.23a}}),
\begin{align}
\operatorname{ker}(\wA-\lambda )&=K^\lambda _{\gamma , X}
\operatorname{ker}L^\lambda,\nonumber\\
\operatorname{ran}(\wA-\lambda )&=\gamma _{W_{\bar\lambda }}^*
\operatorname{ran}L^\lambda+H\ominus W_{\bar\lambda }.
 \label{tag2.17}
\end{align}

{\rm (v)} For $\lambda \in \varrho (\wA)\cap \varrho (A_\gamma )$
there is a
Kre\u\i{}n-type  resolvent formula:
\begin{align} (\wA-\lambda )^{-1}&=(A_\gamma -\lambda )^{-1}-\inj_{V_\lambda }\gamma _{V_\lambda }^{-1} 
M_L(\lambda )(\gamma _{W_{\bar\lambda }}^*)^{-1}\pr_{W_{\bar\lambda }}\nonumber\\
&=(A_\gamma -\lambda )^{-1}-K^\lambda _{\gamma ,X} 
M_L(\lambda )(K^{\prime\bar\lambda }_{\gamma ,Y})^*
\label{tag2.19}
\\
&=(A_\gamma -\lambda )^{-1}+K^\lambda _{\gamma ,X} 
(L^\lambda )^{-1}(K^{\prime\bar\lambda }_{\gamma ,Y})^*
.\nonumber\end{align}

\end{theorem}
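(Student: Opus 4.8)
The plan is to obtain all five items by transporting the abstract statements of Section~\ref{extend} --- namely Theorem~\ref{Theorem1.1}, Corollary~\ref{Corollary1.2}, and Theorems~\ref{Theorem1.3}--\ref{Theorem1.5} --- through the boundary homeomorphisms $\gamma_V\colon V\simto X$, $\gamma_W\colon W\simto Y$, $\gamma_{V_\lambda}\colon V_\lambda\simto X$, $\gamma_{W_{\bar\lambda}}\colon W_{\bar\lambda}\simto Y$ of \eqref{tag3.22}--\eqref{tag3.23a}, and to read off the remaining identities from the relations between $T$, $T^\lambda$, $L$ and $L^\lambda$ already established in \eqref{tag3.24}--\eqref{tag3.43b}. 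Item~(i) is essentially complete: \eqref{tag3.43}, \eqref{tag3.43a}, \eqref{tag3.43b} were derived in the discussion preceding the theorem, and the membership $P^0_{\gamma,\chi}-P^\lambda_{\gamma,\chi}\in\mathcal L(H^{-\frac12}(\Sigma),H^{\frac12}(\Sigma))$ follows either from \eqref{tag3.43b} (the right side is bounded $H^{-\frac12}(\Sigma)\to H^{\frac12}(\Sigma)$ since $G^\lambda_{Z,Z'}\in\mathcal L(Z,Z')$ and $\gamma_Z$, $\gamma_{Z'}^*$ are homeomorphisms to and from $H^{-\frac12}(\Sigma)$, $H^{\frac12}(\Sigma)$), or directly from the Dirichlet-to-Neumann analysis in Theorems~\ref{Theorem4.2a} and~\ref{Theorem4.3}.

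For~(ii), I would define $M_L(\lambda)$ by conjugating the abstract $M_{\wA}(\lambda)\in\mathcal L(W,V)$ of Theorem~\ref{Theorem1.5} according to the diagram \eqref{tag3.25}, i.e.\ $M_L(\lambda)=\gamma_V M_{\wA}(\lambda)\gamma_W^*$, which lies in $\mathcal L(Y^*,X)$ and is holomorphic on $\varrho(\wA)$ because $\gamma_V$ and $\gamma_W^*$ are $\lambda$-independent homeomorphisms. The verification then is that this agrees with the explicit formula in (ii): inserting Theorem~\ref{Theorem1.5}(i) one must see that $\gamma_V\pr_\zeta$ acts as $\gamma_0$ on the relevant argument. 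The element $(I-(\wA-\lambda)^{-1}(\Ama-\lambda))A_\gamma^{-1}\inj_W\gamma_W^*\psi$ lies in $Z_\lambda=\operatorname{ker}(\Ama-\lambda)$ (apply $\Ama-\lambda$ and use $\wA\subset\Ama$), on which $\pr_\zeta$ restricts to $F^\lambda=I-\lambda A_\gamma^{-1}$; since $\gamma_0 A_\gamma^{-1}=0$ one gets $\gamma_V\pr_\zeta=\gamma_0$ there, which reproduces exactly the displayed formula.

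For~(iii), Theorem~\ref{Theorem1.5}(ii) gives $M_{\wA}(\lambda)=-(T+G^\lambda_{V,W})^{-1}$, while \eqref{tag3.41a} reads $L^\lambda=(\gamma_W^*)^{-1}(T+G^\lambda_{V,W})\gamma_V^{-1}$; conjugating yields $M_L(\lambda)=\gamma_V M_{\wA}(\lambda)\gamma_W^*=-(L^\lambda)^{-1}$, and combining with the already-established \eqref{tag3.43}, \eqref{tag3.43a} (which identify $L^\lambda$ on $D(L)$ with $L+\inj_Y^*(P^0_{\gamma,\chi}-P^\lambda_{\gamma,\chi})\inj_X$) gives the first equality in (iii). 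For~(iv) and~(v) I would start from Corollary~\ref{Corollary1.2}(i)--(ii), invert \eqref{tag3.41} to $T^\lambda=\gamma_{W_{\bar\lambda}}^*L^\lambda\gamma_{V_\lambda}$, and use $K^\lambda_{\gamma,X}=\inj_{V_\lambda}\gamma_{V_\lambda}^{-1}$ together with $(K^{\prime\bar\lambda}_{\gamma,Y})^*=(\gamma_{W_{\bar\lambda}}^*)^{-1}\pr_{W_{\bar\lambda}}$ (the latter because $\inj_{W_{\bar\lambda}}^*=\pr_{W_{\bar\lambda}}$) to rewrite $\operatorname{ker}(\wA-\lambda)=\operatorname{ker}T^\lambda$, $\operatorname{ran}(\wA-\lambda)=\operatorname{ran}T^\lambda+(H\ominus W_{\bar\lambda})$ and $(\wA-\lambda)^{-1}=(A_\gamma-\lambda)^{-1}+\inj_{V_\lambda}(T^\lambda)^{-1}\pr_{W_{\bar\lambda}}$ into the stated formulas \eqref{tag2.17} and \eqref{tag2.19}; the last line of \eqref{tag2.19} is then $(L^\lambda)^{-1}=-M_L(\lambda)$ from (iii).

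The genuinely routine parts (the denseness and interpretation matters) have already been handled in Theorems~\ref{Theorem3.1'} and~\ref{Theorem3.2}, and the fact that $\wA-\lambda$ is invertible exactly when $L^\lambda$ is --- via $\operatorname{ker}T^\lambda$ and the range condition of Corollary~\ref{Corollary1.2}(i) --- is what makes $M_L(\lambda)=-(L^\lambda)^{-1}$ meaningful for $\lambda\in\varrho(\wA)\cap\varrho(A_\gamma)$. The one place demanding real care, though not a new idea, is the adjoint bookkeeping: keeping straight which maps are $\gamma_0$-restrictions, which are their Hilbert-space adjoints $\gamma_V^*$, $\gamma_W^*$, $\gamma_{W_{\bar\lambda}}^*$, and which are injections or orthogonal projections, and checking that the antiduality pairings $(\cdot,\cdot)_{Y^*,Y}$ and $(\cdot,\cdot)_{\frac12,-\frac12}$ match up so that the conjugated abstract identities reproduce the boundary-operator formulas verbatim. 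This is exactly the translation carried out in the smooth case in \cite{Grubb68}, \cite{Grubb74}, \cite{BGW09}, and no step needs more than the mapping properties of $K^\lambda_\gamma$, $P^\lambda_{\gamma,\chi}$ and the modified Green's formula \eqref{tag2.10b} already proved in Sections~\ref{resolv}--\ref{bdrycond}.
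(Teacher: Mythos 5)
Your proposal is correct and follows essentially the same route as the paper: items (i)--(v) are obtained by conjugating the abstract statements of Section~\ref{extend} with the boundary homeomorphisms $\gamma_V,\gamma_W,\gamma_{V_\lambda},\gamma_{W_{\bar\lambda}}$ and their adjoints, exactly as in the paper's proof, and your added details (that $\gamma_V\pr_\zeta$ acts as $\gamma_0$ on $Z_\lambda$, and that $(K^{\prime\bar\lambda}_{\gamma,Y})^*=(\gamma_{W_{\bar\lambda}}^*)^{-1}\pr_{W_{\bar\lambda}}$) are just the bookkeeping the paper leaves implicit. Your derivation of (v) from Corollary~\ref{Corollary1.2}(ii) together with (iii), instead of computing the last term of \eqref{tag1.5a} via \eqref{tag3.41b}, is an equivalent variant of the same argument.
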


\begin{proof}
Statement (i) was accounted for before the theorem. 

In (ii), the $M$-function $M_L(\lambda )$ is obtained from
$M_{\wA}(\lambda )$ in Theorem \ref{Theorem1.5} (i) by composition to the
right with $\gamma _W^*$ and to the left with $\gamma _V$:
\begin{align}
M_L(\lambda )&=
\gamma _VM_{\wA}(\lambda )\gamma _W^*=
\gamma _V\pr_\zeta (I-(\wA-\lambda )^{-1}(\Ama-\lambda ))A_\gamma
^{-1}\inj_{W }\gamma _W^*\label{tag2.19a}\\
&=
\gamma _0(I-(\wA-\lambda )^{-1}(\Ama-\lambda ))A_\gamma
^{-1}\inj_{W }\gamma _W^*.\nonumber
\end{align}

Statement (iii) follows  from  Theorem \ref{Theorem1.5} (ii) in a similar way.

For (iv), we use the homeomorphism properties of $\gamma _{V_\lambda
}$ and $\gamma _{W_{\bar\lambda }}$ and their adjoints.

For (v), we calculate the last term in  \eqref{tag1.5a}, using \eqref{tag2.19a}:
\begin{align*}
-\inj_{V_\lambda  }E^\lambda _VM_{\wA}(\lambda
)(E^{\prime\bar\lambda }_W)^*\pr_{W_{\bar\lambda }}&=
-\inj_{V_\lambda  }E^\lambda _V\gamma _V^{-1}M_{L}(\lambda
)(\gamma _W^*)^{-1}(E^{\prime\bar\lambda }_W)^*\pr_{W_{\bar\lambda
  }}\\
&=-\inj_{V_\lambda  }\gamma _{V_\lambda }^{-1}M_{L}(\lambda
)(\gamma _{W_{\bar\lambda }}^{-1})^*\pr_{W_{\bar\lambda
  }}\\
&=-K^\lambda _{\gamma ,X} 
M_L(\lambda )(K^{\prime\bar\lambda }_{\gamma ,Y})^*;
\end{align*}
cf.\ \eqref{tag3.41b} and \eqref{tag3.23a}. 
\end{proof}

Hereby, Kre\u\i{}n-type resolvent formulas are established for all
closed realizations of $A$ in the present nonsmooth case.

In the cases where $X$ and $Y$ differ from $H^{-\frac12}(\Sigma )$,
the formulas are quite different from those established in \cite{GM10}
for selfadjoint realizations of the Laplacian, where an $M$-function
acting between full boundary Sobolev spaces is used (if the domain is $C^{\frac32+\varepsilon }$).

\section{Neumann-type conditions}\label{neumann}

The case where
$X=Y=H^{-\frac12}(\Sigma )$, i.e., $V=Z$, $W=Z'$, is particularly
interesting for applications of the theory. Here the boundary
condition has the form in \eqref{tag3.40} and we say that
$\wA$ represents a
{\it Neumann-type condition} (this includes the information that $D(L)$ is a dense
subset of $H^{-\frac12}(\Sigma )$). It may be written 
\begin{equation}
\chi  u=C\gamma _0u,\text{ with }C=L+P^0_{\gamma ,\chi },\quad
D(C)=D(L).
\label{tag2.11b} 
\end{equation}
An interesting case is where $C$ acts like a differential operator (or
pseudodifferential operator) of order 1. 
In the differential operator case we can assume, to match the smoothness
properties of $s_0$ and ${\cal A}_1$ in 
Green's formula, that
\begin{equation}
C=c\cdot D_\tau +c_0,\text{ where }c=(c_1,\dots,c_n), \;c_j\in
H^{\frac12}_p(\Sigma )\text{ for } j=0,1,\dots,n.\label{tag2.11c}
\end{equation}
As a $\psi $do $C$ we can take an operator constructed from local
first-order pieces as in \eqref{eq:DefnPt} with $H^{\frac12}_qS^1_{1,0}$-
symbols. 

Then $L$ acts as a
pseudodifferential operator of order 1,
\[ L=C-P^0_{\gamma ,\chi };
\]
cf.\ Theorem \ref{Theorem4.3} for the properties of $P^0_{\gamma ,\chi }$.

It should be noted that $L$ is determined in a precise way from $\wA$ as 
an operator from $D(L)\subset
H^{-\frac12}(\Sigma )$ to $H^\frac12(\Sigma )$; it is generally {\it unbounded}
from $H^{-\frac12}(\Sigma )$ to $H^\frac12(\Sigma )$ since it is of
order 1.

In the study of boundary conditions, the situation is sometimes set up
in a slightly different way:

It is $C$ that is given as a first-order operator, and we define
$\wA$ as the restriction of $\Ama$ with domain
\begin{equation}
D(\wA)=\{u\in D(\Ama)\mid \chi u=C\gamma _0u\}.\label{tag2.13}
\end{equation}
Note that for the $H^2(\Omega )$-functions satisfying  $\chi u=C\gamma
_0u$, the Dirichlet data $\gamma _0u$ fill out the space
$H^{\frac32}(\Sigma )$, since $\{\gamma _0,\chi \}$ is surjective from
$H^2(\Omega )$ to $H^{\frac32}(\Sigma )\times H^{\frac12}(\Sigma )$.
When $u\in D(\wA)$,
\[ 
L\gamma_0u=\Gamma ^0u=\chi u-P^0_{\gamma ,\chi }\gamma _0u=(C-P^0_{\gamma ,\chi })\gamma _0u,
\]
so necessarily $\gamma _0u$ belongs to the subset of
$H^{-\frac12}(\Sigma )$ that is mapped by $C-P^0_{\gamma ,\chi }$ into
$H^{\frac12}(\Sigma )$. When $u$ lies there, it moreover has to satisfy
$\Gamma ^0u =(C-P^0_{\gamma ,\chi })\gamma _0u$, in order to belong to
$D(\wA)$. This shows:

\begin{lemma}\label{CtoL} Let $C$ be a first-order operator on
  $\Sigma $ as
  described above and define the realisation
  $\wA$ 
by {\rm    \eqref{tag2.13}}. Then $L$ is the operator acting like $C-P^0_{\gamma ,\chi }$ with
  domain
\[
D(L)=\{\varphi \in H^{-\frac12}(\Sigma )\mid (C-P^0_{\gamma ,\chi
})\varphi \in H^{\frac12}(\Sigma )\}.
\]
$D(L)$ contains $H^{\frac32}(\Sigma )$, hence is dense in
$H^{-\frac12}(\Sigma )$.  
\end{lemma}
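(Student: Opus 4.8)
The plan is first to check that the operator $\wA$ defined by \eqref{tag2.13} is a closed element of $\mathcal M$, so that the correspondence of Section~\ref{bdrycond} attaches to it an operator $L$, and then to compute $D(L)$ and the action of $L$. That $\Ami\subset\wA\subset\Ama$ is clear: $u\in D(\Ami)=H^2_0(\Omega)$ has $\gamma_0u=\gamma_1u=0$, hence $\chi u=0=C\gamma_0u$. Closedness follows since $\gamma_0$ and $\chi$ are continuous from $D(\Ama)$ (graph norm) to $H^{-\frac12}(\Sigma)$ and $H^{-\frac32}(\Sigma)$ (Theorem~\ref{Theorem9.10}) and $C\colon H^{-\frac12}(\Sigma)\to H^{-\frac32}(\Sigma)$ is bounded, so one may pass to the limit in $\chi u_k=C\gamma_0u_k$. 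Then Theorem~\ref{Theorem3.1'} produces $T\colon V\to W$ and $L\colon X\to Y^{*}$; moreover $D(L)=\gamma_0D(\wA)$, because $D(T)=\pr_\zeta D(\wA)$ and $\gamma_0\pr_\zeta=\gamma_0$ on $D(\Ama)$ (the $\gamma$-part of $u$ lies in $H^1_0(\Omega)$).

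The next step identifies $\gamma_0D(\wA)$ with $\Phi:=\{\varphi\in H^{-\frac12}(\Sigma):(C-P^0_{\gamma,\chi})\varphi\in H^{\frac12}(\Sigma)\}$. If $u\in D(\wA)$, then $\Gamma^0u=\chi u-P^0_{\gamma,\chi}\gamma_0u=(C-P^0_{\gamma,\chi})\gamma_0u$, and since $\Gamma^0$ maps $D(\Ama)$ into $H^{\frac12}(\Sigma)$ (Theorem~\ref{modGreen}) this gives $\gamma_0u\in\Phi$. Conversely, given $\varphi\in\Phi$, put $g=(C-P^0_{\gamma,\chi})\varphi\in H^{\frac12}(\Sigma)$; by surjectivity of $\chi$ from $D(A_\gamma)=H^2(\Omega)\cap H^1_0(\Omega)$ onto $H^{\frac12}(\Sigma)$ (proof of Theorem~\ref{modGreen}, or Corollary~\ref{cor:TraceChi} at $s=2$), choose $w\in D(A_\gamma)$ with $\chi w=g$ and set $u=K^0_\gamma\varphi+w\in D(\Ama)$. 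Then $\gamma_0u=\varphi$ and $\chi u=P^0_{\gamma,\chi}\varphi+g=C\varphi=C\gamma_0u$, so $u\in D(\wA)$; hence $\Phi\subseteq\gamma_0D(\wA)$ and therefore $D(L)=\gamma_0D(\wA)=\Phi$.

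For the action of $L$ and the density statement I would argue as follows. The inclusion $H^{\frac32}(\Sigma)\subseteq\Phi$ holds because $P^0_{\gamma,\chi}\colon H^{\frac32}(\Sigma)\to H^{\frac12}(\Sigma)$ (the case $s=2$ of \eqref{tag4.33}, Theorem~\ref{Theorem4.3}) and $C\colon H^{\frac32}(\Sigma)\to H^{\frac12}(\Sigma)$: for a differential $C=c\cdot D_\tau+c_0$ this combines $D_\tau\colon H^{\frac32}(\Sigma)\to H^{\frac12}(\Sigma)$ with the fact, recalled in Section~\ref{subsec:Green}, that multiplication by an $H^{\frac12}_p(\Sigma)$-function preserves $H^{s}(\Sigma)$ for $|s|\le\frac12$, and in the $\psi$do case it is the boundedness result following \eqref{eq:DefnPt} (for $M=2$, at the exponent $\frac12$). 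Since $H^{\frac32}(\Sigma)$ is dense in $H^{-\frac12}(\Sigma)$, so is $D(L)$; via the homeomorphism $\gamma_{Z}\colon Z\simto H^{-\frac12}(\Sigma)$ this yields $V=\overline{\pr_\zeta D(\wA)}=Z$, hence $X=H^{-\frac12}(\Sigma)$. The companion identity $W=Z'$ (so $Y=H^{-\frac12}(\Sigma)$ and $\inj_{Y}^{*}$ is the identity) follows by the same reasoning applied to the adjoint realization $\wA^{*}$ of $A'$; then $\Gamma^0u=L\gamma_0u$ for $u\in D(\wA)$ (cf.\ \eqref{tag3.37}), and comparison with the formula $\Gamma^0u=(C-P^0_{\gamma,\chi})\gamma_0u$ shows $L\varphi=(C-P^0_{\gamma,\chi})\varphi$ for all $\varphi\in D(L)$, as claimed.

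The step I expect to require the most care is the bookkeeping of the mapping properties of the first-order operator $C$ under its minimal coefficient regularity $H^{\frac12}_p(\Sigma)$: one needs $C$ bounded both from $H^{-\frac12}(\Sigma)$ to $H^{-\frac32}(\Sigma)$ (for closedness of $\wA$ and to make sense of $C\gamma_0u$ for $u\in D(\Ama)$) and from $H^{\frac32}(\Sigma)$ to $H^{\frac12}(\Sigma)$ (for $H^{\frac32}(\Sigma)\subseteq D(L)$), both of which must be extracted from the product and symbol estimates of Section~\ref{prelim} — and in the $\psi$do case one must verify that the required exponents, in particular the endpoint $s=\frac12$, lie in the admissible range for $M=2$.
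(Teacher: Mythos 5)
Most of your argument is sound and runs parallel to the paper's: the identity $D(L)=\gamma_0D(\wA)$, the identification of this trace set with $\Phi=\{\varphi\in H^{-\frac12}(\Sigma)\mid (C-P^0_{\gamma,\chi})\varphi\in H^{\frac12}(\Sigma)\}$ via $\Gamma^0u=(C-P^0_{\gamma,\chi})\gamma_0u$ and the lifting $u=K^0_\gamma\varphi+w$, and the mapping properties giving $H^{\frac32}(\Sigma)\subset\Phi$ and hence density, all check out (and they settle the \emph{domain} statement of the lemma). The genuine gap is your treatment of $W=Z'$: you assert it ``follows by the same reasoning applied to the adjoint realization $\wA^*$ of $A'$'', but at that stage you have no description of $D(\wA^*)$ at all --- that $\wA^*$ is again a Neumann-type realization of $A'$ is a \emph{consequence} of knowing $W$ (Theorem \ref{Theorem1.1} (i)), not an input, so the step is circular. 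Nor does $W=Z'$ follow from $V=Z$: the two subspaces are independent data determined by $\wA$ and $\wA^*$ separately (for instance $\Ama$ itself has $V=Z$ but $W=\{0\}$). Without $W=Z'$ the general theory only yields $\inj_Y^*\Gamma^0u=L\gamma_0u$, cf.\ \eqref{tag3.38}, so your final ``comparison'' would only identify $L\varphi$ with the restriction to $Y$ of the functional $(C-P^0_{\gamma,\chi})\varphi$, which is strictly weaker than the lemma's claim that $L$ acts like $C-P^0_{\gamma,\chi}$ with values in $H^{\frac12}(\Sigma)$.

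The clean way to close the gap --- and in effect what the paper's brief argument does --- is to run the correspondence in the opposite direction, so that $W$ never has to be computed from $\wA^*$. Let $L_1$ be the operator acting like $C-P^0_{\gamma,\chi}$ with domain $\Phi$, viewed as an operator from $H^{-\frac12}(\Sigma)$ to $H^{\frac12}(\Sigma)$; it is densely defined by your inclusion $H^{\frac32}(\Sigma)\subset\Phi$, and closed provided $C$ (hence $C-P^0_{\gamma,\chi}$) is continuous from $H^{-\frac12}(\Sigma)$ to $H^{-\frac32}(\Sigma)$ --- the same endpoint property you rightly flag as needed to make \eqref{tag2.13} meaningful in the first place. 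By Theorem \ref{Theorem3.1'} with $X=Y=H^{-\frac12}(\Sigma)$ (so $\inj_Y^*$ is superfluous, as in \eqref{tag3.40}), $L_1$ determines a closed realization $\wA_1$ with $D(\wA_1)=\{u\in D(\Ama)\mid \gamma_0u\in\Phi,\ \chi u=(L_1+P^0_{\gamma,\chi})\gamma_0u\}$, and the side condition $\gamma_0u\in\Phi$ is automatic once $\chi u=C\gamma_0u$, because $\Gamma^0u\in H^{\frac12}(\Sigma)$ by Theorem \ref{modGreen}; hence $\wA_1=\wA$, and the uniqueness in the one-to-one correspondence forces $V=Z$, $W=Z'$ and $L=L_1$, which is exactly the lemma. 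Your lifting construction and the boundedness checks for $C$ then slot into this argument unchanged.
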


In the $\lambda $-dependent setting, 
\[ L^\lambda \text{ acts like }C-P^\lambda _{\gamma ,\chi }\text{ with }D(L^\lambda )=D(L).
\]

Further information can be obtained in the {\it elliptic} case.
This is the case where the model operator, defined from the
principal symbols at each $(x',\xi ')$ in local coordinates, is invertible:
\[ \begin{pmatrix} \underline a^0(x',0,\xi ',D_n)\\ \quad\\ \underline
  \chi ^0(x',\xi ',D_n)-\underline c^0(x',\xi
')\gamma _0\end{pmatrix} \colon  H^2({\R}_+)\simto \begin{matrix} L_2({\R}_+)\\
\times \\ {\C}\end{matrix},
\]
for all $x'$, all $|\xi '|\ge 1$. Using the various reductions
introduced above on this one-dimensional level, we find that $L$ has
the principal symbol
\[ \underline l^0(x',\xi ')=
\underline
c^0(x',\xi ')-\underline p^0(x',\xi ').
\]
where $\underline c^0(x',\xi ')$ and $\underline p^0(x',\xi ')$ are
the principal symbols of $C$ and 
$P^0_{\gamma
,\chi }$; moreover, ellipticity holds 
if and only if $\underline l^0(x',\xi ')\ne 0$ for $|\xi
'|\ge 1$.
These considerations take place pointwise in $x'$ regardless of
smoothness with respect to $x'$.

\begin{theorem}\label{regularity1} For a given first-order 
pseudodifferential operator  $C$ as described above, 
let $\wA$ be defined by 
  {\rm \eqref{tag2.13}}. Assume that $C-P^0 _{\gamma ,\chi }$ is
  elliptic. Then $D(L)=H^{\frac32}(\Sigma)$, and $D(\wA)\subset H^2(\Omega ) $.
\end{theorem}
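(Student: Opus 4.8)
The plan is to exploit the ellipticity of $C-P^0_{\gamma,\chi}$ as an operator on $\Sigma$, combining the regularity theory for elliptic $\psi$do's with the structural results for $P^0_{\gamma,\chi}$ from Theorem~\ref{Theorem4.3}. First I would recall from Theorem~\ref{Theorem4.3}~$3^\circ$ that $P^0_{\gamma,\chi}$ agrees, modulo a smoothing remainder mapping $H^{-\frac12}(\Sigma)\to H^{-\frac32+\eps}(\Sigma)$, with a $\psi$do ${P}^{\sharp 01}_{\gamma,\chi}$ having symbol in $S^1_{1,\delta}(\R^{n-1}\times\R^{n-1})$; together with the given first-order operator $C$ (which by \eqref{tag2.11c} is a differential operator with $H^{\frac12}_p$-coefficients, or a $\psi$do with $H^{\frac12}_qS^1_{1,0}$-symbols), the operator $C-P^0_{\gamma,\chi}$ is, up to the same lower-order remainder, a $\psi$do of order $1$ whose principal symbol is $\underline l^0(x',\xi')=\underline c^0(x',\xi')-\underline p^0(x',\xi')$. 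By hypothesis this principal symbol is nonvanishing for $|\xi'|\ge1$, pointwise in $x'$.

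Next I would construct a pointwise parametrix: since $\underline l^0(x',\xi')\ne0$ for $|\xi'|\ge1$ at each $x'$, one forms $\underline q^0(x',\xi')=\underline l^0(x',\xi')^{-1}\chi_{\{|\xi'|\ge1\}}$, a symbol of order $-1$ whose coefficients inherit the $H^{\frac12}_p$-regularity via the pointwise-inversion results at the end of Section~\ref{sec:Inversion} (applied to $1/\underline l^0$, which is smooth in $\xi'$ and nonvanishing). Assembling $Q$ locally as in \eqref{eq:DefnPt} with these symbols, Theorem~\ref{thm:PsDOCompo} (composition of nonsmooth $\psi$do's) gives that $Q(C-P^0_{\gamma,\chi})=I+\mathcal R_0$ on $\Sigma$, where $\mathcal R_0\colon H^{s}(\Sigma)\to H^{s+\theta}(\Sigma)$ for a suitable small $\theta>0$ and $s$ in the admissible range dictated by the regularity exponent $\tau$ of the coefficients. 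The key consequence is the a~priori estimate: if $\varphi\in H^{-\frac12}(\Sigma)$ and $(C-P^0_{\gamma,\chi})\varphi=g\in H^{\frac12}(\Sigma)$, then $\varphi = Qg-\mathcal R_0\varphi$, so $\varphi$ gains smoothness — one would bootstrap in steps of size $\theta$ from $H^{-\frac12}$ up to $H^{\frac12}$ (using that $g\in H^{\frac12}(\Sigma)$ and that at each stage $\mathcal R_0$ is smoothing), the ceiling $H^{\frac32}$ being exactly where $\gamma_0$ on $H^2(\Omega)$ maps. This yields $D(L)\subseteq H^{\frac32}(\Sigma)$; the reverse inclusion $H^{\frac32}(\Sigma)\subseteq D(L)$ is Lemma~\ref{CtoL}, so $D(L)=H^{\frac32}(\Sigma)$.

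Finally, to get $D(\wA)\subseteq H^2(\Omega)$: for $u\in D(\wA)$ write, as in the discussion preceding Lemma~\ref{CtoL}, $u=u_\gamma+u_\zeta$ with $u_\gamma=A_\gamma^{-1}\Ama u\in D(A_\gamma)\subseteq H^2(\Omega)$ and $u_\zeta=K^0_\gamma\gamma_0u$; since $\gamma_0u\in D(L)=H^{\frac32}(\Sigma)$ and $K^0_\gamma\colon H^{s-\frac12}(\Sigma)\to H^s(\Omega)$ continuously for $0\le s\le2$ by Theorem~\ref{Theorem4.2a}, we get $u_\zeta\in H^2(\Omega)$, hence $u\in H^2(\Omega)$.

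The main obstacle I expect is the regularity bootstrap in the nonsmooth setting: the parametrix identity only improves smoothness by the finite amount $\theta$ controlled by $\tau=\frac12-\frac{n-1}p$, and the composition theorem Theorem~\ref{thm:PsDOCompo} imposes the two-sided constraints $s,s+m\in(-r+\tau,r]$ on the Sobolev exponents. One must check carefully that the finite chain of steps from $-\frac12$ to $\frac32$ stays within the admissible window at every stage and that the coefficients of the parametrix $Q$ — obtained by inverting $\underline l^0$ — genuinely lie in a space ($H^{\frac12}_p(\Sigma)$, or the corresponding symbol class $H^{\frac12}_qS^{-1}_{1,0}$) to which Theorem~\ref{thm:PsDOCompo} applies; this is where the pointwise-inversion results of Section~\ref{sec:Inversion} and the algebra property of $H^{\frac12}_p(\Sigma)$ noted after Theorem~\ref{thm:Green} are essential.
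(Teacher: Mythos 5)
Your reduction of $D(\wA)\subset H^2(\Omega )$ to $D(L)=H^{\frac32}(\Sigma )$ (via Theorem \ref{Theorem4.2a} and Lemma \ref{CtoL}) is exactly the paper's final step, and your identification of the principal symbol $\underline l^0$ is fine. The gap lies in the boundary bootstrap. You propose to invert $C-P^0_{\gamma ,\chi }$ \emph{inside the nonsmooth calculus}, with a parametrix $Q$ whose symbol has $H^{\frac12}_p$-coefficients (equivalently $C^\tau $ with $\tau <\frac12$). But the mapping and composition theorems for that calculus are confined to a Sobolev window set by the coefficient smoothness: by Theorem \ref{lem:HsBddnes}, an operator with symbol in $H^{\frac12}_pS^{-1}_{1,0}$ maps $H^{s-1}(\Sigma )\to H^{s}(\Sigma )$ only for $-\frac12<s\le \frac12$, and Theorem \ref{thm:PsDOCompo} requires $s,s+m_1\in(-r+\tau ,r]$ with $r=\frac12$, cf.\ \eqref{eq:Cond1}. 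Hence $Qg$, for $g=(C-P^0_{\gamma ,\chi })\varphi \in H^{\frac12}(\Sigma )$, cannot be placed in $H^{\frac32}(\Sigma )$, and the identity $Q(C-P^0_{\gamma ,\chi })=I+\mathcal R_0$ is not available at the levels your iteration must pass through; the scheme saturates near $H^{\frac12}(\Sigma )$, well short of $H^{\frac32}(\Sigma )$. Moreover, $P^0_{\gamma ,\chi }$ is not given as an operator with $H^{\frac12}_pS^{1}_{1,0}$-symbol plus an acceptable remainder at the low end: Theorem \ref{Theorem4.3} $1^\circ$ controls its remainder only for $s\in(2-\tau ,2]$, so the only usable low-order descriptions are the smooth-symbol approximations of $2^\circ$ and $3^\circ$, which you cite at the outset but then abandon.

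This is precisely why the paper argues differently: the rough part is kept as a remainder, and the parametrices are built for the symbol-smoothed operators. From Theorem \ref{Theorem4.3} $3^\circ$ one gets $(C-P^{\sharp 01}_{\gamma ,\chi })\varphi \in H^{-\frac32+\eps}(\Sigma )$, and since $C-P^{\sharp 01}_{\gamma ,\chi }$ is built from elliptic $S^1_{1,\delta }$-symbols smooth in $x'$, its local parametrices are bounded at \emph{all} Sobolev orders, giving $\varphi \in H^{-\frac12+\eps}(\Sigma )$. Then the uniform $\eps'$ of $2^\circ$ (continuity of $P^0_{\gamma ,\chi }-P^{\sharp 0}_{\gamma ,\chi }$ from $H^{s-\frac12-\eps'}(\Sigma )$ to $H^{s-\frac32}(\Sigma )$ for all $s\in[s',2]$), combined with a parametrix of $C-P^{\sharp 0}_{\gamma ,\chi }$ in the smooth $S^1_{1,\delta }$-calculus, allows finitely many steps of size $\eps'$ up to $\varphi \in H^{\frac32}(\Sigma )$. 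To repair your proof you must replace your nonsmooth $Q$ by these smooth-calculus parametrices; pointwise inversion of $\underline l^0$ within $H^{\frac12}_p$ cannot by itself carry the regularity past the coefficient ceiling.
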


\begin{proof} Let $\varphi \in D(L)$; then we know to begin with that
  $\varphi \in H^{-\frac12 }(\Sigma)$ and \linebreak$(C-P^0 _{\gamma ,
    \chi })\varphi \in H ^{\frac12}(\Sigma)$. It follows from Theorem
  \ref{Theorem4.3} $3^\circ$ that $(C-P^0 _{\gamma ,
    \chi })\varphi =(C-P^{\sharp01}_{\gamma , \chi })\varphi +\psi $,
  where $\psi \in
  H^{-\frac32+\varepsilon}(\Sigma)$. This together with $(C-P^0 _{\gamma ,
    \chi })\varphi \in H ^{\frac12}(\Sigma)$ implies $(C-P^{\sharp01}_{\gamma , \chi  })
\varphi \in H^{-\frac32+\varepsilon}(\Sigma)$. Here
$C-P^{\sharp01}_{\gamma , \chi  }$ is defined as in
\eqref{eq:DirNeumannSharp}, constructed from localized
pieces with elliptic smooth $\psi $do symbols, and it follows by use
of a parametrix in each localization that $\varphi \in H^{-\frac12+\varepsilon
}(\Sigma )$. (Details on cutoffs and partitions of unity in 
parametrix constructions can
e.g.\ be found in \cite{G09}, Sect.\ 8.2.)

Next, let $s'=-\frac32+\varepsilon $. By Theorem
  \ref{Theorem4.3} $2^\circ$ there is an $\varepsilon '>0$ such that 
$P^0_{\gamma ,\chi }-P^{\sharp 0}_{\gamma ,\chi }$ is continuous from 
$H^{s-\frac12-\varepsilon '}(\Sigma)$ to $H^{s-\frac32}(\Sigma)$ for
all $s\in [s',2]$. In a similar way as in the preceding construction,
one finds by use of a parametrix of $C-P^{\sharp 0}_{\gamma ,\chi }$ that 
$(C-P^0 _{\gamma ,
    \chi })\varphi \in H ^{\frac12}(\Sigma)$ together with 
$\varphi \in H^{s-\frac12-\varepsilon '}(\Sigma)$ imply $\varphi \in
H^{s-\frac12}(\Sigma)$, when $s\in [s',2]$. Starting from
$s=s'-\varepsilon '$ and applying the argument successively with
$s=s'+k\varepsilon '$, $k=-1,0,1,2,\dots$, we reach the conclusion
$\varphi \in H^{\frac32}(\Sigma )$ in finitely many steps.

Since we know from Lemma \ref{CtoL} that $D(L)\supset  H^{\frac32}(\Sigma
)$, this shows that  $D(L)=  H^{\frac32}(\Sigma )$. Now any
$u\in D(\wA )$ satisfies
$\gamma _0u\in H^{\frac32}(\Sigma )$, so it follows
from our knowledge of the Dirichlet problem that $u\in H^2(\Omega )$.
\end{proof}

We also have:

\begin{theorem}\label{regularity} 
For a given first-order differential or 
pseudodifferential operator  $C$ as described around {\rm \eqref{tag2.11c}}, 
let $\wA$ be defined by 
  {\rm \eqref{tag2.13}}.
If there is a 
  $\lambda \in \varrho (\wA)\cap \varrho (A_\gamma )$ such that $(\wA-\lambda )^{-1}$ is continuous from
  $L_2(\Omega )$ to $H^2(\Omega )$, then $D(L)=H^{\frac32}(\Sigma )$.

In this case, 
there is a Kre\u\i{}n resolvent formula for $\lambda \in \varrho (\wA)\cap \varrho (A_\gamma )$:
\begin{equation} (\wA-\lambda )^{-1}=(A_\gamma -\lambda )^{-1}+K^\lambda _{\gamma } 
(L^\lambda )^{-1}(K^{\prime\bar\lambda }_{\gamma })^*
,\label{tag7.4}\end{equation}
where the operators $L^\lambda $ and $K^\lambda _{\gamma }$ are a $\psi $do and a Poisson operator, respectively, belonging to the
nonsmooth calculus, acting on $H^{\frac32}(\Sigma )$
(the case $s=2$ in Theorems {\rm \ref{Theorem4.2}} and {\rm \ref{Theorem4.3}}). \end{theorem}

\begin{proof} According to Theorem \ref{Theorem2.1} (ii), $M_L(\lambda )$ has the
  form 
 \[
M_L(\lambda ) =\gamma _0\bigl(I-(\wA-\lambda )^{-1}(\Ama-\lambda
)\bigr)A_\gamma  ^{-1}\inj_{Z'}\gamma _{Z'}^*.
\]
Here  the mapping property of $(\wA-\lambda )^{-1}$ assures that
$I-(\wA-\lambda )^{-1}(\Ama-\lambda )$ preserves $H^2(\Omega )$, which
implies that $M_L(\lambda )$ maps $H^{\frac12}(\Sigma )$ continuously into
  $H^{\frac32}(\Sigma )$. Moreover, by (iii), $-M_L(\lambda )$ is the inverse of
  $L^\lambda $, whose domain contains $H^{\frac32}(\Sigma )$ by Lemma 
\ref{CtoL}. Then the
  domain must equal $H^{\frac32}(\Sigma )$. Since $D(L)=D(L^\lambda
  )$, it follows that also $D(L)=H^{\frac32}(\Sigma )$.

The next statement follows from the definition of $L$, and the last
statement is a consequence of the fact that $D(L)=H^{\frac32}(\Sigma )$.
\end{proof}

This theorem includes general Neumann-type boundary conditions with
$C$ of order 1 in the
discussion, where earlier treatments such as \cite{PR09} and
\cite{GM08} had conditions of compactness relative to order 1 or
lower order than 1 in the picture (Robin conditions). \cite{GM09} has
a somewhat more general class of nonlocal operators $C$, also used in
\cite{GM10}, for selfadjoint realizations of $A=-\Delta $.

As a sufficient condition for the validity of the assumptions in Theorem
\ref{regularity} we can mention parameter-ellipticity of the system
$\{A-\lambda , \chi -C\gamma _0\}$ on a ray in $\C$ in the sense of
\cite{FunctionalCalculus}, when $C$ is a differential operator  {\rm \eqref{tag2.11c}}. Here one can
construct the resolvent in an exact way for large $\lambda $ on the
ray, using Agmon's trick in this situation in the same way as in the resolvent
construction for $A_\gamma $ we described above; this is also
accounted for at the end of
\cite{G08}. (It is used that $C-P^\lambda _{\gamma , \chi }$ has
``regularity $\nu=+\infty$'', cf.\ Remark \ref{rem:reginfty}.) 
We can denote $\wA=A_{\chi -C\gamma_0}$ in these cases. The case $C=0$
(the oblique Neumann problem) satisfies the
hypothesis for rays $re^{i\eta }$ with $\eta \in (\pi /2-\delta ', 3\pi
/2+\delta ')$, some $\delta '>0$, when the sesquilinear form $a(u,v)$ satisfies 
\begin{equation}
\operatorname{Re}a(u,u)\ge c_1\|u\|^2_1-k\|u\|_0^2, \quad u\in
H^1(\Omega ),
\label{eq:coercive}
\end{equation}
 with $c_1>0$. It defines the realization $A_\chi $.

If $A$ is symmetric, $A_\gamma $ is selfadjoint (and then $\wA$ will be
selfadjoint if and only if $X=Y$ and $L\colon X\to X^*$ is selfadjoint, cf.\
\cite{Grubb68}). If, moreover, $a(u,u)$ is real for $u\in H^1(\Omega )$
and satisfies \eqref{eq:coercive}, $A_\chi $ will be selfadjoint with
domain in $H^2(\Omega )$.

The preceding choices of $L$ are the most natural ones in connection with applications
to physical problems. One can of course more generally choose $L$
abstractly to be
of a convenient form and derive $C$ from it as in \eqref{tag2.11b}.

Besides the Kre\u\i{}n-type resolvent formulas, there are many
other applications of the characterization of realizations in terms of
the operators $L$. Let us mention numerical range estimates and lower
bounds, and coerciveness estimates, as e.g.\ in \cite{Grubb71},
and spectral
asymptotics estimates as e.g.\ in \cite{Grubb74}, for the smooth
case. Both papers are followed
up in the recent literature with further developments.
%, and there
%remain unsolved questions, particularly for nonsmooth domains. 
Spectral asymptotic formulas (Weyl-type spectral estimates) have been
worked out  in Grubb \cite{G12} for the boundary term in the Kre\u\i{}n formula 
\eqref{tag7.4}, and show the expected appearance of the boundary
dimension $n-1$.
%%GG

%\input{appendix8.tex}
\appendix
\section[Pseudodifferential boundary value problems]{Pseudodifferential boundary value problems with nonsmooth coefficients}

\subsection{Definitions, symbol smoothing}\label{sec:symbolsmoothing}

\begin{defn}\label{defn:nonsmoothPsDO}
  Let $X$ be a Banach space and let $X^\tau=C^\tau$ or $X^\tau=\mathcal{C}^\tau$. The symbol space $X^\tau S^m_{1,\delta}(\Rn\times\Rn;X) $, $\tau>0$,
  $\delta\in [0,1]$, $m\in\R$, is the set of all functions $p\colon\Rn\times \Rn\to X$ that are smooth with respect to $\xi$ and are in $X^\tau$ with respect to $x$ satisfying the estimates
  \begin{eqnarray*}
\|D_{\xi}^{\alpha}D_x^\beta p(.,\xi) \|_{L^\infty(\Rn;X)} &\leq&
    C_{\alpha,\beta} \weight{\xi}^{m-|\alpha|+\delta|\beta|},\\ 
    \|D_{\xi}^{\alpha}p(.,\xi) \|_{X^\tau(\Rn;X)} &\leq& C_{\alpha} \weight{\xi}^{m-|\alpha|+\delta\tau},   
  \end{eqnarray*}
and if $X=C^\tau$ additionally
  \begin{eqnarray*}
    \|D_{\xi}^{\alpha}p(.,\xi) \|_{C^j(\Rn;X)} &\leq& C_{\alpha} \weight{\xi}^{m-|\alpha|+\delta j} \quad \text{for all}\ j\in\N_0,\,j\leq  [\tau],
  \end{eqnarray*}
for all $\alpha\in\N_0^n$ and $|\beta|\leq [\tau]$. 
  \end{defn}

Obviously, $\bigcap_{\tau >0} C^\tau S^m_{1,\delta}(\Rn\times\Rn;X)$ coincides with the usual
H\"ormander class $S^m_{1,\delta}(\Rn\times\Rn;X)$ in the vector-valued variant.

In particular, we are interested in the case $\delta=0$, where we
simply say that the symbols (and operators) have $X^\tau $-smoothness in $x$. But we need
the classes $C^\tau S^m_{1,\delta}$ with $\delta >0$ when working
with the technique called \emph{symbol
  smoothing}: If $p\in C^\tau S^m_{1,\delta}(\Rn\times\Rn;X)$,
$\delta\in [0,1)$,
then for every $\gamma\in (\delta,1)$ there  is a decomposition $p=p^\#+p^b$
  with
  \begin{eqnarray}\label{eq:SymbolSmoothing}
    p^\# \in S^m_{1,\gamma}(\Rn\times\Rn;X),\qquad p^b\in C^\tau S^{m-(\gamma-\delta)\tau}_{1,\gamma}(\Rn\times\Rn;X), 
  \end{eqnarray}
  cf. \cite[Equation (1.3.21)]{TaylorNonlinearPDE}.
  We  note that the proofs in
  \cite{TaylorNonlinearPDE} are formulated for scalar symbols only; 
but they still
  hold in the $X$-valued setting since they are based on direct
  estimates. 

In the case where $X=\mathcal{L}(X_0,X_1)$ is the space of all bounded linear
operators $A\colon X_0\to X_1$ for
some Banach spaces $X_0$ and $X_1$, we define the pseudodifferential
operator with symbol $p\in C^\tau S^m_{1,0}(\Rn\times\Rn; 
\mathcal{L}(X_0,X_1))$ by 
\begin{equation}
p(x,D_x) u= \OP(p)u = \int_{\Rn} e^{ix\cdot\xi} p(x,\xi) \hat{u}(\xi)\dd \xi \quad \text{for\ } u\in\SD(\Rn;X_0),\label{eq:x-form} 
\end{equation}
where $\dd\xi:= (2\pi)^{-n}d\xi$. --- An operator defined from a
symbol $p(x,\xi )$ by formula \eqref{eq:x-form} is said to be ``in $x$-form'' in
contrast to more general formulas, e.g.\ 
 where $\hat u(\xi )=\int e^{-iy\cdot\xi
}u(y)\,dy$ is inserted, and $p$ is allowed to
depend also on $y$. Compositions often lead to more general formulas.

\begin{prop}\label{prop:HsBddness2}
  Let $1<q<\infty$ and let $p\in C^rS^m_{1,\delta}(\Rn\times \Rn
  ;\mathcal{L}(H_0,H_1))$, $m\in\R$, $\delta \in [0,1]$, $r>0$, where
  $H_0$ and $H_1$ are Hilbert spaces. Then $p(x,D_x)$ is continuous 
\[
p(x,D_x)\colon H^{s+m}_q(\Rn;H_0) \to H^{s}_q(\Rn;H_1)
\]
for all $s\in\R$ with $-r(1-\delta) <s < r$.
\end{prop}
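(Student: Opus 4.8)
The plan is to reduce the vector-valued, operator-symbol statement of Proposition~\ref{prop:HsBddness2} to the scalar statement of Proposition~\ref{prop:HsBddness}, which is already available (it is \cite[Theorem~2.1]{Marschall2}), by working with the $L_q$-based Sobolev scale rather than the $L_2$-based one. Since $H_0,H_1$ are Hilbert spaces, the point is that all the scalar harmonic-analysis tools used to prove $H^s$-boundedness of pseudodifferential operators with nonsmooth coefficients (dyadic decomposition in $\xi$, paraproduct estimates, the Littlewood--Paley characterization of $H^s_q$) carry over verbatim to the Hilbert-space-valued setting, because the relevant square-function and maximal-function inequalities hold for $H_i$-valued functions. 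Concretely, I would first recall the Littlewood--Paley description $\|f\|_{H^s_q(\Rn;H_i)}\simeq \big\|\big(\sum_{j\ge 0}2^{2sj}|\varphi_j(D_x)f|_{H_i}^2\big)^{1/2}\big\|_{L_q(\Rn)}$, valid for $1<q<\infty$ and Hilbert-space-valued $f$ (this uses that $H_i$ has the UMD property and, being Hilbert, Rademacher type/cotype $2$, so the randomized and $\ell^2$ square functions coincide). This makes the $H^s_q(\Rn;H_i)$-norms behave exactly like scalar ones for the purposes of the estimates in \cite{Marschall2}.

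The main steps, in order: First, fix the dyadic partition of unity $\varphi_j$ from Section~\ref{prelim} and decompose $p(x,\xi)=\sum_{k\ge 0}p_k(x,\xi)$ where $p_k(x,\xi)=\Psi_k(D_x)p(\cdot,\xi)$ for a suitable Littlewood--Paley cutoff $\Psi_k$ acting in the $x$-variable; here the operator-valued $C^r$-estimates $\|D_\xi^\alpha p(\cdot,\xi)\|_{C^r(\Rn;\mathcal L(H_0,H_1))}\le C_\alpha\weight\xi^{m-|\alpha|+\delta r}$ give, exactly as in the scalar case, the bounds $\|p_k(\cdot,\xi)\|_{L^\infty(\Rn;\mathcal L(H_0,H_1))}\le C_\alpha 2^{-kr}\weight\xi^{m-|\alpha|+\delta r}$ after using the $C^\beta$-bounds for $|\beta|\le[\tau]$ together with the $C^r$-remainder estimate. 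Second, split $p(x,D_x)=p^\#(x,D_x)+p^\flat(x,D_x)$ along the usual ``$k$ versus $j$'' dichotomy ($k\le j(1-\delta)-$const versus $k>j(1-\delta)-$const), i.e.\ the paradifferential decomposition: the ``good'' part $p^\#$ has a smooth symbol in $S^m_{1,\gamma}$ for appropriate $\gamma\in(\delta,1)$ and its $\mathcal L(H_0,H_1)$-operator-valued boundedness on $H^{s+m}_q(\Rn;H_0)\to H^s_q(\Rn;H_1)$ for all $s\in\R$ follows from the Hilbert-space-valued Calderón--Vaillancourt / standard $S^m_{1,\gamma}$-calculus (e.g.\ via the $H^\infty$-calculus or Mikhlin-type multiplier theorems in the UMD setting, or directly Proposition~\ref{prop:HsBddness2} in the limiting smooth case). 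The ``bad'' part $p^\flat$ is estimated by summing the dyadic pieces: here the condition $-r(1-\delta)<s<r$ enters, exactly as in \cite{Marschall2}, to make the geometric series in $k$ converge with the right power of $2^j$.

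Third, and this is where I would be most careful, I would verify that each dyadic piece $\operatorname{Op}(p_k)$ maps $H^{s+m}_q(\Rn;H_0)\to H^s_q(\Rn;H_1)$ with the operator norm bounded by $C\,2^{-kr}\,2^{k\delta r}$ (after Littlewood--Paley localization in $\xi$ the symbol is essentially frequency-localized to $|\xi|\sim 2^j$, contributing $\weight\xi^{m}\sim 2^{jm}$), and then sum over $j$ and $k$ using the Littlewood--Paley characterization above; the ``spectral support'' bookkeeping — that $\varphi_\ell(D_x)\operatorname{Op}(p_k)\varphi_j(D_x)$ vanishes unless $\ell$ is comparable to $\max(j,k)$ — is the combinatorial heart of the argument and is identical to the scalar case once one knows the $H^i$-valued square function estimates. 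The only genuinely ``vector-valued'' input is therefore the UMD/Hilbert-space Littlewood--Paley theory and the $H^i$-valued Fefferman--Stein maximal inequality; everything else is a transcription. The main obstacle I anticipate is not conceptual but bookkeeping: making sure the operator-symbol estimates from Definition~\ref{defn:nonsmoothPsDO} (which control only finitely many $x$-derivatives, up to $[\tau]$, plus a $C^r$-remainder) are enough to run the paraproduct estimate with the sharp range $-r(1-\delta)<s<r$; this is precisely the delicate interplay in \cite{Marschall2,TaylorNonlinearPDE}, and the cleanest write-up is to cite those proofs and remark, as the paper already does for \eqref{eq:SymbolSmoothing}, that they are based on direct estimates which go through unchanged for $\mathcal L(H_0,H_1)$-valued symbols acting on $H_i$-valued functions, using the Hilbert-space-valued Littlewood--Paley theorem in place of the scalar one.
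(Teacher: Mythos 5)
Your proposal is correct and follows essentially the same route as the paper: the paper's proof simply observes that the scalar argument of Taylor (Proposition 2.1.D in \cite{TaylorNonlinearPDE}) carries over verbatim to $\mathcal{L}(H_0,H_1)$-valued symbols once one invokes the Hilbert-space-valued Mikhlin multiplier theorem, which is exactly the reduction you describe (your extra detail on the paradifferential bookkeeping is just the interior of that scalar proof). The only difference is that the paper cites \cite{HInftyInLayer} and stops there rather than re-running the Littlewood--Paley decomposition explicitly.
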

\begin{proof}
The proposition is an operator-valued variant of
\cite[Proposition 2.1.D]{TaylorNonlinearPDE}. As indicated in
\cite[Appendix]{HInftyInLayer} the proof given in  \cite{TaylorNonlinearPDE}
directly carries over to the present setting by using the
Mihlin multiplier theorem in the $\mathcal{L}(H_0,H_1)$-valued
version, where it is essential
that $H_0$ and $H_1$ are Hilbert spaces. 
\end{proof} 

 We denote by ${\cal S}(\crp)$ the
space of restrictions to $\crp$ of functions in ${\cal S}(\R)$.

\begin{defn}\label{defn:Symbolkernels}
  The space $C^\tau S^d_{1,\delta}(\R^N\times\R^{n-1},\SD(\ol{\R}_+))$,
  $d\in\R$, $n,N\in\N$,
  consists of all functions $\tilde{f}(x,\xi',y_n)$, which are smooth in
  $(\xi',y_n)\in \R^{n-1}\times \ol{\R}_+$, are in $C^\tau(\R^N)$ with
  respect to $x$, and 
  satisfy
  \begin{eqnarray}
    \label{eq:PoissonSymbolEstim}
\sup_{x'\in\R^{N}}    \|y_n^l\partial_{y_n}^{l'}D_{\xi'}^{\alpha} \tilde{f}(x',\xi',.)\|_{L^2_{y_n}(\R_+)}
    &\leq& C_{\alpha,l,l'}\weight{\xi'}^{d+\frac12-l+l'-|\alpha|} \\
    \|y_n^l\partial_{y_n}^{l'}D_{\xi'}^{\alpha} \tilde{f}(.,\xi',.)\|_{C^\tau (\R^N;L^2_{y_n}(\R_+))}
    &\leq& C_{\alpha,l,l'}\weight{\xi'}^{d+\frac12-l+l'-|\alpha|+|\delta|\tau} 
  \end{eqnarray} 
  for all $\alpha\in\N_0^{n-1}$, $ l,l'\in\N_0$.
  Moreover, we define $S^m_{1,\delta}(\R^{n-1}\times \R^{n-1};\mathcal{S}(\ol{\R}_+)):= \cap_{k\in\N} C^kS^m_{1,\delta}(\R^{n-1}\times \R^{n-1};\mathcal{S}(\ol{\R}_+))$.
\end{defn}

Now we define an $S^m_{1,\delta}$-variant of the Poisson operators with nonsmooth coefficients as studied in \cite{NonsmoothGreen}. 
\begin{defn}\label{defn:PoissonOperator}
Let $\tilde{k}=\tilde{k}(x',\xi',y_n)\in C^\tau
 S^{d-1}_{1,\delta}(\R^{n-1}\times\R^{n-1},\SD(\ol{\R}_+))$, $d\in\R$,
 $0\leq \delta <1$, $\tau>0$.
Then we define the \emph{Poisson operator} of order $d$ by
\begin{equation*}
  k(x',D_x)v= \F^{-1}_{\xi'\mapsto x'}\left[\tilde{k}(x',\xi',x_n) \hat{v}(\xi')\right], \qquad v\in \SD(\R^{n-1}),
\end{equation*}
where the Fourier transform is applied in the $x'$-variables. 
$\tilde k$ is called a Poisson symbol-kernel of order $d$.
The associated boundary symbol operator $k(x',\xi',D_n)\in \mathcal{L}(\C,\mathcal{S}(\overline{\R}_+))$ is defined by
$$
(k(x',\xi',D_n)v)(x_n)= \tilde{k}(x',\xi',x_n)v\qquad \text{for all}\ x_n\geq 0, v\in\C.
$$
\end{defn}
\begin{thm}\label{thm:PoissonBddness}
  Let $\tilde{k}=\tilde{k}(x',\xi',y_n)\in C^\tau
 S^{d-1}_{1,\delta}(\R^{n-1}\times\R^{n-1},\SD(\ol{\R}_+))$, $d\in\R$, $0\leq \delta< 1$. Then $k(x',D_x)$ extends to a bounded operator
 \begin{equation}\label{eq:PoissonBddness}
   k(x',D_x)\colon H^{s+d-\frac12}(\R^{n-1})\to H^s(\Rn_+)
 \end{equation}
 for every $-(1-\delta)\tau<s<\tau$. In particular, {\rm
   (\ref{eq:PoissonBddness})} holds for every $s\in\R$ if $\tilde k\in S^{d-1}_{1,\delta}(\R^{n-1}\times\R^{n-1},\SD(\ol{\R}_+))$. 
\end{thm}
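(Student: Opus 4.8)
The plan is to reduce the boundedness of the Poisson operator $k(x',D_x)$ to the mapping properties already established for pseudodifferential operators of type $S^m_{1,\delta}$ with nonsmooth coefficients, via a factorization through order-reducing operators in the tangential variables combined with the structure in the normal variable. First I would use the order-reducing operators $\Lambda_{-,+}^{r}$ on $\Rn_+$ (the truncated versions of $\op(\weight{\xi}^r)$), writing $k(x',D_x) = \Lambda_{-,+}^{-1}\,\ell(x',D_x)\,\Lambda_0^{d-\frac12}$ where $\ell(x',D_x)$ is the Poisson operator with symbol-kernel $\tilde\ell(x',\xi',y_n) = \weight{\xi'}^{\frac32-d}\,\tilde\ell_0$ built so that $\tilde\ell\in C^\tau S^{0}_{1,\delta}(\R^{n-1}\times\R^{n-1},\SD(\ol\R_+))$; by construction $\Lambda_0^{d-\frac12}\colon H^{s+d-\frac12}(\R^{n-1})\to H^s(\R^{n-1})$ and $\Lambda_{-,+}^{-1}\colon H^{s-\frac12}(\Rn_+)\to H^{s+\frac12}(\Rn_+)$ are isomorphisms onto their ranges for the relevant $s$, so it suffices to treat the case $d=\frac12$, i.e. a Poisson symbol-kernel of order $0$ in $C^\tau S^{-1}_{1,\delta}(\R^{n-1}\times\R^{n-1},\SD(\ol\R_+))$, and prove $\ell(x',D_x)\colon L_2(\R^{n-1})\to H^{s}(\Rn_+)$ for $-(1-\delta)\tau<s<\tau$ with $s\le\frac12$, then bootstrap the higher $s$ by the same order-reducing trick.

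Next I would handle the normal variable by the standard device of viewing the Poisson operator as an operator-valued pseudodifferential operator in the tangential variables $x'$. Concretely, set $H_0=\C$ and $H_1 = L_2(\R_+)$ or, better, the scale $H^s(\R_+)$, and observe that the boundary symbol operator $k(x',\xi',D_n)\in\mathcal{L}(\C,\SD(\ol\R_+))$ has, by the symbol-kernel estimates \eqref{eq:PoissonSymbolEstim} in Definition~\ref{defn:Symbolkernels}, the property that $\xi'\mapsto k(x',\xi',D_n)$, after conjugation by the dilation $u(y_n)\mapsto \weight{\xi'}^{1/2}u(\weight{\xi'}y_n)$, becomes a symbol in $C^\tau S^{d-\frac12}_{1,\delta}(\R^{n-1}\times\R^{n-1};\mathcal{L}(\C,\mathcal{K}))$ for a fixed space $\mathcal{K}$ of rapidly decreasing functions; this is exactly the computation isolating how the estimates on $y_n^l\partial_{y_n}^{l'}D_{\xi'}^\alpha\tilde k$ translate into operator-norm bounds $\|D_{\xi'}^\alpha k(x',\xi',D_n)\|_{\mathcal{L}(\C,H^s(\R_+))}\le C_\alpha\weight{\xi'}^{d-\frac12+s-|\alpha|+\delta|\alpha|\cdot\text{(correction)}}$. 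Then $k(x',D_x)$ factors as composition of $\op$ of this operator-valued symbol with the elementary inclusion $H^s_{\mathrm{tang}}(\R^{n-1};H^{s}(\R_+))\hookrightarrow H^s(\Rn_+)$ (valid for $s\ge0$; for negative or fractional $s$ one uses the characterization of $H^s(\Rn_+)$ via tangential-normal anisotropic spaces together with interpolation, as in Lemma~\ref{lem:W2p} and the trace discussion). Applying Proposition~\ref{prop:HsBddness2} — the operator-valued $H^s_q$-boundedness for $C^rS^m_{1,\delta}$ symbols with Hilbert-space targets, used here with $q=2$, $H_0=\C$, $H_1=H^s(\R_+)$ — gives the desired boundedness for the range $-r(1-\delta)<s<r$ with $r=\tau$.

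The main obstacle I anticipate is the bookkeeping of the normal variable across the \emph{whole} range of $s$, not just $s=0$: the naive inclusion $H^s(\R^{n-1};H^s(\R_+))\hookrightarrow H^s(\Rn_+)$ is clean only for nonnegative integer (or, with care, nonnegative real) $s$, so the case $s<0$ and the borderline $s$ close to $-\tau(1-\delta)$ require either a direct argument using the Schwartz decay of $\tilde k$ in $y_n$ to absorb negative-order tangential smoothness into extra normal regularity, or an interpolation argument anchored at a positive value of $s$ and a very negative one (obtained by duality against the adjoint trace operator). I would therefore first prove the statement for $s$ in a neighborhood of $0$ cleanly, then push up by the order-reduction factorization described above, and push down by duality: the formal adjoint of $k(x',D_x)$ is a trace operator whose boundedness $H^{-s}(\Rn_+)\to H^{-s-d+\frac12}(\R^{n-1})$ follows from the same symbol estimates, and transposing recovers the negative-$s$ half of the claimed interval. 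The final sentence of the statement — the case $\tilde k\in S^{d-1}_{1,\delta}$, smooth in $x'$ — is then immediate since one may take $r=\tau$ arbitrarily large in the foregoing, so the restriction $-(1-\delta)\tau<s<\tau$ disappears and \eqref{eq:PoissonBddness} holds for all $s\in\R$.
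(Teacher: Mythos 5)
Your central idea coincides with the paper's: regard $k(x',D_x)$ as an operator-valued pseudodifferential operator in $x'$, read off from the symbol-kernel estimates \eqref{eq:PoissonSymbolEstim} that $k(x',\xi',D_n)$ is a $C^\tau S^{d-\frac12+s}_{1,\delta}$-symbol with values in $\mathcal{L}(\C,H^s(\R_+))$ for $s\ge 0$, and invoke Proposition~\ref{prop:HsBddness2}. But the way you convert this into \eqref{eq:PoissonBddness} has a genuine gap for $s>0$. With the direct operator-norm bounds (the ones you display), the $\mathcal{L}(\C,H^s(\R_+))$-valued symbol has order $d-\frac12+s$, so Proposition~\ref{prop:HsBddness2} applied with tangential exponent $s$ yields only $k(x',D_x)\colon H^{2s+d-\frac12}(\R^{n-1})\to H^s(\R^{n-1};H^s(\R_+))$, i.e.\ it costs $s$ extra derivatives on the data compared with the claim. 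The dilation conjugation you mention would keep the operator-valued order at $d-\frac12$, but then $\OP$ of the conjugated symbol is no longer $k(x',D_x)$; undoing the $\xi'$-dependent dilations puts you in group-action (``wedge''-space) territory, machinery neither the paper nor your sketch provides. So the step into $H^s(\R^{n-1};H^s(\R_+))$ as written either fails on orders or rests on an unproved auxiliary theory.

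The repair is what the paper actually does, and it also removes your two detours. Apply Proposition~\ref{prop:HsBddness2} twice: with values in $\mathcal{L}(\C,L^2(\R_+))$ (symbol order $d-\frac12$) and tangential exponent $s$, giving $k(x',D_x)\colon H^{s+d-\frac12}(\R^{n-1})\to H^s(\R^{n-1};L^2(\R_+))$ for all $-(1-\delta)\tau<s<\tau$; and, for $s\ge0$, with values in $\mathcal{L}(\C,H^s(\R_+))$ (order $d-\frac12+s$) and tangential exponent $0$, giving $k(x',D_x)\colon H^{s+d-\frac12}(\R^{n-1})\to L^2(\R^{n-1};H^s(\R_+))$. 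For $s\le0$ the first mapping already suffices, since $H^s(\R^{n-1};L^2(\R_+))\hookrightarrow H^s(\Rn_+)$; for $s\ge0$ one intersects, using $H^s(\Rn_+)=H^s(\R^{n-1};L^2(\R_+))\cap L^2(\R^{n-1};H^s(\R_+))$. In particular no duality against the adjoint trace operator is needed for negative $s$ (your sketch there would in any case require mapping properties of the adjoint, a trace operator in $y$-form, and care with $H^s$ versus $H^s_0$ dualities once $\tau\ge\frac12$), and the initial factorization $k(x',D_x)=\Lambda_{-,+}^{-1}\,\ell(x',D_x)\,\Lambda_0^{d-\frac12}$ is both unnecessary and unjustified: that the left composition of a nonsmooth Poisson operator with $\Lambda_{-,+}^{-1}$ is again a Poisson operator with symbol-kernel in the asserted class is itself a composition theorem, and your order bookkeeping for $\ell$ (symbol-kernel in $C^\tau S^{0}_{1,\delta}$ in one sentence, in $C^\tau S^{-1}_{1,\delta}$ in the next, neither consistent with the orders of the three factors) does not close. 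The final remark on the smooth case, letting $\tau\to\infty$, is fine.
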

\begin{proof}
  From the symbol estimates one easily derives that
  \begin{equation*}
    k(x',\xi',D_n)\in C^\tau S^{d-\frac12+s}_{1,\delta}(\R^{n-1}\times \R^{n-1};\mathcal{L}(\C,H^s(\Rn_+)))
  \end{equation*}
for every $s\geq 0$, cf.\ \cite[Proof of Lemma 4.5]{NonsmoothGreen} for details. Hence Proposition~\ref{prop:HsBddness2} implies that
\begin{equation*}
  k(x',D_x)\colon H^{s+d-\frac12}(\R^{n-1})\to H^s(\R^{n-1};L^2(\R_+))
\end{equation*}
is a bounded operator for every $-(1-\delta)\tau<s<\tau$. If $s\leq 0$, then $H^s(\R^{n-1};L^2(\R_+))\hookrightarrow H^{s}(\Rn_+)$ and the theorem is proved. In the case $s\geq 0$,  Proposition~\ref{prop:HsBddness2} additionally implies
that
\begin{equation*}
  k(x',D_x)\colon H^{s+d-\frac12}(\R^{n-1})\to L^2(\R^{n-1};H^s(\R_+))
\end{equation*}
is a bounded operator for every $s\geq 0$. Since 
\begin{equation*}
  H^s(\Rn_+)= H^s(\R^{n-1};L^2(\R_+))\cap L^2(\R^{n-1};H^s(\R_+))\quad \text{if}\ s\geq 0,
\end{equation*}
the theorem is also obtained in this case. 
\end{proof}

\begin{rem}
  One can easily modify the arguments in the proof of \cite[Th.\~4.8]{NonsmoothGreen} to even get that
 \begin{equation*}
   k(x',D_x)\colon B^{s+d-\frac12}_{p,p}(\R^{n-1})\to H^s_p(\Rn_+)
 \end{equation*}
 is bounded for every $-(1-\delta)\tau<s<\tau$ and $1<p<\infty$.
\end{rem}

  We note that $\tilde{f}\in C^\tau S^d_{1,\delta}(\R^{n-1}\times \R^{n-1};\mathcal{S}(\ol{\R}_+))$ if and only if 
  \begin{equation*}
    x_n^l \partial_{x_n}^{l'}f(x',\xi',D_n)\in C^\tau S^{d+\frac12-l+l'}_{1,\delta}(\R^{n-1}\times \R^{n-1};\mathcal{L}(L^2(\R_+)))\quad \text{for all}\ l,l'\in\N_0,
  \end{equation*} 
where
$(x_n^l \partial_{x_n}^{l'}f(x',\xi',D_n)v)(x_n)=x_n^l \partial_{x_n}^{l'}\tilde{f}(x',\xi',x_n)v$
for all $v\in\C$, $x_n\geq 0$.
  Hence, applying symbol smoothing with respect to $(x',\xi')$, we obtain that  $\tilde{f}= \tilde{f}^\sharp + \tilde{f}^b$, where
  \begin{equation*}
    \tilde{f}^\sharp\in S^m_{1,\delta}(\R^{n-1}\times
    \R^{n-1};\mathcal{S}(\ol{\R}_+)),\quad
 \tilde{f}^b\in  C^{\tau } S^{m-\tau \delta}_{1,\delta}(\R^{n-1}\times \R^{n-1};\mathcal{S}(\ol{\R}_+)),
  \end{equation*}
which can be proved the same way as e.g.\ \cite[Prop.\ 1.3.E]{TaylorNonlinearPDE}.
  As a consequence we derive directly from Theorem~\ref{thm:PoissonBddness}:
  \begin{lem}\label{lem:PoissonSmoothing}
  Let $\tilde{k}=\tilde{k}(x',\xi',y_n)\in C^\tau
 S^{d-1}_{1,0}(\R^{n-1}\times\R^{n-1},\SD(\ol{\R}_+))$, $d\in\R$, $0<
 \delta< 1$.
Then $k(x',D_x)=k^\sharp(x',D_x)+k^b(x',D_x)$, where 
  \begin{equation*}
    \tilde{k}^\sharp\in S^m_{1,\delta}(\R^{n-1}\times
    \R^{n-1};\mathcal{S}(\ol{\R}_+)),\quad
 \tilde{k}^b\in  C^\tau S^{m-\tau\delta}_{1,\delta}(\R^{n-1}\times \R^{n-1};\mathcal{S}(\ol{\R}_+)).
  \end{equation*}
  In particular, we have that
  \begin{equation*}
    k(x',D_x)-k^\sharp(x',D_x)\colon H^{s+d-\delta\tau-\frac12}(\R^{n-1})\to H^s(\Rn_+)
  \end{equation*}
  for every $-(1-\delta)\tau<s<\tau$, and $k^\sharp(x',D_x) \in \mathcal{L}(H^{s+d-\frac12}(\R^{n-1}), H^s(\Rn_+))$ for every $s\in\R$.
  \end{lem}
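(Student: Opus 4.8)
\textbf{Proof plan for Lemma~\ref{lem:PoissonSmoothing}.}
The plan is to piece the statement together from two ingredients already at hand: the symbol-smoothing decomposition for operator-valued symbols, and the mapping properties of Poisson operators in Theorem~\ref{thm:PoissonBddness}. First I would recall the observation, stated in the paragraph preceding the lemma, that $\tilde k\in C^\tau S^{d-1}_{1,0}(\R^{n-1}\times\R^{n-1},\SD(\ol{\R}_+))$ is equivalent to the family of estimates
\begin{equation*}
  x_n^l\partial_{x_n}^{l'}k(x',\xi',D_n)\in C^\tau S^{d-\frac12-l+l'}_{1,0}(\R^{n-1}\times\R^{n-1};\mathcal L(L^2(\R_+)))\qquad\text{for all }l,l'\in\N_0,
\end{equation*}
i.e.\ the Poisson symbol-kernel may be regarded as an $\mathcal L(L^2(\R_+))$-valued symbol (after the usual factor conventions) of the H\"ormander type with $C^\tau$-regularity in $x'$. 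Applying the operator-valued symbol-smoothing recorded in~\eqref{eq:SymbolSmoothing} with $\delta_{\mathrm{old}}=0$ and smoothing parameter $\gamma=\delta\in(0,1)$, I would obtain a splitting $\tilde k=\tilde k^\sharp+\tilde k^b$ with $\tilde k^\sharp\in S^{d-1}_{1,\delta}$ and $\tilde k^b\in C^\tau S^{d-1-\delta\tau}_{1,\delta}$, both valued in $\SD(\ol{\R}_+)$; here one uses, as in \cite[Prop.~1.3.E]{TaylorNonlinearPDE}, that the smoothing acts only in the $(x',\xi')$ variables and so commutes with the $x_n$-weights $x_n^l\partial_{x_n}^{l'}$, which is exactly what keeps the decomposition inside the symbol-kernel classes of Definition~\ref{defn:Symbolkernels}.

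Having produced the decomposition, the mapping properties follow directly. Since $\tilde k^\sharp\in S^{d-1}_{1,\delta}(\R^{n-1}\times\R^{n-1};\SD(\ol{\R}_+))$, the last assertion of Theorem~\ref{thm:PoissonBddness} gives $k^\sharp(x',D_x)\colon H^{s+d-\frac12}(\R^{n-1})\to H^s(\Rn_+)$ for \emph{every} $s\in\R$. For the remainder, $\tilde k^b\in C^\tau S^{(d-\delta\tau)-1}_{1,\delta}(\R^{n-1}\times\R^{n-1};\SD(\ol{\R}_+))$, so Theorem~\ref{thm:PoissonBddness}, applied now with order $d-\delta\tau$ in place of $d$, yields $k^b(x',D_x)\colon H^{s+(d-\delta\tau)-\frac12}(\R^{n-1})\to H^s(\Rn_+)$ for every $-(1-\delta)\tau<s<\tau$. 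Since $k(x',D_x)-k^\sharp(x',D_x)=k^b(x',D_x)$, this is precisely the claimed mapping $H^{s+d-\delta\tau-\frac12}(\R^{n-1})\to H^s(\Rn_+)$ on that range of $s$, and the proof is complete.

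The only genuinely delicate point is the first paragraph: one must check that the operator-valued symbol-smoothing of \cite{TaylorNonlinearPDE}, which is stated for scalar symbols on $\Rn\times\Rn$, is legitimate here in two respects at once --- the target is the Fr\'echet space $\SD(\ol{\R}_+)$ (handled by smoothing each seminorm, equivalently each family $x_n^l\partial_{x_n}^{l'}$, separately and uniformly), and the base variables are the $(n-1)$-dimensional boundary variables. Both are routine: the smoothing construction is a convolution in $x'$ against a dilated mollifier together with a Littlewood--Paley splitting in $\xi'$, and all the estimates in \cite{TaylorNonlinearPDE} are direct, so they transfer verbatim, as already noted in the excerpt after~\eqref{eq:SymbolSmoothing}. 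Thus the lemma is obtained with no new analytic input beyond bookkeeping of the symbol-kernel norms.
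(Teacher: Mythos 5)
Your proposal is correct and follows essentially the same route as the paper: recast the symbol-kernel as an operator-valued (equivalently $\SD(\ol{\R}_+)$-valued) symbol via the $x_n^l\partial_{x_n}^{l'}$ characterization, apply the symbol smoothing of \eqref{eq:SymbolSmoothing} (Taylor's Prop.~1.3.E) in the $(x',\xi')$ variables, and then read off both mapping properties from Theorem~\ref{thm:PoissonBddness}, using its last assertion for $k^\sharp$ and the order $d-\delta\tau$ for $k^b$. The delicate point you flag (uniformity of the smoothing over the $\SD(\ol{\R}_+)$-seminorms) is exactly the one the paper disposes of by the same remark that Taylor's estimates are direct and transfer to the vector-valued setting.
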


Let us recall the definition of trace operators from \cite{NonsmoothGreen}: A trace operator of order $m\in \R$ and  class $r\in\N_0$ with $C^\tau$-coefficients (in $x$-form) is defined as
  \begin{eqnarray*}
       t(x',D_x) f &=& \sum_{j=0}^{r-1} s_j(x',D_{x'}) \gamma_j f + t_0(x',D_x) f\\ 
       t_0(x',D_x) f &=& \F^{-1}_{\xi'\mapsto x'}\left[\int_0^\infty
         \tilde{t}_0(x',\xi',y_n) \acute{f}(\xi',y_n) dy_n \right],
     \end{eqnarray*} 
     where $\tilde{t}_0\in C^\tau
     S^{m}_{1,0}(\R^{n-1}\times\R^{n-1},\SD(\ol{\R}_+))$, $s_j\in C^\tau
     S_{1,0}^{m-j}(\R^{n-1}\times \R^{n-1})$, $j=0,\ldots, r-1$,
     $\acute{f}(\xi',x_n)= \F_{x'\mapsto \xi'}[f(.,x_n)]$ (a partial
     Fourier transform) and 
     $\gamma_j f = \partial_n^j f|_{x_n=0}$. The associated
     \emph{boundary symbol operator $t(x',\xi',D_n)$} is defined by
     applying the above definition to $f\in\SD(\overline{\R}_+)$ for
     every fixed $x',\xi'\in\R^{n-1}$. Since $\gamma _j$ is
     well-defined on $H^k(\rp)$ ($k\in \N_0$) if and only if $k>j$,
     the boundary symbol operators of class $r$ are those that are
     well-defined on $H^r(\rp)$ for all $(x',\xi ')$.

In particular, $t(x',D_x)$ is called a \emph{differential trace operator}
of order $m$ and class $r$ with $C^\tau$-coefficients if $t_0\equiv 0$
and the $s_j(x',\xi')$ are polynomials in $\xi'$.

For the precise definitions of singular Green and Green operators of
order $m$ and class $r$ with $C^\tau$-coefficients (in $x$-form) as
well as the definition of the (global) transmission condition for
$p\in C^\tau S^m_{1,0}(\Rn\times \Rn)$, $m\in\Z$, we refer to
\cite{NonsmoothGreen}. The precise definitions are not important for
our considerations in the present paper. 

\subsection{A parametrix result}

\begin{theorem}\label{Theorem3.1}

$1^\circ$ Let 
\[  \mathcal A=\begin{pmatrix} P_++G \\T\end{pmatrix},
\]
where $P_+$ is the truncation to $\rnp$ of a zero-order $\psi $do satisfying the transmission condition at $x_n=0$, $G$ is a
zero-order singular Green operator, such that $P_++G$ is of class $r\in{\mathbb Z}$, and $T$ is a trace
operator of order $-\frac12$ and class $r$, all in $x$-form with
$C^\tau $-smoothness in $x$. Then $\mathcal A$ maps continuously
\begin{equation}
\mathcal A=\begin{pmatrix}P_++G \\T\end{pmatrix} \colon  
H^{s}(\rnp) \to
\begin{matrix}
H^{s}(\rnp)\\ \times \\ H^{s}({\mathbb R}^{n-1})\end{matrix} ,
\label{tag3.2}\end{equation}
when 

{\rm (i)} $|s|<\tau $, 

{\rm (ii)} $s>r-\tfrac12$ (class restriction).
\smallskip

$2^\circ$ Let $\mathcal A$ be as in $1^\circ$, and polyhomogeneous and
uniformly elliptic with principal symbol $a^0$. Then the operator $\mathcal B^0$ with symbol $(a^0)^{-1}$,
\[  {\mathcal B}^0=\begin{pmatrix} R^0& K^0\end{pmatrix},
\]
with $R^0$ of order $0$ and class $r$ (being the sum of a truncated
$\psi $do and a singular Green operator), $K^0$ a Poisson operator of
order $\frac12$, all in $x$-form with
$C^\tau $-smoothness in $x$, satisfies that ${\mathcal B}^0$ is 
continuous in the opposite direction of $\mathcal A$, and  
${\mathcal R}={\mathcal
A}{\mathcal B}^0-I$ is continuous:
\begin{equation}
{\mathcal R}\colon  \begin{matrix}
H^{s-\theta }(\rnp)\\ \times \\ H^{s-\theta }({\mathbb
R}^{n-1})\end{matrix} \to
\begin{matrix}
H^{s}(\rnp)\\ \times \\ H^{s}({\mathbb
  R}^{n-1})\end{matrix} ,
\label{tag3.4}
\end{equation}
when

{\rm (i)} $-\tau +\theta <s<\tau $;

{\rm (ii)}
 $ s-\theta > r-\tfrac12$ (class restriction).

\end{theorem}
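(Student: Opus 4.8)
The plan is to reduce the statement to the boundedness and composition results of the nonsmooth Boutet de Monvel calculus of \cite{NonsmoothGreen}, together with the symbol-smoothing and $H^s$-boundedness facts recalled above.

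For $1^\circ$, I would split $\mathcal A$ into its entries. The truncated $\psi$do $P_+$ (order $0$, transmission condition, $C^\tau$-smooth in $x$) is bounded $H^s(\rnp)\to H^s(\rnp)$ for $|s|<\tau$ by the half-space analogue of Proposition~\ref{prop:HsBddness} for truncated nonsmooth $\psi$do's with the transmission property, cf.\ \cite{NonsmoothGreen}; the singular Green operator $G$ of order $0$ and class $r$ is bounded $H^s(\rnp)\to H^s(\rnp)$ for $|s|<\tau$, $s>r-\tfrac12$; and the trace operator $T$ of order $-\tfrac12$ and class $r$ is bounded $H^s(\rnp)\to H^s(\R^{n-1})$ in the same range. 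The class restriction $s>r-\tfrac12$ is precisely what makes $\gamma_0,\dots,\gamma_{r-1}$ continuous on $H^s(\rnp)$ (recall $\gamma_j$ is continuous on $H^k(\rp)$ iff $k>j$), and the resulting $s_j$-terms then land in $H^s(\R^{n-1})$ because $s_j$ has order $-\tfrac12-j$; the $t_0$-piece is handled by the boundedness of nonsmooth trace symbol operators (Theorem~\ref{lem:HsBddnes} on the boundary, together with the $C^\tau S^{-1/2}_{1,0}(\R^{n-1}\times\R^{n-1},\SD(\overline{\R}_+))$ estimates). Assembling the three entries gives \eqref{tag3.2}.

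For $2^\circ$, I would first build $\mathcal B^0$ on the symbol level. Uniform ellipticity of the polyhomogeneous system $\mathcal A$ means that the principal interior symbol of $P_++G$ is invertible and that the principal boundary symbol operator
\[
a^0(x',\xi',D_n)=\begin{pmatrix} p^0(x',0,\xi',D_n)_++g^0(x',\xi',D_n)\\ t^0(x',\xi',D_n)\end{pmatrix}\colon H^s(\rp)\to\begin{matrix} L_2(\rp)\\ \times\\ \C\end{matrix}
\]
is an isomorphism for all $x'$ and all $|\xi'|\ge 1$. Its inverse has the standard form $\begin{pmatrix} r^0(x',\xi',D_n)& k^0(x',\xi',D_n)\end{pmatrix}$ with $r^0$ the boundary symbol of a truncated $\psi$do plus a singular Green term (order $0$, class $r$) and $k^0$ a Poisson symbol of order $\tfrac12$; since inversion is an analytic operation in the entries of $a^0$, the $C^\tau$-dependence on $x'$ is preserved, with uniform bounds, using the inverse-symbol (Neumann series) construction together with the pointwise inversion results recalled in Section~\ref{sec:Inversion}. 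After a cut-off near $\xi'=0$, I set $\mathcal B^0=\begin{pmatrix} R^0& K^0\end{pmatrix}=\OP(r^0,k^0)$ in $x$-form; its continuity in the direction opposite to $\mathcal A$ is again $1^\circ$ for $R^0$ together with Theorem~\ref{thm:PoissonBddness} for $K^0$ (a Poisson operator of order $\tfrac12$ maps $H^s(\R^{n-1})\to H^s(\rnp)$).

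Finally I would compose. Writing $\mathcal A\mathcal B^0$ as the $2\times2$ block of compositions $(P_++G)R^0$, $(P_++G)K^0$, $TR^0$, $TK^0$, each entry is again an operator of the calculus (truncated $\psi$do plus s.g.o., Poisson, trace, boundary $\psi$do, respectively), and the composition rules for nonsmooth Green operators, \cite[Theorem~4.13.3]{NonsmoothGreen} and the accompanying trace/Poisson composition statements, show that each block equals the operator with the Leibniz-composed symbol plus a remainder that gains $\theta$ orders for any $0<\theta<\tau$. The leading composed symbol is $a^0(a^0)^{-1}=\mathrm{Id}$, while the subleading homogeneous components of $\mathcal A$'s symbol contribute genuine order $-1$ terms; hence $\mathcal R=\mathcal A\mathcal B^0-I$ is of order $-\theta$, and applying $1^\circ$ with source $H^{s-\theta}(\rnp)\times H^{s-\theta}(\R^{n-1})$ and target $H^{s}(\rnp)\times H^{s}(\R^{n-1})$ yields \eqref{tag3.4} under (i) $-\tau+\theta<s<\tau$ and (ii) $s-\theta>r-\tfrac12$. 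The main obstacle is this last step: keeping straight the order/class bookkeeping across the four block compositions of differing operator types so that they all produce the same gain $\theta<\tau$ and combine into a single remainder estimate, and, on the symbol side of the construction of $\mathcal B^0$, verifying that the inverse boundary symbol operator genuinely stays in the $C^\tau S_{1,0}$-classes with bounds uniform in $x'$.
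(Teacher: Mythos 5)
Your outline is correct and ultimately rests on the same foundation as the paper, namely the nonsmooth Boutet de Monvel calculus of \cite{NonsmoothGreen}, but you take a more self-contained route: the paper's own proof is essentially a citation, with $1^\circ$ being exactly the mapping properties in \cite[Theorems 1.1 and 1.2]{NonsmoothGreen} and $2^\circ$ being \cite[Theorem 6.4]{NonsmoothGreen}, the only substantive observation being that the parametrix construction there reduces the trace operator to order $0$, which creates an extra restriction $|s-\frac12|<\tau$, whereas keeping $T$ at order $-\frac12$ (as in the present statement, and as you do throughout) lets the same proof run without that restriction. You instead re-run the parametrix construction yourself: entrywise boundedness for $1^\circ$, inversion of the principal boundary symbol with $C^\tau$-control to build $\mathcal B^0$, and the composition rules of the nonsmooth calculus to show that $\mathcal R=\mathcal A\mathcal B^0-I$ gains $\theta<\tau$ orders. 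That is precisely what the proof of the cited Theorem 6.4 does, so the strategy is sound; what the citation buys is that the two points you yourself flag as the main obstacles --- uniform $C^\tau$-bounds for the inverse boundary symbol operator, and the remainder/order/class bookkeeping for the four mixed-type block compositions (truncated $\psi$do plus singular Green with $R^0$ and $K^0$, trace with $R^0$ and $K^0$, including negative classes) --- are already established in \cite{NonsmoothGreen} and need not be redone. As written, your argument is a correct sketch rather than a complete proof: to finish it one must either supply those estimates or simply quote \cite[Theorem 6.4]{NonsmoothGreen} as the paper does, noting the order $-\frac12$ adjustment that removes the restriction $|s-\frac12|<\tau$.
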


\smallskip

\begin{proof}
The first part of the theorem follows from \cite[Th.\ 1.1 and 1.2]{NonsmoothGreen}. The second part of the theorem essentially
follows from \cite[Th.\ 6.4]{NonsmoothGreen} with the only
difference that there is an additional restriction
$|s-\frac12|<\tau$. This comes from the fact that for the parametrix
construction there, the trace operator is reduced to order $m=0$ (the same
order as the order of $P_+$ and $G$). But when we take the trace operator
to be of order $-\frac12$, the proof of  \cite[Th.\
6.4]{NonsmoothGreen} applies to the present situation and the
restriction $|s-\frac12|<\tau$ is not needed.   
\end{proof}

 ${\mathcal B}^0$ is called a parametrix of $\mathcal A$.

 A more
general version than the above is
quoted in \cite[Th.\ 6.3]{G08}.

\begin{remark}\label{Remark3.2} To make the above theorem useful for systems where the elements have
other orders we need the so-called 
``order-reducing
operators''. There are two types, one acting over the domain and one acting
over the boundary: 
\begin{align}
&\Lambda _{-,+}^r=\operatorname{OP}(\lambda ^r_-(\xi ))_+\colon  H^t(\rnp)\simto H^{t-r}(\rnp),
\label{tag3.5}\\
&\Lambda _0^s=\operatorname{OP}'(\ang{\xi '}^s)\colon H^t({\mathbb R}^{n-1})\simto H^{t-s}({\mathbb
R}^{n-1}),\text{ all $t\in {\mathbb
R}$,}\nonumber
\end{align}
$r\in\mathbb Z$ and $s\in\R$, with inverses 
$\Lambda
_{-,+}^{-r}$ resp.\ $\Lambda _0^{-s}$. Here  $\lambda _-^r$ is the
``minus-symbol'' defined in
\cite[Prop.\ 4.2]{G90}  as a refinement of $(\ang{\xi '}-i\xi _n)^{r}$. 
Composition of an operator in $x$-form with an order-reducing operator
to the right gives an operator in $x$-form (since the order-reducing
operator acts on the symbol level essentially as a multiplication by
an $x$-independent symbol). Composition with the order-reducing
operator to the left 
gives a more complicated
expression when applied to an $x$-form operator.

It should be noted that when e.g.\ $S=s(x',D_{x'})$ is a $\psi$do of order $m$ on
$\R^{n-1}$ with $C^\tau $-smoothness, then 
it maps  $H^{s+m}(\R^{n-1})\to H^{s}(\R^{n-1})$ for $|s|< \tau
$, so by \eqref{tag3.5},
\begin{equation}\Lambda _0^r S\colon H^{s+m}(\R^{n-1})\to H^{s-r}(\R^{n-1})
\text{ for }-\tau <s<\tau .\label{tag3.6}\end{equation}
The
composition rule  Theorem \ref{thm:PsDOCompo} (in a version for
$C^\tau $-smooth symbols)  shows that
$\Lambda _0^rS$ can be written as the sum of an operator in the
calculus $\operatorname{OP}'(\ang{\xi '}^r s(x',\xi '))$ in $x$-form and a
remainder, such that the sum maps  $H^{s'+m+r}(\rnp)\to H^{s'}(\rnp)$ for $-\tau
<s'<\tau $; this gives a mapping property like in \eqref{tag3.6}
but with a shifted interval $-\tau +r<s<\tau +r$. This extends the
applicability, but one has to keep in mind that the new decomposition
produces different operators;
$\Lambda
_0^rS$ is not in $x$-form but is an operator in $x$-form composed to the left
with $\Lambda _0^r$, not equal to
$\operatorname{OP}'(\ang{\xi '}^r s(x',\xi '))$.

The operators $\Lambda _{-,+}^r$ allow an extension of the class
concept for trace operators to negative values: When $T=T_0\Lambda
_{-,+}^{-k}$, where $T_0$ is of class 0 and $k\in \N_0$,
$T$ is said to be of class $-k$. Then the boundary symbol operator is
well-defined on $H^{-k}(\rp)$. There is a similar concept for
operators $P_++G$.
More details on operators of negative class can be found in
\cite[Section~5.4]{NonsmoothGreen}, \cite{G90}, or \cite{FunctionalCalculus}.
With this extension, Theorem \ref{Theorem3.1} is valid for $r\in \Z$.
\end{remark}

\footnotesize

\def\cprime{$'$}


\begin{thebibliography}{10}


\bibitem{FreeSurface}
H.~Abels.
\newblock The initial value problem for the {N}avier-{S}tokes equations with a
  free surface in {$L^{q}$}-{S}obolev spaces.
\newblock {\em Adv. Diff. Eq.}, Vol. 10, No. 1:45--64, 2005.

\bibitem{NonsmoothGreen}
H.~Abels.
\newblock Pseudodifferential boundary value problems with non-smooth
  coefficients.
\newblock {\em Comm. Part. Diff. Eq.}, 30:1463--1503, 2005.

\bibitem{HInftyInLayer}
H.~Abels.
\newblock Reduced and generalized {S}tokes resolvent equations in
  asymptotically flat layers, part {II}: {$H_{\infty}$}-calculus.
\newblock {\em J. Math. Fluid. Mech.} 7:223-260, 2005.

\bibitem{SolonnikovProceedings}
H.~Abels.
\newblock Nonstationary {S}tokes system with variable viscosity in bounded and
  unbounded domains.
\newblock {\em Discrete Contin. Dyn. Syst. Ser. S}, 3(2):141--157, 2010.


\bibitem{BIPVariableViscosity}
H.~Abels and Y.~Terasawa.
\newblock On {S}tokes {O}perators with variable viscosity in bounded and
  unbounded domains.
\newblock {\em Math. Ann.}, 344(2):381--429, 2009.

\bibitem{AbelsTerasawa2}
H.~Abels and Y.~Terasawa.
\newblock Non-homogeneous {N}avier-{S}tokes systems with order-parameter
  dependent stresses.
\newblock {\em Math. Methods Appl. Sci.}, 33(13):1532--1544, 2010.


\bibitem{A62}
S.~Agmon.
\newblock On the eigenfunctions and on the eigenvalues of general elliptic
  boundary value problems.
\newblock {\em Comm. Pure Appl. Math.}, 15:119--147, 1962.

\bibitem{Amann}
H.~Amann.
\newblock {\em Linear and Quasilinear Parabolic Problems, Volume 1: Abstract
  Linear Theory}.
\newblock Birkh{\"{a}}user, Basel - Boston - Berlin, 1995.

\bibitem{AP04} 
W.O.~Amrein and D.B.~Pearson. \newblock $M$ operators: a
  generalisation of Weyl-Titchmarsh theory. \newblock  {\em J.~Comp.~Appl. Math},  171:1--26,  2004.

\bibitem{ABHN01}
W.~Arendt, C.J.K.~Batty, M.~Hieber and F.~Neubrander.
\newblock {\em Vector-valued {L}aplace transforms and {C}auchy problems},
  volume~96 of {\em Monographs in Mathematics}.
\newblock Birkh\"auser Verlag, Basel, 2001.

 \bibitem{BL07} J.~Behrndt and M.~Langer. \newblock Boundary value problems
     for elliptic partial  differential operators on bounded domains.
\newblock   {\em J.~Funct.~Anal.}, 243:536--565, 2007.

%%GG
 \bibitem{BL12} J.~Behrndt and M.~Langer. \newblock Elliptic
   operators, Dirichlet-to-Neumann maps and quasi-boundary triples.
\newblock   {\em London Math Soc. Lecture Note Series}, 404:121--160, 2012.

\bibitem{BLL13a} J.~Behrndt, M.~Langer and V.~Lotoreichik. \newblock
Trace formulae and singular values of resolvent power differences of self-adjoint elliptic operators. \newblock {\em
 J. Lond. Math. Soc.}, 88:319--337, 2013.

\bibitem{BLL13b} J.~Behrndt, M.~Langer and V.~Lotoreichik. \newblock
 Spectral estimates for resolvent differences of self-adjoint elliptic operators.
\newblock {\em Integral Equations Operator Theory}, 77:1--37, 2013. 




%%GG
 \bibitem{BM13} J.~Behrndt and T.~Micheler. \newblock Elliptic
   differential operators on Lipschitz domains and abstract boundary value problems.
\newblock   {\em arXiv:1307.7501.}

\bibitem{BR12} J.~Behrndt and J.~Rohleder. \newblock An inverse problem of {C}alder\'on type with partial data.
\newblock {\em Comm. Partial Differential Equations}, 37:1141--1159, 2012.

\bibitem{Interpolation}
J.~Bergh and J.~L{\"{o}}fstr{\"{o}}m.
\newblock {\em Interpolation Spaces}.
\newblock Springer, Berlin - Heidelberg - New York, 1976.

\bibitem{Birman56}
M.\v{S}.~Birman.
\newblock {On the theory of self-adjoint extensions of positive definite
  operators}.
\newblock {\em Mat. Sb. N. Ser.}, 38(80):431--450, 1956.

\bibitem{BoutetDeMonvel}
L.~Boutet de~Monvel.
\newblock {\em Boundary problems for pseudo-differential operators}.
\newblock Acta Math. 126, 11-51, 1971.

\bibitem{BGW09}
B.M.~Brown, G.~Grubb and I.G.~Wood.
\newblock { M-functions for closed extensions of adjoint pairs of operators,
  with applications to elliptic boundary problems}.
\newblock {\em Math. Nachr.}, 282:314--347, 2009.

   \bibitem{BMNW08}
{B.M.~Brown},  {M.~Marletta}, {S.~Naboko} and
{I.G.~Wood}. \newblock 
Boundary triplets and M-functions for non-selfadjoint operators, with
applications to elliptic PDEs and block operator matrices.
\newblock{\em J. Lond. Math. Soc.}, (2)77:700--718, 2008.
 
\bibitem{ButzerBerens}
P.L.~Butzer and H.~Berens.
\newblock {\em Semi-groups of operators and approximation}.
\newblock Die Grundlehren der mathematischen Wissenschaften, Band 145.
  Springer-Verlag New York Inc., New York, 1967.


\bibitem{DM91} V.A.~Derkach and M.M.~Malamud. \newblock{Generalized
    resolvents and the boundary value problems for Hermitian operators
    with gaps}. \newblock {\em  J.~Funct.~Anal.}, 95:1--95, 1991.




\bibitem{GM08}
F.~Gesztesy and M.~Mitrea.
\newblock Generalized {R}obin boundary conditions, {R}obin-to-{D}irichlet maps,
  and {K}rein-type resolvent formulas for {S}chr\"odinger operators on bounded  {L}ipschitz domains.
\newblock In {\em Perspectives in partial differential equations, harmonic
  analysis and applications}, volume~79 of {\em Proc. Sympos. Pure Math.},
  pages 105--173. Amer. Math. Soc., Providence, RI, 2008.

\bibitem{GM09}
F.~Gesztesy and M.~Mitrea.
\newblock Nonlocal Robin Laplacians
and some remarks on a paper by Filonov on eigenvalue inequalities.
\newblock {\em J. Diff. Eq.},  247:2871--2896, 2009.

\bibitem{GM09b}
F.~Gesztesy and M.~Mitrea. \newblock
Robin-to-Robin maps and Krein-type resolvent formulas for Schr\"odinger operators on bounded Lipschitz domains.
\newblock {\em Oper. Theory Adv. Appl.}, 191:81--113, Birkh\"auser Verlag, Basel, 2009.


%%GG
\bibitem{GM10}
F.~Gesztesy and M.~Mitrea.
\newblock 
A description of all selfadjoint extensions of the Laplacian and Krein-type
resolvent formulas in nonsmooth domains.
\newblock {\em J. Analyse Math.}, 113:53--172, 2011.


 \bibitem{GG84} V.I.~Gorbachuk and M.L.~Gorbachuk. \newblock
   Boundary value problems for operator differential
   equations.\newblock{\em Kluwer, Dordrecht}, 1991; translated from
   the Russian edition from 1984.
 
\bibitem{Grubb68}
G.~Grubb.
\newblock A characterization of the non-local boundary value problems
  associated with an elliptic operator.
\newblock {\em Ann. Scuola Norm. Sup. Pisa (3)}, 22:425--513, 1968.

\bibitem{Grubb71}
G.~Grubb.
\newblock On coerciveness and semiboundedness of general boundary problems.
\newblock {\em Israel J. Math.}, 10:32--95, 1971.

\bibitem{Grubb74}
G.~Grubb.
\newblock {Properties of normal boundary problems for elliptic even-order
  systems}.
\newblock {\em Ann. Sc. Norm. Sup. Pisa, Ser. IV}, 1:1--61, 1974, available
  from www.numdam.org.

\bibitem{G90}
G.~Grubb, 
\newblock  {Pseudodifferential boundary problems in
   $L_p$ spaces}. 
\newblock {\em Comm. Part. Diff. Eq.} {15}:289--340, 1990. 

\bibitem{FunctionalCalculus}
G.~Grubb.
\newblock {\em Functional Calculus of Pseudodifferential Boundary Problems, 2nd
  Edition}.
\newblock Birkh{\"{a}}user, Basel - Boston - Berlin, 1996.

\bibitem{G08}
G.~Grubb.
\newblock Krein resolvent formulas for elliptic boundary problems in nonsmooth
  domains.
\newblock {\em Rend. Sem. Mat. Univ. Pol. Torino}, 66:13--39, 2008.

\bibitem{G09}
G.~Grubb.
\newblock {\em Distributions and operators}, volume 252 of {\em Graduate Texts
  in Mathematics}.
\newblock Springer, New York, 2009.

%%GG
\bibitem{G12}
G.~Grubb.
\newblock Spectral asymptotics for nonsmooth singular Green operators.
\newblock{\em arXiv:1205.0094. To appear in Comm. Part. Diff. Eq.}


\bibitem{HanouzetBesovProd}
B.~Hanouzet.
\newblock Applications bilin\'eaires compatibles avec un syst\`eme \`a
  coefficients variables. {C}ontinuit\'e dans les espaces de {B}esov.
\newblock {\em Comm. Part. Diff. Eq.}, 10(4):433--465, 1985.
%%GG

\bibitem{HP57}
E.~Hille and R.S.~Phillips.
\newblock {\em Functional analysis and semi-groups}.
\newblock American Mathematical Society Colloquium Publications, vol. 31.
  American Mathematical Society, Providence, R. I., 1957.
\newblock rev. ed.

\bibitem{JohnsenPointwiseMultipliers}
J.~Johnsen.
\newblock Pointwise multiplication of {B}esov and {T}riebel-{L}izorkin spaces.
\newblock {\em Math. Nachr.}, 175:85--133, 1995.

\bibitem{K75} A.N.~Ko\v{c}ube\u{\i}.
\newblock{Extensions of symmetric operators and symmetric binary
  relations}.
\newblock {\em Math.~Notes} (1)17:25--28, 1975.


\bibitem{KK04} N.D.~Kopachevski{\u{\i}} and S.G.~Kre\u{\i}n.
 \newblock{ Abstract Green formula for a triple of Hilbert spaces, abstract boundary value and spectral problems.} 
 \newblock{\em Ukr.~Math.~Bull.} (1) 1 , 77--105, 2004.

\bibitem{Krein47}
M.G.~Krein.
\newblock {The theory of self-adjoint extensions of semi-bounded Hermitian
  transformations and its applications. I.}
\newblock {\em Mat. Sb. N. Ser.}, 20(62):431--495, 1947.

\bibitem{KumanoGo}
H.~Kumano-Go.
\newblock {\em Pseudo-Differential Operators}.
\newblock MIT Press, Cambridge, Massachusetts, and London, 1974.

\bibitem{KumanoGoNagase}
H.~Kumano-Go and M.~Nagase.
\newblock { Pseudo-differential operators with non-regular symbols and
  applications}.
\newblock {\em Funkcial Ekvac.} 21, 151--192, 1978.


 \bibitem{LM68} J.L.~Lions and E.~Magenes. \newblock \emph{Probl\`emes aux
 limites non homog\`enes et applications,} {\bf 1}.
  \'Editions Dunod, Paris, 1968.

\bibitem{Mal10}
M.M.~Malamud. \newblock
Spectral theory of elliptic operators in exterior domains. \newblock {\em Russ. J. Math. Phys.} 17:96--125, 2010.

\bibitem{MM02}
M.M.~Malamud and V.~I. Mogilevskii.
\newblock {Krein type formula for canonical resolvents of dual pairs of linear
  relations.}
\newblock {\em Methods Funct. Anal. Topol.}, 8(4):72--100, 2002.



\bibitem{Marschall2}
J.~Marschall.
\newblock { Pseudo-differential operators with nonregular symbols of the
  class $S^{m}_{\rho,\delta}$}.
\newblock {\em Comm. Part. Diff. Eq.} 12(8), 921--965, 1987.
\bibitem{MarschallSobolevCoeff}

%%GG
J.~Marschall.
\newblock Pseudodifferential operators with coefficients in {S}obolev spaces.
\newblock {\em Trans. Amer. Math. Soc.}, 307(1):335--361, 1988.

\bibitem{McLean}
W.~McLean.
\newblock {\em Strongly elliptic systems and boundary integral equations}.
\newblock Cambridge University Press, Cambridge, 2000.

\bibitem{PR09}
A.~Posilicano and L.~Raimondi.
\newblock Krein's resolvent formula for self-adjoint extensions of symmetric
  second order elliptic differential operators.
\newblock {\em J. Phys. A: Math. Theor.}, 42, 015204 (11 pp.), 2009.

\bibitem{RunstCompOp}
T.~Runst.
\newblock Mapping properties of nonlinear operators in spaces of
  {T}riebel-{L}izorkin and {B}esov type.
\newblock {\em Anal. Math.}, 12(4):313--346, 1986.

\bibitem{R07} V.~Ryzhov.\newblock{A general boundary value problem and
    its Weyl function}. \newblock {\em Opuscula Math.} (2)27:305--331, 2007.

\bibitem{Stein:SingInt}
E.~Stein.
\newblock {\em Singular integrals and differentiability properties of functions}.
\newblock Princeton University Press, Princeton, 1970.


\bibitem{TaylorNonlinearPDE}
M.E.~Taylor.
\newblock {\em Pseudodifferential Operators and Nonlinear PDE}.
\newblock Birkh{\"{a}}user, 1991.

\bibitem{ToolsForPDE}
M.E.~Taylor.
\newblock {\em Tools for PDE}.
\newblock Mathematical Surveys and Monographs, AMS, 2000.


\bibitem{Triebel1}
H.~Triebel.
\newblock {\em Interpolation Theory, Function Spaces, Differential Operators}.
\newblock North-Holland Publishing Company, Amsterdam, New York, Oxford, 1978.

\bibitem{Triebel2}
H.~Triebel.
\newblock {\em Theory of Function Spaces}.
\newblock Birkh\"{a}user Verlag, Basel, Boston, Stuttgart, 1983.

\bibitem{Vishik52}
M.I.~Vishik.
\newblock {On general boundary problems for elliptic differential equations.
  Translated by V. I. Filippenko.}
\newblock {\em Am. Math. Soc., Transl., II. Ser.}, 24:107--172, 1952.

\end{thebibliography}
\end{document}